\newcommand{\A}{\mathbf{A}}
\newcommand{\C}{\mathbf{C}}
\newcommand{\D}{\mathbf{D}}
\newcommand{\G}{\mathbf{G}}
\newcommand{\N}{\mathbf{N}}
\renewcommand{\P}{\mathbf{P}}
\newcommand{\Q}{\mathbb{Q}}
\newcommand{\R}{\mathbf{R}}
\newcommand{\Z}{\mathbb{Z}}
\renewcommand{\O}{\cal{O}}
\newcommand{\an}{\textup{an}}
\newcommand{\m}{\mathfrak{m}}
\newcommand{\too}{\longrightarrow}
\newcommand{\mapstoo}{\longmapsto}
\renewcommand{\phi}{\varphi}
\renewcommand{\epsilon}{\varepsilon}
\renewcommand{\ker}{\Ker}
\newcommand{\cal}{\mathcal}
\DeclareMathOperator{\pr}{pr}
\DeclareMathOperator{\Spec}{Spec}
\DeclareMathOperator{\Hom}{Hom}
\DeclareMathOperator{\Ker}{Ker}
\DeclareMathOperator{\coker}{Coker}
\DeclareMathOperator{\Supp}{Supp}
\DeclareMathOperator{\Ann}{Ann}
\renewcommand{\le}{\leqslant}
\renewcommand{\ge}{\geqslant}
\newcommand{\E}[2]{\ensuremath{\mathbf{A}^{#1,\mathrm{an}}_{#2}}}
\newcommand{\Ahyb}[2]{\ensuremath{\mathbf{A}^{#1,\mathrm{hyb}}_{#2}}}
\newcommand{\Uk}{\mathfrak{U}}
\newcommand{\Xk}{\mathfrak{X}}
\newcommand{\Yk}{\mathfrak{Y}}
\newcommand{\cA}{\mathcal{A}}
\newcommand{\cF}{\mathcal{F}}
\newcommand{\cG}{\mathcal{G}}
\newcommand{\cH}{\mathcal{H}}
\newcommand{\cI}{\mathcal{I}}
\newcommand{\cJ}{\mathcal{J}}
\renewcommand{\Mc}{\mathcal{M}}
\newcommand{\cM}{\mathcal{M}}
\newcommand{\cO}{\mathcal{O}}
\newcommand{\Vc}{\mathcal{V}}
\newcommand{\Xc}{\mathcal{X}}
\newcommand{\eps}{\varepsilon}
\DeclarePairedDelimiter\abs{\lvert}{\rvert}
\DeclarePairedDelimiter\norm{\lVert}{\rVert}
\newcommand\wc{{\mkern 2mu\cdot\mkern 2mu}}
\newcommand\va{\abs{\wc}}
\newcommand\nm{\norm{\wc}}
\newcommand\cp{^{\!\urcorner}}
\newcommand\hyb{\mathrm{hyb}}
\newcommand{\simto}{\xrightarrow[]{\sim}}
\DeclareMathOperator{\colim}{colim}
\newcommand{\binphi}{\mathbin{\Phi}}
\DeclareFontFamily{U}{mathc}{}
\DeclareFontShape{U}{mathc}{m}{it}%
{<->s*[1.03] mathc10}{}
\DeclareMathAlphabet{\mathcal}{U}{mathc}{m}{it}
\DeclareMathOperator{\sHom}{\mathcal{H\mkern-3mu om}}
\DeclareMathOperator{\sKer}{\mathcal{K\mkern-3mu er}}
\theoremstyle{plain}
\newtheorem{theorem}{Theorem}[section]
\newtheorem{lemma}[theorem]{Lemma}
\newtheorem{proposition}[theorem]{Proposition}
\newtheorem{corollary}[theorem]{Corollary}
\newtheorem{theoremintro}{Theorem}
\theoremstyle{definition}
\newtheorem{definition}[theorem]{Definition}
\newtheorem{example}[theorem]{Example}
\newtheorem{notation}[theorem]{Notation}
\theoremstyle{remark}
\newtheorem{remark}[theorem]{Remark}
\numberwithin{equation}{section}
\begin{document}
\title{Valuative compactifications of analytic varieties}

\author{J\'er\^ome Poineau}
\address{Normandie Univ., UNICAEN, CNRS, Laboratoire de math\'ematiques Nicolas Oresme, 14000 Caen, France}
\email{\href{mailto:jerome.poineau@unicaen.fr}{jerome.poineau@unicaen.fr}}
\urladdr{\url{https://poineau.users.lmno.cnrs.fr/}}

\date{\today}

\subjclass{}
\keywords{}

\begin{abstract}
Let $X$ be an algebraic variety over~$\C$. We define a canonical compactification~$X\cp$ of the complex analytic space~$X(\C)$ by adding a Berkovich space over a trivially valued field at the boundary. The construction is functorial with respect to proper morphisms and preserves many properties, such as normality, regularity, etc. We prove a partial GAGA theorem in this setting~: there is an equivalence between the categories of coherent sheaves on~$X$ and~$X\cp$, and it induces bijections on global sections. The results still hold if~$\C$ is replaced by a complete non-trivially valued field~$k$, and complex analytic spaces by Berkovich analytic spaces over~$k$.
\end{abstract}

\maketitle

Let $X$ be an algebraic variety over the field of complex numbers~$\C$, and consider its analytification $X^\an$, as a locally ringed space (in other words, $X(\C)$ endowed with the transcendental topology and the sheaf of analytic functions). 
%It is a locally ringed space: the underlying set $X(\C)$ is endowed with a certain topology (the usual transcendental topology) and a sheaf of functions (the analytic functions). 
The goal this article is to define a canonical compactification~$X\cp$ of~$X^\an$, namely a compact locally ringed space~$X\cp$ in which $X^\an$ embeds as a dense open, and study its properties.

Within the realm of complex analytic spaces, this task is hopeless. Indeed, a space as simple as the affine plane~$\C^2$ may already be embedded into different natural compact spaces, such as $\P^2(\C)$ or $\P^1(\C) \times \P^1(\C)$, which may or may not be relevant, depending on the issue at hand.

Former research however suggests that canonical compactifications may exist, in some larger category of spaces. %allowing valuations as well as classical points. 
During the eighties, J.~Morgan and P.~Shalen carried out their seminal work on fundamental groups of 3-manifolds \cite{MSI} by studying compactifications of character varieties. The latter are defined by adding ``ideal points'', defined in terms of valuations, at the boundaries of the varieties. 

More recently, a similar strategy has been used by various authors to investigate degenerations of complex analytic varieties, based on the idea that the limit behavior of a one-parameter family (say over a punctured disc with parameter $t$) may be encoded into a non-Archimedean Berkovich analytic space (defined over the non-Archimedean field of Laurent series $\C(\!(t)\!)$). This seemingly simple principle has been intensively used over the last years, and striking applications have been found to topics as diverse as Hodge theory (limit mixed Hodge structures \cite{BerkovichW0}), complex manifolds (degenerations of volume forms \cite{BJ}), dynamical systems (degeneration of measures \cite{FavreEndomorphisms}, blow-ups of Lyapunov exponents \cite{DujardinFavreSL2C}), number theory (uniform Manin-Mumford bounds \cite{DKY}), etc.

Note that degenerations may be interpreted as compactifications in a relative setting (over a punctured disc for instance), so that the former may be obtained from the latter, under suitable functoriality properties.

In this text, we propose to lay the foundation of a thorough theory of compactifications (and degenerations) of complex analytic varieties encompassing all preceding instances, by relying on the theory of Berkovich spaces over Banach rings. Along the way, this flexible setting will allow us to enrich the compactifications with new additional features, such as a locally ringed space structure.

Our main result may be summarized as follows.

\begin{theoremintro}[\textit{infra} Theorem~\ref{th:compact}, Propositions~\ref{prop:functorcp} and \ref{prop:iotaiso}, Corollaries~\ref{cor:propertiesXcp} and~\ref{cor:propertiesfcp}]\label{thintro:compactification}
To each complex algebraic variety~$X$, we may associate a locally ringed space~$X\cp$ that satisfies the following properties.
\begin{enumerate}[i)]
\item The space $X\cp$ is compact. 
\item There exists a natural morphism of locally ringed spaces $X^\an \to X\cp$. It is an open immersion with dense image.
\end{enumerate}

The association $X \mapsto X\cp$ is functorial with respect to proper morphisms, and preserves many properties: normality, regularity, etc.
%\JP{find a way to say properties are preserved}
\end{theoremintro}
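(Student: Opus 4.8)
The plan is to situate $X\cp$ within Berkovich's theory of analytic spaces over Banach rings. For $X$ affine with coordinate ring~$A$, I would take as points of $X\cp$ the multiplicative seminorms on~$A$ whose restriction to~$\C$ is either the archimedean absolute value $\va_\infty$ or the trivial one~$\va_0$: the former constitute a copy of $X^\an = X(\C)$, the latter the Berkovich analytification $X^\an_{\C_0}$ over the trivially valued field, which plays the role of the boundary. The subtle point already here is the topology, which I would define through growth rates, so that a boundary seminorm~$v$ trivial on~$\C$ appears as the limit of complex points~$x$ along which $-\log\abs{f(x)}$, suitably normalized, converges to the valuation $v(f)$ for every~$f$. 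Equipping $X\cp$ with the structure sheaf obtained from limits of local fractions makes it a locally ringed space, and one globalizes by gluing over an affine cover. Since the seminorm description is manifestly functorial in~$A$, the gluing, the assignment $X \mapsto X\cp$, and the morphism $X^\an \to X\cp$ realizing $X^\an$ as the archimedean locus all follow with little extra work.

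Compactness (Theorem~\ref{th:compact}) is where I expect the real difficulty, since for non-proper~$X$ the archimedean locus $X(\C)$ is already non-compact and compactness cannot be read off the fibers. I would first settle the proper case, where $X\cp$ is compact by the general principle that the analytification of a proper scheme over a Banach ring with compact spectrum is compact. For general~$X$ I would choose, via Nagata, an open immersion into a proper variety~$\ol X$ and realize $X\cp$ as the closure of $X^\an$ in the compact space $\ol X\cp$; being closed in a compact space it is then compact. The heart of the matter is to check that this closure adds \emph{exactly} the trivially valued valuations recording the growth of functions along the ends of~$X$, so that the result is intrinsic and independent of~$\ol X$ --- this is what makes the compactification canonical --- and I would verify it by the growth-rate analysis of seminorms on an affine chart.

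Granting the topology, the open immersion with dense image (Proposition~\ref{prop:iotaiso}) becomes tractable: the archimedean locus is open because it is cut out by an open condition on the restriction of the seminorm to~$\C$, the morphism is an isomorphism of locally ringed spaces there because the structure sheaf restricts to the sheaf of holomorphic functions, and density is precisely the limit description above, which presents every boundary valuation as a limit of genuine complex points. For functoriality with respect to proper morphisms (Proposition~\ref{prop:functorcp}), a morphism $f \colon X \to Y$ induces $f\cp$ by pullback of seminorms; properness is exactly what guarantees that this pullback carries the boundary into the boundary and that $f\cp$ is again proper, through a transport of the valuative criterion to the trivially valued fibers.

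Finally, the preservation of properties (Corollaries~\ref{cor:propertiesXcp} and~\ref{cor:propertiesfcp}) I would reduce to local algebra at the stalks. At a boundary point one identifies the completed local ring of~$X\cp$ and compares it with a local ring of~$X$; the comparison map should be faithfully flat with geometrically regular (or otherwise well-controlled) fibers --- the same flatness input underlying the partial GAGA statement announced in the abstract. Properties such as normality and regularity are stable under faithfully flat base change with good fibers, hence pass between $X$ and $X\cp$ in both directions, and the relative version of the argument yields the corresponding statement for proper morphisms.
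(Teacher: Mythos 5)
Your overall vision --- boundary points as trivially valued seminorms recording growth at the ends, functoriality via pullback of seminorms and the valuative criterion, preservation of local properties via flatness of comparison maps on local rings --- matches the paper's philosophy, and your last two paragraphs are essentially Propositions~\ref{prop:finfty}, \ref{prop:functorcp} and~\ref{prop:propertiesX+}. But the construction itself has a genuine gap. The boundary is \emph{not} the full trivially valued analytification $X^\an_0$: the paper's boundary is $X_\infty = X^\an_0 - X^\beth$, obtained by deleting the seminorms that have a center on~$X$ (the Raynaud generic fiber $X^\beth$), and one must then quotient by the flow $x \mapsto x^\alpha$, which identifies a seminorm with all of its powers. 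Both steps are forced. If you keep a centered seminorm, say the trivial seminorm $x^0$ attached to a closed point $p \in X(\C)$, then in the growth-rate topology you propose $x^0$ is a limit of the constant family at~$p$ (the normalized quantities $-\eps \log\abs{f(p)}$ tend to $0$ or $+\infty$ according as $f(p)\ne 0$ or $f(p)=0$, which is exactly~$x^0$), so $x^0$ and $p$ cannot be separated and the space is not Hausdorff. Without the quotient by the flow, each end of $X$ contributes a whole ray of equivalent seminorms rather than one boundary point (e.g.\ $(\A^1_{k})_{\infty} = \{\eta_r : r>1\}$), and your set-theoretic description is then inconsistent with your later closure description. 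The phrase ``suitably normalized'' is silently doing the work of the flow quotient; making it precise is the content of Sections~\ref{sec:flow} and~\ref{sec:topprop}, and the Hausdorffness of the quotient (Proposition~\ref{prop:Hausdorff}) is a delicate point requiring Chow's lemma and a case analysis you do not address.

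The compactness argument via Nagata also fails as stated. For a proper variety $\ol X$ one has $\ol X^\beth = \ol X^\an_0$, so $\ol X\cp$ is just $\ol X^\an = \ol X(\C)$, and the closure of $X^\an$ inside it is $\ol X(\C)$ --- the classical compactification, which depends on $\ol X$ and contains none of the valuative boundary. Taking the closure inside the (genuinely compact) hybrid analytification $\ol X^{\hyb}$ does no better: $X^\an$ sits in the closed fiber over $\eps=1$, so its closure stays there. The valuative boundary is not reached as the closure of a single archimedean copy of $X^\an$; it appears only after quotienting the whole family $X^\hyb_{>0}$ by the flow. Accordingly the paper proves compactness directly (Theorem~\ref{th:compact}): writing $X\cp = X^\an \cup X^\partial$, it shows $X^\partial$ is compact as the image of a compact slice $\{\abs{h}=r\}$ of $\Yk_\eta$, and it traps $X^\an$ in a compact set by observing that any $y \in X^\an$ with $\abs{h(y)}<r$ satisfies $q(y)=q(y^\alpha)$ for the $\alpha$ with $\abs{h(y)}^\alpha = r$, and $y^\alpha$ then lies in a fixed compact subset $\{\abs{t_i}\le 1,\ \abs{h}=r\}$ of $X^\hyb$. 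This use of the flow to push escaping archimedean points into a compact hybrid region is the key mechanism of the proof, and it is absent from your proposal; density of $X^\an$ (Proposition~\ref{prop:Xandense}) is likewise proved fiberwise in the hybrid space rather than by your limit heuristic.
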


\medbreak 

We investigate coherent sheaves on compactifications and obtain a partial GAGA theorem.

\begin{theoremintro}[\textit{infra} Theorems~\ref{th:H0} and~\ref{th:FF+}]\label{thintro:GAGA}
Let $X$ be a complex algebraic variety. To each coherent sheaf~$F$ on~$X$, we may functorially associate a coherent sheaf~$F\cp$ on~$X\cp$.
\begin{enumerate}[i)]
\item The functor 
\[F \in \mathrm{Coh}(X) \longmapsto F\cp \in \mathrm{Coh}(X\cp)\]
is an equivalence of categories.
\item For each coherent sheaf $F$ on~$X$, we have a natural isomorphism
\[H^0(X,F) \stackrel{\sim}{\longrightarrow} H^0(X\cp,F\cp).\]
\end{enumerate}
\end{theoremintro}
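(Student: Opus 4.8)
The plan is to construct the functor, prove the global-sections comparison~(ii) first, and then deduce the equivalence~(i), reducing its full faithfulness to~(ii) and treating essential surjectivity by a d\'evissage. For~$F \in \mathrm{Coh}(X)$ I would define~$F\cp$ so that it restricts to the analytification~$F^\an$ on the dense open~$X^\an$ and, near the valuative boundary~$\partial X\cp = X\cp \setminus X^\an$, agrees with the non-Archimedean analytification of a coherent extension~$\bar F$ of~$F$ to a proper model~$\bar X \supseteq X$. One checks this is independent of the choices (two extensions differ by a sheaf supported on~$\bar X \setminus X$, invisible to the boundary, which records only generic valuative behaviour along that locus), so~$F \mapsto F\cp$ is a well-defined exact functor, and~$F\cp$ is coherent because~$\O_{X\cp}$ is (Oka on the Archimedean part, coherence of Berkovich structure sheaves on the boundary). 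The natural map~$H^0(X,F) \to H^0(X\cp, F\cp)$ sends a section~$\O_X \to F$ to its image~$\O_{X\cp} \to F\cp$. The content of the theorem is concentrated in~(ii) and in essential surjectivity for~(i).

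For~(ii), both sides map to~$H^0(X^\an, F^\an)$: the left by analytification, the right by restriction along the dense open immersion~$\iota \colon X^\an \hookrightarrow X\cp$, using~$\iota^* F\cp \iso F^\an$. The restriction is injective by density of~$X^\an$ together with the absence of sections of~$F\cp$ supported on the nowhere-dense boundary, and the composite~$H^0(X,F) \to H^0(X^\an,F^\an)$ is the (injective) analytification map; hence~$H^0(X,F) \to H^0(X\cp, F\cp)$ is injective and the whole statement reduces to identifying the two images inside~$H^0(X^\an,F^\an)$. One inclusion is formal, since algebraic sections pass through the functor. The crux is the converse: a section of~$F^\an$ that extends across~$\partial X\cp$ is algebraic. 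Writing~$H^0(X,F)$ as the sections of~$\bar F$ of finite pole order along~$Z = \bar X \setminus X$, this amounts to showing that extendability across a boundary point — a valuation centred on~$Z$ — bounds the pole order of the section along~$Z$. I would prove this first for~$\O$, where it says that an entire section extending across the valuative boundary has at worst polynomial growth at infinity, and then pass to general~$F$ via a local presentation~$\O^{\oplus q} \to \O^{\oplus p} \to F \to 0$ and the exactness of~$(\,\wc\,)\cp$.

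For~(i), full faithfulness follows from~(ii): for coherent~$F, G$ one has~$\Hom(F,G) = H^0(X, \sHom(F,G))$ and~$\Hom(F\cp, G\cp) = H^0(X\cp, \sHom(F,G)\cp)$ via the natural isomorphism~$\sHom(F,G)\cp \iso \sHom(F\cp, G\cp)$, so~(ii) applied to the coherent sheaf~$\sHom(F,G)$ gives the bijection. For essential surjectivity, let~$\mathcal G$ be coherent on the compact space~$X\cp$. Compactness should yield finiteness of coherent cohomology and a Cartan-type generation statement, so that — after twisting by a line bundle~$L$ arising from a projective model and using the vanishing~$H^i(X\cp, \mathcal G \otimes L^{\otimes n}) = 0$ for~$i > 0$ and~$n \gg 0$ — one obtains a finite presentation of~$\mathcal G$ by sums of powers of~$L$ (for~$X$ affine no twist is needed and one presents by free sheaves~$\O_{X\cp}^{\oplus p}$). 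Since~$H^0(X\cp, \O_{X\cp}) = H^0(X, \O_X)$ by~(ii), and likewise for the twists, the presenting morphism is algebraic, so~$\mathcal G$ is the image of the algebraic cokernel and lies in the essential image.

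The main obstacle is the boundary analysis underlying both parts: proving that the Berkovich boundary over the trivially valued field faithfully records orders of growth, so that \emph{extendability across the boundary is equivalent to finite pole order}, and that higher cohomology of positive twists vanishes on the compact space. This rests on a precise local description of~$\O_{X\cp}$ near boundary points and on gluing the Archimedean GAGA (Serre, Oka) on the interior with the non-Archimedean GAGA over the trivially valued field on the boundary, across the hybrid structure of~$X\cp$.
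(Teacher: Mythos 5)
Your construction of $F\cp$, your treatment of part~(ii), and your deduction of full faithfulness from~(ii) via $\sHom$ all match the paper's route. In particular, reducing~(ii) to the statement that a section of $F^\an$ extending across the valuative boundary has bounded pole order along $Z=\bar X - X$ is exactly Lemma~\ref{lem:globalsectionsmeromorphic}, and the hard input you correctly flag as ``the crux'' is supplied there by the Stein properties of the valuative thickenings $V^\square_{Y}(Z)$ (Theorem~\ref{th:VYZStein}, resting on the Stein theory of hybrid discs): the ring $\cO(V^\ast_{Y}(Z))$ is the localization of $\cO(V^r_{Y}(Z))$ at the ideal of~$Z$, which is what converts ``extends across the boundary'' into ``finite pole order''. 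Up to this point your plan is sound.

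The genuine gap is in essential surjectivity. You propose a Cartan--Serre d\'evissage: finiteness of coherent cohomology on the compact space $X\cp$, vanishing $H^i(X\cp,\mathcal G\otimes L^{\otimes n})=0$ for $i>0$ and $n\gg 0$, and global generation, so as to present $\mathcal G$ by twists of $L$ (or by free sheaves in the affine case). None of this is available: $X\cp$ is compact but it is not a compact complex space, and its coherent cohomology is badly behaved. The paper computes explicitly (Remark following Theorem~\ref{th:H0}) that $H^2(\A^{2,+}_{k},\cO)$ is non-zero and even infinite-dimensional, using a flow-saturated \v Cech covering that descends to $\A^{2,\cp}_{k}$; so already for $X=\A^2$ -- where you assert one can present by free sheaves with no twist -- the required vanishing fails. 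There is no finiteness theorem to start the d\'evissage, and no reason a coherent sheaf on $X\cp$ is globally generated after twisting. The paper's actual argument (Theorem~\ref{th:FF+}) avoids cohomology on $X\cp$ entirely: given $F$ coherent on $X^+$, one first extends $F_{\vert X_{\infty}}=F_{\vert V^\ast_{Y}(Z)}$ to a coherent sheaf on the full tube $V^r_{Y}(Z)=r_{Y}^{-1}(Z)$ by an extension theorem (Theorem~\ref{th:extension}) modelled on the formal/rigid fact that coherent sheaves on generic fibers extend to formal models; one then propagates this extension into the hybrid space by compactness and the flow, glues it with $F$ on the Archimedean part to obtain a coherent sheaf on an entire fiber $Y^\an_{\eps''}$ of the proper model $Y$, applies classical GAGA there to algebraize, and transports the isomorphism back to $X^+$ using the flow-invariance of coherent sheaves (Theorem~\ref{th:coherentflow}) and the fact that isomorphy can be tested on $X^\hyb_{>0}$. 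Some substitute for Theorem~\ref{th:extension} -- an extension-across-the-boundary statement for sheaves, not just for sections -- is the missing idea in your proposal.
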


Both results still hold if $X$ is an algebraic variety over a complete non-Archimedean valued field~$k$ with non-trivial valuation. In this case, $X^\an$ is to be understood as the Berkovich analytification of~$X$ over~$k$. %the completion~$\hat k$ of~$k$.

\bigbreak

Let us now say a few words about the definition of the compactification. The missing part $X\cp - X^\an$, in other words the boundary of~$X^\an$, already appears in the work of V.~Berkovich \cite{BerkovichtoDrinfeld}, O.~Ben--Bassat and M.~Temkin \cite{BBT}, and A.~Thuillier \cite{beth}. Its construction takes place in the setting of Berkovich spaces over the trivially valued field~$\C$ \cite{rouge,bleu}. To combine it with the complex analytic space~$X^\an$, we work in the setting of hybrid Berkovich spaces, a particular instance of Berkovich spaces over Banach rings, as developed in \cite{rouge,CTCZ}. A quotient operation, inspired by L.~Fantini's definition of normalized spaces \cite{TXZ}, completes the construction.

During the last stage of the writing of this article, we discovered that Yinchong Song had introduced the same compactification procedure in~\cite{SongDivisors} (from the topological point of view only) to study adelic divisors over quasi-projective varieties in the sense of X.~Yuan and S.-W.~Zhang \cite{YuanZhangAdelic}. We thank Mattias Jonsson for pointing out this reference.

\medbreak

\noindent $\blacktriangleright$ \textbf{The boundary}

The construction of the boundary of~$X^\an$ takes place in the category of Berkovich analytic spaces over~$(\C,\va_{0})$, where $\va_{0}$ denotes the trivial absolute value. Recall that we have an analytification functor $X \mapsto X^\an_{0}$ from algebraic varieties over~$\C$ to analytic spaces over~$(\C,\va_{0})$. 

Endowing~$\C$ with the discrete topology, we may consider the scheme~$X$ as a formal scheme, and consider its generic fiber~$X^\beth$ in the sense of Raynaud. %Following~\cite{beth}, we will denote it by~$X^\beth$. 
It is a compact subset of~$X_{0}^\an$, and we may now define 
\[X_{\infty} := X_{0}^\an - X^\beth,\]
which corresponds to the set of seminorms without center on~$X$. 

It is worth noticing that, if~$X$ is embedded as an open subset in a proper algebraic variety~$Y$ over~$\C$ with complement~$Z$, then $X_{\infty}$ may be identified with the generic fiber (in the sense of Raynaud--Berthelot) of the formal completion~$\hat Y_{Z}$ of~$Y$ along~$Z$ deprived of (the analytication of) its special fiber~$Z$. In particular, the latter construction does not depend on the choice of~$Y$.

The set~$X_{\infty}$ was first defined by V.~Berkovich in a letter to V.~Drinfeld \cite{BerkovichtoDrinfeld}, and subsequentely used by O.~Ben--Bassat and M.~Temkin in~\cite{BBT} to prove some descent results (reconstructing coherent sheaves on~$Y$ from coherent sheaves on~$\hat Y_{Z}$ and~$X$). It was also independently defined by A.~Thuillier in~\cite{beth}, who proved that if~$Y$ is regular and~$Z$ has normal crossings, then the dual complex of~$Z$ is homotopy equivalent to~$X_{\infty}$. As a consequence, the homotopy type of the dual complex of the boundary depends only on~$X$ and not on the chosen compactification.

\medbreak

\noindent $\blacktriangleright$ \textbf{Hybrid Berkovich spaces}

It remains to put together the analytic space~$X^\an$ and its boundary~$X_{\infty}$. For this purpose, we use the framework of Berkovich spaces over Banach rings, as introduced in~\cite{rouge} and later developped in \cite{A1Z,EtudeLocale,CTCZ}.  

Consider the field ~$\C$ endowed with the hybrid norm $\nm_{\hyb} := \max(\va_{0},\va_{\infty})$, where~$\va_{\infty}$ is the usual absolute value on~$\C$. It is a Banach ring, and we may consider the analytification~$X^\hyb$ of~$X$ over it.  The basic example is the analytification of $\Spec(\C)$, which may be explicitly described as 
\[\Spec(\C)^\hyb = \{\va_{\infty}^\eps, 0\le \eps \le 1\},\] 
where $\va_{\infty}^0 := \va_{0}$. 

The structure morphism $\pr \colon X \to \Spec(\C)$ gives rise to a morphism $\pr^\hyb \colon X^\hyb \to \Spec(\C)^\hyb$, by functoriality. Its fiber over~$\va_{0}$ coincides with~$X^\an_{0}$, whereas the fibers over the different~$\va_{\infty}^\eps$, with $\eps \in (0,1]$, all identifies to~$X^\an$.\footnote{The parameter~$\eps$ plays a role as a scaling factor for the metric, but no more at the topological level.} It makes sense to interpret the morphism~$\pr^\hyb$ as a degeneration of a family of complex analytic spaces onto a non-Archimedean Berkovich space.

\medbreak

\noindent $\blacktriangleright$ \textbf{The compactification}

Set
\[X^+ := X^\hyb - X^\beth.\]
It is an open subset of~$X^\hyb$, whence inherits a structure of locally ringed space. The space~$X^+$ is very similar to~$X^\hyb$. It has the same Archimedean part: a family of spaces~$X^\an$ parametrized by~$(0,1]$, and only differs in its non-Archimedean part, with~$X_{\infty}$ replacing~$X_{0}^\an$.

In order to keep only one copy of~$X^\an$, we identify the points of the space~$X^+$ that correspond to equivalent seminorms. The resulting space~$X\cp$, endowed with the push-forward structure sheaf, is the promised compactification.

By construction, the Archimedean part of the space~$X\cp$ now consists in exactly one copy of~$X^\an$. Its non-Archimedean part, which is the quotient of~$X_{\infty}$ by the equivalence of seminorms, is a so-called normalized space, as introduced by L.~Fantini in~\cite{TXZ}.

\medbreak

\begin{center}
\textbf{Organization of the manuscript}
\end{center}

The first two sections gather general material about Berkovich spaces over Banach rings. In Section~\ref{sec:BerkovichBanach}, we briefly recall their definition. In Section~\ref{sec:hybridspaces}, we focus on the case of hybrid Berkovich spaces, that is to say spaces over Banach rings of the form $(\C,\nm_{\hyb})$, possibly allowing other valued fields instead of~$\C$. We carefully investigate properties of the \emph{flow}, which raises multiplicative seminorms (corresponding to points of Berkovich spaces) to some power. The latter plays a fundamental role in the last part of the construction of the compactification, where the quotient is performed.

The rest of the paper deals more specifically with the compactification problem. In Section~\ref{sec:triviallyvalued}, we recall the construction of the boundary~$X_{\infty}$ and its main properties.
In Section~\ref{sec:compactifications}, we perform the compactification procedure by defining~$X^+$ and~$X\cp$. We investigate their topological properties as well as their first sheaf-theoretic properties, and prove Theorem~\ref{thintro:compactification}.
Finally, in Section~\ref{sec:GAGA}, we investigate coherent sheaves on~$X^+$, and prove Theorem~\ref{thintro:GAGA}. The general strategy is to reduce to a classical GAGA theorem on some fixed compactification~$Y$ of~$X$, by means of extension results for coherent sheaves from~$X^+$ to~$Y^\an$. The main technical input comes from the Stein properties (higher cohomological vanishing and global generation for coherent sheaves) of hybrid discs from \cite{CTCZ}.

\medbreak

\begin{center}
\textbf{Conventions}
\end{center}

We denote by~$\N$ (resp. $\N_{\ge 1}$) the set of non-negative (resp. positive) integers.

A \emph{variety} over a field~$k$ is a separated scheme of finite type over~$k$ (hence quasi-compact).

\medbreak

\begin{center}
\textbf{Acknowledgements}
\end{center}

We express our heartfelt gratitude to Marco Maculan for early discussions that started the project, and many more that contributed to his achievement. %This manuscript would not exist without his input. 
We also thank Antoine Ducros, Mattias Jonsson and Alexandre Roy for several comments.

\section{Berkovich analytic spaces over Banach rings}\label{sec:BerkovichBanach}

Let $(\cA,\nm)$ be a Banach ring. In this section, we recall V.~Berkovich's definition of $\cA$-analytic space (see \cite[Section~1.5]{rouge}).

\subsection{Definitions}

We start with the affine analytic space~$\E{n}{\cA}$ of dimension $n$ over~$\cA$, for $n \in \N$. It bears the structure of a locally ringed space and we define it in three steps: set, topology and structure sheaf. 

\medbreak

\noindent \textit{Set:} The underlying set of $\E{n}{\cA}$ is the set of bounded multiplicative seminorms on $\cA[T_{1},\dotsc,T_{n}]$ that are bounded on~$\cA$, \emph{i.e.} the set of maps 
\[ \va \colon \cA[T_{1},\dotsc,T_{n}] \too \R_{\ge 0} \]
that satisfy the following properties:
\begin{enumerate}[i)]
\item $|0|=0$ and $|1|=1$;
\item $\forall P,Q \in \cA[T_{1},\dotsc,T_{n}]$, $|P+Q| \le |P| + |Q|$;
\item $\forall P,Q \in \cA[T_{1},\dotsc,T_{n}]$, $|P Q| = |P|\, |Q|$;
\item $\forall a \in \cA$, $|a| \le \|a\|$.
\end{enumerate}

We set $\cM(\cA) := \E{0}{\cA}$ and call it the \emph{spectrum} of~$\cA$. Note that we have a projection map $\pr \colon \E{n}{\cA} \to \cM(\cA)$ induced by the morphism $\cA \to \cA[T_{1},\dotsc,T_{n}]$.

Let~$x$ be a point of~$\E{n}{\cA}$. Denote by~$\va_{x}$ the multiplicative seminorm associated to it. The ring $\cA[T_{1},\dotsc,T_{n}]/\ker(\va_{x})$ is a domain and we can consider its field of fractions. The seminorm~$\va_{x}$ induces an absolute value on the later, and we can consider its completion, which we denote by~$\cH(x)$. We simply denote by~$\va$ the absolute value on~$\cH(x)$ induced by~$\va_{x}$, since no confusion may result.

We have a natural morphism $\chi_{x}\colon \cA[T_{1},\dotsc,T_{n}] \to \cH(x)$. For each $P \in \cA[T_{1},\dotsc,T_{n}]$, we set $P(x) := \chi_{x}(P)$. Note that, by definition, we have $|P(x)| = |P|_{x}$.

\medbreak

\noindent \textit{Topology:} The set $\E{n}{\cA}$ is endowed with the coarsest topology such that, for each $P \in \cA[T_{1},\dotsc,T_{n}]$, the map
\[x \in \E{n}{\cA} \mapstoo |P(x)| \in \R_{\ge0}\] 
is continuous. The resulting topological space is Hausdorff and locally compact. The spectrum~$\cM(\cA)$ is even compact. The projection map~$\pr$ is continuous.

For each $x\in \cM(\cA)$, we have a natural homeomorphism
\[\E{n}{\cH(x)} \simto \pr^{-1}(x).\] %\subseteq \E{n}{\cA}.\]

\medbreak

\noindent \textit{Structure sheaf:} For each open subset~$V$ of~$\E{n}{\cA}$, we denote by~$S_{V}$ the set of element of $\cA[T_{1},\dotsc,T_{n}]$ that do not vanish on~$V$ and set $K(V) := S_{V}^{-1} \cA[T_{1},\dotsc,T_{n}]$. 

Let~$U$ be an open subset of~$\E{n}{\cA}$. We define~$\cO(U)$ to be the set of maps
\[f \colon U \too \bigsqcup_{x\in U} \cH(x)\]
such that
\begin{enumerate}[i)]
\item for each $x\in U$, $f(x)\in \cH(x)$;
\item each $x\in U$ has an open neighborhood~$V$ on which~$f$ is a uniform limit of elements of~$K(V)$. 
\end{enumerate}

\medbreak

One may now define arbitrary \emph{$\cA$-analytic spaces}, by first considering \emph{local models}, \textit{i.e.} closed analytic subspaces of open subsets of affine spaces, and then gluing the latter. Note that those spaces are not merely locally ringed spaces, since one wants to keep track of some metric information on the rings of sections, for instance. The actual definition thus involves several pieces of data such as atlases, embeddings into affine spaces for local models, etc. We refer to \cite[2.1]{CTCZ} for details.

Each $\cA$-analytic space~$X$ admits a canonical projection map $\pr \colon X \to \cM(\cA)$. Moreover, for each $x\in \cM(\cA)$, the fiber $\pr^{-1}(x)$ may naturally be endowed with a structure of $\cH(x)$-analytic space.

\medbreak

In the rest of the text, we will consider sections of sheaves on arbitrary subsets of a space. Recall that, given a space~$S$ and a sheaf~$\cF$ on it, for each subset~$T$ of~$S$, one may define the set of \textit{overconvergent} sections
\[\cF(T) := \colim_{U \supseteq T} \cF(U),\]
where $U$ runs through the open neighborhoods of~$T$ in~$S$. Note that it gives back the usual $\cF(T)$ when $T$ is open.

In the case where~$(S,\cO)$ is a locally ringed space, by the former procedure, any subset~$T$ inherits a locally ringed space structure, called overconvergent. The structure sheaf on~$T$ identifies to $\iota^{-1}\cO$, where $\iota\colon T \to S$ in the inclusion. We usually still denote it by~$\cO$.

As a consequence of the preceding discussion, the notion of coherent sheaf on an arbitrary subset of an analytic space makes sense. Let us quote the following useful result (see \cite[Proposition~1]{Cartan5152exp19}). The reference provides a proof for complex analytic spaces, but it adapts readily to a more general setting. 

\begin{proposition}\label{prop:extensioncoherentsheafcompact}
Let $S$ be an $\cA$-analytic space and~$K$ be a compact subset of~$S$. Each coherent sheaf on~$K$ extends to a coherent sheaf on a neighborhood of~$K$.
\qed
\end{proposition}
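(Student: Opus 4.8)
The plan is to transcribe H.~Cartan's classical extension argument, organizing it around a few soft principles that survive the passage to an arbitrary Banach ring~$\cA$. The formal inputs I will rely on are: coherence is a local property; the structure sheaf~$\cO$ is coherent, so that kernels, cokernels and the sheaves $\sHom(\cF,\cG)$ of homomorphisms between coherent sheaves are again coherent; and the support of a coherent sheaf is closed. The mechanism that makes everything run is the definition of overconvergent sections recalled just above: a section of a sheaf over a subset~$T$ is, tautologically, a section over some open neighborhood of~$T$. In particular a coherent sheaf on~$K$, a finite presentation of it over a compact piece of~$K$, or a morphism between such sheaves over a compact set, is automatically already defined over an (unspecified, possibly shrinking) open neighborhood.

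First I would treat the local step. Since coherence is local, each point of~$K$ has an open neighborhood on which~$\cF$ admits a finite presentation $\cO^{p}\to\cO^{q}\to\cF\to 0$. The presentation morphism is a matrix of sections of~$\cO$, hence is defined over an honest open set, and its cokernel is a coherent sheaf on an open neighborhood of the point restricting to~$\cF$ there. As $K$ is compact, I cover it by finitely many compact pieces $K_{1},\dots,K_{n}$, each contained in an open set carrying such a coherent extension of~$\cF$. I then record the two gluing lemmas. If $\cG_{1}$ and~$\cG_{2}$ are coherent on neighborhoods of a compact set~$L$ and $u\colon \cG_{1}|_{L}\simto\cG_{2}|_{L}$ is an isomorphism, then $u$ is a section over~$L$ of the coherent sheaf $\sHom(\cG_{1},\cG_{2})$, hence extends to a morphism on a neighborhood of~$L$; moreover the locus where this extension fails to be an isomorphism is $\Supp(\ker)\cup\Supp(\coker)$, which is closed and disjoint from~$L$, so after shrinking $u$ is an isomorphism on a neighborhood of~$L$.

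The global extension is then obtained by induction on~$n$. The case $n=1$ is the local step. For the inductive step, assume~$\cF$ has been extended to a coherent sheaf~$\cG'$ on a neighborhood of $K'=K_{1}\cup\dots\cup K_{n-1}$ and to~$\cG''$ on a neighborhood of $K''=K_{n}$. Over $K'\cap K''$ both sheaves restrict to~$\cF$, so the lemmas furnish an isomorphism~$\theta$ between them over some open neighborhood~$W$ of $K'\cap K''$. Using that~$S$ is locally compact and Hausdorff, I shrink the two neighborhoods to open sets $U'\supseteq K'$ and $U''\supseteq K''$ with $U'\cap U''\subseteq W$; this is possible because $K'\setminus W$ and $K''\setminus W$ are disjoint compacta. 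Gluing $\cG'|_{U'}$ and $\cG''|_{U''}$ along~$\theta$ over $U'\cap U''$ produces a coherent sheaf on the neighborhood $U'\cup U''$ of~$K$. Since the gluing involves the single identification~$\theta$, no cocycle condition intervenes, and the identifications are pinned to~$\cF$ over~$K$, so the result restricts to the prescribed sheaf on~$K$.

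The step I expect to require the most care is this two-piece gluing. One must arrange by point-set topology that the overlap $U'\cap U''$ lies inside the locus where~$\theta$ is defined and an isomorphism, while keeping $U'$ and~$U''$ neighborhoods of~$K'$ and~$K''$, and one must check that the glued sheaf is compatible with~$\cF$ along all of~$K$ rather than merely on each piece separately. Everything else is a formal rerun of Cartan's proof; the only genuinely geometric ingredients are the coherence of~$\cO$ and of sheaves of homomorphisms, and the local compactness and Hausdorffness of $\cA$-analytic spaces recorded in the previous section, all of which are available here.
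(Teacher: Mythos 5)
Your proposal is correct and is essentially the paper's own proof: the paper gives no argument beyond citing Proposition~1 of Cartan's S\'eminaire (Expos\'e~19) with the remark that the complex-analytic proof adapts, and what you have written is precisely that adaptation (local finite presentations extended via overconvergence over compact pieces, extension of isomorphisms using coherence of $\sHom$ and closedness of supports, and a two-by-two gluing induction with the point-set separation of $K'\setminus W$ and $K''\setminus W$). The ingredients you isolate --- coherence of $\cO$ and of $\sHom$, local compactness and Hausdorffness --- are exactly the ones available in the paper's setting of geometric base rings.
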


\subsection{Base rings}

The theory of Berkovich spaces over~$\cA$ has been extensively studied in the case where $\cA$ is a complete non-Archimedean valued field (see \cite{rouge, bleu} for instance). The case where~$\cA$ is an arbitrary Banach ring has comparatively received little attention. 

In recent work, the author and his collaborators have tried to single out properties of the ring~$\cA$ that are sufficient to ensure the existence of a well-behaved theory of $\cA$-analytic spaces (see \cite{A1Z,EtudeLocale,CTCZ}). This led to the notion of \emph{geometric base ring}  (see \cite[D\'efinition~3.3.8]{CTCZ}). Over such a ring, several expected basic properties of the spaces are known to hold: Noetherianity and Henselianity of the local rings, coherence of the structure sheaf, etc. One may also define a notion of morphism of $\cA$-analytic spaces, and the resulting category is stable under various operations, such as fiber products.

The precise conditions for~$\cA$ to be a geometric base ring are long, technical and not particularly illuminating. Instead of stating them, we provide a list of examples where they are satisfied and that already yield interesting applications:
\begin{enumerate}[i)]
\item a non-Archimedean complete valued field $(k,\va)$, which gives back a large part of the usual theory of $k$-analytic spaces (good spaces with no boundary in Berkovich's terminology);
\item an Archimedean complete valued field $(k,\va)$, including the case of $(\C,\va_{\infty})$, where $\va_{\infty}$ denotes the usual absolute value, which gives back the usual theory of complex analytic spaces;
\item a complete discrete valuation ring $(R,\va)$, where $\va$ is associated to the valuation;
\item the ring of integers $(\Z,\va_{\infty})$ or the ring of integers of a number field. 
\end{enumerate}

In this paper, we will consider another example, that of hybrid fields. We defer the definition to Section~\ref{sec:defhybrid}.

\subsection{Analytification}\label{sec:analytification}

Assume that $\cA$ is a geometric base ring. In this section, we recall how to associate naturally an $\cA$-analytic space to a scheme over~$\cA$, under suitable assumptions. We denote by $\Hom_{\cA - \textrm{loc}}(\wc,\wc)$ the set of morphisms in the category of locally $\cA$-ringed spaces. 

Let $X$ be a scheme locally of finite presentation over~$\cA$. By \cite[Th\'eor\`eme~4.1.4]{CTCZ}, the functor
\[\begin{array}{ccc}
\cA - \textrm{An} & \too & \textrm{Set}\\
S & \mapsto &\Hom_{\cA - \textrm{loc}}(S,X)
\end{array}\]
is representable. The analytic space representing the functor is called \emph{analytification of~$X$}, and denoted by~$X^\an$. It comes with a morphism of locally $\cA$-ringed spaces
\[\rho_{X} \colon X^\an \too X.\]

\medbreak

It follows from the proof of \cite[Th\'eor\`eme~4.1.4]{CTCZ} that the analytification~$X^\an$ may be constructed by the same procedure as for complex analytic spaces. We recall it here.

\medbreak

\noindent \textit{Step~1:} $X$ is of the form $\A^n_{\cA}$, for some $n\in \N$.

The analytification is $X^\an = \E{n}{\cA}$ and the morphism
\[\rho_{\A^n_{\cA}} \colon \E{n}{\cA} \too \A^n_{\cA}\]
is the one sending a multiplicative seminorm to its kernel.

\medbreak

\noindent \textit{Step~2:} $X$ is an affine variety of finite presentation over~$\cA$.

The variety~$X$ is a closed subscheme of some affine space~$\A^n_{\cA}$ defined by a finitely generated ideal~$I$ of~$\cO(\A^n_{\cA})$.
Its analytification~$X^\an$ is the closed analytic subspace of~$\E{n}{\cA}$ defined by the sheaf of ideals generated by~$I$ and the morphism $\rho_{X}$ is the one induced by~$\rho_{\A^n_{\cA}}$.

\medbreak

\noindent \textit{Step~3:} $X$ is locally of finite presentation over~$\cA$.

The variety~$X$ admits an open covering~$(U_{i})_{i\in I}$ by affine varieties of finite presentation over~$\cA$. Its analytification~$X^\an$ and the morphism~$\rho_{X}$ are obtained by gluing the~$U_{i}^\an$'s and the $\rho_{U_{i}}$'s constructed previously.

\begin{remark}\label{rem:pairs}
From the explicit construction of the analytification, we may derive an alternative useful description of the underlying set of $X^\an$. It may be identified with the set of pairs
\[ \{ (q,\va) : q\in X,\ \va \textrm{ absolute value on } \kappa(q) \textrm{ such that } \va_{\vert \cA} \le \nm\}.\]
\end{remark}

\section{Hybrid Berkovich spaces}\label{sec:hybridspaces}

In this section, we focus on the case of \emph{hybrid spaces}, that is to say analytic spaces over a hybrid field.

\subsection{Definitions}\label{sec:defhybrid}

Let $k$ be a field endowed with a non-trivial absolute value~$\va$ (no completeness condition required).

\begin{definition}\label{def:khyb}
We call \emph{trivial absolute value} on~$k$ the absolute value defined by
\[ \va_{0} \colon a \in k \mapstoo 
\begin{cases}
0 & \textrm{if } a=0;\\
1 & \textrm{otherwise}
\end{cases}.\]
We call \emph{hybrid norm} on~$k$ the norm defined by
\[\nm_{\hyb} := \max(\va_{0},\va).\]
The pair $(k,\nm_{\hyb})$ is a Banach ring called \emph{hybrid field} associated to~$k$. We denote it by~$k_\hyb$.
\end{definition}

\begin{notation}
We denote by $k_{0}$ the field~$k$ endowed with the trivial absolute value~$\va_{0}$.

We denote by $\hat k$ the completion of the field~$k$ with respect to~$\va$. For $\eps\in (0,1]$, we denote by~$\hat k_{\eps}$ the completion of the field~$k$ with respect to~$\va_{\eps}$. Note that all these fields are isomorphic.
\end{notation}

The spectrum of the hybrid field~$k_\hyb$ may be easily described: the map
\[\eps \in [0,1] \mapstoo \va^\eps \in \cM(k_\hyb)\]
is a homeomorphism. In the sequel, we will identity $\cM(k_\hyb)$ and $[0,1]$ by the map above. With this notation, we have $\cH(0) = k_{0}$ and, for each $\eps\in(0,1]$, $\cH(\eps)=\hat k_{\eps}$.

\begin{notation}\label{nota:Xeps}
Let $X$ be a $k_\hyb$-analytic space, with projection map $\pr \colon X \to \cM(k_\hyb) = [0,1]$. 

For each $\eps \in [0,1]$, set $X_{\eps} := \pr^{-1}(\eps)$. It is naturally an $\cH(\eps)$-analytic space.

Set $X_{>0} := X - X_{0}$.
\end{notation}

\begin{remark}\label{rem:kkeps}
Let $X$ be a $k_\hyb$-analytic space. All the spaces~$X_{\eps}$, with $\eps \in (0,1]$, are of a similar flavour. Indeed, a $\hat k_{\eps}$-analytic space is essentially a $\hat k$-analytic space with a different normalization, and the underlying locally ringed spaces are isomorphic. On the other hand, the space $X_{0}$ is a $k_{0}$-analytic space, hence a space over a trivially valued field, rather different from the former. 

The case where $(k,\va) = (\C,\va_{\infty})$ is particularly striking: the fibers over $(0,1]$ are complex analytic spaces, or closely related spaces, whereas the fiber over~0 is a non-Archimedean Berkovich space. One may interpret this picture as a family of complex analytic spaces degenerating onto a non-Archimedean space. This point of view gave rise to many applications, such as \cite{BerkovichW0},\cite{BJ},\cite{FavreEndomorphisms},\cite{DujardinFavreSL2C},\cite{DKY}, etc.
\end{remark}

\subsection{The flow}\label{sec:flow}

Let $k$ be a field endowed with a non-trivial absolute value~$\va$.

\subsubsection{Definitions and first properties}

\begin{definition}
For $\eps \in [0,1]$, set 
\[I_{\eps} := 
\begin{cases}
[0,+\infty) &\textrm{if } \eps =0;\\
[0,\frac1\eps] &\textrm{if } \eps >0.
\end{cases}\]
and $I^\ast_{\eps} = I_{\eps} -\{0\}$.

Let~$S$ be a $k_\hyb$-analytic space and consider the projection map $\pr \colon S \to \Mc(k_\hyb) = [0,1]$. For each $x\in S$, we set $I_{x} := I_{\pr(x)}$ and $I_{x}^\ast := I^\ast_{\pr(x)}$.
\end{definition}

\begin{lemma}\label{lem:xalpha}
Let $x\in \A^{n,\hyb}_{k}$ and $\alpha\in I_{x}$. The map
\[P \in k[T_{1},\dotsc,T_{n}] \mapsto |P(x)|^\alpha \in \R_{\ge 0}\]
is a multiplicative seminorm on~$k[T_{1},\dotsc,T_{n}]$ that is bounded by~$\nm_{\hyb}$ on~$k$. In other terms, it defines a point in $\A^{n,\hyb}_{k}$.
\end{lemma}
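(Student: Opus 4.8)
The plan is to verify that the map $P \mapsto |P(x)|^\alpha$ satisfies the four axioms of a bounded multiplicative seminorm, as listed in the definition of the underlying set of $\E{n}{k_\hyb}$. Writing $\va_x$ for the seminorm attached to $x$, the map in question is $\va_x^\alpha$, so the first three axioms follow by raising the corresponding identities for $\va_x$ to the power~$\alpha$. Concretely: $|0|^\alpha = 0$ and $|1|^\alpha = 1$ (using $\alpha > 0$ for the former, and $1^\alpha=1$ in any case); multiplicativity $|PQ|^\alpha = (|P|\,|Q|)^\alpha = |P|^\alpha |Q|^\alpha$ is immediate since $t\mapsto t^\alpha$ is multiplicative on $\R_{\ge 0}$. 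The only axiom requiring genuine attention is the triangle inequality $|P+Q|^\alpha \le |P|^\alpha + |Q|^\alpha$, together with the boundedness condition $|a|^\alpha \le \|a\|_\hyb$ for $a\in k$.

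For the triangle inequality, I would split according to the archimedean nature of the residue field $\cH(x)$, which is governed by $\eps = \pr(x)$. If $\eps = 0$, then $\va_x$ is a seminorm over the trivially valued field $k_0$, so $\cH(x)$ carries a non-archimedean absolute value and $\va_x$ is itself non-archimedean, i.e. $|P+Q| \le \max(|P|,|Q|)$. Raising to any power $\alpha \ge 0$ preserves this ultrametric inequality, hence a fortiori the ordinary triangle inequality, for all $\alpha \in I_0 = [0,+\infty)$. If $\eps > 0$, then $\va_x$ may be archimedean, and the constraint $\alpha \le 1/\eps$ becomes essential: the standard fact (a power-mean / concavity estimate) is that for an archimedean absolute value the power $\va^\alpha$ remains subadditive precisely as long as the exponent stays within the range allowed by the defect in the triangle inequality. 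I would invoke the elementary inequality that $\va_x^\alpha$ is a seminorm whenever $\alpha \le 1/\eps$, using that $\va_x$ restricted to~$k$ is bounded by $\va^\eps$, so the archimedean defect is controlled by~$\eps$; this is exactly the reason $I_\eps$ is truncated at $1/\eps$.

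For the boundedness condition, for $a \in k$ we have $|a|_x \le \|a\|_\hyb = \max(|a|_0, |a|)$ by definition of a point of the hybrid space. I would check $|a|_x^\alpha \le \|a\|_\hyb$ case by case. When $\eps = 0$, the seminorm is trivial on $k^\times$, so $|a|_x \in \{0,1\}$ and $|a|_x^\alpha \in \{0,1\} \le \|a\|_\hyb$ for any $\alpha$. When $\eps > 0$, one has $|a|_x \le |a|^\eps$, hence $|a|_x^\alpha \le |a|^{\eps\alpha} \le |a|$ when $|a| \ge 1$ (using $\eps\alpha \le 1$) and $|a|_x^\alpha \le |a|^{\eps\alpha}\le 1 = \|a\|_\hyb$ when $|a|<1$; comparing with $\|a\|_\hyb = \max(1,|a|)$ gives the bound in both subcases. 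I expect the main obstacle to be pinning down the correct elementary estimate in the archimedean case showing that $\va^\alpha$ stays subadditive up to exponent $1/\eps$; the cleanest route is to reduce to the model $(\C, \va_\infty^\eps)$ via the description of $\cH(x)$ and use that $t\mapsto t^{\eps\alpha}$ is concave with $\eps\alpha\le 1$, so that $(|P|+|Q|)^{\eps\alpha}$ dominates the analogue of the triangle inequality after renormalizing. Everything else is a routine power-law manipulation.
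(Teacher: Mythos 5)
Your proposal is correct and follows essentially the same route as the paper: all axioms except subadditivity are immediate under raising to the power~$\alpha$, and the triangle inequality in the Archimedean case is exactly the point the paper delegates to the choice of~$\alpha$ via a citation to Bourbaki (VI, \S 6, n$^\circ$1, Proposition~2), which is the concavity estimate $(u+v)^t \le u^t + v^t$ for $t\le 1$ that you unpack by reducing to $\cH(x) \simeq (\R$ or $\C, \va_{\infty}^{\eps})$. Your additional explicit verification of the boundedness condition on~$k$ is fine (the only loose end, shared with the paper, is the convention needed for $\alpha=0$, which is harmless since the sequel only uses $\alpha \in I_{x}^\ast$).
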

\begin{proof}
By definition, the map $P \in k[T_{1},\dotsc,T_{n}] \mapsto |P(x)| \in \R_{\ge 0}$ is a multiplicative seminorm on~$k[T_{1},\dotsc,T_{n}]$ that is bounded by~$\nm_{\hyb}$ on~$k$. It follows that the map $P \in k[T_{1},\dotsc,T_{n}] \mapsto |P(x)|^\alpha \in \R_{\ge 0}$ satisfies the same properties. The only non-trivial part is the fact that the triangle inequality still holds, in the case where $k$ is Archimedean. This is ensured by the choice of~$\alpha$ (see for instance \cite[VI, \S 6, n$^\circ$1, Proposition~2]{BourbakiAC56}). 
\end{proof}

\begin{notation}
Let $x\in \A^{n,\hyb}_{k}$ and $\alpha\in I_{x}$. We denote by $x^\alpha \in  \A^{n,\hyb}_{k}$ the point associated to the map $P \mapsto |P(x)|^\alpha$ from Lemma~\ref{lem:xalpha}. 

If $\alpha\in I_{x}^\ast$, the fields $\cH(x)$ and $\cH(x^\alpha)$ are obtained by completing the same field for equivalent absolute values, which yields an isomorphism
\[ \lambda_{x^\alpha,x} \colon \cH(x) \xrightarrow[]{\sim} \cH(x^\alpha).\]
\end{notation}

\begin{remark}
We have $\pr(x^\alpha) = \alpha\, \pr(x)$.

The isomorphism $\lambda_{x^\alpha,x}$ is not an isometry, but raises the absolute value to the power~$\alpha$:
for each $u \in \cH(x)$, we have
\[ \abs{\lambda_{x^\alpha,x}(u)} = \abs{u}^\alpha.\]
In particular, 
for each $P \in k[T_{1},\dotsc,T_{n}]$, we have
\[ \abs{\lambda_{x^\alpha,x}(P)(x^\alpha)} = \abs{P(x)}^\alpha.\]
\end{remark}

Although it is possible to define $x^\alpha$ for each $\alpha \in I_{x}$, in the case $\alpha=0$, several properties are lost. In the sequel, we will only consider $\alpha \in I_{x}^\ast$.

\begin{definition}
Set
\[D(\Ahyb{n}{k}) := \bigcup_{x\in \Ahyb{n}{k}} \{x\} \times I^\ast_{x} \subseteq \A^{n,\hyb}_{k} \times \R_{>0}.\]
We define the \emph{flow} on~$\Ahyb{n}{k}$ as the map
\[ 
\begin{array}{ccccc}
\Phi & \colon & D(\Ahyb{n}{k}) & \too &  \A^{n,\hyb}_{k}\\
&& (x,\alpha) & \mapstoo & x^\alpha
\end{array}.
\]
\end{definition}

\begin{proposition}\label{prop:Phicontinuous}
The flow $\Phi \colon D(\Ahyb{n}{k}) \to \Ahyb{n}{k}$ is continuous and open.
\end{proposition}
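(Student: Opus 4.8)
The plan is to prove continuity and openness of the flow $\Phi$ by working locally in coordinates, reducing everything to the explicit description of the topology on $\Ahyb{n}{k}$ via the evaluation maps $x \mapsto \abs{P(x)}$. Recall that the topology on $\Ahyb{n}{k}$ is the coarsest one making all such maps continuous; similarly the topology on $\Mc(k_\hyb) = [0,1]$ is controlled by the single coordinate $\eps = \pr(x)$. Since $\abs{(x^\alpha)} = \abs{x}^\alpha$ on polynomials, the flow is defined precisely so as to interact simply with these evaluation maps, which is the key to both statements.

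For continuity, I would argue as follows. A basic open subset of $\Ahyb{n}{k}$ is an intersection of sets of the form $\{y : \abs{P(y)} \in J\}$ for $P \in k[T_1,\dots,T_n]$ and $J \subseteq \R_{\ge 0}$ open. To check that $\Phi$ is continuous at a point $(x,\alpha) \in D(\Ahyb{n}{k})$, it suffices to show that $(y,\beta) \mapsto \abs{P(y^\beta)} = \abs{P(y)}^\beta$ is continuous as a function on $D(\Ahyb{n}{k}) \subseteq \Ahyb{n}{k} \times \R_{>0}$, for each fixed $P$. But this map is the composite of the continuous map $(y,\beta) \mapsto (\abs{P(y)},\beta)$ into $\R_{\ge 0} \times \R_{>0}$ with the exponentiation map $(t,\beta) \mapsto t^\beta$, which is continuous on $\R_{\ge 0} \times \R_{>0}$. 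The only subtlety is that the domain $D(\Ahyb{n}{k})$ is not a product but a subset cut out by the constraint $\beta \in I^\ast_{\pr(y)}$; I would check that this constraint is compatible with the subspace topology and that the projection $(y,\beta)\mapsto \beta$ is continuous there (which follows since $\pr$ is continuous and $I^\ast_{\eps}$ varies upper-semicontinuously), so the argument restricts cleanly.

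For openness, the natural strategy is to exhibit an explicit continuous inverse-type structure: the flow has a near-group-action property, since $(x^\alpha)^\beta = x^{\alpha\beta}$ whenever both sides are defined, and $x^1 = x$. Thus, for $(x,\alpha)$ with $\alpha \in I^\ast_x$, the point $y = x^\alpha$ can be recovered by applying the flow in the parameter $1/\alpha$, and locally $\Phi$ admits a continuous section-like inverse $(y,\beta)\mapsto (y^{1/\beta}, \beta)$ defined on a suitable open neighborhood. More precisely, I would show that for any basic open $U \times V$ in $D(\Ahyb{n}{k})$ (with $V \subseteq \R_{>0}$ open and $U$ a basic open in $\Ahyb{n}{k}$), the image $\Phi(U \times V)$ contains a neighborhood of each of its points, by writing $\Phi(U\times V) = \bigcup_{\beta \in V}\{y : y^{1/\beta} \in U\}$ and checking that each set in the union is open, using continuity of the ``inverse flow'' map $y \mapsto y^{1/\beta}$ for fixed $\beta$ (itself an instance of the continuity just established).

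The main obstacle I anticipate is the careful handling of the non-product domain $D(\Ahyb{n}{k})$ and the boundary behavior as $\eps = \pr(x) \to 0$, where the admissible interval $I^\ast_\eps$ jumps from $(0,1/\eps]$ to all of $(0,+\infty)$. One must verify that this variation does not destroy continuity or openness near the non-Archimedean fiber $X_0$, and in particular that the exponentiation retains good behavior as the allowed range of $\alpha$ expands without bound. A clean way to manage this is to note that the constraint $\alpha\,\eps \le 1$ (equivalently $\pr(x^\alpha) \le 1$) is exactly the condition that $x^\alpha$ remains a valid point of $\Ahyb{n}{k}$, so that $D(\Ahyb{n}{k})$ is naturally described as the open locus where $\pr(x)\cdot\alpha < 1$ together with its relevant closed part; recognizing $\Phi$ as compatible with the structure map $\pr$ via $\pr(x^\alpha) = \alpha\,\pr(x)$ should let me reduce the boundary analysis to the continuity and openness of multiplication $(\eps,\alpha)\mapsto \alpha\eps$ on the base, which is elementary.
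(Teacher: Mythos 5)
Your continuity argument is sound and is essentially the paper's (the paper simply defers to the proof of \cite[Proposition~1.3.4]{A1Z}, which runs exactly along the lines you sketch: $\abs{P(x^\alpha)}=\abs{P(x)}^\alpha$ plus continuity of $(t,\beta)\mapsto t^\beta$ on $\R_{\ge 0}\times \R_{>0}$).

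The openness argument, however, has a genuine gap, and it sits exactly where you did \emph{not} anticipate it. Your decomposition $\Phi(U\times V)=\bigcup_{\beta\in V}\{y: y^{1/\beta}\in U\}$ is only correct once you add the condition $1/\beta\in I^\ast_y$, i.e.\ $\pr(y)\le \beta$; and for $\beta<1$ this is a \emph{closed}, non-open condition, so the individual sets in your union are not open. Worse, they are not even neighborhoods of all of their points: if $x\in U$ has $\pr(x)=1$ and $\alpha\in V$ with $\alpha<1$, then $y_0:=x^\alpha$ satisfies $\pr(y_0)=\alpha$ exactly, and every neighborhood of $y_0$ contains points $z$ with $\pr(z)>\alpha$ (e.g.\ $x^{\alpha+\delta}$), for which $z^{1/\alpha}$ is simply undefined. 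This case is not marginal --- it is the generic situation for points coming from the Archimedean fiber. The set $D(\Ahyb{n}{k})$ is cut out by the closed condition $\alpha\,\pr(x)\le 1$, so the ``inverse flow'' $y\mapsto y^{1/\beta}$ is globally defined only when $\beta\ge 1$; your argument works verbatim in that regime but breaks down for $\beta<1$. The obstacle you flag (the jump of $I^\ast_\eps$ as $\eps\to 0$) is the harmless direction, since there the admissible interval only grows, and reducing to openness of $(\eps,\alpha)\mapsto\alpha\eps$ on the base cannot repair the problem, because the failure involves the interaction between changing the exponent and the spatial constraints defining $U$.

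To close the gap one must show that the \emph{union} over $\beta\in V$ is still a neighborhood of such a boundary point $y_0=x^\alpha$, and this is precisely what the paper's proof does in its case $\alpha<1$: writing $U$ as a basic open set $\{r_i<\abs{P_i}<s_i\}$, it picks $\alpha'\in(\alpha,\min(\gamma,1))$ close enough to $\alpha$ that $r_i^{\alpha'/\alpha}<\abs{P_i(x)}<s_i^{\alpha'/\alpha}$, and then checks that
\[
V' := \{z : \forall i,\ r_i^{\alpha'}<\abs{P_i(z)}<s_i^{\alpha'}\}\cap \pr^{-1}([0,\alpha'))
\]
is an open neighborhood of $x^\alpha$ on which $z\mapsto z^{1/\alpha'}$ is defined and lands in $U$. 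This perturbation of both the exponent and the defining inequalities of $U$ is the missing step in your proposal; without it the openness claim is unproved.
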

\begin{proof}
Continuity follows from the definitions (see the proof of \cite[Proposition~1.3.4]{A1Z} for details). 

Let us prove openness. Let $(x,\alpha) \in D(\Ahyb{n}{k})$ and let $W$ be a neighborhood of $(x,\alpha)$ in $D(\Ahyb{n}{k})$. It is enough to prove that $\Phi(W)$ contains a neighborhood of~$x^\alpha$ in~$\Ahyb{n}{k}$.

By definition of the topology of $\Ahyb{n}{k}$, there exists $m \in \N$, $P_{1},\dotsc,P_{m} \in k[T_{1},\dotsc,T_{n}]$, $r_{1},\dotsc,r_{m},s_{1},\dotsc,s_{m} \in \R$ with $r_{i} < \abs{P_{i}(x)} < s_{i}$ for all $i = 1,\dotsc, m$ and $\beta,\gamma \in \R_{>0}$ with $\beta < \alpha < \gamma$ such that 
\[ W' := \big(\{y \in \Ahyb{n}{k} : \forall i=1,\dotsc,m, \ r_{i} < \abs{P_{i}(y)} < s_{i} \} \times (\beta,\gamma) \big) \cap D(\Ahyb{n}{k})\]
is an open neighborhood of~$(x,\alpha)$ contained in~$W$. 

\medbreak

\noindent $\bullet$ Assume that $\alpha \ge 1$.

Set 
\[ U := \{z \in \Ahyb{n}{k} : \forall i=1,\dotsc,m, \ r_{i}^\alpha < \abs{P_{i}(z)} < s_{i}^\alpha \}.\]
It is an open neighborhood of~$x^\alpha$ in~$\Ahyb{n}{k}$. 

For each $z \in U$, we have $\frac1{\alpha} \in I_{z}^\ast$, and $(z^{1/\alpha},\alpha) \in W'$. It follows that $\Phi(W') \supseteq U$.

\medbreak

\noindent $\bullet$ Assume that $\alpha < 1$.

Note that, we have $p(x^\alpha) \le \alpha$. Let $\alpha' \in (\alpha,\min(\gamma,1))$ such that, for each $i=1,\dotsc,m$, we have $r_{i}^{\alpha'/\alpha} < \abs{P_{i}(x)} < s_{i}^{\alpha'/\alpha}$. Then, the set 
\[ V := \{z \in \Ahyb{n}{k} : \forall i=1,\dotsc,m, \ r_{i}^{\alpha'} < \abs{P_{i}(z)} < s_{i}^{\alpha'} \} \cap \pr^{-1}([0,\alpha'))\]
is an open neighborhood of~$x^\alpha$ in~$\Ahyb{n}{k}$. 

For each $z \in V$, we have $\frac1{\alpha'} \in I_{z}^\ast$, and $(z^{1/\alpha'},\alpha') \in W'$. It follows that $\Phi(W') \supseteq V$.
\end{proof}

\begin{definition}\label{def:T(x)} 
Let $x\in \A^{n,\hyb}_{k}$. The \emph{trajectory} of~$x$ is the set
\[T(x) := \Phi(x,I_{x}^\ast) = \{x^\alpha \colon \alpha\in I^\ast_{x}\} \subseteq \Ahyb{n}{k}.\]
\end{definition}

\begin{remark}\label{rem:shapeT(x)}
The trajectory of a point may have two different shapes. Let $x\in \A^{n,\hyb}_{k}$. 

\noindent $\bullet$ Assume that $\cH(x)$ is trivially valued. 

Then, $I_{x}^\ast = (0,+\infty)$ and, for each $\alpha \in (0,+\infty)$, we have $x^\alpha=x$. In particular, $T(x) = \{x\}$.

\noindent $\bullet$ Assume that $\cH(x)$ is not trivially valued. 

In this case, the map $\Phi(x,\wc)$ is injective, and provides a homeomorphism between~$T(x)$ and~$I_{x}^\ast$. 
\end{remark}

The following result is elementary. It will be crucial for us later in this text, by allowing us to consider the equivalence relation on $\Ahyb{n}{k}$ induced by trajectories. %of points.

\begin{lemma}\label{lem:TxTy}
Let $x,y \in \Ahyb{n}{k}$. If $y \in T(x)$, then $T(y)=T(x)$.

In particular, trajectories of points form a partition of~$\Ahyb{n}{k}$.
\qed
\end{lemma}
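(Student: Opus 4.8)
The plan is to reduce everything to the semigroup identity $(x^\alpha)^\beta = x^{\alpha\beta}$, valid whenever both sides are defined, and then to keep track of how the admissible exponent sets~$I_{x}^\ast$ transform under the flow. The identity itself is immediate from the definition: the seminorm attached to $(x^\alpha)^\beta$ sends $P$ to $(\abs{P(x)}^\alpha)^\beta = \abs{P(x)}^{\alpha\beta}$, which is precisely the seminorm attached to~$x^{\alpha\beta}$. First I would dispose of the degenerate case: if $\cH(x)$ is trivially valued, then by Remark~\ref{rem:shapeT(x)} we have $x^\alpha = x$ for every~$\alpha$, so $y=x$ and $T(y)=T(x)$ trivially. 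Hence I may assume $\cH(x)$ is not trivially valued and write $y = x^\alpha$ with $\alpha \in I_{x}^\ast$.

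The core of the argument is then purely a matter of intervals. Writing $\eps := \pr(x)$, the relation $\pr(x^\alpha) = \alpha\,\pr(x)$ recorded above gives $\pr(y) = \alpha\eps$, and using the identity $(x^\alpha)^\beta = x^{\alpha\beta}$ we obtain
\[ T(y) = \{ y^\beta : \beta \in I_{y}^\ast \} = \{ x^{\alpha\beta} : \beta \in I_{y}^\ast \}. \]
It therefore suffices to show that $\beta \mapsto \alpha\beta$ maps $I_{y}^\ast$ bijectively onto~$I_{x}^\ast$, after which $T(y) = \{x^\gamma : \gamma \in I_{x}^\ast\} = T(x)$. When $\eps = 0$ we have $\pr(y)=0$ as well, so $I_{x}^\ast = I_{y}^\ast = (0,+\infty)$ and multiplication by $\alpha > 0$ is a bijection of $(0,+\infty)$. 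When $\eps > 0$ we have $I_{x}^\ast = (0,1/\eps]$, and since $\pr(y) = \alpha\eps \in (0,1]$ also $I_{y}^\ast = (0,1/(\alpha\eps)]$; multiplication by~$\alpha$ then carries $(0,1/(\alpha\eps)]$ exactly onto $(0,1/\eps]$, as required.

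For the partition statement I would verify that $y \in T(x)$ defines an equivalence relation. Reflexivity holds because $1 \in I_{x}^\ast$ for every~$x$ (here one uses $\pr(x)\le 1$) and $x^1 = x$, so $x \in T(x)$. The main part of the lemma yields symmetry and transitivity simultaneously: if $y \in T(x)$ then $T(y)=T(x)$, whence $x \in T(x) = T(y)$; and if moreover $z \in T(y)$, then $z \in T(y) = T(x)$. The equivalence classes are precisely the trajectories, so they partition~$\Ahyb{n}{k}$.

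I do not expect a serious obstacle here: the only delicate point is the bookkeeping of the endpoints of~$I_{\eps}$, namely checking that $\alpha\beta$ lands \emph{exactly} in~$I_{x}^\ast$ rather than merely inside some larger interval, since this exactness is what makes the reparametrization $\beta \mapsto \alpha\beta$ a genuine bijection and not just an inclusion.
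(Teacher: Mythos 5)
Your proof is correct, and the key points — the semigroup identity $(x^\alpha)^\beta = x^{\alpha\beta}$ together with the exact bookkeeping showing $\beta \mapsto \alpha\beta$ carries $I_{y}^\ast$ bijectively onto $I_{x}^\ast$ (using $\pr(x^\alpha) = \alpha\,\pr(x)$) — are precisely what is needed. The paper states this lemma as elementary and omits the proof entirely, so your argument simply supplies the expected details.
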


We may also consider trajectories of arbitrary parts of~$\Ahyb{n}{k}$.

\begin{definition}\label{def:T(V)} 
Let $V$ be a subset of $\Ahyb{n}{k}$. The \emph{trajectory} of~$V$ is the set
\[ T(V) := \bigcup_{x\in V} T(x)  \subseteq \Ahyb{n}{k}.\]
\end{definition}

\begin{remark}\label{rem:TVTV'}
For $V, V' \subseteq \Ahyb{n}{k}$, we have $T(V\cap V') = T(V) \cap T(V')$ and $T(V\cup V') = T(V) \cup T(V')$.
\end{remark}

\subsubsection{Flow-connected and flow-open sets}

We introduce some properties ensuring that the structure sheaf is preserved by the flow.

\begin{definition}
Let $V$ be a subset of $\Ahyb{n}{k}$. We say that $V$ is \emph{flow-connected} if, for each $x\in V$, $T(x) \cap V$ is connected.
\end{definition}

\begin{remark}\label{rem:flowconnected}
Intersections of flow-connected subsets are flow-connected. Indeed, by Remark~\ref{rem:shapeT(x)}, connected sets of the form $T(x) \cap V$ are intervals, hence their connectedness is preserved by intersection.
\end{remark}

\begin{lemma}\label{lem:basisflowconnected}
Flow-connected open subsets form a basis of the topology of $\Ahyb{n}{k}$.
\end{lemma}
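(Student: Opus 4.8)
The plan is to reduce the statement to the flow-connectedness of the subbasic open sets defining the topology, and then to verify the latter by a direct computation using the homogeneity formula $\abs{P(x^\alpha)} = \abs{P(x)}^\alpha$ for the flow.

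Recall that the topology of $\Ahyb{n}{k}$ admits as a subbasis the collection of sets of the form $\{y \in \Ahyb{n}{k} : \abs{P(y)} < s\}$ and $\{y \in \Ahyb{n}{k} : \abs{P(y)} > r\}$, with $P \in k[T_{1},\dotsc,T_{n}]$ and $r,s \in \R_{\ge 0}$, so that their finite intersections form a basis. By Remark~\ref{rem:flowconnected}, a finite intersection of flow-connected sets is again flow-connected. It therefore suffices to prove that each subbasic set above is flow-connected: for then every basic open is flow-connected, and these already form a basis of the topology.

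So fix such a subbasic set, say $U = \{y : \abs{P(y)} < s\}$ (the case of a lower bound being entirely symmetric), and let $x \in U$. We must show that $T(x) \cap U$ is connected. If $\cH(x)$ is trivially valued, then $T(x) = \{x\}$ by Remark~\ref{rem:shapeT(x)} and there is nothing to prove. Otherwise, the same remark provides a homeomorphism $\Phi(x,\wc) \colon I_{x}^\ast \simto T(x)$, under which $T(x) \cap U$ corresponds to $\{\alpha \in I_{x}^\ast : \abs{P(x)}^\alpha < s\}$, where we use $\abs{P(x^\alpha)} = \abs{P(x)}^\alpha$. Since $t \mapsto \abs{P(x)}^t$ is monotonic on~$\R_{>0}$, the condition $\abs{P(x)}^\alpha < s$ cuts out an interval of values of~$\alpha$; intersecting with the interval $I_{x}^\ast$ again yields an interval, hence a connected set, which the homeomorphism transports back to the connectedness of $T(x) \cap U$.

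The argument is elementary throughout, the substance being entirely contained in the multiplicativity of the flow and the interval description of trajectories from Remark~\ref{rem:shapeT(x)}. The only point requiring attention is the degenerate behavior of $t \mapsto \abs{P(x)}^t$ when $\abs{P(x)} \in \{0,1\}$, where the map is constant rather than strictly monotonic; but there the constraint $x \in U$ forces $\abs{P(x)} < s$, so the set of admissible~$\alpha$ is all of $I_{x}^\ast$, still an interval. No genuine obstacle arises.
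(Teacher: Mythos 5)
Your proof is correct and follows essentially the same route as the paper: the paper takes the standard basis of sets $\{x : \forall i,\ r_{i} < \abs{P_{i}(x)} < s_{i}\}$ and simply asserts they are flow-connected, while you decompose into subbasic sets, invoke Remark~\ref{rem:flowconnected} for intersections, and supply the monotonicity computation (via $\abs{P(x^\alpha)} = \abs{P(x)}^\alpha$ and the interval description of trajectories) that the paper leaves implicit. The extra care with the degenerate cases $\abs{P(x)} \in \{0,1\}$ is a welcome detail, and nothing in the argument is amiss.
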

\begin{proof}
By definition, a basis of the topology of $\Ahyb{n}{k}$ is given by sets of the form
\[\{x \in \Ahyb{n}{k} : \forall i=1,\dotsc,m, \ r_{i} < \abs{P_{i}(x)} < s_{i} \},\]
for $m \in \N$, $P_{1},\dotsc,P_{m} \in k[T_{1},\dotsc,T_{n}]$ and $r_{1},\dotsc,r_{m},s_{1},\dotsc,s_{m} \in \R$. Those sets are flow-connected.
\end{proof}

\begin{definition}\label{def:flowopen}
Let $V$ be a subset of $\Ahyb{n}{k}$. We say that $V$ is \emph{flow-open} if the map 
\[ 
\begin{array}{ccccc}
\Phi_{V} & \colon & D(\Ahyb{n}{k}) \cap (V \times \R_{>0}) & \too &  T(V)\\
&& (x,\alpha) & \mapstoo & x^\alpha
\end{array}
\]
induced by~$\Phi$ is open.
\end{definition}

\begin{example}\label{ex:flowopen}
\begin{enumerate}[i)]
\item For each $x \in \Ahyb{n}{k}$ and each interval $I \subseteq \R$, $\Phi(x,I \cap I_{x}^\ast)$ is flow-open.
\item By Proposition~\ref{prop:Phicontinuous}, each open subset of $\Ahyb{n}{k}$ is flow-open.
\end{enumerate}
\end{example}

The following properties are direct consequences of the definition.

\begin{lemma}\label{lem:propertiesflowopen}
Let $V$ be a flow-open subset of $\Ahyb{n}{k}$. 
\begin{enumerate}[i)]
\item For each open subset~$V'$ of~$V$, $V'$ is flow-open and $T(V')$ is open in~$T(V)$.
\item For each interval~$I\subseteq \R$, $\Phi(V,I)$ is flow-open. In particular, $T(V)$ is flow-open.
\item For each subset $W$ of $\Ahyb{n}{k}$, $V \cap T(W)$ is flow-open.
\end{enumerate}
\qed
\end{lemma}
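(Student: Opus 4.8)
The plan is to verify each of the three assertions directly from Definition~\ref{def:flowopen}, the openness criterion for the restricted flow map, using the structural facts already at hand (Remarks~\ref{rem:shapeT(x)} and~\ref{rem:TVTV'}, and Lemma~\ref{lem:TxTy}). Throughout, the crucial observation is that $T(V) = \Phi\big(D(\Ahyb{n}{k}) \cap (V \times \R_{>0})\big)$ is the image of the flow map whose openness defines flow-openness, so I want to compare the domain and codomain of $\Phi_V$ with those of the new maps $\Phi_{V'}$, $\Phi_{\Phi(V,I)}$, and $\Phi_{V \cap T(W)}$.

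\textbf{Part i).} First I would treat $V'$ open in $V$. Since $V$ is flow-open, $\Phi_V$ is open; I claim $\Phi_{V'}$ is the restriction of $\Phi_V$ to the open subset $D(\Ahyb{n}{k}) \cap (V' \times \R_{>0})$ of its domain. Because $V'$ is open in $V$, this is indeed an open subset of $D(\Ahyb{n}{k}) \cap (V \times \R_{>0})$, and the restriction of an open map to an open subset is open onto its image. The image is $T(V')$, which is then open in $T(V)$ since openness of $\Phi_V$ sends the open set $D(\Ahyb{n}{k}) \cap (V' \times \R_{>0})$ to an open subset of $T(V)$. This gives both conclusions at once, and the only point to check is that $T(V') = \Phi_V\big(D \cap (V' \times \R_{>0})\big)$, which is immediate from the definition of the trajectory of a set.

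\textbf{Part ii).} For an interval $I \subseteq \R$, I would show $\Phi(V,I)$ is flow-open by exhibiting a surjection from an open subset of a flow-open domain. Using Lemma~\ref{lem:TxTy} one has $T(\Phi(V,I)) = T(V)$, so the target of $\Phi_{\Phi(V,I)}$ coincides with that of $\Phi_V$. The domain $D \cap (\Phi(V,I) \times \R_{>0})$ should be compared to $D \cap (V \times \R_{>0})$ via the composition of flows: a point $(x^\beta, \alpha)$ with $x \in V$, $\beta \in I \cap I_x^\ast$ maps under $\Phi$ to $x^{\alpha\beta}$, i.e.\ the flow satisfies the cocycle relation $\Phi(\Phi(x,\beta),\alpha) = \Phi(x,\alpha\beta)$. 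Thus $\Phi_{\Phi(V,I)}$ factors as $\Phi_V$ precomposed with the continuous open reparametrization $(x,\beta,\alpha) \mapsto (x,\alpha\beta)$, and openness of $\Phi_V$ together with openness of multiplication on $\R_{>0}$ yields openness of $\Phi_{\Phi(V,I)}$. Taking $I = \R$ (intersected with the relevant $I_x^\ast$) gives $\Phi(V,\R) = T(V)$, so $T(V)$ is flow-open as a special case. The main obstacle here is bookkeeping the parameter intervals $I_x^\ast$ correctly so that the cocycle relation stays within the domain $D(\Ahyb{n}{k})$; once the cocycle identity is set up cleanly this is routine.

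\textbf{Part iii).} Finally, for $V \cap T(W)$ I would argue that $D \cap ((V \cap T(W)) \times \R_{>0})$ is an open subset of $D \cap (V \times \R_{>0})$ \emph{within the trajectory direction}: since $T(W)$ is a union of full trajectories, its intersection with each fiber $T(x) \times \R_{>0}$ is either empty or all of $T(x)^\ast \times \R_{>0}$, and the flow map only moves points along trajectories. Concretely, $\Phi_{V \cap T(W)}$ is the restriction of $\Phi_V$ to the preimage (under $\Phi_V$) of the flow-saturated set $T(W)$, intersected appropriately; because $T(W)$ is saturated and $\Phi_V$ is open, this restriction remains open onto its image $T(V \cap T(W)) = T(V) \cap T(W)$ (using Remark~\ref{rem:TVTV'}). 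The subtlety, and the step I expect to be the main obstacle across the whole lemma, is verifying that restricting an open map to the preimage of a saturated subset stays open: this is true precisely because saturation guarantees that the restriction's image is open in the image of the whole map, and one must phrase this carefully in terms of the domain $D(\Ahyb{n}{k})$ rather than a plain product. I would handle it by reducing to Part~i) after replacing $V$ by the flow-open set $V \cap T(W)$, bootstrapping from the already-established cases.
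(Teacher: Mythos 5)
The paper offers no proof of this lemma (it is recorded as a direct consequence of the definition), so your argument can only be judged on its own terms. Parts i) and iii) are correct. For i), the domain of $\Phi_{V'}$ is open in that of $\Phi_V$, so its image $T(V')$ is open in $T(V)$ and the restriction stays open onto $T(V')$. For iii), the essential point, which you correctly identify, is that $D(\Ahyb{n}{k}) \cap ((V\cap T(W))\times\R_{>0})$ is the \emph{full} preimage $\Phi_V^{-1}(T(W)\cap T(V))$, because $T(W)$ is a union of trajectories and the flow moves points only along their trajectories; the restriction of an open map $f$ to a full preimage $f^{-1}(B)\to B$ is always open, since $f(U'\cap f^{-1}(B))=f(U')\cap B$ for $U'$ open.

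Part ii) has a genuine gap. The cocycle identity $(x^\beta)^\alpha=x^{\alpha\beta}$ and the factorization $\Phi_{\Phi(V,I)}\circ p=\Phi_V\circ q$ with $q(x,\beta,\alpha)=(x,\alpha\beta)$ are fine, but the claim that $q$ is an open map onto $D(\Ahyb{n}{k})\cap(V\times\R_{>0})$ fails when $\inf I>0$. The domain of $q$ only contains triples with $\beta\in I\cap I_x^\ast$, and the set of $x\in V$ with $I\cap I_x^\ast\neq\emptyset$ is cut out by the closed condition $\pr(x)\le 1/\inf I$. At a point $x_0$ with $\pr(x_0)=1/\inf I$ (say $I=[2,3]$ and $\pr(x_0)=1/2$), every neighborhood of $x_0$ contains points $x\in V$ with $\pr(x)>1/2$, since the projection $\pr$ is open; these points contribute nothing to the image of $q$, so $q$ is not open there. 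Relatedly, $T(\Phi(V,I))$ can be a proper, non-open subset of $T(V)$, so the openness demanded by Definition~\ref{def:flowopen} must be checked relative to $T(\Phi(V,I))$, whereas your composition produces sets open in $T(V)$. The special case that matters most for the paper, $I=\R$ (hence $T(V)$ flow-open), does go through your argument: there one can always replace the offending exponent $\beta_0=1/\pr(x_0)$ by the slightly smaller $1/\pr(x)$, which still lies in $I_x^\ast$ and in any prescribed neighborhood of $\beta_0$. For a general interval the statement remains true, but one must treat the boundary case $\beta_0=\inf I=1/\pr(x_0)$ separately, checking that the trajectories of the problematic nearby points either miss $T(\Phi(V,I))$ entirely or enter $\Phi(V,I)$ through a witness $x'^{\beta'}$ that still lands in the prescribed open set. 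This is exactly the interval bookkeeping you flagged as the main obstacle and then dismissed as routine; it is where the argument as written breaks.
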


\begin{proposition}\label{prop:restrictionflotO}
Let $U$ be a flow-connected subset of $\Ahyb{n}{k}$. Let $V$ be a flow-connected subset of $\Ahyb{n}{k}$ such that $U \subseteq V \subseteq T(U)$.

Let $f \in \cO(V)$. For each $x\in U$ and each $\alpha\in I_{x}^\ast$ such that $x^\alpha\in V$, we have
\[ f(x^\alpha) = \lambda_{x^\alpha,x}(f(x)) \textrm{ and } \abs{f(x^\alpha)} = \abs{f(x)}^\alpha.\]

The restriction map $\cO(V) \to \cO(U)$ is injective.

If, moreover, $U$ is flow-open, then the restriction map $\cO(V) \to \cO(U)$ is an isomorphism.
\end{proposition}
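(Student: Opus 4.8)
The plan is to prove the three assertions in order — the pointwise flow identity, then injectivity, then surjectivity — each step using the previous one.

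\emph{The flow identity.} I would first check $f(x^\alpha) = \lambda_{x^\alpha,x}(f(x))$ for $f$ a polynomial, where it holds by construction of $\lambda_{x^\alpha,x}$ as the map induced by the identity on $k[T_{1},\dotsc,T_{n}]/\ker(\va_{x}) = k[T_{1},\dotsc,T_{n}]/\ker(\va_{x^\alpha})$, then for $f \in K(W)$ by multiplicativity, since $\lambda_{x^\alpha,x}$ is a field homomorphism; the absolute value statement is the relation $\abs{\lambda_{x^\alpha,x}(u)} = \abs{u}^\alpha$. To pass to a general section I fix $x \in U$ (the case where $\cH(x)$ is trivially valued being immediate by Remark~\ref{rem:shapeT(x)}) and consider $J = \{\alpha \in I_{x}^\ast : x^\alpha \in V\}$, which is an interval because $T(x) \cap V$ is connected. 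On the subset $A \subseteq J$ where the identity holds I run an open--closed argument: one has $1 \in A$; near any $\alpha_{0}$, writing $y = x^{\alpha_{0}}$ and using a neighborhood of $y$ on which $f$ is a uniform limit of elements of some $K(W)$, the rational case together with the estimate $\abs{\lambda_{y^\beta,y}(u)} = \abs{u}^\beta$ (which lets uniform convergence pass to the limit) yields $f(y^\beta) = \lambda_{y^\beta,y}(f(y))$ for $\beta$ near $1$; composing with the cocycle relation $\lambda_{x^\alpha,x} = \lambda_{x^\alpha,y} \circ \lambda_{y,x}$ shows $A$ is open, and the same reasoning shows its complement is open. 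As $J$ is connected and $1 \in A$, we get $A = J$.

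\emph{Injectivity.} Let $f \in \cO(V)$ restrict to $0$ in $\cO(U)$, and choose a representative $\tilde f \in \cO(W)$ on an open $W \supseteq V$, vanishing on an open $\Omega \supseteq U$. Every $y \in V \subseteq T(U)$ has the form $x^\alpha$ with $x \in U$, so the flow identity already gives the vanishing of the value $\tilde f(y) = \lambda_{x^\alpha,x}(\tilde f(x)) = 0$. To upgrade this to vanishing of the germ, I show that the germ-vanishing locus $Z \subseteq W$ of $\tilde f$ (an open set containing $\Omega$, hence $U$) contains $V$. Fixing $x \in U$ and $y = x^\alpha \in V$, flow-connectedness of $V$ shows the trajectory segment $\sigma$ joining $1$ to $\alpha$ stays in $V \subseteq W$, and I claim $B = \{\beta \in \sigma : x^\beta \in Z\}$ is clopen in $\sigma$. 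Indeed, germ-vanishing propagates along trajectories: if $\tilde f \equiv 0$ on a flow-connected open $N \ni x^{\beta_{0}}$, then for $\beta$ close to $\beta_{0}$ the flow identity gives $\tilde f \equiv 0$ as a function on the flowed set $\Phi(N' \times \{\beta/\beta_{0}\})$, which is open because open subsets are flow-open (Example~\ref{ex:flowopen}) and $\Phi$ is open (Proposition~\ref{prop:Phicontinuous}); a function vanishing on an open set has vanishing germ there, so $x^\beta \in Z$, and running the argument backwards shows $\sigma \setminus B$ is open. As $1 \in B$ and $\sigma$ is connected, $\alpha \in B$, whence $y \in Z$. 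Thus $\tilde f$ vanishes on the open neighborhood $Z$ of $V$, so $f = 0$.

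\emph{Surjectivity when $U$ is flow-open.} Given $g \in \cO(U)$, define $f$ on $V$ by $f(x^\alpha) := \lambda_{x^\alpha,x}(g(x))$ for $x \in U$, $\alpha \in I_{x}^\ast$, $x^\alpha \in V$. Well-definedness follows from the flow identity applied to $g$ on $U$ together with the cocycle relation for the isomorphisms $\lambda$, using that trajectories partition the space (Lemma~\ref{lem:TxTy}). By construction $f|_{U} = g$, so only $f \in \cO(V)$ remains. Around $y_{0} = x_{0}^{\alpha_{0}} \in V$ I choose a flow-connected open $\Omega_{0} \ni x_{0}$ on which $g$ is a uniform limit of rationals $h_{j}$; then $\Omega_{0} \cap U$ is flow-open and $T(\Omega_{0} \cap U)$ is open in $T(U)$ by Lemma~\ref{lem:propertiesflowopen}, so $N' := T(\Omega_{0} \cap U) \cap V$ is an open neighborhood of $y_{0}$ in $V$ whose points are all of the form $z^\beta$ with $z \in \Omega_{0} \cap U$. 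There the identities $f(z^\beta) = \lambda_{z^\beta,z}(g(z))$ and $h_{j}(z^\beta) = \lambda_{z^\beta,z}(h_{j}(z))$ give $\abs{(f - h_{j})(z^\beta)} = \abs{(g - h_{j})(z)}^\beta$; after shrinking $N'$ so that $\beta$ ranges in a compact subinterval of $(0,\infty)$ — possible by continuity of $\Phi$ and of $\pr$ — the uniform convergence $h_{j} \to g$ on $\Omega_{0} \cap U$ forces $h_{j} \to f$ uniformly on $N'$, exhibiting $f$ as a section.

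\emph{Main obstacle.} The genuinely delicate point is this last step: transporting a local rational approximation along the flow while keeping convergence uniform. The exponent $\beta$ enters as a power $\abs{(g - h_{j})(z)}^\beta$, so one must use flow-openness of $U$ and continuity of the flow to confine $\beta$ to a compact range on a small enough neighborhood; this control is precisely what forces the flow-openness hypothesis in the isomorphism statement. A secondary subtlety, in the injectivity step, is that vanishing of a section \emph{as a function} on $V$ is weaker than vanishing of its germ, which is why the propagation-of-germ-vanishing argument along trajectories is needed rather than a bare appeal to the pointwise identity.
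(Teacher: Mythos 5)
Your proposal follows the same overall architecture as the paper's proof: establish the pointwise flow identity by a connectedness argument on the interval $I^\ast_{V,x}$ of exponents, deduce injectivity from it, and prove surjectivity by transporting local uniform rational approximations along the flow, using flow-openness of $U$ to see that $T(\Omega_{0}\cap U)$ is open in $T(U)$. One step is genuinely different, and simpler: to prove that the locus $A$ where the identity holds is closed, you invoke the \emph{local} identity $f(x^{\beta})=\lambda_{x^{\beta},x^{\alpha_{0}}}(f(x^{\alpha_{0}}))$ for $\beta$ near an arbitrary $\alpha_{0}\in I^\ast_{V,x}$ (which is exactly the content of the openness step, valid at every point of the trajectory inside the domain of $f$, not only at points of $A$), then conclude by the cocycle relation and the injectivity of the field isomorphism $\lambda_{x^{\beta},x^{\alpha_{0}}}$. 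The paper instead runs a quantitative estimate with two approximating sequences (the decomposition \eqref{eq:RS}); your route avoids that estimate entirely and is correct. You are also more explicit than the paper about why pointwise vanishing on $V$ upgrades to vanishing of the overconvergent section --- a real subtlety when $V$ is not open, which the paper dispatches in one line.

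Two small repairs are needed, neither affecting the structure of the argument. First, $\Phi(N'\times\{\gamma\})$ for a \emph{single} exponent $\gamma$ need not be open: if $N'$ meets the fibre $\pr^{-1}(1)$ and $\gamma<1$, the image lies in $\pr^{-1}([0,\gamma])$ and is not a neighborhood of its points over $\pr=\gamma$. Openness of $\Phi$ only gives openness of $\Phi\big((N'\times J)\cap D(\Ahyb{n}{k})\big)$ for an open interval $J$; take such a $J\ni\beta/\beta_{0}$ close to $1$ (shrinking $N'$ and $J$, by continuity of $\Phi$, so that the relevant trajectory segments stay in the domain of $\tilde f$, which is needed to apply the flow identity at all), and this open set witnesses the vanishing of the germ at $x^{\beta}$. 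Second, in the surjectivity step your uniform convergence is obtained on $N'=T(\Omega_{0}\cap U)\cap V$, which is only relatively open in $V$; when $V$ is not open, membership in the overconvergent $\cO(V)$ requires a section on an ambient open neighborhood, so you should run the same computation on $\Phi\big((\Omega_{0}\times J)\cap D(\Ahyb{n}{k})\big)$, as the paper does with its auxiliary function $h'$ on a neighborhood $W'$ of $y$ in $\Ahyb{n}{k}$.
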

\begin{proof}
We closely follow the proof of \cite[Proposition~1.3.10]{A1Z}, which deals with the case where~$U$ is a singleton. We include details for the convenience of the reader.

Let $x\in U$. Set 
\[ I_{V,x}^\ast := \{ \alpha \in I_{x}^\ast : x^\alpha \in V\}.\]
Since~$V$ is flow-connected, $I_{V,x}^\ast$ is an interval. Set
\[ J :=\{ \alpha \in I_{V,x}^\ast : f(x^\alpha) = \lambda_{x^\alpha,x}(f(x))\}.\]

Let us prove that~$J$ is open. Let $\alpha \in J$. Let $W$ be a neighborhood of~$x^\alpha$ in $\Ahyb{n}{k}$. By continuity of the flow (see Proposition~\ref{prop:Phicontinuous}), there exists an open neighborhood~$L$ of~$\alpha$ in~$I_{x}^\ast$ such that $\{x^\beta : \beta \in L\} \subseteq W$. Up to shrinking~$W$, we may assume that~$f$ is defined on~$W$. By definition of the structure sheaf, up to shrinking~$W$ again, we may assume that there exists a sequence $(R_{i})_{i\in \N}$ of elements of $k(T_{1},\dotsc,T_{n})$ without poles on~$W$ that converges uniformly to~$f$ on~$W$. For each $\beta\in L$ and each $i\in \N$, we have $R_{i}(x^{\beta}) = \lambda_{x^\beta,x^\alpha}(R_{i}(x^\alpha))$. By passing to the limit, we deduce that 
\[f(x^{\beta}) = \lambda_{x^\beta,x^\alpha}(f(x^\alpha)) = \lambda_{x^\beta,x}(f(x)),\]
hence $L\subseteq J$.

Let us prove that~$J$ is closed. Let $\alpha \in I^\ast_{V,x} - J$. Set $m := \abs{f(x^\alpha) - \lambda_{x^\alpha,x}(f(x))} >0$. As before, there exist a neighborhood~$W$ of~$x^\alpha$ (resp. $V$ of $x$) in $\Ahyb{n}{k}$ and a sequence $(R_{i})_{i\in \N}$ (resp. $(S_{i})_{i\in \N}$) of elements of $k(T_{1},\dotsc,T_{n})$ without poles on~$W$ (resp. $V$) that converges uniformly to~$f$ on~$W$ (resp. $V$). 
Let $\eps >0$ such that $\max(\eps, \eps^\alpha) < m/6$.  
There exists $j\in \N$ such that $\norm{f - R_{j}}_{W} < \eps$ and $\norm{f - S_{j}}_{V} < \eps$. For $\beta \in I^\ast_{V,x}$ close enough to~$\alpha$, we have $x^\beta \in W$ and
\begin{equation}\tag{RS}\label{eq:RS} 
f(x^\beta) - \lambda_{x^\beta,x}(f(x))
=   f(x^\beta) - S_{j}(x^\beta) + S_{j}(x^\beta) - R_{j}(x^\beta) + \lambda_{x^\beta,x}(R_{j}(x)) - \lambda_{x^\beta,x}(f(x)).
\end{equation}
Note that $\abs{f(x^\beta) - S_{j}(x^\beta)} \le \eps$ and $\abs{\lambda_{x^\beta,x}(R_{j}(x)) - \lambda_{x^\beta,x}(f(x))} \le \eps^\beta$. 

Applying \eqref{eq:RS} with $\beta=\alpha$ shows that $\abs{S_{j}(x^\alpha) - R_{j}(x^\alpha) } \ge 2m/3$. By continuity of $\abs{S_{j}-R_{j}}$, for $\beta \in I^\ast_{V,x}$ close enough to~$\alpha$, we have $\abs{S_{j}(x^\beta) - R_{j}(x^\beta) } \ge m/2$. Applying \eqref{eq:RS} again for $\beta  \in I^\ast_{V,x}$ close enough to~$\alpha$, we obtain
\[ \abs{f(x^\beta) - \lambda_{x^\beta,x}(f(x))} \ge \frac m 6 >0.\]
It follows that $J$ is closed. 

Since $J$ contains~1, it is non-empty. By connectedness of~$I_{V,x}^\ast$, we have $J = I_{V,x}^\ast$. The equality of absolue values in the first part of the statement follows. It implies that the restriction map $\cO(V) \to \cO(U)$ is injective.

\medbreak

Assume that $U$ is flow open and let us prove that the restriction map $\cO(V) \to \cO(U)$ is surjective. We may assume that $V=T(U)$. Let $g\in \cO(U)$. Let $y\in T(U)$. There exists $x\in U$ such that $y \in T(x)$. Set
\[ f(y) := \lambda_{y,x}(g(x)).\]
By the first part of the statement applied with $V=U$, this does not depend on the choice of~$x$ and we have $f_{\vert U} = g$. It remains to prove that $f \in \cO(T(U))$. 

Let $y\in T(U)$. Let $x\in U$ and $\alpha \in I_{x}^\ast$ such that $y=x^\alpha$. There exists an open neighborhood~$W$ of~$x$ in~$\Ahyb{n}{k}$, an element $g'\in \cO(W)$ with $g'_{\vert W\cap U}=g_{\vert W\cap U}$, and a sequence  $(R_{i})_{i\in \N}$ of elements of $k(T_{1},\dotsc,T_{n})$ without poles on~$W$ that converges uniformly to~$g'$ on~$W$. 

For each $z \in T(W)$, there exists $w\in W$ such that $z \in T(w)$. Set
\[ h'(z) := \lambda_{z,w}(g'(w)).\]
By the first part of the statement applied with $U=W$ and $V=T(W)$, this does not depend on the choice of~$w$. Let $\eps \in (0,\alpha)$. A direct computation shows that the sequence  $(R_{i})_{i\in \N}$ converges uniformly to~$h'$ on
\[ \{ w^\beta : (w,\beta) \in D(\Ahyb{n}{k}) \cap (W \times (\alpha-\eps,\alpha+\eps))\}.\]
By openness of the flow (see Proposition~\ref{prop:Phicontinuous}), the latter is a neighborhood of $x^\alpha=y$ in $\Ahyb{n}{k}$, and it follows that $h'$ belongs to~$\cO(W')$ for some open neighborhood~$W'$ of~$y$ in $\Ahyb{n}{k}$.

Note that $f$ coincides with~$h'$ on $W' \cap T(U)$. Since~$U$ is flow-open, the latter is a neighborhood of~$y$ in~$T(U)$. The result follows.
\end{proof}

\begin{remark}
Without the assumption that $U$ is flow-open, the result of Proposition~\ref{prop:restrictionflotO} may fail. Consider for instance the space $\Ahyb{1}{k}$ with coordinate~$X$ and its subsets 
\[ U_{-}:= \Big\{x \in \Ahyb{1}{k} : \pr(x) < \frac12,\ \abs{X(x)} < 1\Big\}\]
and
\[U_{+} := \{x \in \Ahyb{1}{k} : \pr(x) = 1 ,\ \abs{X(x)} =1\}.\]
The spaces~$U_{-}$ and $U_{+}$ are disjoint, but the trajectory of~$U_{-}$ is 
\[ T(U_{-})= \{x \in \Ahyb{1}{k} : \pr(x) \le 1,\ \abs{X(x)} < 1\},\]
whose closure meets~$U_{+}$. As a result, the morphism $\cO(T(U_{-}\cup U_{+})) \to  \cO(U_{-}\cup U_{+})$ is not surjective.
\end{remark}

\begin{lemma}\label{lem:syFy}
Let $V$ be an open subset of~$\Ahyb{n}{k}$. Let $p\in \N$ and let $\cF$ be a finite type subsheaf of~$\cO_{V}^p$. Let $s\in \cO(V)^p$. For each $x \in \Ahyb{n}{k}$, the set
\[\{ y \in V\cap T(x) : s_{y} \in \cF_{y}\}\]
is open and closed in~$V\cap T(x)$.
\end{lemma}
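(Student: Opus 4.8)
The plan is to prove the two topological properties separately: openness of the set is a general fact valid for any subsheaf, whereas closedness is where the flow enters essentially, through Proposition~\ref{prop:restrictionflotO}. Write $B := \{y \in V \cap T(x) : s_y \in \cF_y\}$. If $\cH(x)$ is trivially valued, then $T(x) = \{x\}$ by Remark~\ref{rem:shapeT(x)}, so $B$ is a subset of a single point and is trivially clopen; I may therefore assume $\cH(x)$ is not trivially valued, in which case $V \cap T(x)$ is an open sub-interval of the interval $T(x)$. For openness, I would argue that $\{y \in V : s_y \in \cF_y\}$ is open in~$V$: if $s_y \in \cF_y$, choose a section $t \in \cF(W)$ with $t_y = s_y$ on a neighborhood~$W$; then $s-t$ has vanishing germ at~$y$, so $s = t$ on a smaller neighborhood~$W'$, whence $s_{y'} \in \cF_{y'}$ for all $y' \in W'$. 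Intersecting with $T(x)$ shows that $B$ is open in $V \cap T(x)$.

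The substance lies in closedness. Let $y_0 = x^{\alpha_0}$ lie in the closure of~$B$ in $V \cap T(x)$; I must show $s_{y_0} \in \cF_{y_0}$. By Lemma~\ref{lem:basisflowconnected} I choose a flow-connected open neighborhood~$W$ of~$y_0$ in~$V$, small enough that $\cF_{\vert W}$ is generated by finitely many sections $g_1,\dotsc,g_r \in \cF(W) \subseteq \cO(W)^p$; this is the only place the finite-type hypothesis is used. Choose a sequence $\beta_i \to \alpha_0$ with $s_{x^{\beta_i}} \in \cF_{x^{\beta_i}}$. For~$i$ large enough one has $x^{\beta_i} \in W$ and an identity $s = \sum_j a_j g_j$ holding on a neighborhood of~$x^{\beta_i}$. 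Shrinking, I pick a flow-connected open neighborhood~$U \subseteq W$ of~$x^{\beta_i}$ on which this identity holds with $a_j \in \cO(U)$; such a~$U$ is flow-open by Example~\ref{ex:flowopen}.

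It remains to propagate this identity along the trajectory down to~$y_0$. Set $V' := T(U) \cap W$. It is flow-connected, since for $z \in V'$ one has $T(z) \subseteq T(U)$, so $V' \cap T(z) = W \cap T(z)$, which is connected as~$W$ is flow-connected; moreover $U \subseteq V' \subseteq T(U)$. Because $x^{\beta_i} \in U$, we have $T(x) = T(x^{\beta_i}) \subseteq T(U)$, hence $y_0 \in T(x) \cap W \subseteq V'$. Applying Proposition~\ref{prop:restrictionflotO} to each component, the restriction map $\cO(V') \to \cO(U)$ is an isomorphism; let $\tilde a_j \in \cO(V')$ be the preimage of~$a_j$. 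The section $s - \sum_j \tilde a_j g_j \in \cO(V')^p$ restricts to~$0$ on~$U$, hence vanishes on all of~$V'$ by the injectivity statement of Proposition~\ref{prop:restrictionflotO}. Taking germs at $y_0 \in V'$ yields $s_{y_0} = \sum_j (\tilde a_j)_{y_0}\,(g_j)_{y_0} \in \cF_{y_0}$, since $(g_j)_{y_0} \in \cF_{y_0}$. Thus~$B$ is closed.

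I expect the main obstacle to be the bookkeeping around flow-open and flow-connected neighborhoods that makes Proposition~\ref{prop:restrictionflotO} applicable, together with the verification that the limit point~$y_0$ genuinely lies in the enlarged domain $T(U) \cap W$. Once these inclusions and connectedness properties are secured, the propagation of the membership relation from~$x^{\beta_i}$ to~$y_0$ is automatic from the restriction isomorphism, and the finite-type hypothesis intervenes only to furnish the finitely many generators~$g_j$.
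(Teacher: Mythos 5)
Your proof is correct and follows essentially the same route as the paper's: choose finitely many generators of~$\cF$ on a flow-connected neighborhood of the limit point, express~$s$ in terms of them near a nearby point of the set, and propagate that identity along the trajectory via the injectivity/surjectivity statements of Proposition~\ref{prop:restrictionflotO}; the paper merely phrases closedness contrapositively (the complement is open, argued by contradiction) rather than through closure points. The only step you leave implicit is that $V' = T(U)\cap W$ is open, so that taking germs at~$y_0$ of sections on~$V'$ is legitimate; this follows from the openness of the flow (Proposition~\ref{prop:Phicontinuous}).
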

\begin{proof}
Let $x \in \Ahyb{n}{k}$. Set $T' := \{ y \in V\cap T(x) : s_{y} \in \cF_{y}\}$. By definition, it is open in~$V\cap T(x)$.

Let us prove that $(V\cap T(x)) - T'$ is also open in~$V\cap T(x)$. Let $z \in (V\cap T(x)) - T'$. Then $s_{z} \notin \cF_{z}$. Since~$\cF$ is of finite type, there exist an open neighborhood~$W$ of~$z$ in~$V$ and sections $s_{1},\dotsc,s_{r}$ of~$\cF(W)$ that generate~$\cF$ on~$W$. By Lemma~\ref{lem:basisflowconnected}, we may assume that~$W$ is flow-connected. Assume, by contradiction, that $W \cap T' \ne \emptyset$ and let $z' \in W \cap T'$. Then, we have $s_{z'} \in \cF_{z'}$, hence there exist an open neighborhood~$W'$ of~$z'$ in~$W$ and $a_{1},\dotsc,a_{r} \in \cO(W')$ such that $s = \sum_{i=1}^r a_{i} s_{i}$ on~$W'$. We may assume that~$W'$ is flow-connected. 
By surjectivity of the restriction map $\cO(T(W')) \to \cO(W')$ (see Proposition~\ref{prop:restrictionflotO}), the $a_{i}$'s extend to elements of~$\cO(T(W'))$, which we denote identically. By injectivity of the restriction map $\cO(W\cap T(W')) \to \cO(W')$ (see Proposition~\ref{prop:restrictionflotO}), the equality $s = \sum_{i=1}^r a_{i} s_{i}$ still holds on $W \cap T(W')$, hence in the neighborhood of~$z$. It follows that $s_{z} \in \cF_{z}$, which is a contradiction. This concludes the proof.
\end{proof}

\begin{lemma}\label{lem:sy=0F}
Let $V$ be an open subset of~$\Ahyb{n}{k}$ and let~$\cF$ be a coherent sheaf on~$V$. Let $s\in \cF(V)$. For each $x \in \Ahyb{n}{k}$, the set
\[\{ y \in V\cap T(x) : s_{y} = 0\}\]
is open and closed in~$V\cap T(x)$.
\end{lemma}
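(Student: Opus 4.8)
The plan is to deduce this from Lemma~\ref{lem:syFy} by rephrasing the vanishing of a germ as a membership condition in a finite type subsheaf of the structure sheaf. First I would view the section~$s$ as a morphism of $\cO_V$-modules $\phi \colon \cO_V \to \cF$ sending the unit section~$1$ to~$s$, and set $\cK := \ker(\phi)$, a subsheaf of $\cO_V = \cO_V^1$. Since the structure sheaf of~$\Ahyb{n}{k}$ is coherent and~$\cF$ is coherent, the kernel~$\cK$ is coherent as well, hence of finite type.

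The key observation is that, for each $y\in V$, the stalk $\phi_y \colon \cO_{V,y} \to \cF_y$ is multiplication by~$s_y$; therefore $s_y = 0$ if and only if $\phi_y = 0$, if and only if $1 \in \ker(\phi_y) = \cK_y$, where I use that taking stalks is exact so that $\ker(\phi)_y = \ker(\phi_y)$. Writing $1 \in \cO(V)$ for the unit section, this yields the identity of sets
\[ \{ y \in V\cap T(x) : s_{y} = 0\} = \{ y \in V\cap T(x) : 1_y \in \cK_y\}.\]

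It then remains to apply Lemma~\ref{lem:syFy} with $p=1$, the finite type subsheaf $\cK \subseteq \cO_V$, and the section $1 \in \cO(V)$, which gives that the right-hand set is open and closed in $V\cap T(x)$, as desired. The only step needing genuine input is the finiteness of~$\cK$, which is exactly where coherence of~$\cF$ (and of the structure sheaf) is used; the rest is formal, all the substantial work — involving the flow and Proposition~\ref{prop:restrictionflotO} — having already been carried out in Lemma~\ref{lem:syFy}.
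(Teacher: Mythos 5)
Your proof is correct, and it reduces to the same key lemma as the paper (Lemma~\ref{lem:syFy}), but via a different mechanism. The paper argues locally: openness of $\{y : s_y = 0\}$ is immediate from the definition of a stalk, and for closedness of the complement it chooses, near a point $z$ with $s_z \neq 0$, a presentation $\cF_{\vert W} \simeq \cO_W^p/\cM$ with $\cM$ of finite type, lifts $s$ to $s' \in \cO(W)^p$, and applies Lemma~\ref{lem:syFy} to the condition $s'_y \in \cM_y$. You instead encode the vanishing of $s_y$ by membership of $1$ in the single global finite-type subsheaf $\cK = \ker(\cO_V \to \cF)$ — a kernel description rather than a cokernel one — which treats openness and closedness in one stroke and avoids any choice of local presentation or lift. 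One small remark: you do not actually need coherence of the structure sheaf here; the finite-type property of $\cK$ is exactly the second clause in the definition of coherence of $\cF$, applied to the morphism $\cO_V \to \cF$, $1 \mapsto s$. In both arguments all the substantive input (the flow and Proposition~\ref{prop:restrictionflotO}) is already contained in Lemma~\ref{lem:syFy}; your packaging is the cleaner of the two.
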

\begin{proof}
Let $x \in \Ahyb{n}{k}$. Set $T' := \{ y \in V\cap T(x) : s_{y} = 0\}$. By definition, it is open in~$V\cap T(x)$.

Let us prove that $(V\cap T(x)) - T'$ is also open in~$V\cap T(x)$. Let $z \in (V\cap T(x)) - T'$. Then $s_{z} \ne 0$. Since~$\cF$ is coherent, there exists an open neighborhood~$W$ of~$z$ in~$V$ such that $\cF \simeq \cO_{W}^p/\cM$, where $\cM$ is a finite type subsheaf of~$\cO_{W}^p$. Up to shrinking~$W$, we may assume that $s_{\vert W}$ lifts to an element~$s'$ of~$\cO^p(W)$. Since $s_{z} \ne 0$, we have $s'_{z} \notin \cM_{z}$. By Lemma~\ref{lem:syFy}, the last property holds in a neighborhood of~$z$ in~$V\cap T(x)$. The result follows.
\end{proof}

\begin{theorem}\label{th:FVFUaffine}
Let $U$ and $V$ be flow-connected flow-open subsets of $\Ahyb{n}{k}$ such that $U \subseteq V \subseteq T(U)$ and $U$ is open in~$V$. %\JP{Still true if $U$ not open in~$V$? Much messier for sure.} 
For each coherent sheaf~$\cF$ on~$V$, the restriction map $\cF(V) \to \cF(U)$ is an isomorphism. 

Moreover, the restriction map induces an equivalence between the categories of coherent sheaves on~$V$ and~$U$.
\end{theorem}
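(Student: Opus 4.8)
The plan is to deduce this global statement from the local results just established, namely Proposition~\ref{prop:restrictionflotO} (for the structure sheaf) together with Lemmas~\ref{lem:syFy} and~\ref{lem:sy=0F} (which control where sections lie in a subsheaf or vanish along trajectories). I would first treat the restriction map $\cF(V) \to \cF(U)$ for a single coherent sheaf and establish that it is bijective, and then upgrade this to the claimed equivalence of categories using Proposition~\ref{prop:extensioncoherentsheafcompact}-style extension together with the fully faithful part.

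\medbreak

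\noindent\emph{Injectivity.} First I would prove injectivity of $\cF(V) \to \cF(U)$. Let $s \in \cF(V)$ restrict to $0$ on~$U$. For each $x \in U$, the set $\{y \in V \cap T(x) : s_y = 0\}$ is open and closed in $V \cap T(x)$ by Lemma~\ref{lem:sy=0F}. Since $V$ is flow-connected, $V \cap T(x)$ is an interval, hence connected; as it meets $U$ (at $x$, where $s$ vanishes), the set must be all of $V \cap T(x)$. Because $V \subseteq T(U)$, every point of $V$ lies on some $T(x)$ with $x \in U$, so $s = 0$ everywhere, giving injectivity.

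\medbreak

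\noindent\emph{Surjectivity.} For surjectivity, given $g \in \cF(U)$, I would construct a section on $V$ trajectory by trajectory. Locally around any $x \in U$ the sheaf $\cF$ is presented as $\cO_W^p/\cM$ with $\cM$ a finite type subsheaf, so $g$ lifts to a section of $\cO^p$; by the surjectivity part of Proposition~\ref{prop:restrictionflotO} (valid since $U$ is flow-open and flow-connected, and $\cO^p$ is just a finite power) this lift extends over the trajectory, and Lemma~\ref{lem:syFy} guarantees the extension still lands in $\cM$ exactly where it should, so the class in $\cF$ is well-defined along $T(\text{local piece})$ and agrees with $g$ on $U$. The main technical point is to check these locally defined extensions glue to a single global section $\tilde g \in \cF(V)$: on overlaps they restrict to the same thing on a nonempty open subset of $U$, hence agree there, and then agree on the relevant trajectories by the injectivity already proved (applied to the difference). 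This produces $\tilde g \in \cF(V)$ with $\tilde g_{|U} = g$, completing bijectivity.

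\medbreak

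\noindent\emph{Equivalence of categories.} Finally, I would promote the isomorphism on sections to an equivalence. The restriction functor $\cF \mapsto \cF_{|U}$ is faithful and full because $\Hom$-sheaves between coherent sheaves are themselves coherent, so $\Hom(\cF,\cG) \cong \sHom(\cF,\cG)(V) \xrightarrow{\sim} \sHom(\cF,\cG)(U) \cong \Hom(\cF_{|U},\cG_{|U})$ by the first part applied to the coherent sheaf $\sHom(\cF,\cG)$. For essential surjectivity I would start with a coherent sheaf $\cG$ on~$U$, use a local presentation and the surjectivity of restriction on $\cO$ (Proposition~\ref{prop:restrictionflotO}) to push the presentation matrices out along trajectories, and assemble a coherent sheaf on $V$ restricting to~$\cG$; here the flow-openness of $U$ and the equality $V = T(U)$ ensure the extended data are compatible and define a genuine coherent sheaf. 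The hard part will be this essential surjectivity, since one must extend not just sections but an entire presentation coherently and verify that the resulting object is independent of the local choices; the key leverage is that both the kernel relations and the generating sections are governed by $\cO$, for which Proposition~\ref{prop:restrictionflotO} gives a clean isomorphism, so the construction reduces to a gluing argument whose compatibility again follows from injectivity.
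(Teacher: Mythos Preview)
Your injectivity argument and your full faithfulness argument (via $\sHom$) are exactly what the paper does. The gap is in your surjectivity step, and the same issue recurs in your essential surjectivity sketch.

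You take $x\in U$, a presentation $\cF_{|W}\simeq\cO_W^p/\cM$ on a neighborhood~$W$ of~$x$ in~$V$, lift $g$ to $\cO^p$ on (a piece of) $W\cap U$, and push the lift out along trajectories using Proposition~\ref{prop:restrictionflotO}. The problem is that the class of the extended lift in $\cF$ is only meaningful where the presentation $\cO^p/\cM$ is valid, namely on~$W$; so what you actually obtain is a section of~$\cF$ on $W\cap T(W\cap U)$, not on $T(W\cap U)$. There is no reason the sets $W\cap T(W\cap U)$, as $x$ ranges over~$U$, should cover~$V$: a point $v\in V\setminus U$ lies on some trajectory meeting~$U$, but a presentation neighborhood of~$v$ need not reach down to~$U$ along that trajectory. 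Your invocation of Lemma~\ref{lem:syFy} does not address this; that lemma controls where a fixed section of~$\cO^p$ lies in a subsheaf, but it cannot manufacture a presentation of~$\cF$ outside~$W$.

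The paper fixes this by reversing the roles of source and target and using Zorn's lemma. One extends $s$ to a maximal flow-connected open $U'\supseteq U$ in~$V$; if $U'\ne V$, pick a boundary point~$x$ of~$U'$ in~$V$ lying in the closure of $T(x)\cap U'$, and choose \emph{generators} $t_1,\dots,t_p$ of~$\cF$ on a flow-connected neighborhood~$V_x$ of~$x$ in~$V$. Because $x$ is on the boundary along its own trajectory, $V_x\cap T(x)\cap U'$ is nonempty; at a point~$y$ there one writes $s=\sum a_i t_i$ on a small flow-connected $U_y\subseteq V_x\cap U'$, extends the scalar coefficients $a_i$ to $T(U_y)$ by Proposition~\ref{prop:restrictionflotO}, and sets $s':=\sum a_i t_i$ on $W_x:=V_x\cap T(U_y)$. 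Now $s'$ is automatically a section of~$\cF$ because the $t_i$ generate~$\cF$ on all of~$V_x$, and Lemma~\ref{lem:sy=0F} forces $s'=s$ on $W_x\cap U'$, contradicting maximality. The same Zorn-plus-boundary-point trick, combined with the full faithfulness already proved, handles essential surjectivity. The moral: anchor the presentation (or generators) at the point you are trying to reach, not at the point you are starting from.
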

\begin{proof}
Let $\cF$ be a coherent sheaf on~$V$. By Lemma~\ref{lem:sy=0F}, the restriction map $\cF(V) \to \cF(U)$ is injective. Let us prove that it is surjective. Let $s\in \cF(U)$. By Zorn's lemma, there exists a maximal flow-connected open subset~$U'$ of~$V$ containing~$U$ to which~$s$ extends. Up to replacing~$U$ by~$U'$, we may assume that~$s$ does not extend beyond~$U$.

Assume, in view of a contradiction that $U\ne V$. Let $x$ be a point in the boundary of~$U$ in~$V$. There exist a open neighborhood~$V_{x}$ of~$x$ in~$V$ and $t_{1},\dotsc,t_{p} \in \cF(V_{x})$ that generate~$\cF$ on~$V_{x}$. We may assume that~$V_{x}$ is flow-connected. Let $y \in V_{x} \cap T(x) \cap U$. There exist an open neighborhood~$U_{y}$ of~$y$ in~$V_{x}\cap U$ and $a_{1},\dotsc,a_{p} \in \cO(U_{y})$ such that $s_{\vert U_{y}} = \sum_{i=1}^p a_{i} t_{i,\vert U_{y}}$. We may assume that~$U_{y}$ is flow-connected. Note that $U_{y}$ is also flow-open, by Lemma~\ref{lem:propertiesflowopen}, since it is open in~$U$. By Proposition~\ref{prop:restrictionflotO}, the $a_{i}$'s extend to elements of~$T(U_{y})$, which we denote in the same way. Set $W_{x} := V_{x} \cap T(U_{y})$ and $s' := \sum_{i=1}^p a_{i,\vert W_{x}} t_{i,\vert W_{x}}$. By Lemma~\ref{lem:propertiesflowopen}, the set~$W_{x}$ is a flow-connected open neighborhood of~$x$ in~$V$ contained in~$T(U_{y})$ and the element~$s'$ belongs to~$\cF(W_{x})$. By Lemma~\ref{lem:sy=0F}, $s'$ coincides with~$s$ on $W_{x}\cap U$. As a result, we can extend~$x$ to $U\cup W_{x}$, which provides the desired contraction. We have just proved that $U = V$, which implies the claimed surjectivity. 

\medbreak

Let us now consider the functor 
\[ R_{U,V} \colon \cF \mapstoo \cF_{\vert U}\] 
from the category of coherent sheaves on~$V$ to that of coherent sheaves on~$U$. 

Let $\cF,\cG$ be coherent sheaves on~$V$. The sheaf $\sHom(\cF,\cG)$ is coherent, hence, by the first part of the statement, the restriction from~$V$ to~$U$ induces an isomorphism
\[ \Hom(\cF,\cG) = \sHom(\cF,\cG)(V) \simto  \sHom(\cF,\cG)(U) = \Hom(\cF_{\vert U},\cG_{\vert U}).\]
In other words, the functor~$R_{U,V}$ is fully faithful.

It remains to prove that~$R_{U,V}$ is essentially surjective. Let~$\cF$ be a coherent sheaf on~$U$. As in the first part of the proof, using Zorn's lemma, we may assume that~$\cF$ admits no extensions as a coherent sheaf to a bigger flow-connected open subset of~$V$. 

Assume, in view of a contradiction, that $U \ne V$. Let $x$ be a point in the boundary of~$U$ in~$V$. Since $U \subseteq V \subseteq T(U)$, there exists $y\in U$ such that $y \in T(x)$. By assumption, $\cF$ is coherent, hence there exists an open neighborhood~$W$ of~$y$ in~$U$ such that $\cF_{\vert W}$ is isomorphic to a sheaf of the form $\cO_{W}^{p}/\cM$, where $\cM$ is a finite type submodule of $\cO_{W}^{p}$. Up to shrinking~$W$, we may assume that it is flow-connected and that~$\cM$ is generated by $s_{1},\dotsc,s_{r} \in \cO(W)^p$. Set $W' := T(W) \cap V$. By Lemma~\ref{lem:propertiesflowopen}, it is an open neighborhood of~$x$ in~$V$. By Proposition~\ref{prop:restrictionflotO}, $s_{1},\dotsc,s_{r}$ extend to elements $s'_{1},\dotsc,s'_{r}$ in $\cO(W')^p$. Denote by~$\cM'$ the submodule of $\cO_{W'}^{p}$ that they generate and set $\cF' := \cO_{W'}^{p}/\cM'$. 

By construction, $\cF'$ is a coherent sheaf on~$W'$ that it is isomorphic to~$\cF$ on~$W$. By full-faithfulness of the restriction functor, the isomorphism extends to $T(W)\cap U = W' \cap U$. As a result, we may glue~$\cF$ and~$\cF'$ to a coherent sheaf on $U \cup W'$, which provides a contradiction. It follows that $U=V$, which concludes the proof of essential surjectivity.
\end{proof}

\subsubsection{Flowing spaces}

We explain how to generalize the flow and the related notions to more general spaces.

\begin{definition}
Let $V$ be a subset of $\Ahyb{n}{k}$. We say that $V$ is \emph{flowing} if, for each $x\in V$ and $\alpha\in I_{x}^\ast$, we have $x^\alpha \in V$.
\end{definition}

\begin{remark}\label{rem:flowing}
Flowing subsets are flow-connected. Intersections and unions of flowing subsets are flowing.
\end{remark}

\begin{definition}
A \emph{flowing local $k^\hyb$-analytic model} is the data of an integer~$n$, a flowing open subset~$U$ of~$\Ahyb{n}{k}$ and a closed analytic subset of~$U$.

A \emph{flowing $k^\hyb$-analytic space} is a $k^\hyb$-analytic space obtained by gluing flowing local $k^\hyb$-analytic models along flowing open subsets.
\end{definition}

\begin{example}
Let $X$ be a variety over~$k$. By construction, its analytification~$X^\hyb$ over~$k^\hyb$ is a flowing $k^\hyb$-analytic space.
\end{example}

The definitions given at the beginning of Section~\ref{sec:flow}: $x^\alpha$, $\lambda_{x^\alpha,x}$, the flow $\Phi$, the trajectory $T(x)$, flow-connected and flow-open subsets, etc. immediately extend to arbitrary flowing $k^\hyb$-analytic spaces. We will use the same notation and terminology in this more general setting. All the properties stated above generalize easily. We only record the last one here, for later reference.

\begin{theorem}\label{th:coherentflow}
Let $X$ be a flowing $k^\hyb$-analytic space. Let $U$ and $V$ be flow-connected flow-open subsets of $X$ such that $U \subseteq V \subseteq T(U)$ and $U$ is open in~$V$. For each coherent sheaf~$\cF$ on~$V$, the restriction map $\cF(V) \to \cF(U)$ is an isomorphism. 

Moreover, the restriction map induces an equivalence between the categories of coherent sheaves on~$V$ and~$U$.
\qed
\end{theorem}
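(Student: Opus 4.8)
The plan is to reduce the statement to its affine incarnation, Theorem~\ref{th:FVFUaffine}, by checking that every ingredient of the latter's proof survives the passage to an arbitrary flowing $k^\hyb$-analytic space. The proof of Theorem~\ref{th:FVFUaffine} rests on four purely local inputs — Lemma~\ref{lem:basisflowconnected} (flow-connected open sets form a basis), Proposition~\ref{prop:restrictionflotO} (injectivity of $\cO(V) \to \cO(U)$, and bijectivity when $U$ is flow-open), and Lemmas~\ref{lem:syFy} and~\ref{lem:sy=0F} (the open-closedness of the conditions $s_{y} \in \cF_{y}$ and $s_{y} = 0$ along trajectories) — together with a global argument by Zorn's lemma that uses no affine structure whatsoever. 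It therefore suffices to propagate these four inputs to~$X$ and then repeat the concluding argument verbatim.

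First I would verify that the flow descends to flowing local models. Such a model is a closed analytic subset~$Z$ of a flowing open subset~$W$ of~$\Ahyb{n}{k}$, cut out by the vanishing of analytic functions; since the flow satisfies $\abs{f(x^\alpha)} = \abs{f(x)}^\alpha$, every such vanishing locus is stable under~$\Phi$, so~$Z$ is flow-stable and the trajectory~$T(x)$ of a point $x \in Z$ lies entirely inside~$Z$. In particular trajectories, flow-connectedness and flow-openness are intrinsic notions that may be tested chart by chart and that glue along the flowing open subsets used to assemble~$X$. Moreover the completed residue field~$\cH(x)$ and the isomorphisms~$\lambda_{x^\alpha,x}$ are unchanged by the closed immersion $Z \hookrightarrow W$, and every local section of~$\cO_{Z}$ lifts to a section of~$\cO_{W}$; hence the identities of Proposition~\ref{prop:restrictionflotO} descend from~$W$ to~$Z$, and Lemma~\ref{lem:basisflowconnected} follows from its affine version by intersecting the standard basic sets with~$Z$. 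The arguments of Lemmas~\ref{lem:syFy} and~\ref{lem:sy=0F} are stalk-wise and already phrased for an arbitrary open~$V$, so they transfer once the flow and the two preceding results are available.

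With the four local inputs in place, I would run the proof of Theorem~\ref{th:FVFUaffine} unchanged. Injectivity of $\cF(V) \to \cF(U)$ comes from Lemma~\ref{lem:sy=0F}; surjectivity follows by taking, via Zorn's lemma, a maximal flow-connected open extension of a section and deriving a contradiction at a boundary point of~$U$ in~$V$, where — working inside a single chart — Proposition~\ref{prop:restrictionflotO} extends the coefficients~$a_{i}$ along trajectories. Full faithfulness of the restriction functor is the isomorphism statement applied to the coherent sheaf $\sHom(\cF,\cG)$, and essential surjectivity is again a Zorn's-lemma-and-gluing argument at boundary points, extending a local presentation $\cO_{W}^p/\cM$ across trajectories and gluing by full faithfulness.

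The only genuine obstacle is the first step of the reduction: confirming that the flow, the trajectories, and the flow-connected/flow-open conditions are well defined and chart-local on~$X$, so that the auxiliary results — proved only for subsets of~$\Ahyb{n}{k}$ — legitimately apply inside each flowing local model, even when the given~$U$ and~$V$ meet several charts. Once flow-stability of closed analytic subsets and the invariance of~$\cH(x)$ under closed immersions are recorded, each step of the global argument localizes to a chart and the remainder is a formal manipulation with trajectories and gluing.
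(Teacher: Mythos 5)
Your proposal is correct and matches the paper's intent: the paper gives no separate proof of Theorem~\ref{th:coherentflow}, merely asserting that the definitions and all properties from the affine case (in particular Theorem~\ref{th:FVFUaffine} and its inputs Lemma~\ref{lem:basisflowconnected}, Proposition~\ref{prop:restrictionflotO}, and Lemmas~\ref{lem:syFy} and~\ref{lem:sy=0F}) ``generalize easily'' to flowing $k^\hyb$-analytic spaces. Your verification that closed analytic subsets of flowing local models are flow-stable (via $\abs{f(x^\alpha)}=\abs{f(x)}^\alpha$), that $\cH(x)$ and $\lambda_{x^\alpha,x}$ are insensitive to the closed immersion, and that the global Zorn argument uses no affine structure is exactly the content the paper leaves implicit.
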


Let us extend this result to other situations. 

\begin{definition}
Let $X$ be a flowing $k^\hyb$-analytic space. A subset~$V$ of~$X$ is said to be \emph{over-flow-connected} if it admits a basis of open neighborhoods in $T(V)$ made of flow-connected flow-open subsets.
\end{definition}

\begin{remark}
Over-flow-connected subsets are flow-connected.
\end{remark}

\begin{example}
For each $x \in X$ and each interval $I \subseteq \R$, $\Phi(x,I \cap I_{x}^\ast)$ is over-flow-connected (see Example~\ref{ex:flowopen}).
\end{example}

Let us now exhibit another family of examples. 

\begin{definition}
Let $B$ be an affine variety over~$k$. A compact subset~$V$ of~$B^\hyb$ is called a \emph{rational domain} if there exist $f_{1},\dotsc,f_{m},g \in \cO(B^\hyb)$, with $(f_{1},\dotsc,f_{m},g)=(1)$, and real numbers $r_{1},\dotsc,r_{m} \in \R_{>0}$ such that
\[ V = \{ x \in B^\hyb : \forall i = 1,\dotsc,m,\ \abs{f_{i}(x)} \le r_{i} \, \abs{g(x)}\}.\]
\end{definition}

\begin{lemma}\label{lem:basisrational}
Let $B$ be an affine variety over~$k$. 

\begin{enumerate}[i)]
\item A finite intersection of rational domains of~$B^\hyb$ is a rational domain.
\item Each rational domain of~$B^\hyb$ admits a basis of neighborhoods in~$B^\hyb$ made of flow-connected open subsets. In particular, each rational domain of~$B^\hyb$ is over-flow-connected.
\end{enumerate}
\end{lemma}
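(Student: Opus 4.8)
Lemma (restated). Let $B$ be an affine variety over $k$.
\begin{enumerate}[i)]
\item A finite intersection of rational domains of $B^\hyb$ is a rational domain.
\item Each rational domain of $B^\hyb$ admits a basis of neighborhoods in $B^\hyb$ made of flow-connected open subsets. In particular, each rational domain is over-flow-connected.
\end{enumerate}

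Let me think about how to prove this.

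**Part i)**: Given two rational domains
$$V = \{x : |f_i(x)| \le r_i |g(x)|, \ i=1,\dots,m\}, \quad V' = \{x : |f'_j(x)| \le r'_j |g'(x)|, \ j=1,\dots,m'\}$$
with $(f_1,\dots,f_m,g)=(1)$ and $(f'_1,\dots,f'_{m'},g')=(1)$ in $\cO(B^\hyb)$, I want to express $V \cap V'$ as a single rational domain.

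The standard trick (as in the non-Archimedean setting, cf. Berkovich/Tate) is to use $gg'$ as the new common denominator. On $V \cap V'$, the condition $|f_i(x)| \le r_i|g(x)|$ rewrites as $|f_i(x) g'(x)| \le r_i |g(x) g'(x)|$, and similarly $|f'_j(x)g(x)| \le r'_j|g(x)g'(x)|$. So I take the new data
$$\{f_i g'\}_i \cup \{f'_j g\}_j, \quad \text{denominator } gg', \quad \text{radii } r_i, r'_j.$$
The one thing to verify is the coprimality condition: that the ideal generated by $\{f_i g', f'_j g, gg'\}$ is all of $\cO(B^\hyb)$. This is the only real content. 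Since $(f_1,\dots,f_m,g)=(1)$ I can write $1 = \sum a_i f_i + a g$; similarly $1 = \sum b_j f'_j + b g'$. Multiplying these two expressions, $1 = (\sum a_i f_i + ag)(\sum b_j f'_j + bg')$, and expanding, every term is a multiple of some $f_i$ (times $g'$ after further manipulation), some $f'_j$, or of $gg'$. A little care is needed: I should check that after expansion I can arrange each term to lie in the ideal generated by $\{f_i g', f'_j g, gg'\}$. This is a routine algebraic manipulation, but it is the technical heart of part i).

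**Part ii)**: This is the main part. Fix a rational domain
$$V = \{x \in B^\hyb : |f_i(x)| \le r_i |g(x)|, \ i=1,\dots,m\}.$$
I want to produce, for any open neighborhood $\Omega \supseteq V$, a flow-connected open $W$ with $V \subseteq W \subseteq \Omega$.

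The natural candidate is to relax the defining inequalities to strict ones with slightly larger radii:
$$W_\delta = \{x \in B^\hyb : |f_i(x)| < (r_i + \delta)\, |g(x)| + \delta, \ i=1,\dots,m\}$$
or a similar perturbation. Let me think about what makes a set flow-connected. By Lemma~\ref{lem:basisflowconnected} and the computation in its proof, sets of the form $\{|P_i| < s_i\}$ (and their intersections with polynomial inequalities) are flow-connected because along a trajectory $T(x)$, the function $\alpha \mapsto |P(x^\alpha)| = |P(x)|^\alpha$ is monotone, so each condition cuts out a subinterval of $I_x^\ast$, and a finite intersection of subintervals is a subinterval, hence connected.

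So the key observation for flow-connectedness is: along a trajectory, $|f_i(x^\alpha)| = |f_i(x)|^\alpha$ and $|g(x^\alpha)| = |g(x)|^\alpha$, so the ratio condition $|f_i(x^\alpha)| \le r_i |g(x^\alpha)|$ becomes $|f_i(x)|^\alpha \le r_i |g(x)|^\alpha$, i.e. $(|f_i(x)|/|g(x)|)^\alpha \le r_i$ (when $g(x)\ne 0$). This is again a monotone condition in $\alpha$, cutting out a subinterval. Thus $V$ itself is flow-connected (consistent with Remark~\ref{rem:flowconnected}), and so are the relaxed sets $W_\delta$ built from the same kind of ratio-inequalities.

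Here is my plan for part ii):
- First, rewrite the defining condition using $h_i := f_i / g$ where meaningful, but since $g$ may vanish I should work with the homogeneous form $|f_i(x)| \le r_i|g(x)|$ throughout, and form neighborhoods
$$W_\delta = \{x : |f_i(x)| < r_i |g(x)| + \delta\ \text{for all } i\} \cap \{|g(x)| > 0 \text{-type condition if needed}\}.$$
Actually the cleanest approach: consider the continuous map $\Psi : B^\hyb \to \R^m$, $x \mapsto (|f_i(x)| - r_i|g(x)|)_i$. Then $V = \Psi^{-1}((-\infty,0]^m)$ and $W_\delta = \Psi^{-1}((-\infty,\delta)^m)$ is open, contains $V$, and as $\delta \to 0^+$ these shrink to $V$. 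By compactness of $V$ and an exhaustion argument (since $B^\hyb$ is locally compact and $V$ compact), for $\delta$ small enough $W_\delta \subseteq \Omega$.
- Second, verify $W_\delta$ is flow-connected. This requires checking that for each $x$, the set $T(x) \cap W_\delta$ is connected. Along $T(x)$ we have $|f_i(x^\alpha)| - r_i|g(x^\alpha)| = |f_i(x)|^\alpha - r_i|g(x)|^\alpha$. I need $\{\alpha \in I_x^\ast : |f_i(x)|^\alpha - r_i|g(x)|^\alpha < \delta \text{ for all }i\}$ to be an interval. Each individual condition $|f_i(x)|^\alpha - r_i|g(x)|^\alpha < \delta$ need not be monotone (it is a difference of two exponentials), so connectedness of the intersection is not immediate — this is where care is needed.

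**The main obstacle**: The difficulty in part ii) is precisely that the relaxed additive condition $|f_i|^\alpha - r_i|g|^\alpha < \delta$ is a difference of exponentials in $\alpha$, which is not monotone in general, so the naive relaxation may fail to be flow-connected. I expect the fix is to relax multiplicatively rather than additively — i.e. to use neighborhoods of the form
$$W = \bigcup \{x : |f_i(x)| < s_i,\ |g(x)| > t\}$$
or, better, to exploit that the basic flow-connected opens of Lemma~\ref{lem:basisflowconnected} already form a basis, and to show directly that $V$ (being cut out by the ratio conditions $(|f_i|/|g|)^\alpha$, which are monotone) has arbitrarily small flow-connected open neighborhoods by perturbing the ratios. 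Concretely, since each $|f_i(x)| \le r_i|g(x)|$ is equivalent along trajectories to a monotone condition, I can cover $V$ by finitely many basic flow-connected opens (using compactness of $V$ and the basis from Lemma~\ref{lem:basisflowconnected}), but a finite union is not flow-connected in general. The resolution is to take the neighborhood
$$W = \{x : |f_i(x)| < (r_i + \delta)|g(x)| \text{ for all } i\} \cap \{|g(x)| > 0\},$$
where the strict ratio inequalities ARE monotone along trajectories (being of the form $(|f_i(x)|/|g(x)|)^\alpha < r_i + \delta$), so $T(x) \cap W$ is a finite intersection of subintervals, hence an interval; and then handle the locus $g(x) = 0$ separately. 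Since $(f_1,\dots,f_m,g)=(1)$, on $V$ we have $g \ne 0$ (as $|f_i| \le r_i|g|$ forces $g(x)=0 \Rightarrow$ all $f_i(x)=0$, contradicting coprimality), so $V$ lies in the open locus $\{g \ne 0\}$, and I may intersect with a flow-open neighborhood of that locus. I would verify this containment carefully — that $V \subseteq \{|g| > 0\}$ via coprimality — and then the monotone-ratio argument gives flow-connectedness of the multiplicatively relaxed $W$, completing part ii). The final sentence ("In particular, over-flow-connected") then follows immediately from the definition of over-flow-connected, since the flow-connected open neighborhoods $W$ we produced are in particular flow-open by Example~\ref{ex:flowopen}~ii), and restricting this neighborhood basis of $B^\hyb$ to $T(V)$ yields the required basis in $T(V)$.
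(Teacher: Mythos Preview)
Your overall strategy for both parts matches the paper's, but there are two concrete issues.

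\textbf{Part i).} The generating family $\{f_i g',\, f'_j g,\, gg'\}$ you propose need not generate the unit ideal. For instance, with $\cO(B)=k[S,T]$, $f_1=S$, $g=1-S$, $f'_1=T$, $g'=1-T$, all three functions $S(1-T)$, $T(1-S)$, $(1-S)(1-T)$ vanish at $S=T=1$. The classical fix (which the paper invokes by citing BGR~7.2.3/7) is to include the products $f_i f'_j$ with radii $r_i r'_j$: these extra inequalities are redundant for describing $V\cap V'$, but are exactly what is needed so that the full family generates $I\cdot I'=(1)$.

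\textbf{Part ii).} Your multiplicative relaxation $W_\delta=\{|f_i|<(r_i+\delta)|g|\}$ is the right object, and you correctly note that coprimality forces $g\ne 0$ on~$V$ and that ratio conditions are monotone along trajectories, hence flow-connected. The gap is the step ``for $\delta$ small enough $W_\delta\subseteq\Omega$'': you proved this for the additive relaxation but not for the multiplicative one, and the $W_\delta$'s are not \emph{a priori} relatively compact, so the nested-intersection argument does not go through directly. The paper closes this by first embedding $B\hookrightarrow\A^N_k$ and trapping~$V$ inside a closed polydisc $D_\le$ and inside $E_\ge=\{|g|\ge t\}$ (possible since $\inf_V|g|>0$ by compactness). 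On the compact set $D_\le\cap E_\ge$ one may assume $\Omega\subseteq D_\le\cap E_\ge$, the continuous function $F(y)=\max_i |f_i(y)|/(r_i|g(y)|)$ attains its infimum $a>1$ on the compact complement $(D_\le\cap E_\ge)\setminus\Omega$, and then
\[
W=D_< \cap E_> \cap \{|f_i|<a\,r_i|g|\ \forall i\}
\]
is a flow-connected open set with $V\subseteq W\subseteq\Omega$. Once you add this compact confinement, your argument is complete and coincides with the paper's.
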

\begin{proof}
i) The arguments of the classical proof (see \cite[Proposition 7.2.3/7]{BGR}) still applies here.

\medbreak

ii) Let $V$ be a rational domain of~$B^\hyb$. By definition, there exist $f_{1},\dotsc,f_{m},g \in \cO(B^\hyb)$, with $(f_{1},\dotsc,f_{n},g)=(1)$, and real numbers $r_{1},\dotsc,r_{m} \in \R_{>0}$ such that
\[ V = \{ x \in B^\hyb : \forall i = 1,\dotsc,m,\ \abs{f_{i}(x)} \le r_{i} \, \abs{g(x)}\}.\]

Let us identify $B$ to a Zariski-closed subset of an affine space~$\A^N_{k}$, hence $B^\hyb$ to a closed analytic subset of~$\Ahyb{N}{k}$. Since $V$ is compact, there exists an open disc~$D_{<}$ containing~$V$ in~$\Ahyb{N}{k}$. Denote by~$D_{\le}$ the closed disc with same center and radius as~$D_{<}$. Note that $D_{\ge} \cap B^\hyb$ is a rational domain of~$B^\hyb$.

The function~$g$ does not vanish on~$V$. Set 
\[ s := \inf(\{\abs{g(y)}  : y \in V\}).\]
Since~$V$ is compact, this infimum is attained, hence $s>0$. Let $t \in (0,s)$. The sets
\[ E_{\ge} := \{ x \in B^\hyb : \abs{g(x)} \ge t\} \textrm{ and } E_{>} := \{ x \in B^\hyb : \abs{g(x)} > t\}\]
contain~$V$.

Let $U$ be a neighborhood of~$V$ in~$B^\hyb$. We need to prove that there exists a contains a flow-connected open subset~$W$ of~$B^\hyb$ such that $V \subseteq W \subseteq U$. We may assume that $U \subseteq D_{\le} \cap E_{\ge}$. The function~$g$ does not vanish on~$D_{\le} \cap E_{\ge}$. Consider the map
\[ F \colon y \in D_{\le} \cap E_{\ge} \mapstoo \max_{1\le i \le m} \Big( \frac{\abs{f_{i}(y)}}{r_{i}\, \abs{g(y)}}\Big) \in \R_{\ge 0}.\]  
Set \[a :=  \inf( \{\abs{F(z)}  : z \in (D_{\le} \cap E_{\ge}) \setminus U\} ).\]
Since $(D_{\le} \cap E_{\ge}) \setminus U$ is compact and non-empty, this infimum is attained, hence $a>1$. It follows that the set
\[ \{ y \in D_{\le} \cap E_{\ge} : \forall i = 1,\dotsc,m,\ \abs{f_{i}(y)} < a\, r_{i} \, \abs{g(y)}\}\]
is contained in~$U$, hence the set
\[W := \{ y \in D_{<} \cap E_{>} : \forall i = 1,\dotsc,m,\ \abs{f_{i}(y)} < a\, r_{i} \, \abs{g (y)}\}\]
satisfies all the conditions.

\medbreak

The last part of the statement follows from Example~\ref{ex:flowopen}, ii) and Lemma~\ref{lem:propertiesflowopen}, iii).
\end{proof}

\begin{corollary}\label{cor:coherentflowcompact}
Let $X$ be a flowing $k^\hyb$-analytic space. Let $U$ and $V$ be compact over-flow-connected subsets of~$X$ such that $U \subseteq V \subseteq T(U)$. For each coherent sheaf~$\cF$ on~$V$, the restriction map $\cF(V) \to \cF(U)$ is an isomorphism. 

Moreover, the restriction map induces an equivalence between the categories of coherent sheaves on~$V$ and~$U$.
\end{corollary}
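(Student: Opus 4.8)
The strategy is to reduce \Corref{cor:coherentflowcompact} to the open case already established in \Thmref{th:coherentflow}, by passing to open neighborhoods and using the overconvergent nature of sections on compact subsets. First I would record the key consequence of over-flow-connectedness: since $U$ admits a basis of open neighborhoods in $T(U)$ made of flow-connected flow-open subsets, and similarly for $V$, every coherent sheaf and every section on $U$ (resp. $V$) is, by definition of overconvergent sections, the restriction of a coherent sheaf and a section defined on some such open neighborhood. By \Propref{prop:extensioncoherentsheafcompact}, a coherent sheaf on the compact set~$V$ extends to a coherent sheaf on an open neighborhood, so this reduction is available.

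\textbf{Injectivity and surjectivity of $\cF(V)\to\cF(U)$.} Fix a coherent sheaf~$\cF$ on~$V$; extend it to a coherent sheaf on an open neighborhood of~$V$ in~$X$. For surjectivity, given $s\in\cF(U)$, the section~$s$ extends to $\cF(U')$ for some flow-connected flow-open open neighborhood~$U'$ of~$U$ inside~$T(U)$. One then wants a flow-connected flow-open~$V'$ with $U'\subseteq V'\subseteq T(U')$ on which $\cF$ is defined and to which one can apply \Thmref{th:coherentflow}. The clean way is to choose, using over-flow-connectedness of~$V$, a flow-connected flow-open neighborhood~$V'$ of~$V$ in~$T(V)=T(U)$ contained in the domain of~$\cF$, and shrink~$U'$ so that $U'\subseteq V'\subseteq T(U')$ holds: since $V'\subseteq T(U)\subseteq T(U')$ once $U\subseteq U'$, and one may arrange $U'\subseteq V'$ by intersecting, the hypotheses of \Thmref{th:coherentflow} are met. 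That theorem gives an isomorphism $\cF(V')\simto\cF(U')$, and passing to the colimit over shrinking neighborhoods yields the section of $\cF(V)$ restricting to~$s$. Injectivity follows the same pattern, or directly from \Lemref{lem:sy=0F} applied trajectory by trajectory, since $V\subseteq T(U)$ forces every point of~$V$ to lie on a trajectory meeting~$U$.

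\textbf{Equivalence of categories.} Full faithfulness is immediate from the first part applied to the internal Hom: for coherent sheaves $\cF,\cG$ on~$V$, the sheaf $\sHom(\cF,\cG)$ is coherent on~$V$, so $\Hom(\cF,\cG)=\sHom(\cF,\cG)(V)\simto\sHom(\cF,\cG)(U)=\Hom(\cF_{\vert U},\cG_{\vert U})$. For essential surjectivity, given a coherent sheaf~$\cG$ on~$U$, extend it to a flow-connected flow-open neighborhood~$U'$ of~$U$ via \Propref{prop:extensioncoherentsheafcompact} together with over-flow-connectedness, choose a flow-connected flow-open~$V'$ with $U'\subseteq V'\subseteq T(U')$ as above, invoke the essential surjectivity in \Thmref{th:coherentflow} to produce a coherent sheaf on~$V'$ restricting to~$\cG$ on~$U'$, and restrict it back to~$V$.

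\textbf{Main obstacle.} The delicate point is the bookkeeping of neighborhoods: one must produce, from the compact over-flow-connected sets~$U\subseteq V\subseteq T(U)$, a \emph{matching pair} of flow-connected flow-open open sets $U'\subseteq V'\subseteq T(U')$ interpolating them, so that \Thmref{th:coherentflow} applies verbatim, and then check that the resulting isomorphisms are compatible with the colimits defining overconvergent sections. The construction of~$V'$ as a flow-connected flow-open neighborhood of~$V$ inside~$T(U)$ is exactly what over-flow-connectedness supplies; the only real work is confirming that the inclusion $V'\subseteq T(U')$ can be arranged simultaneously with $U'\subseteq V'$ while keeping both sets flow-connected and flow-open, which uses \Lemref{lem:propertiesflowopen} and \Remref{rem:TVTV'} to stay within the calculus of trajectories.
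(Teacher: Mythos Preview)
Your proposal is correct and follows essentially the same route as the paper: extend the coherent sheaf to an open neighborhood via Proposition~\ref{prop:extensioncoherentsheafcompact}, use over-flow-connectedness of~$U$ and~$V$ to produce a matching pair of flow-connected flow-open open sets $U'\subseteq V'\subseteq T(U')$ in $T(U)=T(V)$, apply Theorem~\ref{th:coherentflow}, and pass to the colimit. The paper's proof is simply terser (it says ``such $U'$'s and $V'$'s form bases of neighborhoods'' and ``the last part is proved similarly''), but your explicit bookkeeping---including the observation that $V'\subseteq T(U)\subseteq T(U')$ once $U\subseteq U'$, and the $\sHom$ argument for full faithfulness---fills in exactly what the paper leaves implicit.
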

\begin{proof}
Let $\cF$ be a coherent sheaf on~$V$. By Proposition~\ref{prop:extensioncoherentsheafcompact}, it extends to a neighborhood~$W$ of~$V$ in~$T(V)$. Consider a flow-connected flow-open open neighborhood~$V'$ of~$V$ in~$W$ and a flow-connected flow-open open neighborhood~$U'$ of~$U$ in~$V'$. By Theorem~\ref{th:coherentflow}, the restriction map $\cF(V') \to \cF(U')$ is an isomorphism. By assumption, such~$U'$'s and~$V'$'s form bases of neighborhoods of~$U$ and~$V$. It follows that the restriction map $\cF(V) \to \cF(U)$ is an isomorphism. 

The last part of the result is proved similarly. 
\end{proof}

\begin{corollary}\label{cor:coherentTx}
Let $X$ be a flowing $k^\hyb$-analytic space. Let $x \in X$. For each coherent sheaf~$\cF$ on~$T(x)$, the restriction map $\cF(T(x)) \to \cF_{x}$ is an isomorphism. 

Moreover, the restriction map induces an equivalence between the category of coherent sheaves on~$T(x)$ and the category of $\cO_{x}$-modules of finite type.
\end{corollary}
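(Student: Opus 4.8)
The plan is to realize the trajectory as an exhaustion by compact over-flow-connected subsets, apply Corollary~\ref{cor:coherentflowcompact} with $U=\{x\}$ on each piece, and then pass to the limit. We may assume that $\cH(x)$ is not trivially valued, since otherwise $T(x)=\{x\}$ by Remark~\ref{rem:shapeT(x)} and both assertions are immediate. Under this assumption, $\Phi(x,\wc)$ is a homeomorphism from $I_x^\ast$ onto $T(x)$, and $x=x^1$ corresponds to $1\in I_x^\ast$. For each compact subinterval $J\subseteq I_x^\ast$ with $1\in J$, set $V_J:=\Phi(x,J)$. It is compact (continuous image of a compact), it is over-flow-connected (being of the form $\Phi(x,I\cap I_x^\ast)$), it contains $x$, and $\{x\}\subseteq V_J\subseteq T(x)=T(\{x\})$. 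Corollary~\ref{cor:coherentflowcompact}, applied to the pair $(\{x\},V_J)$, then shows that for every coherent sheaf $\cF$ on $T(x)$ the restriction $\cF(V_J)\to\cF(\{x\})=\cF_x$ is an isomorphism. The sets $V_J$ exhaust $T(x)$ and are cofinal among its compact subsets.

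From this I would deduce the first assertion. Injectivity of $\cF(T(x))\to\cF_x$ is formal: a section killed in $\cF_x$ is killed in each $\cF(V_J)$ by injectivity of $\cF(V_J)\to\cF_x$, hence has vanishing stalk at every point of $\bigcup_J V_J=T(x)$, hence is zero. For surjectivity, given $g\in\cF_x$ I would use the isomorphisms above to produce, for each $J$, a unique $s_J\in\cF(V_J)$ lifting $g$; uniqueness forces $s_{J'}|_{V_J}=s_J$ whenever $J\subseteq J'$, so the stalks $(s_J)_y$ glue to a well-defined $s_y\in\cF_y$ for each $y\in T(x)$. Choosing, for a given $y_0$, an interval $J$ with $V_J$ a neighborhood of $y_0$ in $T(x)$ and representing $s_J$ by an honest section on an open set, one checks that $s$ coincides with that section near $y_0$; thus $s$ is a genuine element of $\cF(T(x))$ lifting $g$. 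I expect this gluing step to be the only real obstacle, precisely because Corollary~\ref{cor:coherentflowcompact} is available only on the compact pieces $V_J$, whereas $T(x)$ itself need not be compact.

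For the equivalence of categories, the functor is $\cF\mapsto\cF_x$, which lands in finite type $\cO_x$-modules since $\cO_x$ is Noetherian. Full faithfulness follows from the first assertion applied to the coherent sheaf $\sHom(\cF,\cG)$: one gets $\Hom(\cF,\cG)=\sHom(\cF,\cG)(T(x))\simeq\sHom(\cF,\cG)_x=\Hom_{\cO_x}(\cF_x,\cG_x)$, the last equality holding because $\cF$ is finitely presented. For essential surjectivity I would exploit the special case $\cF=\cO_{T(x)}$ of the first assertion, which yields $\cO(T(x))\simeq\cO_x$: given a finite type $\cO_x$-module $M$, a presentation $\cO_x^q\to\cO_x^p\to M\to 0$ has its matrix entries in $\cO(T(x))$, hence defines a morphism $\cO_{T(x)}^q\to\cO_{T(x)}^p$ whose cokernel is a coherent sheaf on $T(x)$ with stalk $M$ at $x$. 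This completes the plan.
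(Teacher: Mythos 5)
Your proof is correct and follows the same route as the paper: both exhaust $T(x)$ by the compact over-flow-connected pieces $\Phi(x,J)$ for compact subintervals $J\ni 1$ of $I_x^\ast$, apply Corollary~\ref{cor:coherentflowcompact} to the pair $(\{x\},\Phi(x,J))$, and pass to the limit. The paper leaves the limiting and equivalence-of-categories steps implicit, whereas you spell them out; the details you supply (gluing of stalks for surjectivity, and the presentation argument for essential surjectivity using $\cO(T(x))\simeq\cO_x$) are the intended ones.
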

\begin{proof}
Let $I$ be a compact subinterval of $I_{x}^\ast$ containing~1. By Lemma~\ref{lem:propertiesflowopen}, ii), $\{x\}$ and $\Phi(\{x\},I)$ are over-flow-connected. It then follows from Corollary~\ref{cor:coherentflowcompact} that the result holds with $T(x)$ replaced by $\Phi(\{x\},I)$.

Since $T(x)$ may be exhausted by subsets of the form $\Phi(\{x\},I)$, we deduce that the result holds as stated.
\end{proof}

\subsection{Analytification}

We investigate the analytification functor in the hybrid setting.

\begin{notation}\label{nota:Xhyb}
Let $X$ be a scheme locally of finite type over~$k$. 

We denote by~$X^\hyb$ the analytification of~$X$ over~$k_{\hyb}$.

For $\eps\in(0,1]$, we denote by $X_{[0,\eps]}^\an$ the analytification of~$X$ over the Banach ring $(k,\max(\va_{0},\va_{\eps}))$. In particular, $X_{[0,1]}^\an = X^\hyb$.

We denote by~$X^\an_{0}$ the analytification of~$X$ over~$k_{0}$. With Notation~\ref{nota:Xeps}, we have a canonical homeomorphism $X^\hyb_{0} = X^\an_{0}$. We denote by~$X^\hyb_{0^\dag}$ the space~$X^\hyb_{0}$ endowed with the overconvergent structure sheaf inherited from~$X^\hyb$. It is not isomorphic to the locally ringed space $X^\an_{0}$ in general.

For $\eps\in(0,1]$, we denote by~$X^\an_{\eps}$ the analytification of $X\otimes_{k} \hat k_{\eps}$ over $k_{\eps}$. With Notation~\ref{nota:Xeps}, we have a canonical homeomorphism $X^\hyb_{\eps} = X^\an_{\eps}$. We endow $X^\hyb_{\eps}$ with the overconvergent structure sheaf inherited from $X^\hyb$.
\end{notation}

\begin{lemma}\label{lem:hybaneps}
Let $X$ be a scheme locally of finite type over~$k$. Let $\eps\in(0,1]$. The natural morphism of locally ringed spaces $\iota \colon X^\an_{\eps} \to X^\hyb_{\eps}$ is an isomorphism.

Moreover, for each open subset $U$ of $X^\hyb_{\eps}$ and each coherent sheaf~$\cF$ on~$U$, we have $\iota^\ast\cF(\iota^{-1}(U)) = \cF(U)$.
\end{lemma}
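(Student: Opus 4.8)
The plan is to prove Lemma~\ref{lem:hybaneps} in two parts, following the structure of the statement: first establish that the natural morphism $\iota$ is an isomorphism of locally ringed spaces, then deduce the compatibility with coherent sheaves. For the first part, I would reduce immediately to the affine case, since the analytification is constructed by gluing and both source and target are built from the same affine charts. Thus assume $X = \Spec(A)$ for a finitely generated $k$-algebra~$A$, and let $A_\eps := A \otimes_k \hat k_\eps$.

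For the affine case, the key observation is that $\iota$ is already a homeomorphism on underlying topological spaces (this is recorded in Notation~\ref{nota:Xhyb}, where $X^\hyb_\eps = X^\an_\eps$ as topological spaces). So it remains to check that $\iota$ induces an isomorphism on structure sheaves. The left-hand side $X^\an_\eps$ carries the genuine structure sheaf of the $\hat k_\eps$-analytification, whereas $X^\hyb_\eps$ carries the \emph{overconvergent} sheaf $\iota^{-1}\cO_{X^\hyb}$ inherited as a fiber inside the hybrid space. The heart of the matter is therefore to compare, for a point $x \in X^\hyb_\eps$ with $\eps \in (0,1]$, the local ring coming from overconvergent sections on the hybrid space with the local ring of the honest $\hat k_\eps$-analytic space. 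The plan is to exploit Remark~\ref{rem:kkeps}: over the point $\va^\eps$, the residue field is $\cH(\eps) = \hat k_\eps$, and the fiber $X^\hyb_\eps$ is intrinsically an $\hat k_\eps$-analytic space whose local rings are describable via uniform limits of rational functions without poles. I would verify that the overconvergent condition, involving neighborhoods in $X^\hyb$ that may spread into nearby fibers, does not change the stalk, because the non-Archimedean fiber over $\eps$ admits a fundamental system of neighborhoods in the hybrid space whose restriction recovers exactly the analytic local ring. The main technical input comes from the flow results of Section~\ref{sec:flow}: by Corollary~\ref{cor:coherentTx}, restriction to a trajectory recovers the stalk, and the flow trivially fixes the fiber over a fixed~$\eps$ up to the isomorphisms $\lambda_{x^\alpha,x}$, which identify the rings up to the harmless renormalization of the absolute value described in Remark~\ref{rem:kkeps}. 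This reduces the local comparison to the statement that passing from $\va$ to $\va_\eps$ changes only the normalization and not the underlying locally ringed space.

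For the second part, the claim $\iota^\ast \cF(\iota^{-1}(U)) = \cF(U)$ for coherent $\cF$ on an open $U \subseteq X^\hyb_\eps$ follows formally once $\iota$ is established as an isomorphism of locally ringed spaces, since then $\iota^\ast$ and $\iota^{-1}$ agree up to the identification of structure sheaves, and global sections over corresponding opens are preserved. I would phrase this as: because $\iota$ is an isomorphism, pullback of coherent sheaves is an equivalence and commutes with taking sections over $U$ and $\iota^{-1}(U)$.

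The hard part will be the precise comparison of stalks in the affine case, namely showing that the overconvergent structure sheaf inherited from the hybrid space agrees on the fiber $X^\hyb_\eps$ with the intrinsic $\hat k_\eps$-analytic structure sheaf. The subtlety is that overconvergent sections involve neighborhoods that are genuinely higher-dimensional in the hybrid direction, and one must argue that no additional functions or relations arise from this spreading. I expect the cleanest route is to invoke the flow machinery — specifically the fact that trajectories through points of a fixed fiber, together with Proposition~\ref{prop:restrictionflotO} and Corollary~\ref{cor:coherentTx}, let one compute overconvergent stalks along the $\eps$-direction and identify them via $\lambda_{x^\alpha,x}$ with the analytic local rings, thereby confirming that the overconvergent structure contributes nothing beyond the expected renormalization.
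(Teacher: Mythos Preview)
Your plan for the second part is fine and agrees with the paper: once $\iota$ is an isomorphism of locally ringed spaces, the claim about coherent sheaves follows by taking a local finite presentation.

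The first part, however, has a genuine gap. All of the flow results you invoke --- Proposition~\ref{prop:restrictionflotO}, Corollary~\ref{cor:coherentTx}, the isomorphisms $\lambda_{x^\alpha,x}$ --- live entirely inside the hybrid structure sheaf. They compare $\cO_{X^\hyb}$ at one point with $\cO_{X^\hyb}$ at another point along the trajectory, or overconvergent sections on one hybrid subset with those on another. They never leave the hybrid sheaf, so they cannot by themselves produce the comparison you need, which is between the \emph{overconvergent} stalk $\cO_{X^\hyb,x}$ and the \emph{intrinsic} $\hat k_\eps$-analytic stalk $\cO_{X^\an_\eps,x}$. These two rings are built from different approximation data: the hybrid sheaf uses uniform limits of rational functions with coefficients in~$k$, while the $\hat k_\eps$-analytic sheaf uses coefficients in~$\hat k_\eps$. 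Moving along a trajectory (say to the fiber over~$1$) does not help: the same discrepancy between $k$-coefficients and $\hat k$-coefficients is present there, so your reduction via Remark~\ref{rem:kkeps} is circular.

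What is actually needed is the density of~$k$ in~$\hat k_\eps$: any $\hat k_\eps$-rational function can be uniformly approximated on a compact by $k$-rational functions, and this lets one promote a germ in $\cO_{X^\an_\eps,x}$ to a germ in $\cO_{X^\hyb,x}$. The argument is a direct hands-on manipulation of uniform limits, very much \emph{in the style of} the proof of Proposition~\ref{prop:restrictionflotO}, but not a consequence of it. The paper handles this by citing \cite[Proposition~3.4.6]{A1Z} for the affine-space case (and remarks that its proof is similar to that of Proposition~\ref{prop:restrictionflotO}); the general case follows by gluing, as you say.
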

\begin{proof}
When $X$ is an affine space, the first part of the statement is \cite[Proposition 3.4.6]{A1Z}. The general case follows. The proof is very similar to that of Proposition~\ref{prop:restrictionflotO}.

The second part of the statement follows by working locally and using a finite presentation of~$\cF$.
\end{proof}

\begin{lemma}\label{lem:basisflowsaturated}
Let $X$ be a scheme locally of finite type over~$k$. Let $\eps\in(0,1]$. Each point~$x$ of~$X^\hyb_{\eps}$ has a basis of neighborhoods~$\Vc_{x}$ in~$X^\hyb_{\eps}$ such that each $V\in \Vc_{x}$ satisfies the following properties:
\begin{enumerate}[i)]
\item $V$ is compact and over-flow-connected;
\item $T(V)$ may be written as an increasing union of compact over-flow-connected subsets containing~$V$.
\end{enumerate}
\end{lemma}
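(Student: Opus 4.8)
The plan is to show that the collection $\Vc_x$ of \emph{all} compact neighborhoods of~$x$ in~$X^\hyb_\eps$ already satisfies the requirements. Since $X^\hyb_\eps$ is homeomorphic to the Berkovich space $X^\an_\eps$ (Lemma~\ref{lem:hybaneps}), which is locally compact, this is indeed a basis of neighborhoods of~$x$. All the notions and results concerning the flow extend to the flowing space~$X^\hyb$, so I work directly on it. Fix a compact neighborhood~$V$ of~$x$ in~$X^\hyb_\eps$. As $\eps>0$, every point of~$X^\hyb_\eps$ has a non-trivially valued complete residue field, so by Remark~\ref{rem:shapeT(x)} each trajectory meeting~$V$ is injectively parametrized. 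The heart of the matter is then to establish that the flow induces a homeomorphism
\[ \Phi_V \colon V \times I^\ast_\eps \simto T(V), \qquad (y,\alpha) \longmapsto y^\alpha, \]
where $I^\ast_\eps = (0,1/\eps]$, under which trajectories become the vertical fibers $\{y\}\times I^\ast_\eps$ and $V$ corresponds to $V\times\{1\}$.

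Establishing that $\Phi_V$ is a homeomorphism is the main obstacle, and I would proceed as follows. Continuity is inherited from the continuity of the flow (Proposition~\ref{prop:Phicontinuous}). Surjectivity onto~$T(V)$ is the definition of~$T(V)$, and injectivity follows from the injectivity of trajectories together with the reading $\pr(y^\alpha)=\alpha\eps$, which recovers~$\alpha$, and then~$y$. The crucial and least formal point is the continuity of the inverse: it sends $z\in T(V)$ to $\big(z^{\eps/\pr(z)},\,\pr(z)/\eps\big)$. Here $\pr$ is continuous and strictly positive on $T(V)\subseteq X^\hyb_{>0}$, and $\eps\le 1$ guarantees that $\eps/\pr(z)\in I^\ast_z$, so the renormalization $z\mapsto z^{\eps/\pr(z)}=\Phi\big(z,\eps/\pr(z)\big)$ is continuous as a composition involving the flow. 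Thus $T(V)$ is identified, compatibly with the flow, to the product $V\times I^\ast_\eps$.

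Granting this identification, the two required properties become elementary bookkeeping in the product. For~(i), the sets $V\times J$, for $J$ an open subinterval of $I^\ast_\eps$ containing~$1$, are open in~$T(V)$; by the tube lemma—this is where the compactness of~$V$ is used—they form a basis of neighborhoods of~$V=V\times\{1\}$ in~$T(V)$. Each such set is flow-connected, since its intersection with a vertical fiber is the interval $\{y\}\times J$, and flow-open, since under the identification $\Phi_{V\times J}$ becomes the map $((y,\beta),\gamma)\mapsto (y,\beta\gamma)$, which is open. Hence~$V$ is over-flow-connected. For~(ii), I write $T(V)=\bigcup_{a\in(0,1]}\Phi(V,[a,1/\eps])$, an increasing union as $a\downarrow 0$ of the subsets $K_a:=V\times[a,1/\eps]$. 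Each $K_a$ contains $V=V\times\{1\}$, is compact as the continuous image of the compact set $V\times[a,1/\eps]$ (on which the flow is defined), and is over-flow-connected by the very same argument as in~(i), now using the neighborhoods $V\times J$ with $J\supseteq[a,1/\eps]$ and the tube lemma for the compact set~$K_a$. This yields all the asserted properties.
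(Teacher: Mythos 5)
Your proof is correct, but it takes a genuinely different route from the paper's. The paper exhibits an explicit basis: after embedding an affine chart into $\A^N_{k}$, it takes neighborhoods $W$ cut out by finitely many polynomial inequalities (with the coordinate functions among the $P_{i}$'s to force compactness), invokes Lemma~\ref{lem:basisrational} to see that such $W$ are rational domains, hence compact and over-flow-connected, and then uses the non-triviality of the valuation of~$k$ to rewrite the radii as powers of $\abs{\alpha}$ for some $\alpha\in k$ with $0<\abs{\alpha}<1$, so that $T(W)$ and the truncations $T_{n}(W)=T(W)\cap\pr^{-1}([2^{-n},1])$ are again recognized as rational domains, giving the exhaustion in~(ii). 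You instead prove the stronger assertion that \emph{every} compact neighborhood $V$ of~$x$ in $X^\hyb_{\eps}$ works, by establishing the flow-equivariant homeomorphism $T(V)\simeq V\times(0,1/\eps]$ — whose inverse $z\mapsto\big(z^{\eps/\pr(z)},\pr(z)/\eps\big)$ is continuous precisely because $\pr>0$ on $T(V)\subseteq X^\hyb_{>0}$ and $\eps\le 1$ — and then reading off~(i) and~(ii) from the tube lemma in the product. The paper's route produces rational domains, which plug directly into the affinoid machinery used elsewhere; yours is more topological and self-contained, at the price of having to check by hand that the tubes $V\times J$ are flow-open, which does hold: under your identification $\Phi_{V\times J}$ is $((y,\beta),\gamma)\mapsto(y,\beta\gamma)$, and the image of a basic box $(V_{0}\times J_{0})\times\Gamma$ intersected with the domain of the flow is $V_{0}\times\big(J_{0}\Gamma\cap(0,1/\eps]\big)$, a relative neighborhood. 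Both arguments, in the end, hinge on $\eps>0$ in an essential way.
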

\begin{proof}
Let $x \in X^\hyb_{\eps}$. The question being local, we may assume that $X$ is affine. Let us consider a closed embedding $j \colon X \to \A^N_{k}$ and fix coordinates $T=(T_{1},\dotsc,T_{N})$ on~$\A^N_{k}$.

By definition of the topology, $j^\hyb(x)$ admits a basis of neighborhoods in $(\Ahyb{N}{k})_{\eps}$  of the form 
\[ W = \bigcap_{i=1}^p \big\{ y \in (\Ahyb{N}{k})_{\eps} : \abs{P_{i}(y)} \le s_{i}\big\} \cap \bigcap_{i=p+1}^q \big\{ y \in (\Ahyb{N}{k})_{\eps} : r_{i} \le \abs{P_{i}(y)} \le s_{i}\big\},\]
with $P_{1},\dotsc,P_{q} \in k[T]$, $r_{p+1},\dotsc,r_{q},s_{1},\dotsc,s_{q} \in \R_{>0}$, $\abs{P_{i}(j^\hyb(x))} < s_{i}$ for each $i \in \{1,\dotsc,p\}$ and $r_{i} < \abs{P_{i}(j^\hyb(x))} < s_{i}$ for each $i \in \{p+1,\dotsc,q\}$. We may assume that the coordinate functions $T_{1},\dotsc,T_{N}$ belong to the family $(P_{i})_{1\le i\le q}$. Let us now prove that the set $V := j^{-1}(W)$ satisfies properties~i) and~ii) of the statement.

\medbreak

i) Since the family $(P_{i})_{1\le i\le q}$ contains the coordinate functions $T_{1},\dotsc,T_{N}$, $W$ is compact, and so is $V := j^{-1}(W)$. By Lemma~\ref{lem:basisrational}, $W$ and $V$ are over-flow-connected too.

\medbreak

ii) Since $k$ is not trivially valued, there exists $\alpha \in k$ such that $0 < \abs{\alpha} < 1$. Up to multiplying the $P_{i}$'s by some power of~$\alpha$, and the corresponding~$r_{i}$'s (when they exist) and~$s_{i}$'s by the same power of~$\abs{\alpha}$, we may assume that $r_{p+1},\dotsc,r_{p},s_{1},\dotsc,s_{q} \in (0,1)$. In this case, for each $i\in \{p+1,\dotsc,q\}$ (resp. $i \in \{1,\dotsc,q\}$), there exists $u_{i} \in \R_{>0}$ (resp. $v_{i} \in \R_{>0}$) such that $r_{i} = \abs{\alpha}^{\eps u_{i}}$ (resp. $s_{i} = \abs{\alpha}^{\eps v_{i}}$), so that we have
\[ W = \bigcap_{i=1}^p \big\{ y \in (\Ahyb{N}{k})_{\eps} : \abs{P_{i}(y)} \le \abs{\alpha(y)}^{v_{i}}\big\} \cap \bigcap_{i=p+1}^q \big\{ y \in (\Ahyb{N}{k})_{\eps} : \abs{\alpha(y)}^{u_{i}} \le \abs{P_{i}(y)} \le \abs{\alpha(y)}^{v_{i}}\big\}.\]
We have 
\[ T(W) = \bigcap_{i=1}^p \big\{ y \in (\Ahyb{N}{k})_{>0} : \abs{P_{i}(y)} \le \abs{\alpha(y)}^{v_{i}}\big\} \cap \bigcap_{i=p+1}^q \big\{ y \in (\Ahyb{N}{k})_{>0} : \abs{\alpha(y)}^{u_{i}} \le \abs{P_{i}(y)} \le \abs{\alpha(y)}^{v_{i}}\big\}.\]

For $n\in \N$, set 
\[
T_{n}(W) := T(W) \cap \pr^{-1}([2^{-n},1])
= \{ y \in T(W) : \abs{\alpha(y)}  \ge \abs{\alpha}^{2^{-n}}  \}.
\]
By Lemma~\ref{lem:basisrational}, it is a compact over-flow-connected subset of $\Ahyb{N}{k}$. Let $n_{0}\in \N$ such that $\eps \ge 2^{-n_{0}}$. For $n\ge n_{0}$, $T_{n}(W)$ contains~$W$.

It is now easy to check that the properties of the statement are satisfied by the sequence $(V_{n} := j^{-1}(W_{n}))_{n\ge n_{0}}$.
\end{proof}

\begin{lemma}\label{lem:>0eps}
Let $X$ be a scheme locally of finite type over~$k$. Let $\eps\in(0,1]$. Let $U$ be an open subset of~$X^\hyb_{>0}$ such that $T(U)=U$. For each coherent sheaf~$\cF$ on~$U$, the restriction map $\cF(U) \to \cF(U \cap X^\hyb_{\eps})$ is an isomorphism. 

Moreover, the restriction map induces an equivalence between the categories of coherent sheaves on~$U$ and~$U \cap X^\hyb_{\eps}$.
\end{lemma}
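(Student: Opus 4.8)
The plan is to reduce the global statement about the open subset $U$ of $X^\hyb_{>0}$ to the local, compact statements already established in Corollary~\ref{cor:coherentflowcompact}, using the fact that $U$ is flow-saturated (that is, $T(U)=U$) to control how trajectories meet the slice $X^\hyb_\eps$. The key structural observation is that since $T(U)=U$, every point of $U$ lies on a trajectory entirely contained in $U$, and each such trajectory (when $\cH(x)$ is nontrivially valued, which is the generic case over the non-trivially valued part $X^\hyb_{>0}$) meets the fiber $X^\hyb_\eps$ in exactly one point via the flow. So set-theoretically the inclusion $U\cap X^\hyb_\eps \hookrightarrow U$ picks out one representative per trajectory, and the flow provides the identification $U \simeq T(U\cap X^\hyb_\eps)$.

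First I would cover $U$ by neighborhoods supplied by Lemma~\ref{lem:basisflowsaturated}: each point $x\in U\cap X^\hyb_\eps$ has a basis of compact over-flow-connected neighborhoods $V$ in $X^\hyb_\eps$ whose trajectories $T(V)$ exhaust as increasing unions of compact over-flow-connected subsets. For such a $V$, the pair $(V, T(V)\cap U)$ — or rather suitable compact members of the exhaustion of $T(V)$ sitting inside $U$ — satisfies the hypotheses $V \subseteq T(V)\cap U \subseteq T(V)$ of Corollary~\ref{cor:coherentflowcompact}, giving that restriction $\cF(T(V)\cap U) \to \cF(V)$ is an isomorphism and an equivalence of coherent-sheaf categories. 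Second, I would pass to the limit over the exhaustion, using that overconvergent sections commute with the colimit, to conclude the local isomorphism $\cF(T_V) \simeq \cF(V)$ for the relevant flow-saturated pieces $T_V := T(V)\cap U$.

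The third step is the globalization: since $U = \bigcup_x T_{V_x}$ as $x$ ranges over $U\cap X^\hyb_\eps$ and the $V_x$ range over the neighborhood basis, and since $U\cap X^\hyb_\eps = \bigcup_x V_x$, I would glue the local isomorphisms into the global one. For injectivity of $\cF(U)\to\cF(U\cap X^\hyb_\eps)$ one invokes Lemma~\ref{lem:sy=0F} (the vanishing locus of a section is open and closed along each trajectory, so a section vanishing on the slice vanishes everywhere on $U$ by flow-saturatedness). For surjectivity, a section on $U\cap X^\hyb_\eps$ extends uniquely along each trajectory by Proposition~\ref{prop:restrictionflotO}, and the local compatibilities from Corollary~\ref{cor:coherentflowcompact} ensure these extensions glue to an element of $\cF(U)$; the full-faithfulness and essential surjectivity of the restriction functor on coherent sheaves follow by the same gluing, exactly as in the proof of Theorem~\ref{th:FVFUaffine}, with $\sHom(\cF,\cG)$ handling the Hom-sets.

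The main obstacle I anticipate is the bookkeeping around the exhaustion in Lemma~\ref{lem:basisflowsaturated}, ii): Corollary~\ref{cor:coherentflowcompact} applies to compact over-flow-connected pairs $U'\subseteq V'\subseteq T(U')$, but $U$ itself is open and $T_V = T(V)\cap U$ need not be compact, so I must genuinely pass through the increasing union of compact pieces and check that the isomorphisms are compatible under the restriction maps of the exhaustion, so that the colimit (overconvergent sections / the category of coherent sheaves) inherits the equivalence. A secondary subtlety is ensuring that $U\cap X^\hyb_\eps$ really is a transversal to the trajectories in $U$ — i.e. that $T(U\cap X^\hyb_\eps)=U$ — which uses $T(U)=U$ together with the fact that over $X^\hyb_{>0}$ the relevant points have nontrivially valued completed residue fields, so each trajectory in $U$ genuinely crosses the level $\eps$; the trivially valued points are already handled fiberwise by Remark~\ref{rem:shapeT(x)} and cause no difficulty.
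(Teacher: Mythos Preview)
Your proposal is correct and follows essentially the same route as the paper: pick local neighborhoods~$V$ of points in $U\cap X^\hyb_\eps$ via Lemma~\ref{lem:basisflowsaturated}, apply Corollary~\ref{cor:coherentflowcompact} to the compact over-flow-connected pieces~$V_n$ of the exhaustion of~$T(V)$, pass to the union to get the result for the pair $(T(V),V)$, and then observe that the~$V$'s cover $U\cap X^\hyb_\eps$ while the~$T(V)$'s cover $T(U\cap X^\hyb_\eps)=T(U)=U$. The paper's proof is terser at the final gluing step (it simply says ``The result follows''), whereas you spell out the sheaf-theoretic bookkeeping (injectivity via Lemma~\ref{lem:sy=0F}, surjectivity via Proposition~\ref{prop:restrictionflotO}, $\sHom$ for full faithfulness); this extra care is not wrong, but it is also not strictly needed, since once the result holds locally for the pairs $(T(V),V)$ and these form compatible covers, the global statement is formal. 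One small remark: your caveat about trivially valued~$\cH(x)$ over~$X^\hyb_{>0}$ is unnecessary, since every point there has $\pr(x)>0$ and hence inherits a nontrivial absolute value from~$k$.
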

\begin{proof}
Let $x \in U \cap X^\hyb_{\eps}$. By Lemma~\ref{lem:basisflowsaturated}, there exists a compact over-flow-connected neighborhood~$V$ of~$x$ in $U$ such that $T(V)$ may be written a the union of a sequence  $(V_{n})_{n\in \N}$ of compact over-flow-connected subsets containing~$V$. By Corollary~\ref{cor:coherentflowcompact}, the result holds with $U$ and $U \cap X^\hyb_{\eps}$ replaced by $V_{n}$ and $V$ respectively, hence also by $T(V)$ and $V$ respectively.

The subsets~$V$ as above cover $U \cap X^\hyb_{\eps}$, and the subsets $T(V)$ cover $T(U \cap X^\hyb_{\eps}) = T(U) = U$. The result follows.   
\end{proof}

Remarkably, a GAGA theorem holds for hybrid analytifications, without requiring properness assumptions. This was already observed by V.~Berkovich in the trivially valued case, that is to say for the functor $X \mapsto X^\an_{0}$, see \cite[Theorem~3.5.1]{rouge}. We follow his proof.

\begin{theorem}\label{th:GAGAhybrid}
Let $X$ be a scheme locally of finite type over~$k$. 

The functor $\cF \mapsto \cF^\hyb$ realises an equivalence between the categories of coherent sheaves on~$X$ and~$X^\hyb$.

Moreover, for each coherent sheaf~$\cF$ on~$X$ and each $q\in \N$, we have a canonical isomorphism
\[ H^q(X,\cF) \simto H^q(X^\hyb,\cF^\hyb).\]

The same results holds replacing~$X^\hyb$ by~$X_{0^\dag}^\hyb$ or $X_{[0,\eps]}^\an$ for $\eps\in (0,1]$.
\end{theorem}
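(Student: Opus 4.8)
The plan is to follow the standard architecture of GAGA theorems: first prove the three assertions (the isomorphism on $H^0$, the vanishing of higher cohomology, and essential surjectivity of $\cF \mapsto \cF^\hyb$) when $X$ is affine, and then globalise by a \v{C}ech argument. Since $X$ is a variety, it is quasi-compact and separated, so it admits a finite cover by affine opens $(U_i)_i$ whose finite intersections $U_{i_0\cdots i_p}$ are again affine. Analytification commutes with open immersions and with fibre products, whence $(U_{i_0 \cdots i_p})^\hyb = U_{i_0}^\hyb \cap \cdots \cap U_{i_p}^\hyb$, so that $(U_i^\hyb)_i$ is an open cover of $X^\hyb$ by affines with affine intersections. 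Once the affine case is known, the two \v{C}ech complexes attached to these covers agree term by term through the affine $H^0$-isomorphism and both compute derived-functor cohomology (by affine vanishing), giving $H^q(X,\cF)\simto H^q(X^\hyb,\cF^\hyb)$ for all $q$. Full faithfulness then follows from the affine $H^0$-statement applied to $\sHom(\cF,\cG)$ (which is compatible with analytification), and essential surjectivity from gluing: a coherent sheaf on $X^\hyb$ is descended over each $U_i^\hyb$ by the affine case, and the descents are glued over the overlaps by full faithfulness.

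It thus remains to treat an affine $X=\Spec(A)$. Here I would follow Berkovich's trivially valued argument \cite[Theorem~3.5.1]{rouge}, which settles the comparison between $X$ and the central fibre $X^\an_0$, and use the flow to transport it to the whole hybrid space. The crucial geometric point is that $X^\hyb$ is the disjoint union of $X^\an_0$ and the flow-saturated open $X^\hyb_{>0}$, and that every trajectory in $X^\hyb_{>0}$ accumulates on $X^\an_0$ as the flow parameter tends to~$0$. Consequently, by Theorem~\ref{th:coherentflow} and Corollary~\ref{cor:coherentTx}, a coherent sheaf, or a section, defined on a flow-saturated neighbourhood of $X^\an_0$ extends uniquely along trajectories to all of $X^\hyb$; combined with Lemma~\ref{lem:>0eps}, which identifies coherent sheaves on $X^\hyb_{>0}$ with those on a single fibre $X^\an_\eps$, this reduces the essential surjectivity and the $H^0$-comparison on $X^\hyb$ to the corresponding statements for $X^\an_0$, hence to Berkovich's theorem.

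For the higher cohomology vanishing on the non-compact affine space $X^\hyb$, I would exhaust $X^\hyb$ by compact over-flow-connected subsets as provided by Lemma~\ref{lem:basisflowsaturated}, extend coherent sheaves to neighbourhoods by Proposition~\ref{prop:extensioncoherentsheafcompact}, and combine the compact comparison of Corollary~\ref{cor:coherentflowcompact} with a Mittag-Leffler argument to kill the $\lim^1$ terms and deduce vanishing on the exhaustion. The statements for the variants $X^\hyb_{0^\dag}$ and $X_{[0,\eps]}^\an$ should come out in the same breath: $X_{[0,\eps]}^\an$ is itself the analytification over a (rescaled) hybrid field, so the argument applies verbatim, while for $X^\hyb_{0^\dag}$ the overconvergent sections over $X^\an_0$ are, by the flow (Theorem~\ref{th:coherentflow}), exactly the sections over flow-saturated hybrid neighbourhoods, which reduces this case to that of $X^\hyb$.

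The main obstacle I anticipate lies entirely in the affine case, and more precisely in the passage across the central fibre. Berkovich's theorem governs the intrinsic trivially valued space $X^\an_0$, whereas the flow naturally produces the overconvergent fibre $X^\hyb_{0^\dag}$, and these two locally ringed spaces do not coincide; the delicate point is therefore to show that sections and coherent sheaves on $X^\hyb_{>0}$ glue with their limits on $X^\an_0$ in a way compatible with the overconvergent structure, so that the comparison with $X$ survives the limit $\eps\to 0$. Controlling this accumulation — equivalently, proving existence and uniqueness of the extension along trajectories up to and including the central fibre — is where the bulk of the work, and the genuine use of the Stein-type properties of hybrid spaces from \cite{CTCZ}, will be concentrated.
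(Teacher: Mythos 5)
Your global architecture (affine case plus \v{C}ech globalisation, full faithfulness via $\sHom$, gluing for essential surjectivity) matches the paper's, and you correctly locate the crux in the affine case. But the mechanism you propose there --- Berkovich's trivially valued GAGA on $X^\an_0$ transported to $X^\hyb$ by the flow --- has a genuine gap that your own last paragraph flags without closing. The flow never crosses the central fibre: for $x\in X^\hyb_{>0}$ one has $\pr(x^\alpha)=\alpha\,\pr(x)>0$ for every $\alpha\in I_x^\ast$, so $T(x)\subseteq X^\hyb_{>0}$, and dually trajectories of points of $X_0$ stay in $X_0$. A ``flow-saturated neighbourhood of $X^\an_0$'' is automatically all of $X^\hyb$, so there is nothing to extend; and Theorem~\ref{th:coherentflow} together with Lemma~\ref{lem:>0eps} only identifies coherent data on $X^\hyb_{>0}$ with data on a single fibre $X^\an_\eps$, where GAGA for a non-proper affine variety over a complete valued field is simply \emph{false} (think of $\cO(\C^n)$ versus $\C[T_1,\dotsc,T_n]$). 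So knowing the statement on $X^\an_0$ and on each $X^\an_\eps$ separately cannot yield it on $X^\hyb$: the whole content is the rigidity imposed by the \emph{simultaneous} presence of the central fibre and the nearby ones, i.e.\ the comparison between the intrinsic space $X^\an_0$ and the overconvergent fibre $X^\hyb_{0^\dag}$ --- which is one of the conclusions of the theorem, not an available input. Your proposed exhaustion for the higher cohomology has the same defect: the compact over-flow-connected sets of Lemma~\ref{lem:basisflowsaturated} and their trajectories live in $X^\hyb_{>0}$ and never cover $X_0$, so a Mittag--Leffler argument on them says nothing about $H^q(X^\hyb,\wc)$.

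The paper's proof does not use the flow at all here. It works directly in the hybrid category: the affine $X=\Spec(A)$, $A=k[T_1,\dotsc,T_n]/I$, is exhausted by the hybrid affinoids $X^\hyb(r)$ (relative closed discs, which \emph{do} contain the central fibre), and the Stein-type results of \cite{CTCZ} give $\cO(\Ahyb{n}{k})=k[T_1,\dotsc,T_n]$, $\cO(X^\hyb(r))=A$, the vanishing of higher coherent cohomology on $\Ahyb{n}{k}$ and on $X^\hyb(r)$, and the equivalence between coherent sheaves on $X^\hyb(r)$ and finitely generated $A$-modules. (The computation showing that a function convergent on a hybrid disc must be polynomial --- because its coefficients are also constrained by the trivial absolute value --- is exactly the point your reduction leaves open; compare Lemma~\ref{lem:D10}.) Faithfulness comes from flatness of $X^\hyb\to X$, fullness from $\cF^\hyb(X^\hyb)=\cF(X)$ applied to $\sHom$ sheaves, and the globalisation needs two \v{C}ech steps (separated, then arbitrary), since the theorem is stated for schemes locally of finite type, not just for separated quasi-compact ones as you assume. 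To repair your argument you would have to import precisely these hybrid Stein inputs, at which point the detour through $X^\an_0$ and the flow becomes unnecessary.
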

\begin{proof}
Let us begin with the first part of the statement. We may assume that $X$ is affine, say $X = \Spec(A)$, with $A = k[T_{1},\dotsc,T_{n}]/I$ for a finitely generated ideal~$I$ of~$k[T_{1},\dotsc,T_{n}]$.

By \cite[Lemme~6.5.6]{CTCZ}, the functor $\cF \mapsto \cF^\hyb$ sends coherent sheaves to coherent sheaves. Moreover, by \cite[Th\'eor\`eme~6.6.5]{CTCZ}, the canonical morphism $X^\hyb\to X$ is flat, hence the preceding functor is faithful. 

It follows from \cite[Corollaire~2.8]{EtudeLocale} that we have a canonical identification $k[T_{1},\dotsc,T_{n}] = \cO(\Ahyb{n}{k})$. By \cite[Corollaire~8.5.15]{CTCZ} (applied with infinite radii), the space $\cO(\Ahyb{n}{k})$ has no higher coherent cohomology. By a classical argument (see the proof of \cite[Th\'eor\`eme~8.3.8]{CTCZ}, for instance), we deduce that $A = \cO(X^\hyb)$. Let $\cF$ be a coherent sheaf on~$X$. Writing it as a cokernel of free sheaves of finite rank and using right-exactness of the global section functor, we deduce that $\cF^\hyb(X^\hyb) = \cF(X)$. Applying this result to sheaves of homomorphisms, it follows that the functor $\cF \mapsto \cF^\hyb$ is full.

It remains to prove essential surjectivity. Let $r \in \R_{\ge 1}$ and consider the relative closed disc 
\[D_{\hyb}(r) := \{ x \in \Ahyb{n}{k} : \forall i =1,\dotsc,n,\ \abs{T_{i}(x)}\le r\}.\]
Denote by~$X^\hyb(r)$ its closed analytic subset defined by the ideal~$I$. It identifies to a subset of~$X^\hyb$, and the family $(X^\hyb(r))_{r\ge 1}$ exhausts~$X^\hyb$. By the same argument as above (using \cite[Corollaire~8.2.16]{CTCZ} instead of \cite[Corollaire~8.5.15]{CTCZ}), we have $A = \cO(X^\hyb(r))$. Moreover, by \cite[Corollaire~8.3.10]{CTCZ}, we have an equivalence between the category of coherent sheaves over~$X^\hyb(r)$ and the category of finitely generated modules over $\cO(X^\hyb(r)) = A$. The result follows. 

\medbreak

Let us now prove the second part of the statement. It concerns an arbitrary scheme~$X$ that is locally of finite type over~$k$. Let~$\cF$ be a coherent sheaf on~$X$. 

\smallbreak

\noindent \textit{Step~1:} $X$ is affine

During the proof of the first part of the statement, we have seen that $\cF^\hyb(X^\hyb) = \cF(X)$. Moreover, by \cite[Corollaire~8.5.24]{CTCZ}, for each $q\ge 1$, we have 
\[H^q(X^\hyb,\cF^\hyb) = H^q(X,\cF) =0.\]

\smallbreak

\noindent \textit{Step~2:} $X$ is separated

Cover~$X$ by a family of open affine subschemes $(U_{i})_{i\in I}$. Since $X$ is separated, the multiple intersections of the~$U_{i}$'s are affine too. The result now follows from the result in the affine case, together with the \v Cech-to-cohomology spectral sequence.

\smallbreak

\noindent \textit{Step~3:} $X$ is arbitrary

Cover~$X$ by a family of open affine subschemes $(U_{i})_{i\in I}$. The $U_{i}$'s and their multiple intersections are separated, hence the result of Step~2 applies to them. The result for~$X$ follows again from the \v Cech-to-cohomology spectral sequence.

\medbreak

The results for $X_{0^\dag}^\hyb$ or $X_{[0,\eps]}^\an$ are proven using the same arguments.
\end{proof}

\section{Varieties and analytic spaces over trivially valued fields}\label{sec:triviallyvalued}

In the recent literature, several examples occur where Berkovich analytic spaces over trivially valued fields are used to understand the behaviour at the boundary of algebraic varieties.

\subsection{Generic fibers}

An interesting feature of non-Archimedean analytic spaces is that they can be used to define a notion of generic fiber for formal schemes (with suitable finiteness assumptions) over valuation rings. This idea was first developed by M.~Raynaud (see~\cite{tableronde}), for formal schemes locally topologically of finite presentation, and later extended by P.~Berthelot to a wider class (see~\cite{BerthelotCohomologieRigide}). We point the reader to \cite{BerkovichVC} and \cite{BerkovichVCII} for the corresponding constructions in the setting of Berkovich spaces. 

In this text, we only need generic fibers of formal schemes of a rather specific sort, and we only recall the construction in this setting. We follow the presentation given by A.~Thuillier in~\cite{beth} and borrow his notation.\footnote{In particular, we denote the usual generic fiber of a formal scheme~$\Xk$ by~$\Xk^\beth$ instead of the customary $\Xk_{\eta}$, which is reserved for a smaller space.} 

Remark that the valued field~$k_{0}$ is a valuation ring. We consider formal schemes of over~$k_{0}$ that are \emph{locally algebraic}, \emph{i.e.} locally isomorphic to the completion of a variety along a closed subscheme. Note that they are special formal schemes in the sense of~\cite{BerkovichVCII}. To a locally algebraic formal scheme~$\Xk$, we associate a generic fiber~$\Xk^\beth$ (which is an analytic space over~$k_{0}$) and a map $r_{\Xk} \colon \Xk^\beth \to \Xk_{s}$ which is anticontinuous (the preimage of an open subset is closed). The construction involves three steps. 

\smallbreak

\noindent \textit{Step~1:} $\Xk = X$ is an affine variety

Write $X = \Spec(A)$. Set 
\[X^\beth := \cM(A),\] 
where $A$ is endowed with the trivial norm. The map $r_{X} \colon \cM(A) \to \Spec(A)$ is the reduction map defined as follows: for $x \in \cM(A)$, 
\[ r_{X}(x) := \{a \in A : \abs{a(x)} <1\}.\]

\smallbreak

\noindent \textit{Step~2:} $\Xk$ is the completion of an affine variety~$X$ along a closed subscheme~$Y$ 

Write $X = \Spec(A)$ and $Y=V(I)$, where $I$ is an ideal of~$A$. Set
\[ \Xk^\beth := r_{X}^{-1}(Y) = \{x\in X^\beth : \forall b \in I, \abs{b(x)} <1\}.\]
It is an open subset of~$X^\beth$, from which it inherits an analytic structure. The map $r_{\Xk} \colon r_{X}^{-1}(Y) \to Y$ is that induced by~$r_{X}$.

\smallbreak

\noindent \textit{Step~3:}  $\Xk$ is locally algebraic%, \emph{i.e.} locally isomorphic to the completion of a variety along a closed subscheme

In this case, $\Xk^\beth$ and $r_{\Xk}$ are obtained from the construction described in Step~2 by a gluing process. %(The functor $\wc^\beth$ sends open immersions to closed analytic domain embeddings.)

\medbreak

Let us mention that A.~Thuillier gives a more precise presentation: he actually states a functorial definition of~$\Xk^\beth$, where $r_{\Xk}$ is enhanced to a morphism $\Xk^\beth \to \Xk$. We refer to \cite[Proposition et D\'efinition 1.3]{beth} for details.

\medbreak

We may now state A.~Thuillier's refined version of the generic fiber.

\begin{definition}
Let~$\Xk$ be a locally algebraic formal scheme~$\Xk$ over~$k_{0}$. Its \emph{generic fiber in the sense of A.~Thuillier} is the analytic space over~$k_{0}$ defined by 
\[\Xk_{\eta} := \Xk^\beth - \Xk_{s}^\beth.\]
\end{definition}

\begin{example}
Let $\Xk$ be the the completion of a variety $X = \Spec(A)$ along a closed subscheme $Y=V(I)$. Then, we have
\[ \Xk_{\eta} =  \{x\in \cM(A) : \forall b \in I, \abs{b(x)} <1\} -  \{x\in \cM(A) : \forall b \in I, b(x) = 0\}.\]
In particular, if $I = (b_{1},\dotsc,b_{r})$, then
\[ \Xk_{\eta} =  \{x\in \cM(A) : 0 < \max_{1\le i\le r}(\abs{b_{i}(x)}) <1\}.\]
\end{example}

\subsection{Generic fibers and analytifications}\label{sec:genericfibers}

Let $X$ be a variety over~$k$. We have just seen a way to obtain an analytic space over~$k_{0}$ from~$X$, by passing to the associated formal scheme and taking the generic fiber~$X^\beth$. Another possibility is to consider the analytification~$X^\an_{0}$ of~$X$ over~$k_{0}$. It is related to the former. Indeed, by \cite[Proposition~1.10]{beth}, $X^\beth$ may be identified to a compact analytic domain of~$X^\an_0$.

To be more precise, let us extend to multiplicative seminorms the classical notion of center of a valuation. Recall that there exists a canonical morphism $\rho \colon X_{0}^\an \to X$ (see Section~\ref{sec:analytification}). Let $x \in X^\an_0$. By Remark~\ref{rem:pairs}, the point~$x$ may be described by the data of the point~$\rho(x) \in X$ and an absolute value~$\va_{x}$ on the residue field~$\kappa(\rho(x))$. We will, somewhat abusively, denote by~$\kappa(\rho(x))^\circ$ the subring of elements of~$\kappa(\rho(x))$ that are bounded by~1 for~$\va_{x}$.

\begin{definition}\label{def:center}
Let $x\in X_{0}^\an$. We say that the seminorm associated to~$x$ (or the point~$x$) \emph{has a center on~$X$} if there exists a morphism $\lambda^\circ_{x} \colon \Spec (\kappa(\rho(x)))^\circ \to X$ that makes the following diagram 
\[
\begin{tikzcd}
\Spec (\kappa(\rho(x))) \ar[r, "\lambda_{x}"] \ar[d] & X \\
\Spec (\kappa(\rho(x))^\circ) \ar[ru, dashed, "\lambda_{x}^\circ"'] &
\end{tikzcd}
\]
commutative.

Since $X$ is separated, as soon as it exists, the morphism $\lambda^\circ_{x}$ is unique. In this case, the image of the closed point of $\Spec (\kappa(\rho(x))^\circ)$ is called the \emph{center} of the seminorm associated to~$x$ (or the point~$x$). 

For each subset $U$ of~$X$, we say that the seminorm associated to~$x$ (or the point~$x$) \emph{has a center on~$U$} if it has a center on~$X$, and its center belongs to~$U$.
\end{definition}

The following result is a direct consequence of the definitions.

\begin{lemma}\label{lem:center}
Let $x\in X_{0}^\an$. 

\begin{enumerate}[i)]
\item The point~$x$ belongs to~$X^\beth$ if, and only if, it has a center on~$X$.

\item For each subset~$U$ of~$X$, the point~$x$ belongs to~$r_{X}^{-1}(U)$ if, and only if, it has a center on~$U$.
\end{enumerate}
\qed
\end{lemma}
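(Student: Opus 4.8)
The plan is to reduce to the affine case, unwind the definitions to prove part~i), identify the center of a point of $X^\beth$ with its reduction $r_{X}(x)$, and then deduce part~ii) formally. The reduction to $X=\Spec(A)$ is legitimate: both $X^\beth$ and $r_{X}$ are built by gluing affine charts, and if $x$ admits a center $\xi$ on $X$, then the map $\Spec(\kappa(\rho(x))^\circ)\to X$ sends its generic point to $\rho(x)$ and its closed point to $\xi$, so $\rho(x)$ generizes $\xi$. Since open subschemes are stable under generization, any affine open containing $\xi$ contains $\rho(x)$ as well, and all the data may be read off on a single chart $V=\Spec(A)$ with $\xi\in V$.

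In the affine case I would argue as follows. Write $q:=\rho(x)$, so that by Remark~\ref{rem:pairs} the point $x$ corresponds to a pair $(q,\va_{x})$ with $\va_{x}$ an absolute value on $\kappa(q)$; let $\lambda_{x}\colon\Spec(\kappa(q))\to X$ be the canonical morphism, given on rings by $a\mapsto a(x)\in\kappa(q)$, and recall that $\kappa(q)^\circ$ is the valuation ring of $\va_{x}$, with maximal ideal $\m^\circ=\{u\in\kappa(q):\abs{u}<1\}$. The dashed arrow $\lambda_{x}^\circ$ is forced to be $\lambda_{x}$ composed with the injective inclusion $\kappa(q)^\circ\hookrightarrow\kappa(q)$, so it exists (and is then automatically unique) if and only if the image of $A$ in $\kappa(q)$ lands in $\kappa(q)^\circ$, i.e. if and only if $\abs{a(x)}\le 1$ for every $a\in A$. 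Since $X^\beth=\cM(A)$ by definition of the $\beth$-construction in the affine case, this last condition is exactly $x\in X^\beth$, which settles part~i).

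Assuming the center exists, I would identify it with $r_{X}(x)$: the center is the image of the closed point of $\Spec(\kappa(q)^\circ)$, which corresponds to the prime ideal $(\lambda_{x}^\circ)^{-1}(\m^\circ)=\{a\in A:\abs{a(x)}<1\}$, and this is precisely $r_{X}(x)$ by its definition. Part~ii) is then a matter of bookkeeping: for any subset $U\subseteq X$ one has $x\in r_{X}^{-1}(U)$ if and only if $x\in X^\beth$ and $r_{X}(x)\in U$, if and only if $x$ has a center on $X$ whose value lies in $U$, i.e. if and only if $x$ has a center on $U$. (Taking $U=X$ recovers part~i), consistently.)

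I do not expect any serious obstacle, since the statement is essentially a translation between the seminorm description of $X^\beth$ and the valuative formulation of the center. The only step requiring a little care is the reduction to the affine chart, namely the observation that $\rho(x)$ and the center lie in a common affine open because $\rho(x)$ generizes the center and opens are generization-stable; everything else is a direct unwinding of the definitions.
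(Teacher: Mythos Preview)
Your argument is correct. The paper does not actually give a proof of this lemma: it is stated with a \texttt{\textbackslash qed} and the preceding sentence says it is ``a direct consequence of the definitions.'' Your proposal is precisely the unwinding of those definitions that the paper leaves implicit, and every step (the reduction to an affine chart via the generization argument, the identification of the existence of $\lambda_x^\circ$ with the condition $\abs{a(x)}\le 1$ for all $a\in A$, and the identification of the center with $r_X(x)$) is correct.
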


It follows from the valuative criterion of properness that $X^\beth = X^\an_0$ if, and only if, $X$ is proper (see \cite[Proposition~1.10]{beth}). 

\medbreak

Following V.~Berkovich's suggestion in his correspondence with V.~Drinfeld (see \cite{BerkovichtoDrinfeld} or \cite[Section~3.3.2]{BBT}), we introduce a notation for the set of points of~$X^\an_0$ whose associated seminorm has no center on~$X$. 

\begin{notation}
Set 
\[X_{\infty} := X^\an_0 - X^\beth.\]
It is an open subset of~$X^\an_{0}$.
\end{notation}

\begin{example}\label{ex:A1Gm}
Consider $\A^1_{k}$ with coordinate~$T$. For $r \in \R_{> 0}$, denote by~$\eta_{r}$ the point of $\E{1}{k_{0}}$ associated to the absolute value
\[ P(T) = \sum_{\ge 0} a_{i} T^i \mapstoo \max_{i\ge 0} (\abs{a_{i}}_{0} r^i).\] 
Note that, for each $P \in k[T] - \{0\}$, we have $\abs{P(\eta_{r})} = r^{-v_{T}(P)}$ if $r \in (0,1)$, and $\abs{P(\eta_{r})} = r^{\deg(P)}$ if $r >1$. 

We have
\[ (\A^1_{k})_{\infty} = \{ x \in \E{1}{k_{0}} : \abs{T(x)} >1\} = \{\eta_{r} : r\in \R_{>1}\}\]
and 
\[ (\G_{m,k})_{\infty} = \{ x \in \E{1}{k_{0}} : 0 < \abs{T(x)} <1\} \cup  \{ x \in \E{1}{k_{0}} : \abs{T(x)} >1\}  = \{\eta_{r} : r\in \R_{>0} - \{1\}\}.\]

Those spaces have a finite number of connected components (1 and 2 respectively), which correspond to the missing points in a smooth compactification. This result easily generalizes to arbitrary curves.
\end{example}

\begin{example}\label{ex:A2}
Consider $\A^2_{k}$ with coordinates $T_{1},T_{2}$. We have
\[ (\A^2_{k})_{\infty} = \{  x \in \E{2}{k_{0}} : \max(\abs{T_{1}(x),\abs{T_{2}(x)}}) > 1\}.\]
\end{example}

The construction $\wc_{\infty}$ behaves well with respect to proper morphisms.

\begin{proposition}\label{prop:finfty}
Let $f\colon X_{1} \to X_{2}$ be a proper morphism between varieties over~$k$. Then $f_{0}^\an$ is proper and we have $(f^\an_0)^{-1}(X_{2}^\beth) = X_{1}^\beth$. 

In particular, $f^\an_0$ restricts to a proper morphism $f_{\infty} \colon (X_{1})_{\infty} \to (X_2)_{\infty}$.
\end{proposition}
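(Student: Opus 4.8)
The plan is to reduce the whole statement to the valuative characterisation of the sets $X_i^\beth$ furnished by Lemma~\ref{lem:center}: a point of $X_{0}^\an$ lies in $X^\beth$ exactly when its associated seminorm admits a center on the scheme. I would first record that $f^\an_{0}$ is proper. Each scheme-theoretic fibre of $f$ is a proper scheme $Z$ over a residue field carrying the trivial absolute value; for such a $Z$ one has $Z^\beth = Z^\an_{0}$, so its analytification is compact, whence the fibres of $f^\an_{0}$ are compact. Universal closedness of $f$ propagates to closedness of $f^\an_{0}$, so that $f^\an_{0}$ is topologically proper (alternatively one invokes that analytification over a trivially valued field preserves properness, following \cite{beth}).

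The heart of the argument is the equality $(f^\an_{0})^{-1}(X_{2}^\beth) = X_{1}^\beth$. I would fix $x \in X_{1}^\an$, set $\xi_{1} := \rho(x)$, $K_{1} := \kappa(\xi_{1})$, and let $V_{1} := K_{1}^\circ$ be the valuation ring cut out by $\va_{x}$. Its image $y := f^\an_{0}(x)$ has $\xi_{2} := f(\xi_{1})$, $K_{2} := \kappa(\xi_{2}) \subseteq K_{1}$, and valuation ring $V_{2} := V_{1} \cap K_{2}$. The inclusion $X_{1}^\beth \subseteq (f^\an_{0})^{-1}(X_{2}^\beth)$ is formal and uses no properness: if $x$ has a center $c_{1}$, I pick an affine open $\Spec B \subseteq X_{2}$ containing $f(c_{1})$; since every point of $\Spec V_{1}$ generises its closed point, the composite $\Spec V_{1} \to X_{1} \to X_{2}$ factors through $\Spec B$, and as the generic point maps to $\xi_{2}$ the resulting ring map $B \to V_{1}$ lands in $V_{1} \cap K_{2} = V_{2}$, producing a center of $y$ on $X_{2}$.

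For the reverse inclusion I would use properness of $f$ through the valuative criterion. Suppose $y \in X_{2}^\beth$, so there is a morphism $\Spec V_{2} \to X_{2}$ extending $\lambda_{y} \colon \Spec K_{2} \to X_{2}$. Since $\m_{V_{2}} = \m_{V_{1}} \cap V_{2}$, the ring $V_{1}$ dominates $V_{2}$, which yields $\Spec V_{1} \to \Spec V_{2} \to X_{2}$; together with the tautological $\Spec K_{1} \to X_{1}$ this fills the outer square of the valuative criterion for $f$ (the two composites to $X_{2}$ agreeing because both equal $\Spec K_{1} \to \Spec K_{2} \xrightarrow{\lambda_{y}} X_{2}$). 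As $f$ is proper, the criterion supplies a unique diagonal $\Spec V_{1} \to X_{1}$, which is precisely a center of $x$ on $X_{1}$; thus $x \in X_{1}^\beth$. The main obstacle here is purely organisational: keeping the two valuation rings $V_{2} \subseteq V_{1}$ and the field extension $K_{2} \subseteq K_{1}$ straight, and checking the domination relation that makes the closed point of $\Spec V_{1}$ map to the center of $y$.

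Finally, taking complements in $(f^\an_{0})^{-1}(X_{2}^\beth) = X_{1}^\beth$ gives $(f^\an_{0})^{-1}\big((X_{2})_{\infty}\big) = (X_{1})_{\infty}$, so $f^\an_{0}$ does restrict to a map $f_{\infty} \colon (X_{1})_{\infty} \to (X_{2})_{\infty}$. Being the restriction of the proper map $f^\an_{0}$ above the open subset $(X_{2})_{\infty} = X_{2}^\an - X_{2}^\beth$, this $f_{\infty}$ is again proper, which is the asserted conclusion.
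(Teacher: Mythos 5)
Your proposal is correct and follows essentially the same route as the paper: the forward inclusion is the formal observation that the image of the coordinate ring lands in $\kappa(\rho(y))^\circ = \kappa(\rho(x))^\circ \cap \kappa(\rho(y))$, and the reverse inclusion is exactly the paper's application of the valuative criterion of properness to the diagram $f\circ\lambda_x = \lambda_y\circ\varphi$, with the domination $V_2 \subseteq V_1$ supplying the outer square. The only cosmetic difference is that the paper disposes of the properness of $f_0^\an$ by citing \cite[Proposition~3.4.7]{rouge} rather than re-deriving it from compactness of fibres.
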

\begin{proof}
The morphism $f_{0}^\an$ is proper by \cite[Proposition~3.4.7]{rouge}.

Let $x\in (X_{1})^\an_0$ and set $y:= f(x) \in (X_{2})^\an_0$. The morphism~$f_{0}^\an$ induces an isometry $(\kappa(\rho(y)),\va_{y}) \to (\kappa(\rho(x)),\va_{x})$, and we have a commutative diagram
\[
\begin{tikzcd}
\Spec (\kappa(\rho(x))) \ar{r}{\lambda_{x}} \ar{d}{\varphi} & X_1 \ar{d}{f} \\
\Spec (\kappa(\rho(y))) \ar{r}{\lambda_{y}} & X_2.
\end{tikzcd}
\]
It is clear that, if $x$ belongs to~$X_{1}^\beth$, then $y$ belongs to~$X_{2}^\beth$.

Conversely, assume that $y\in X_{2}^\beth$. Then, the morphism~$\lambda_{y}$ factors through~$\Spec (\kappa(\rho(y))^\circ)$, hence the morphism $f\circ \lambda_{x} = \lambda_{y}\circ \varphi$ factors through~$\Spec (\kappa(\rho(x))^\circ)$. By the valuative criterion of properness, the morphism~$\lambda_{x}$ also factors through~$\Spec (\kappa(\rho(x))^\circ)$, which means that $x\in X_{1}^\beth$.
\end{proof}

As a consequence, the mappings $X \mapsto X_\infty$, $f\mapsto f_\infty$ define a functor from the category of varieties over~$k$ with proper morphisms to that of analytic varieties over~$k_{0}$.

\medbreak

To conclude, let us state the remarkable fact~$X_{\infty}$ may be computed using a compactification of~$X$. 

\begin{proposition}\label{prop:etainfini}
Let $Y$ be a compactification of~$X$, \emph{i.e.} a proper variety in which $X$ embeds as an open subset. Set $Z := Y - X$ and let~$\Yk$ be the completion of~$Y$ along~$Z$. Then, we have a natural identification $\Yk_{\eta} = X_{\infty}$.

In particular, $\Yk_{\eta}$ does not depend on the choice of the compactification~$Y$ of~$X$.
\end{proposition}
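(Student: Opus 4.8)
The plan is to carry out the whole comparison inside the ambient space $Y^\an_0$, where both sides naturally live, and to show that $\Yk_\eta$ and $X_\infty$ are literally the same open subset. First I would record the two relevant set-theoretic descriptions. Since $X \hookrightarrow Y$ is an open immersion, $X^\an_0$ is the open subset $\rho^{-1}(X)$ of $Y^\an_0$, and dually $Z^\an_0 = \rho^{-1}(Z)$ is closed. As $Y$ is proper, the valuative criterion gives $Y^\beth = Y^\an_0$ (see \cite[Proposition~1.10]{beth}). Unwinding Thuillier's construction of $\Yk = \hat Y_Z$, one has $\Yk^\beth = r_Y^{-1}(Z)$, which by Lemma~\ref{lem:center}, ii) is exactly the set of points of $Y^\an_0$ having a center on $Z$, while the local description of the special fiber identifies $\Yk_s^\beth$ with $\rho^{-1}(Z) = Z^\an_0$. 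Removing the latter from the former and using that $Y = X \sqcup Z$ set-theoretically, I obtain
\[ \Yk_\eta = \Yk^\beth - \Yk_s^\beth = \{\, x \in X^\an_0 : x \text{ has a center on } Y \text{ lying in } Z \,\}. \]

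The heart of the matter is then to identify this with $X_\infty = \{x \in X^\an_0 : x \text{ has no center on } X\}$; I expect this center comparison to be the only genuine step. Fix $x \in X^\an_0$, so $\rho(x) \in X$, and write $V := \kappa(\rho(x))^\circ$ for the associated valuation ring. By properness, $x \in Y^\beth$, so the generic morphism $\lambda_x$ extends to a unique $\lambda_x^\circ \colon \Spec(V) \to Y$; let $c \in Y$ be the image of the closed point, i.e. the center of $x$ on $Y$. I claim that $x$ has a center on $X$ if and only if $c \in X$. Indeed, since $X$ is open in $Y$, the morphism $\lambda_x^\circ$ factors through $X$ precisely when its image lies in $X$; as $\Spec(V)$ is local and its generic point already maps into $X$, this happens exactly when the closed point maps into $X$, that is, when $c \in X$. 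Any such factorization is then the sought center on $X$ (unique because $X$ is separated). Consequently $x$ has no center on $X$ iff $c \notin X$ iff $c \in Z$, which is precisely the condition defining $\Yk_\eta$ above. This yields the equality of sets $\Yk_\eta = X_\infty$.

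Finally I would check that the identification is one of $k_0$-analytic (hence locally ringed) spaces, not merely of sets. Both sides are open subsets of $Y^\an_0$: the set $\Yk^\beth = r_Y^{-1}(Z)$ is open by anticontinuity of $r_Y$, and $\Yk_s^\beth = \rho^{-1}(Z)$ is closed, so $\Yk_\eta$ is open in $Y^\an_0$; likewise $X_\infty$ is open in $X^\an_0 = \rho^{-1}(X)$, hence open in $Y^\an_0$. Since both carry the analytic structure induced from $Y^\an_0$, the identification of underlying sets promotes to an isomorphism of analytic spaces over $k_0$. The independence of the choice of compactification $Y$ is then immediate, because $X_\infty$ was defined intrinsically from $X$ alone. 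The main obstacle is the center comparison of the second paragraph, where properness is used to produce a center on $Y$ and the open-immersion behaviour is used to detect when that center already lives on $X$.
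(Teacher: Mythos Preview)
Your proof is correct and follows essentially the same route as the paper's: embed everything in $Y_0^\an$, use properness of~$Y$ to get $Y^\beth = Y_0^\an$, identify $\Yk^\beth$ with $r_Y^{-1}(Z)$ and $\Yk_s^\beth$ with $Z_0^\an$, and reduce to the center comparison. Your treatment is in fact a bit more detailed than the paper's on two points: you spell out why, for $x\in X_0^\an$, having no center on~$X$ is equivalent to the center on~$Y$ lying in~$Z$ (the paper simply asserts $r_Y^{-1}(Z)=Y^\beth - X^\beth$), and you explicitly remark that the identification is one of analytic spaces rather than bare sets.
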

\begin{proof}
Let $j\colon \Yk\to Y$ be the canonical morphism. By \cite[Proposition~1.5]{beth}, %(or \cite[Proposition~1.3]{BerkovichVCII}), 
the morphism $j^\beth\colon \Yk^\beth \to Y^\beth = Y^\an_0$ induces an isomorphism between~$\Yk^{\beth}$ and the open subset $r_{Y}^{-1}(Z)$ of~$Y^\an_0$. We identify them in the sequel.

The space $r_{Y}^{-1}(Z)$ consists of the points of $Y^\beth$ whose associate seminorm has a center in~$Z$, that is to say not in~$X$. In other words, $\Yk^{\beth} = Y^\beth - X^\beth$, hence $\Yk_{\eta} = Y^\beth - X^\beth - Z^\beth$.

Since~$Y$ is proper, $Z$ is proper too, and we have $Y^\beth = Y_{0}^\an$ and $Z^\beth = Z_{0}^\an$. It follows that 
\[ \Yk_{\eta} = Y^\an_{0} - X^\beth - Z^\an_{0} = X_{0}^\an - X^\beth = X_{\infty}.\]
\end{proof}

\section{Berkovich compactifications}\label{sec:compactifications}

In this section, we define and study compactifications of analytifications of varieties over~$\hat k$ in the setting of hybrid Berkovich spaces. 

\begin{notation}
For a variety $X$ over~$k$, set 
\[X^+ := X^\hyb - X^\beth.\] 
It is an open subset of~$X^\hyb$, hence an analytic space over~$k_{\hyb}$.
\end{notation}

Note that we have $X^+_{0} = X_{\infty}$ and, for each $\eps \in (0,1]$, $X^+_{\eps} = (X\otimes_{k} \hat k_{\eps})^\an$. 
In particular, we have $X^+_{1} = (X\otimes_{k} \hat k)^\an$.

\begin{proposition}\label{prop:f+}
Let $f\colon X_{1} \to X_{2}$ be a proper morphism of varieties over~$k$. Then the morphism $f^\hyb \colon X_{1}^\hyb \to X_{2}^\hyb$ is proper and restricts to a proper morphism $f^+ \colon X_{1}^+ \to X_{2}^+$.

In particular, the mappings $X \mapsto X^+$, $f\mapsto f^+$ define a functor from the category of varieties over~$k$ with proper morphisms to that of analytic varieties over~$k_\hyb$ with proper morphisms.
\qed
\end{proposition}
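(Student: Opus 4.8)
The plan is to mirror the structure of the proof of Proposition~\ref{prop:finfty}: first establish that $f^\hyb$ is proper, then identify the preimage of the ``beth'' locus, and finally deduce the properness of the restriction by an essentially topological argument. For the properness of $f^\hyb$, I would invoke the fact that analytification over a geometric base ring preserves properness, exactly as \cite[Proposition~3.4.7]{rouge} is used in the trivially valued case; this is the hybrid counterpart of that statement and should be available within the formalism of \cite{CTCZ}. As a sanity check, one may also look at the situation fibrewise over the compact base $\cM(k_\hyb) = [0,1]$: over~$0$ the induced morphism is $f^\an_0$, proper by Proposition~\ref{prop:finfty}, while over each $\eps \in (0,1]$ it is (isomorphic to) the analytification $f^\an_\eps$ of a proper morphism over a complete valued field, hence proper by the classical results.

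The heart of the matter is the identity $(f^\hyb)^{-1}(X_2^\beth) = X_1^\beth$. Here I would exploit that $X^\beth$ sits inside the fibre over~$0$: indeed $X_i^\beth \subseteq (X_i^\hyb)_0 = (X_i)^\an_0$, and the morphism $f^\hyb$ commutes with the projections $\pr \colon X_i^\hyb \to [0,1]$. Consequently, if $f^\hyb(x) \in X_2^\beth$ then $\pr(x) = \pr(f^\hyb(x)) = 0$, so that $x$ lies in $(X_1^\hyb)_0 = (X_1)^\an_0$ and $f^\hyb$ acts on it through its restriction to the fibre over~$0$, namely $f^\an_0$. Thus $(f^\hyb)^{-1}(X_2^\beth)$ coincides with $(f^\an_0)^{-1}(X_2^\beth)$ computed inside $(X_1)^\an_0$, which equals $X_1^\beth$ by Proposition~\ref{prop:finfty}.

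Taking complements, this yields $(f^\hyb)^{-1}(X_2^+) = X_1^+$, so $f^\hyb$ indeed restricts to a morphism $f^+ \colon X_1^+ \to X_2^+$. Since $f^\hyb$ is proper and $X_2^+$ is open in $X_2^\hyb$ with preimage exactly $X_1^+$, the restriction $f^+$ is again proper: preimages of compact subsets of $X_2^+$ agree with their preimages under $f^\hyb$, hence are compact, and the relative boundary condition is local on the target. The functoriality assertion then follows formally from the functoriality of analytification and of the assignment $X \mapsto X^+$. I expect the only genuine obstacle to be pinning down the properness of $f^\hyb$ itself, which hinges on having preservation of properness under analytification available in the hybrid setting; everything else reduces cleanly to Proposition~\ref{prop:finfty} and elementary topology.
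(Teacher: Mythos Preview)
Your proposal is correct and follows essentially the same approach as the paper: invoke preservation of properness under hybrid analytification (the paper cites \cite[Proposition~6.5.3]{CTCZ} for this), reduce the identity $(f^\hyb)^{-1}(X_2^\beth) = X_1^\beth$ to Proposition~\ref{prop:finfty} via the observation that $X^\beth$ lies in the fibre over~$0$, and conclude by restricting a proper map to an open set with saturated preimage. Your only uncertainty, the availability of the properness-preservation result in the hybrid setting, is indeed resolved by the cited reference.
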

\begin{proof}
The properness of $f^\hyb$ follows from~\cite[Proposition~6.5.3]{CTCZ}. By Proposition~\ref{prop:finfty}, we have $(f^\hyb)^{-1}(X_{2}^\beth) = X_{1}^\beth$, hence $f^\hyb$ induces a morphism from $X_{1}^+$ to $X_{2}^+$, which remains proper.
\end{proof}

Since the functor $\wc^+$ comes from analytication, it preserves several properties. 

\begin{lemma}
Let $Y$ be a closed subvariety of~$X$ defined by a sheaf of ideals~$\cI$. Then $Y^+$ identifies to the closed analytic subspace of~$X^+$ defined by the sheaf of ideals~$(\cI^\hyb)_{\vert X^+}$.\qed
\end{lemma}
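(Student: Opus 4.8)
The plan is to exploit the fact that analytification over~$k_\hyb$ is compatible with closed immersions, and then to reduce the statement to the set-theoretic identity $Y^\hyb \cap X^\beth = Y^\beth$, which is already furnished by Proposition~\ref{prop:finfty}.

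First I would recall that the closed immersion $\iota \colon Y \hookrightarrow X$ defined by~$\cI$ analytifies to a morphism $\iota^\hyb \colon Y^\hyb \to X^\hyb$ exhibiting $Y^\hyb$ as the closed analytic subspace of~$X^\hyb$ cut out by the sheaf of ideals~$\cI^\hyb$. This is a formal consequence of the construction of the analytification recalled in Section~\ref{sec:analytification}: working locally, $Y$ is a closed subscheme of an affine space defined by an ideal, and Step~2 of that construction identifies $Y^\hyb$ with the closed analytic subspace defined by the generated sheaf of ideals; functoriality globalizes this, and Theorem~\ref{th:GAGAhybrid} guarantees that~$\cI^\hyb$ is coherent and is precisely the ideal sheaf of~$Y^\hyb$ in~$X^\hyb$.

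Next I would observe that forming the closed analytic subspace commutes with restriction to an open subset: since $X^+$ is open in~$X^\hyb$, the closed analytic subspace of~$X^+$ defined by $(\cI^\hyb)_{\vert X^+}$ is nothing but $Y^\hyb \cap X^+$, equipped with the structure it inherits as an open subset of~$Y^\hyb$. As $Y^+$ is by definition the open subspace $Y^\hyb - Y^\beth$ of~$Y^\hyb$, the whole statement reduces to the identification of the two open subsets $Y^\hyb \cap X^+ = Y^+$ inside~$Y^\hyb$; matching them as subsets then automatically matches their analytic structures, both being the restriction of~$Y^\hyb$ to a common open. Since $X^+ = X^\hyb - X^\beth$, this amounts to the equality $Y^\hyb \cap X^\beth = Y^\beth$. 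Here I would invoke Proposition~\ref{prop:finfty}: the closed immersion~$\iota$ is proper, so $\iota^\an_0$ is proper, in particular injective, with $(\iota^\an_0)^{-1}(X^\beth) = Y^\beth$. As $X^\beth \subseteq X^\hyb_0 = X^\an_0$, any point of~$Y^\hyb$ lying in~$X^\beth$ already lies in $Y^\hyb_0 = Y^\an_0$; identifying $Y^\an_0$ with its image under~$\iota^\an_0$, the set $Y^\hyb \cap X^\beth$ is exactly $(\iota^\an_0)^{-1}(X^\beth) = Y^\beth$. Consequently $Y^\hyb \cap X^+ = Y^\hyb - Y^\beth = Y^+$, which closes the argument.

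The genuine content is concentrated in the set-level identity $Y^\hyb \cap X^\beth = Y^\beth$, which is exactly where the behaviour of~$\wc^\beth$ under proper morphisms enters; I expect this to be the main point, whereas the preservation of closed immersions under analytification and the compatibility of the operation $V(-)$ with restriction to opens are essentially formal bookkeeping. If one prefers not to cite Proposition~\ref{prop:finfty}, the same identity can be obtained directly from the center criterion (Definition~\ref{def:center} and Lemma~\ref{lem:center}): a point of~$Y^\an_0$ has a center on~$X$ if and only if it has one on~$Y$, since $Y$ is closed in~$X$ and the center, being a specialization of a point of~$Y$, must again lie in~$Y$.
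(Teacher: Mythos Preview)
Your proof is correct. The paper's own proof consists of the single sentence ``This holds by construction,'' so you have supplied the details that the paper leaves implicit: compatibility of analytification with closed immersions, compatibility of closed subspaces with restriction to opens, and the set-theoretic identity $Y^\hyb \cap X^\beth = Y^\beth$.

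The only mild difference in emphasis is that you route the last identity through Proposition~\ref{prop:finfty} (closed immersions being proper), whereas ``by construction'' presumably refers to checking it directly from the affine description of~$\wc^\beth$ or the center criterion---which you in fact sketch as an alternative in your final paragraph. Both routes are equally short here; invoking Proposition~\ref{prop:finfty} is perhaps slightly cleaner since it avoids reopening the definition of~$\wc^\beth$, while the direct check makes the lemma logically independent of that proposition.
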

\begin{proof}
This holds by construction.
\end{proof}

\begin{lemma}\label{lem:XXeps}
Let $X$ be a variety over~$k$. Let $x \in X^\hyb$ and set $\eps(x) := \pr(x)$. The restriction morphism
\[ \cO_{X,x} \to \cO_{X_{\eps(x)},x}\]
is flat.
\end{lemma}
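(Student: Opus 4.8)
The plan is to separate the cases $\eps(x)>0$ and $\eps(x)=0$, after first reducing to affine space by a base-change argument. Since the statement is local on~$X$, I may assume $X$ affine and fix a closed immersion $X \hookrightarrow \A^n_k$; analytifying yields a closed immersion $X^\hyb \hookrightarrow \Ahyb{n}{k}$ near~$x$, so that $\cO_{X^\hyb,x} \simeq \cO_{\Ahyb{n}{k},x}/I$ with $I = \cI_x$. By compatibility of analytification and of passage to fibers with closed immersions (as recorded just above for $X^+$), the fiber $X_{\eps(x)}$ is the closed analytic subspace of $(\Ahyb{n}{k})_{\eps(x)}$ cut out by the same ideal, whence
\[ \cO_{X_{\eps(x)},x} \simeq \cO_{(\Ahyb{n}{k})_{\eps(x)},x} \otimes_{\cO_{\Ahyb{n}{k},x}} \cO_{X^\hyb,x}. \]
The restriction morphism of the statement is therefore the base change along $\cO_{\Ahyb{n}{k},x} \to \cO_{X^\hyb,x}$ of the analogous morphism for $\A^n_k$. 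As flatness is stable under base change, it suffices to treat $X = \A^n_k$.

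If $\eps(x)>0$, Lemma~\ref{lem:hybaneps} identifies the genuine $\cH(\eps(x))$-analytic structure of the fiber with the overconvergent structure inherited from~$X^\hyb$, whose stalk at~$x$ is $\cO_{\Ahyb{n}{k},x}$ itself. Hence the restriction morphism is an isomorphism, in particular flat, and only the case $\eps(x)=0$ remains.

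For $\eps(x)=0$ the morphism is the comparison $\cO_{\Ahyb{n}{k},x} \to \cO_{\E{n}{k_0},x}$ between the hybrid (overconvergent) germ and the genuine trivially-valued germ, which are genuinely different (see the Remark following Notation~\ref{nota:Xhyb}). The plan is to describe both by means of the local structure theory of \cite{EtudeLocale,CTCZ}. The base germ $\cO_{\cM(k_{\hyb}),0}$ equals~$k$, a field, so there is no transverse parameter along~$\cM(k_{\hyb})$: the fiber germ is instead an \emph{enlargement} of the hybrid germ, obtained by completing the algebra of analytic functions inside the fiber (concretely, by completing the overconvergent Laurent-type expansions at~$x$ into convergent trivially-valued ones). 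I would therefore exhibit $\cO_{\E{n}{k_0},x}$ as a completion, possibly after a localization, of $\cO_{\Ahyb{n}{k},x}$. Since geometric base rings have Noetherian local rings, and both localizations and completions of Noetherian local rings are flat, this would yield the claim.

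The main obstacle is exactly this last identification: making rigorous that $\cO_{\E{n}{k_0},x}$ is a completion (up to localization) of $\cO_{\Ahyb{n}{k},x}$. Because these two local rings are not isomorphic, one cannot argue formally and must instead control both explicitly through their germ descriptions, which is where the local structure theory is indispensable.
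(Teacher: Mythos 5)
Your reduction to $X=\A^n_k$ is sound (the fiber of a closed analytic subspace is the closed analytic subspace of the fiber cut out by the same ideal, stalks commute with quotients, and flatness is stable under base change), and your treatment of the case $\eps(x)>0$ is correct: by Lemma~\ref{lem:hybaneps} the fiber's intrinsic structure agrees with the overconvergent one, whose stalk at~$x$ is $\cO_{X^\hyb,x}$ itself, so the restriction morphism is an isomorphism. The problem is that this disposes only of the easy case. For $\eps(x)=0$ --- which is the entire content of the lemma, since there the two local rings genuinely differ --- you do not give a proof: you propose to exhibit $\cO_{\E{n}{k_0},x}$ as a completion, possibly after localization, of $\cO_{\Ahyb{n}{k},x}$, and then you explicitly flag this identification as an unresolved ``obstacle.'' As stated, the claim is only a heuristic: you do not say along which ideal one should complete, nor how to compare the two rings of germs (the natural route would be to compare $\m_x$-adic completions, or associated graded rings, and invoke the local criterion of flatness), and verifying any such statement requires precisely the local structure theory whose output is the result you are trying to prove. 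So the argument has a genuine gap at its crucial step.

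For comparison, the paper proves the lemma in one line, with no case distinction and no reduction to affine space: it invokes \cite[Th\'eor\`eme~4.3]{BergerEtale}, a general flatness statement for the passage from the stalk on the total space to the stalk on a fiber of the projection to $\cM(\cA)$, the only input being that the local ring $\cO_{\cM(k_{\hyb}),\eps(x)}$ is a field. If you want a self-contained argument along your lines, the missing step is essentially to reprove that theorem in the hybrid setting (e.g., via a comparison of $\m_x$-adic completions of the two germ rings together with the local flatness criterion); your sketch correctly locates where the difficulty lies but does not overcome it.
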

\begin{proof}
The result follows from \cite[Th\'eor\`eme~4.3]{BergerEtale}, noting that the local ring~$\cO_{\cM(k_{\hyb}),\eps(x)}$ is a field.
\end{proof}

We refer to~\cite{BergerEtale} for the definitions of \'etale and smooth morphisms in the setting of Berkovich spaces over Banach rings.

\begin{proposition}\label{prop:propertiesX+}
Let $X$ be a variety over~$k$. If $X$ is (i) Cohen-Macaulay, (ii) $(R_{n})$ for some $n\in \N$, (iii) $(S_{n})$ for some $n\in \N$, (iv) reduced, (v) normal, (vi) regular, (vii) smooth, then so are~$X^\hyb$ and~$X^+$.
\end{proposition}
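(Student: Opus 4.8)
The plan is to reduce each of the seven properties to a statement about the local rings $\cO_{X^\hyb,x}$ and then to transfer it along the fibration $\pr\colon X^\hyb\to\cM(k_\hyb)=[0,1]$. Since all of (i)--(vii) are local conditions on the structure sheaf, and since $X^+$ is an \emph{open} subspace of $X^\hyb$, it suffices to treat $X^\hyb$: the assertion for $X^+$ follows at once by restricting to the open subset. I would also record at the outset that, $k_\hyb$ being a geometric base ring, every local ring $\cO_{X^\hyb,x}$ is Noetherian, so that the standard commutative-algebra criteria apply.

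The core mechanism is faithfully flat descent. Fix $x\in X^\hyb$ and put $\eps:=\pr(x)$. By Lemma~\ref{lem:XXeps} the restriction homomorphism $\cO_{X^\hyb,x}\to\cO_{X_\eps,x}$ is flat; being a local homomorphism of Noetherian local rings, it is automatically faithfully flat (its source is local and the image of the maximal ideal lies in the maximal ideal, so $\m\,\cO_{X_\eps,x}\neq\cO_{X_\eps,x}$). I would then invoke the descent statements: if $A\to B$ is a faithfully flat local homomorphism of Noetherian local rings and $B$ is regular (resp. Cohen--Macaulay, reduced, normal, or satisfies $(R_n)$ or $(S_n)$), then $A$ enjoys the same property. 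Applied with $A=\cO_{X^\hyb,x}$ and $B=\cO_{X_\eps,x}$, this reduces the proposition to the corresponding statement for the fibers $X_\eps$.

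It remains to establish the properties on the fibers, and this is the classical input: each $X_\eps$ is the analytification of a variety over a field, namely $X$ over the trivially valued field $k_0$ when $\eps=0$, and $X\otimes_k\hat k_\eps$ over $\hat k_\eps$ when $\eps\in(0,1]$. Analytification over a complete (possibly trivially valued) field preserves (i)--(vii), as the comparison morphism $\rho$ is flat with geometrically regular fibers; for the trivially valued field this is Berkovich's analysis in \cite{rouge}, and for $\hat k_\eps$ it is the non-Archimedean (or complex) counterpart. Hence $\cO_{X_\eps,x}$ inherits the property from $X$, which closes the argument. As an alternative, one can run the whole argument through the single flat morphism $\rho_X\colon X^\hyb\to X$ (its flatness, via \cite[Th\'eor\`eme~6.6.5]{CTCZ}, was already used in the proof of Theorem~\ref{th:GAGAhybrid}) and ascend the properties once its fibers are shown to be geometrically regular.

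I expect the fiber step to be the main obstacle. For $\eps=0$ one must handle the trivially valued analytification carefully, while for $\eps\in(0,1]$ one must ensure that the base change $k\to\hat k_\eps$ does not destroy the property before analytifying---that is, that $X\otimes_k\hat k_\eps$ still has the property---and one must cite the appropriate preservation theorem for each of the seven cases separately. The descent step, by contrast, is a routine application of well-documented commutative algebra.
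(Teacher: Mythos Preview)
Your proposal is correct and follows essentially the same route as the paper: reduce to $X^\hyb$ by openness of $X^+$, use Lemma~\ref{lem:XXeps} to get the (faithfully) flat local map $\cO_{X^\hyb,x}\to\cO_{X_\eps,x}$, invoke the classical preservation results on the fiber $X_\eps$, and descend. The only point of divergence is property~(vii): the paper does not run smoothness through the local-ring descent argument but instead cites \cite[Proposition~9.5]{BergerEtale} directly, since smoothness in the Berkovich-over-Banach-ring setting is a relative notion with its own definition and is not captured purely by a condition on stalks; you should treat it separately as well.
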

\begin{proof}
Since $X^+$ is open in~$X^\hyb$, it is enough to prove the result for the latter. 

Let $x\in X^\hyb$ and set $\eps(x) := \pr(x)$. If one of the properties (i) to (vi) holds for $X$, then it holds for the local ring $\cO_{X_{\eps(x)},x}$, which a local ring in a classical analytic space over a valued field (either a complex analytic space, a quotient of such by complex conjugation, or a Berkovich analytic space). We refer to \cite[Exp. XII, Proposition~2.1]{SGA1} for the complex analytic case, and to \cite[Th\'eor\`eme~3.4]{DucrosExcellence} for the case of Berkovich spaces. The result now follows from Lemma~\ref{lem:XXeps}, since the properties may be checked after faithfully flat extension (see for instance \cite[Section~0.5.1]{DucrosExcellence} and the references therein).

The result for property (vii) (smoothness) follows from \cite[Proposition~9.5]{BergerEtale}. \end{proof}

\begin{proposition}\label{prop:propertiesf+}
Let $f\colon X_{1} \to X_{2}$ be a proper morphism of varieties over~$k$. If $f$ is (i) a closed immersion, (ii) surjective, (iii) finite, (iv) flat, (v) \'etale, (vi) smooth, then so are~$f^\hyb$ and~$f^+$.
\end{proposition}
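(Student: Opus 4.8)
The plan is to mirror the proof of Proposition~\ref{prop:propertiesX+}: reduce to the assertion for~$f^\hyb$, verify each property on the fibers over $\cM(k_\hyb) = [0,1]$ by transferring it from the algebraic morphism~$f$ to the classical analytifications $f^\an_\eps \colon (X_1)^\an_\eps \to (X_2)^\an_\eps$, and then descend back to~$f^\hyb$ through the flatness provided by Lemma~\ref{lem:XXeps}.

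First I would note that, by Proposition~\ref{prop:f+}, $f^+$ is the restriction of~$f^\hyb$ to the open subsets $X_1^+ = (f^\hyb)^{-1}(X_2^+)$ and~$X_2^+$. Since each of the properties (i)--(vi) is local on source and target and is inherited by restrictions to open subsets, it is enough to establish them for~$f^\hyb$. The properties that are essentially built into the analytification require only short arguments. If~$f$ is a closed immersion, it realizes~$X_1$ as a closed subvariety of~$X_2$, so $f^\hyb$ realizes~$X_1^\hyb$ as the closed analytic subspace of~$X_2^\hyb$ cut out by the analytified ideal sheaf (compare the lemma preceding Proposition~\ref{prop:propertiesX+}), hence is a closed immersion. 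Surjectivity is checked fiberwise: $f^\hyb$ is surjective if and only if every~$f^\an_\eps$ is, and surjectivity of~$f$ passes to each analytification over a valued field. Finiteness follows by combining properness of~$f^\hyb$ (Proposition~\ref{prop:f+}) with the finiteness of each fiber morphism~$f^\an_\eps$, which is itself inherited from~$f$.

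The core of the argument is flatness~(iv). Fix $x_1 \in X_1^\hyb$, set $x_2 := f^\hyb(x_1)$ and $\eps := \pr(x_1) = \pr(x_2)$; I must show that the local homomorphism $\cO_{X_2^\hyb, x_2} \to \cO_{X_1^\hyb, x_1}$ is flat. By Lemma~\ref{lem:XXeps} the restriction maps $\cO_{X_i^\hyb, x_i} \to \cO_{(X_i)^\an_\eps, x_i}$ are flat; being local homomorphisms of local rings, they are faithfully flat. Since~$f$ is flat, the fiber morphism~$f^\an_\eps$ is flat (analytification over a valued field preserves flatness), so $\cO_{(X_2)^\an_\eps, x_2} \to \cO_{(X_1)^\an_\eps, x_1}$ is flat, and composing with the previous map shows that $\cO_{X_2^\hyb, x_2} \to \cO_{(X_1)^\an_\eps, x_1}$ is flat. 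I would then invoke the standard descent fact that, for ring maps $A \to B \to C$ with $B \to C$ faithfully flat and the composite $A \to C$ flat, the map $A \to B$ is flat; applied with $A = \cO_{X_2^\hyb, x_2}$, $B = \cO_{X_1^\hyb, x_1}$ and $C = \cO_{(X_1)^\an_\eps, x_1}$, this yields the flatness of~$f^\hyb$.

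Finally, for étaleness~(v) and smoothness~(vi) I would use the characterizations of~\cite{BergerEtale} as flatness supplemented by a fiberwise condition (unramifiedness, respectively geometric regularity of the fibers). Flatness is settled as above, and the fiber conditions reduce, via Lemma~\ref{lem:XXeps} and the same style of faithfully flat descent already used for Proposition~\ref{prop:propertiesX+}, to the corresponding properties of the~$f^\an_\eps$, which hold because analytification over a valued field preserves étaleness and smoothness (see~\cite{BergerEtale} for the non-Archimedean fibers and classical GAGA for the Archimedean ones). The main obstacle I anticipate is exactly the flatness step: flatness of~$f^\hyb$ does not follow formally from flatness of~$f$, and one must pass through the fibers, relying on the faithful flatness of Lemma~\ref{lem:XXeps} together with the descent lemma to recover flatness of the hybrid morphism itself.
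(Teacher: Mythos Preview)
Your proposal is correct and, for the crucial case of flatness~(iv), follows exactly the paper's argument: both pass to the fiber over $\eps = \pr(x)$, use Lemma~\ref{lem:XXeps} to obtain faithfully flat restriction maps $\cO_{X_i^\hyb,x_i} \to \cO_{(X_i)^\an_\eps,x_i}$, invoke the classical preservation of flatness under analytification over a valued field for the right vertical map, and then conclude via the descent lemma (the paper cites \cite[\href{https://stacks.math.columbia.edu/tag/039V}{Lemma 039V}]{stacks-project}, which is precisely your ``$A\to B\to C$ with $B\to C$ faithfully flat and $A\to C$ flat implies $A\to B$ flat'').

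For the remaining properties the paper takes a shorter route than you do: it simply cites \cite[Propositions~6.5.3 and~6.6.9]{CTCZ} for (i)--(iii) and \cite[Propositions~8.8 and~9.5]{BergerEtale} for (v)--(vi), rather than reducing to the fibers. Your direct arguments for (i) and (ii) are fine; your argument for (iii) (properness plus fiberwise finiteness) is morally right but would need the appropriate characterization of finiteness in the hybrid category, which is exactly what the cited results in \cite{CTCZ} provide. Likewise for (v) and (vi), the references in \cite{BergerEtale} give the preservation statements directly, so your proposed reduction to ``flat plus fiberwise condition'' is a valid alternative but more work than necessary.
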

\begin{proof}
The result for properties (i), (ii) and (iii) follows from \cite[Propositions~6.5.3 and~6.6.9]{CTCZ}, and for (v) and (vi) from \cite[Propositions~8.8 and 9.5]{BergerEtale}.

It remains to deal with (iv). Assume that $f$ is flat. Let $x \in X_1^\hyb$ and set $y := f^\hyb(x)$ and $\eps(x):=\pr(x)$. We have a commutative diagram 
\[\begin{tikzcd}
\cO_{X_{1},x} \arrow[r] \arrow[d]& \cO_{X_{1,\eps(x)},x}\arrow[d]\\
\cO_{X_{2},y}\arrow[r] & \cO_{X_{2,\eps(x)},y}\\
\end{tikzcd}.\]
By Lemma~\ref{lem:XXeps}, the horizontal arrows are flat. The right vertical arrow is flat too, by the classical theory over a field (see \cite[Exp. XII, Proposition~3.1]{SGA1} for complex analytic spaces and \cite[Proposition~3.4.6]{rouge} for Berkovich spaces). It follows from \cite[\href{https://stacks.math.columbia.edu/tag/039V}{Lemma 039V}]{stacks-project} that the left arrow is flat too, which proves the claim.  
\end{proof}

For a variety $X$ over~$k$, the spaces $(X\otimes_{k} \hat k_{\eps})^\an$, with $\eps \in (0,1]$, are related by the flow, and all isomorphic to each other. As a result, it looks as if the space~$X^+$ contained several copies of the original space $(X\otimes_{k} \hat k)^\an$. To remedy this situation, we will consider a quotient of the space~$X^+$. This procedure was first carried out by L.~Fantini (over a trivially valued field) in his construction of normalized Berkovich spaces (see~\cite[Section~3]{TXZ}). 

\begin{definition}
Let $X$ be a variety over~$k$. Let $x,y\in X^\hyb$. We say that $x$ and $y$ are \emph{flow-equivalent} if $T(x) = T(y)$. In this case, we write $x \binphi y$.
\end{definition}

\begin{lemma}
Let $X$ be a variety over~$k$. The relation~$\binphi$ on~$X^\hyb$ is an equivalence relation that preserves~$X^+$. 
\end{lemma}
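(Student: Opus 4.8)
The plan is to handle the two claims in turn. That $\binphi$ is an equivalence relation is immediate: it is defined by the equality $T(x) = T(y)$ of trajectories, so reflexivity, symmetry and transitivity are formal. Conceptually, Lemma~\ref{lem:TxTy} states that the trajectories partition $X^\hyb$, and $\binphi$ is nothing but the equivalence relation whose classes are the trajectories; since $x = x^{1}$ with $1 \in I_{x}^\ast$ we have $x \in T(x)$, which also shows that $x \binphi y$ is equivalent to $y \in T(x)$. Thus ``$\binphi$ preserves $X^+$'' amounts to the assertion that $X^+$, equivalently its complement $X^\beth$, is a union of trajectories.

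To prove this I would first control how trajectories sit over the base $[0,1]$. Recall that $\pr(x^\alpha) = \alpha\,\pr(x)$. Hence if $\pr(x) > 0$ then $\pr(x^\alpha) > 0$ for every $\alpha \in I_{x}^\ast$, so $T(x) \subseteq X^\hyb_{>0}$; and if $\pr(x) = 0$ then $T(x) \subseteq X^\hyb_{0} = X^\an_{0}$. Since $X^\beth \subseteq X^\an_{0}$, the set $X^\hyb_{>0}$ is disjoint from $X^\beth$, that is $X^\hyb_{>0} \subseteq X^+$, and therefore every trajectory meeting $X^\hyb_{>0}$ is entirely contained in $X^+$. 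Only trajectories lying in the fibre over $0$ remain to be examined.

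It then remains to show that $X^\beth$ is stable under the flow inside $X^\an_{0}$, namely that for $x \in X^\an_{0}$ and $\alpha \in I_{x}^\ast$ the point $x$ has a center on $X$ if and only if $x^\alpha$ does (Lemma~\ref{lem:center}). This is the geometric heart of the statement, and I expect it to be the main point, even though it is essentially formal. Since $\alpha > 0$, the seminorm associated with $x^\alpha$, being the $\alpha$-th power of the one associated with $x$, has the same kernel, so that $\rho(x^\alpha) = \rho(x)$ and $\kappa(\rho(x^\alpha)) = \kappa(\rho(x))$; moreover, for $a \in \kappa(\rho(x))$ the condition $\abs{a}_{x} \le 1$ is equivalent to $\abs{a}_{x^\alpha} \le 1$, so that the valuation subring $\kappa(\rho(x))^\circ$ appearing in Definition~\ref{def:center} is literally the same ring for $x$ and for $x^\alpha$. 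Consequently the canonical morphism $\lambda_{x} \colon \Spec(\kappa(\rho(x))) \to X$ and the dashed factorization through $\Spec(\kappa(\rho(x))^\circ)$ are identical data for the two points, so one admits a center exactly when the other does. This proves that $X^\beth$, and hence $X_{\infty} = X^\an_{0} - X^\beth$, is flow-invariant; combined with the previous paragraph it follows that $X^+$ is a union of trajectories, i.e.\ that $\binphi$ preserves $X^+$.
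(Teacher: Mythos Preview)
Your proof is correct. Both your argument and the paper's ultimately rest on the elementary observation that, for $\alpha>0$, the condition $\abs{a}\le 1$ is equivalent to $\abs{a}^\alpha\le 1$, but you deploy it along a different route. The paper reduces to an open affine $U=\Spec(A)$ and uses the concrete description
\[
U^\beth=\{x\in U^\an_{0}:\forall a\in A,\ \abs{a(x)}\le 1\},
\]
from which flow-invariance of $U^\beth$ (hence of $X^\beth=\bigcup U^\beth$ and of $X^+$) is immediate. You instead work intrinsically \emph{via} the center characterization of Lemma~\ref{lem:center}: since $\va_{x}$ and $\va_{x^\alpha}$ are equivalent absolute values on $\kappa(\rho(x))$, they share the same kernel and the same valuation subring $\kappa(\rho(x))^\circ$, so the factorization problem in Definition~\ref{def:center} is literally identical for $x$ and~$x^\alpha$. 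Your approach avoids choosing an affine cover and is slightly more conceptual; the paper's is a two-line computation once the description of~$U^\beth$ is in hand.
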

\begin{proof}
 It follows from the definitions that $\binphi$ is an equivalence relation on~$X^\hyb$ (see Lemma~\ref{lem:TxTy}).

For each open affine subset $U = \Spec(A)$, we have
\[ U^\beth = \cM(A) = \{ x\in U^\an : \forall a \in A, \abs{a(x)}\le 1\},\]
hence $U^\beth$ is preserved by~$\binphi$. It follows that~$X^\beth$, and then~$X^+$, are preserved by~$\binphi$ too.
\end{proof}

\begin{definition}
Let $X$ be a variety over $k$. The set
\[X\cp := X^+/\binphi\]
is called the \emph{valuative compactification} of~$X$. We denote by $q \colon X^+ \to X\cp$ the quotient map.

The subset 
\[X^\partial := q(X_{0}^+) = q(X_{\infty})\]
of~$X\cp$ is called the \emph{valuative boundary} of~$X$.

Set 
\[\iota \colon (X\otimes_{k} \hat k)^\an = X^+_{1} \lhook\joinrel\longrightarrow X^+ \xrightarrow[]{\ q\ } X\cp.\]
\end{definition}

\begin{remark}
The space $X^\partial = X_{\infty}/\binphi$ is a typical example of a normalized Berkovich space in the sense of L.~Fantini (see \cite{TXZ}). 
%Note that the structure sheaves are different because of the overconvergent structure we impose. 
\end{remark}

\begin{lemma}\label{lem:iota}
Let $X$ be a variety over $k$. The map~$\iota$ 
%\[\iota \colon (X\otimes_{k} \hat k)^\an = X^+_{1} \lhook\joinrel\longrightarrow X^+ \too X\cp\]
induces a bijection onto $q(X^+_{>0})$. 
\end{lemma}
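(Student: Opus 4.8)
The plan is to exploit the fact that the flow moves a point $x$ with $\pr(x)=\eps$ to the point $x^\alpha$ with $\pr(x^\alpha)=\alpha\eps$, so that each trajectory of a point of positive parameter meets the fiber $X^+_{1}$ in exactly one point. Recall that $\iota$ is the composite of the inclusion $X^+_{1}\hookrightarrow X^+$ with the quotient map $q$, so its image is $q(X^+_{1})$ and two points of $X^+_{1}$ have the same image precisely when they are flow-equivalent. First I would record that $X^+$ is stable under the flow: by the lemma preceding the definition of $X\cp$, both $X^\beth$ and $X^+$ are unions of $\binphi$-classes, hence of trajectories, so that $T(x)\subseteq X^+$ for every $x\in X^+$.

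Next I would analyze the shape of trajectories in $X^+_{>0}$. Let $z\in X^+_{>0}$ and set $\eps:=\pr(z)\in(0,1]$. The restriction of the seminorm attached to~$z$ to~$k$ is $\va_{\eps}$, which is non-trivial, so $\cH(z)$ is non-trivially valued. By Remark~\ref{rem:shapeT(x)}, the map $\alpha\mapsto z^\alpha$ is then a homeomorphism $I_{z}^\ast=(0,1/\eps]\to T(z)$. Since $\pr(z^\alpha)=\alpha\,\pr(z)=\alpha\eps$, the map $\pr$ restricts to a bijection $T(z)\to(0,1]$; in particular $T(z)$ meets the fiber $X^+_{1}$ in the single point $z^{1/\eps}$, which lies in~$X^+$ by the flow-stability recorded above.

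From this the two assertions follow. For injectivity, let $x,y\in X^+_{1}$ with $\iota(x)=\iota(y)$, that is $q(x)=q(y)$, which means $T(x)=T(y)$. Applying the previous paragraph with $\eps=1$, each of $x$ and $y$ is the unique point of its trajectory with $\pr=1$; since the trajectories coincide, $x=y$. For the image, the inclusion $q(X^+_{1})\subseteq q(X^+_{>0})$ is clear, while for $z\in X^+_{>0}$ the point $z^{1/\eps}\in X^+_{1}$ satisfies $q(z)=q(z^{1/\eps})$, whence $q(X^+_{>0})\subseteq q(X^+_{1})$. Combining the two, $\iota$ restricts to a bijection from $X^+_{1}$ onto $q(X^+_{>0})$.

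The only genuinely substantive point, and the one I would take care to justify, is that $\cH(z)$ is non-trivially valued for $z\in X^+_{>0}$: this is what guarantees that $\pr$ is injective along each trajectory, so that the $\pr=1$ representative is truly unique, rather than the trajectory collapsing to a point. Everything else is direct bookkeeping of the flow through the identity $\pr(x^\alpha)=\alpha\,\pr(x)$.
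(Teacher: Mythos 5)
Your argument is correct and is essentially the paper's own proof, which simply asserts the key fact $T(x)\cap X^+_{1}=\{x^{1/\pr(x)}\}$ for $x\in X^+_{>0}$ and concludes; you have spelled out the justification (non-triviality of the valuation on $\cH(z)$ for $\pr(z)>0$, the identity $\pr(z^\alpha)=\alpha\,\pr(z)$, and flow-stability of $X^+$) in more detail, all of it accurate.
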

\begin{proof}
For each $x \in X^+_{>0} = X^\hyb_{>0}$, 
%Recall the map $\pr \colon X^\hyb \to \cM(k_\hyb) = [0,1]$ from Notation~\ref{nota:Xeps}. 
we have $T(x) \cap X^+_{1} = \{x^{1/\pr(x)}\}$. The result follows.
\end{proof}

We abusively denote by~$X^\an$ the analytic space $(X\otimes_{k} \hat k)^\an$. From now on, we identify it to its image by~$\iota$. (We will see later that~$\iota$ is an open map, and even an open immersion of locally ringed spaces, see Lemmas~\ref{lem:qopen} and Proposition~\ref{prop:iotaiso}.) We then have 
\[X\cp = X^\an \sqcup X^\partial.\]

\begin{example}\label{ex:A1partial}
Consider $\A^1_{k}$ with coordinate~$T$. It follows from Example~\ref{ex:A1Gm} that the valuative boundary $(\A^1_{k})^\partial$ consists of one point, say $\ast_{\infty}$, with residue field~$k(\!(T^{-1})\!)$. The valuative boundary $(\G_{m,k})^\partial$ contains an additional point, say $\ast_{0}$, with residue field~$k(\!(T)\!)$.
\end{example}

\begin{example}\label{ex:A2partial}
Consider $\A^2_{k}$ with coordinates $T_{1},T_{2}$. Recall that we have
\[ (\A^2_{k})_{\infty} = \{  x \in \E{2}{k_{0}} : \max(\abs{T_{1}(x),\abs{T_{2}(x)}}) > 1\}\]
(see Example~\ref{ex:A2}) and let us now consider the quotient by the flow. For each $r\in \R_{>1}$, we have a bijection (and even a homeomorphism with respect to the quotient topology)
\[(\A^2_{k})_{\infty}/\binphi \simeq \{  x \in \E{2}{k_{0}} : \max(\abs{T_{1}(x),\abs{T_{2}(x)}}) = r\}.\]
We recover this way the valuative tree introduced by Ch.~Favre and M.~Jonsson (see \cite{ValuativeTree}).
\end{example}

\begin{remark}
Let $X$ be a proper variety over~$k$ and let $f \colon X \to \G_{m,k}$ be a proper morphism. It follows from Proposition~\ref{prop:f+} that we have an induced morphism $f\cp \colon X\cp \to \G_{m,k}\cp$. Let us describe~$f\cp$ as a fibration over $\G_{m,k}\cp = \G_{m,k}^\an \cup \{\ast_{0}\} \cup \{\ast_{\infty}\}$. 

\begin{itemize}
\item Over the open subset $\G_{m,k}^\an$, the morphism $f\cp$ identifies to the classical analytification morphism $f^\an \colon X^\an \to \G_{m,k}^\an$. 

\item Over the point~$\ast_{0}$, the fiber of~$f\cp$ identifies to the analytification of the variety $X \times_{\G_{m,k}} \Spec(k(\!(T)\!))$ over~$k(\!(T)\!)$\footnote{The analytification procedure requires a normalisation of the absolute value on $k(\!(T)\!)$, but all choices give rise to equivalent spaces under the flow.}, the morphism $\Spec(k(\!(T)\!)) \to \G_{m,k}$ being induced by the inclusion $k[T,T^{-1}] \subset k(\!(T)\!)$. 

\item Over the point~$\ast_{\infty}$, the fiber of~$f\cp$ identifies to the analytification of $X \times_{\G_{m,k}} \Spec(k(\!(T^{-1})\!))$ over~$k(\!(T^{-1})\!)$, with a base change map defined as above.
\end{itemize}
We recover in this way the construction of S.~Boucksom and M.~Jonsson in~\cite{BJ} (see in particular the appendix). Starting with a meromorphic family~$X$ over the complex punctured disk~$\D_{\C}^\ast$, they construct a hybrid space $X^\textrm{BJ}$ by adding a non-Archimedean fiber $(X \times_{\O(\D_{\C}^\ast)} \C(\!(T)\!))^\an$ over the puncture. Concretely, the space $X^\textrm{BJ}$ is realized as the analytification of~$X$ over the Banach ring
\[ A_{r} := \big\{\sum_{i\in \Z} a_{i} t^i \in \C(\!(T)\!) :  \sum_{i\in \Z} \norm{a_{i}}_{\hyb}\, r^i < \infty\big\},\] 
endowed with the norm defined by $\norm{\sum_{i\in \Z} a_{i} t^i} = \sum_{i\in \Z} \norm{a_{i}}_{\hyb}\, r^i$, for some $r\in (0,1)$.

The relationship with our construction comes from the fact that the Berkovich spectrum~$\cM(A_{r})$ of~$A_{r}$ identifies to the hybrid circle 
\[ C_{\hyb}(r) := \{x \in \Ahyb{1}{\C} : \abs{T(x)} = r\},\]
which again identifies to the quotient by the flow of the hybrid punctured disk
\[ \D_{\hyb}^\ast := \{x \in \Ahyb{1}{\C} : 0 < \abs{T(x)} < 1\}.\]
Writing $\D_{\hyb}^\ast = (\D_{\hyb}^\ast \cap \pr^{-1}((0,1])) \cup  (\D_{\hyb}^\ast \cap \pr^{-1}(0))$ and passing to the quotient, we get
\[  C_{\hyb}(r) = \D_{\hyb}^\ast/\binphi = \D_{\C}^\ast \cup \{\ast_{0}\}  \subset \G_{m,\C}\cp.\]
\end{remark}

\subsection{Topological properties}\label{sec:topprop}

Let $X$ be a variety over~$k$. We endow $X\cp := X^+/\binphi$ with the quotient topology, and investigate its topological properties.

\begin{lemma}\label{lem:qopen}
The maps $q \colon X^+ \to X\cp$ and $\iota \colon X^\an \to X\cp$ are open. 

In particular, $\iota$ induces a homeomorphism from~$X^\an$ onto its image (which is open in~$X\cp$).
\end{lemma}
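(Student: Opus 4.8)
The plan is to use the defining property of the quotient topology together with the identification of $\binphi$-saturations with trajectories. Since $X\cp$ carries the quotient topology, for any $A \subseteq X^+$ the image $q(A)$ is open in $X\cp$ if and only if its saturation $q^{-1}(q(A))$ is open in $X^+$. Now $x \binphi y$ means $T(x) = T(y)$, which by Lemma~\ref{lem:TxTy} is equivalent to $y \in T(x)$; hence the saturation of $A$ is exactly $T(A) \cap X^+$. As $X^+$ is preserved by $\binphi$, it is flowing, so $T(A) \subseteq X^+$ whenever $A \subseteq X^+$, and therefore $q^{-1}(q(A)) = T(A)$. The whole proof will thus reduce to showing that $T(A)$ is open for the two relevant choices of $A$.

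First I would treat $q$. For $U$ open in $X^+$, I would write $T(U) = \Phi\big(D(X^\hyb) \cap (U \times \R_{>0})\big)$, the image under the flow of an open subset of $D(X^\hyb)$. Since the flow is open (Proposition~\ref{prop:Phicontinuous}, which extends to flowing $k^\hyb$-analytic spaces), $T(U)$ is open in $X^\hyb$, hence in $X^+$. Thus $q^{-1}(q(U)) = T(U)$ is open, $q(U)$ is open, and $q$ is open.

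Next I would treat $\iota = q \circ j$, where $j \colon X^+_1 \hookrightarrow X^+$ is the inclusion. The difficulty here is that $X^+_1$ is \emph{not} open in $X^+$, so I cannot invoke openness of the flow directly. Instead I would exploit that every trajectory in $X^+_{>0}$ meets $X^+_1$ exactly once (Remark~\ref{rem:shapeT(x)} and Lemma~\ref{lem:iota}): for $y \in X^+_{>0}$ one has $\pr(y^{1/\pr(y)}) = 1$, which defines a retraction $\pi \colon X^+_{>0} \to X^+_1$, $y \mapsto y^{1/\pr(y)} = \Phi(y,1/\pr(y))$, continuous since $\pr$ and $\Phi$ are. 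For $U$ open in $X^+_1$ and $x \in U$, all the points $x^\alpha$ satisfy $\pr(x^\alpha) = \alpha \in (0,1]$, so $T(U) \subseteq X^+_{>0}$; moreover, using $(x^\alpha)^{1/\alpha} = x$ one checks $T(U) = \pi^{-1}(U)$, which is open in $X^+_{>0}$, hence in $X^+$. Therefore $q^{-1}(q(U)) = T(U)$ is open and $\iota(U) = q(U)$ is open, so $\iota$ is open.

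Finally I would conclude that $\iota$ is a homeomorphism onto its image. It is continuous as the composition $q \circ j$, injective by Lemma~\ref{lem:iota}, and open by the previous step; its image $q(X^+_1) = q(X^+_{>0})$ is open (apply the openness of $\iota$ to $U = X^+_1$). A continuous injective open map is a homeomorphism onto its image, which is open in $X\cp$. The main obstacle is precisely the second step: because $X^+_1$ fails to be open in $X^+$, the openness of the flow does not apply, and one must introduce the retraction $\pi$ to rewrite the saturation $T(U)$ as a manifestly open preimage.
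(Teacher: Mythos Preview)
Your proof is correct, and for the openness of~$q$ it coincides with the paper's argument. For the openness of~$\iota$, however, you take a genuinely different route. The paper works in local coordinates: given a basic open neighborhood $W = \bigcap_i \{r_i < |P_i| < s_i\}$ of a point in $X^+_1$, it rescales the $P_i$ by powers of some $\alpha \in k$ with $0 < |\alpha| < 1$ so that each bound $r_i, s_i$ can be rewritten as $\pm|\alpha|^{u}$ or $0$; the resulting inequalities are flow-invariant, and the set they cut out in $X^+_{>0}$ is an explicit open set with the same image under~$q$. Your argument instead observes that the map $\pi\colon y \mapsto y^{1/\pr(y)}$ is a continuous retraction of $X^+_{>0}$ onto $X^+_1$ (which is immediate from the continuity of $\pr$ and of the flow), and that $T(U) = \pi^{-1}(U)$ for any $U \subseteq X^+_1$; this identifies the saturation directly as a preimage under a continuous map. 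Your approach is shorter and coordinate-free, and avoids the trick with the element~$\alpha$; the paper's approach, on the other hand, produces concrete flow-saturated neighborhoods, a description that is reused later (e.g.\ in Lemma~\ref{lem:basisflowsaturated}) when one needs explicit bases of neighborhoods with controlled shape.
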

\begin{proof}
Let $U$ be an open subset of~$X^+$. Its image in $X\cp = X^+/\binphi$ is open if, and only if, $T(U)$ is open in~$X^+$. The latter property follows from Proposition~\ref{prop:Phicontinuous}, hence $q$ is open.

\medbreak

In order to prove that $\iota$ is open, it is enough to show that each $x\in X^\an$ admits a basis of open neighborhoods~$\Vc_{x}$ with the following property: for each $V\in \Vc_{x}$, there exists an open subset~$V'$ of~$X^+$ such that $V'=V \mod \binphi$. Indeed, in this case, we have $\iota(V) = q(V')$, which is open, by the first part.

Let $x\in X^\an$. We may assume that $X$ is affine. Let us consider a closed embedding $j \colon X \to \A^n_{k}$ and fix coordinates $T=(T_{1},\dotsc,T_{n})$ on~$\A^n_{k}$.

By definition of the topology, $j(x)$ admits a basis of neighborhoods in $\E{n}{\hat k}$ of the form 
\[ W = \bigcap_{i=1}^p \big\{ y \in \E{n}{\hat k} : r_{i} < \abs{P_{i}(y)} < s_{i}\big\},\]
with $P_{1},\dotsc,P_{p} \in k[T]$, $r_{1},\dotsc,r_{p},s_{1},\dotsc,s_{p} \in \R$, and $r_{i} < \abs{P_{i}(x)} < s_{i}$ for each $i \in \{1,\dotsc,p\}$. 

Since $k$ is not trivially valued, there exists $\alpha \in k$ such that $0 < \abs{\alpha} < 1$. Up to multiplying the $P_{i}$'s by some power of~$\alpha$, and the corresponding~$r_{i}$ and~$s_{i}$ by the same power of~$\abs{\alpha}$, we may assume that $r_{1},\dotsc,r_{p},s_{1},\dotsc,s_{p} \in (-1,1)$. In this case, for each $i\in \{1,\dotsc,p\}$, there exists $\sigma_{i} \in \{-1,1\}$, $\beta_{i},\gamma_{i} \in \{0,\alpha\}$ and $u_{i},v_{i} \in \R_{>0}$ such that $r_{i} = \sigma_{i} \abs{\beta_{i}}^{u_{i}}$ and $s_{i} = \abs{\gamma_{i}}^{v_{i}}$, so that we have
\[ W = \bigcap_{i=1}^p \big\{ y \in \E{n}{\hat k} : \sigma_{i} \abs{\beta_{i}}^{u_{i}} < \abs{P_{i}(y)} < \abs{\gamma_{i}}^{v_{i}}\big\}.\]

Set 
\[ W' := \bigcap_{i=1}^p \big\{ y \in \Ahyb{n}{\hat k, >0} : \sigma_{i} \abs{\beta_{i}}^{u_{i}} < \abs{P_{i}(y)} < \abs{\gamma_{i}}^{v_{i}}\big\}.\]
It is an open subset of $\Ahyb{n}{\hat k, >0}$ and we have $W' = W \mod \binphi$.

We obtain the desired result by pulling back by~$j$.

\medbreak

The last part of the statement now follows from Lemma~\ref{lem:iota}. 
\end{proof}

\begin{proposition}\label{prop:Hausdorff}
The space $X\cp$ is Hausdorff.
\end{proposition}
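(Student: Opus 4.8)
The plan is to prove that $X\cp = X^+/\binphi$ is Hausdorff by separating two distinct points $q(x)$ and $q(y)$, where $x,y \in X^+$ have distinct trajectories $T(x) \ne T(y)$. Since the quotient map~$q$ is open (by Lemma~\ref{lem:qopen}), it suffices to find flow-saturated open subsets~$U_x \supseteq T(x)$ and~$U_y \supseteq T(y)$ of~$X^+$ that are disjoint; their images $q(U_x)$ and $q(U_y)$ are then disjoint open neighborhoods of $q(x)$ and $q(y)$. The natural strategy is to reduce the problem to the ambient hybrid space~$X^\hyb$, where topological properties are easier to control, and then exploit the fact that the offending identifications only happen within trajectories.

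First I would split into cases according to the images $\pr(x)$ and $\pr(y)$ in $\cM(k_\hyb) = [0,1]$. The key observation from Remark~\ref{rem:shapeT(x)} is that a trajectory is either a single point (when $\cH(x)$ is trivially valued, i.e.\ $x$ lies over~$0$) or homeomorphic to the interval~$I_x^\ast$ via the flow. If both points lie over $\eps > 0$, after flowing we may normalize them to lie in the single copy $X^\an = X^+_1$; there, being distinct points of a Hausdorff analytic space (Proposition~\ref{prop:propertiesX+} gives $X^\hyb$ good local structure, but more basically $X^\hyb$ is Hausdorff as an analytic space over a Banach ring, as recalled in Section~\ref{sec:BerkovichBanach}), they can be separated by open sets, which we then flow out to flow-saturated neighborhoods. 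If one or both points lie over~$0$ (so in $X_\infty$), we use that $X^\hyb$ is itself Hausdorff to separate $x$ and $y$ by disjoint opens $W_x, W_y$, and then replace these by their flow-saturations $T(W_x)$ and $T(W_y)$, which are open by Proposition~\ref{prop:Phicontinuous}.

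The main obstacle is that flow-saturating two disjoint opens need not preserve disjointness: $T(W_x) \cap T(W_y)$ may be nonempty even when $W_x \cap W_y = \emptyset$, precisely the phenomenon illustrated in the Remark following Proposition~\ref{prop:restrictionflotO}. To handle this, I would choose the separating neighborhoods more carefully so that they are already saturated in the relevant direction, or argue via a flow-invariant continuous function that separates the two trajectories. The cleanest route is to produce, for distinct trajectories, a regular function~$P$ on a local affine chart and use the \emph{quotient} of absolute values: since along a trajectory $\abs{P(x^\alpha)} = \abs{P(x)}^\alpha$, ratios $\log\abs{P}/\log\abs{Q}$ are flow-invariant. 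Thus if $T(x) \ne T(y)$, one can find functions $P, Q$ such that the flow-invariant ratio $\abs{P}^{\log\abs{Q(y)}} $ versus $\abs{Q}^{\log\abs{P(y)}}$ (or a suitable comparison) distinguishes the two trajectories, yielding disjoint flow-saturated opens as preimages of disjoint opens in~$\R$ under a continuous flow-invariant map descending to~$X\cp$.

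Concretely, I would fix a local affine embedding $j\colon X \hookrightarrow \A^n_k$ with coordinates~$T_i$ and use the seminorm values $\abs{T_i(x)}$, together with values on further polynomials, to build a continuous flow-invariant separating map. Because trajectories are determined by the \emph{projective class} of the tuple $(\log\abs{P(x)})_P$ indexed by polynomials (two points are flow-equivalent exactly when these logarithmic-value vectors are positively proportional), distinct trajectories give distinct projective classes, which are separated in a suitable projectivized value space; pulling back separating opens there gives the required disjoint flow-saturated neighborhoods in~$X^+$. I expect verifying continuity and flow-invariance of this separating map, and checking it genuinely distinguishes trajectories across the Archimedean/non-Archimedean divide (the case where one point lies over~$0$ and the other over $\eps>0$), to be the delicate point, but the interval structure of trajectories from Remark~\ref{rem:shapeT(x)} should make the comparison of logarithmic values tractable.
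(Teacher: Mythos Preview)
Your overall strategy---separate two inequivalent trajectories by flow-invariant open conditions, using the openness of~$q$ from Lemma~\ref{lem:qopen}---matches the paper's, and your case split on $\pr(x),\pr(y)$ is exactly how the paper proceeds. The paper phrases this as showing that the graph~$\Gamma_{\binphi}$ of the relation is closed in $X^+\times X^+$, which is equivalent to your formulation once~$q$ is known to be open.

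There is one genuine gap. Your argument needs a common supply of functions~$P$ whose values at \emph{both}~$x$ and~$y$ can be compared; in practice this means a common affine open containing (the images of) both points. For an arbitrary variety~$X$ this is not automatic: two points of a separated scheme need not lie in a common affine open. The paper handles this by first applying Chow's lemma to obtain a proper surjective morphism $f\colon Y\to X$ with~$Y$ quasi-projective; since $f^+\times f^+$ is proper surjective (hence closed) and $\Gamma_{\binphi,X^+}$ is the image of $\Gamma_{\binphi,Y^+}$, it suffices to treat the quasi-projective case, where any two points do lie in a common affine $U=\Spec(A)$. Your sketch silently assumes such a chart exists (``fix a local affine embedding $j\colon X\hookrightarrow\A^n_k$''), which is the missing step.

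On the implementation side, your ``projective class of $(\log\abs{P(x)})_P$'' heuristic is correct in spirit but awkward to make precise (zeros and values equal to~$1$ require care, and the target space is infinite-dimensional). The paper avoids this by using only the flow-invariant predicates $\abs{a}<1$, $\abs{a}>1$ and comparisons $\abs{a(u)}>r$, $\abs{a(v)}<s$, combined with a short compactness argument: one first bounds the admissible exponent~$\alpha$ away from~$0$ and~$\infty$ using a single function (for instance some $a\in A$ with $\abs{a(x)}>1$, available since $x\notin U^\beth$), and then covers the remaining compact interval of~$\alpha$'s by finitely many elementary separating neighborhoods. This replaces your projectivization with a concrete finite check and is what actually handles the ``delicate point'' you flag in the mixed and both-over-$0$ cases.
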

\begin{proof}
We want to prove that $X\cp$ is Hausdorff, which amounts to proving that the graph~$\Gamma_{\binphi,X^+}$ of~$\binphi$ is closed in $X^+\times X^+$. 

By Chow's lemma, we may find a quasi-projective variety~$Y$ and a proper surjective morphism $f \colon Y \to X$. By \cite[Proposition~6.5.3]{CTCZ}, the induced morphism $f^\hyb \colon Y^\hyb \to X^\hyb$ is proper and surjective too, hence so is $f^+ \colon Y^+ \to X^+$, by Proposition~\ref{prop:finfty}. The induced morphism $f^+\times f^+ \colon Y^+ \times Y^+ \to X^+\times X^+$ between the Cartesian products is still surjective and proper (as a topological morphism), hence closed. It follows from the definitions that $\Gamma_{\binphi,X^+} = (f^+\times f^+)(\Gamma_{\binphi,Y^+})$. In particular, if $\Gamma_{\binphi,Y^+}$ is closed in $Y^+\times Y^+$, then $\Gamma_{\binphi,X^+}$ is closed in $X^+\times X^+$.

It remains to show that $\Gamma_{\binphi,Y^+}$ is closed in $Y^+\times Y^+$. Let $(x,y) \in (Y^+\times Y^+) \setminus \Gamma_{\Phi}$. Since $Y$ is quasi-projective, there exists an open affine subset $U = \Spec(A)$ of~$Y$ such that~$x$ and~$y$ belong to~$U^+$. Recall that, for each $k_\hyb$-analytic space~$\Xc$, we have a projection map $\pr \colon \Xc \to \cM(k_{\hyb}) = [0,1]$ (see Section~\ref{sec:defhybrid}).

\smallbreak

$\bullet$ Assume that $\pr(x) >0$ and $\pr(y)>0$. 

We have $x^{1/\pr(x)} \ne y^{1/\pr(y)}$ in $U^\an$. Up to switching~$x$ and~$y$, we may assume that there exists $a\in A$ such that $\abs{a(x^{1/\pr(x)})} < \abs{a(y^{1/\pr(y)})}$. Since $k$ is not trivially valued, up to replacing~$a$ by an element of the form $\lambda a^n$, with $\lambda\in k^\ast$ and $n \in \N^\ast$, we may assume that  $\abs{a(x^{1/\pr(x)})}<1$ and $\abs{a(y^{1/\pr(y)})}>1$. Then the set 
\[W := \{(u,v) \in U^+\times U^+ \colon \abs{a(u)}<1, \abs{a(v)}>1\}\] 
is a neighborhood of~$(x,y)$ in $U^+\times U^+$ that does not meet~$\Gamma_{\binphi}$.

\smallbreak

$\bullet$ Assume that $\pr(x)=0$ and $\pr(y) >0$, or that $\pr(x)>0$ and $\pr(y) =0$.

Up to switching~$x$ and~$y$, we may assume that $\pr(x)=0$ and $\pr(y) >0$. Let $\alpha_{1} < \alpha_{2}\in (0,\pr(y))$. Then the set 
\[W_{-} := \{(u,v) \in U^+\times U^+ \colon \pr(u)<\alpha_{1}, \pr(v)>\alpha_{2}\}\] 
is a neighborhood of~$(x,y)$ in $U^+\times U^+$ that contains no points of the form $(z,z^\beta)$ with $z\in U^+$ and $\beta\in I^\ast_{z} \cap (0,\alpha_{2}/\alpha_{1}]$. 

Since $x\notin U^\beth$, there exists $a\in A$ such that $|a(x)|>1$. Let $r \in (1,|a(x)|)$ and $s\in (|a(y)|,+\infty)$. Then the set 
\[W_{+}:=\{(u,v) \in U^+\times U^+ \colon \abs{a(u)}>r, \abs{a(v)}<s\}\] 
is a neighborhood of~$(x,y)$ in $U^+\times U^+$ that contains no points of the form $(z,z^\beta)$ with $z\in U^+$ and $\beta\in I^\ast_{z} \cap [\log(s)/\log(r),+\infty)$. 

If $\log(s)/\log(r) \le \alpha_{2}/\alpha_{1}$, then $W_{-}\cap W_{+}$ is a neighborhood of~$(x,y)$ in $U^+\times U^+$ that does not meet~$\Gamma_{\binphi}$. Otherwise, for each $\alpha\in [\alpha_{2}/\alpha_{1},\log(s)/\log(r)]$, by assumption, there exists $a\in A$ such that $\abs{a(y)} \ne \abs{a(x)}^\alpha$. It follows that there exist a neighborhood~$W_{\alpha}$ of $(x,y)$ in $U^+\times U^+$ and a neighborhood~$J_{\alpha}$ of~$\alpha$ in~$\R_{>0}$ such that, for each $(u,v) \in W_{\alpha}$ and $\beta\in J_{\alpha}$, we have $\abs{a(v)} \ne \abs{a(u)}^\beta$. A compactness argument shows that there exists a finite subset~$S$ of~$[\alpha_{2}/\alpha_{1},\log(s)/\log(r)]$ such that $W_{-}\cap W_{+}\cap \bigcap_{\alpha \in S} W_{\alpha}$ is a neighborhood of~$(x,y)$ in $U^+\times U^+$ that does not meet~$\Gamma_{\binphi}$.

\smallbreak

$\bullet$ Assume that $\pr(x) = \pr(y) = 0$. 

Since $x\notin U^\beth$, as before, there exist $\alpha_{+} \in \R_{>0}$ and a neighborhood~$W_{+}$ of~$(x,y)$ in $U^+\times U^+$ that contains no points of the form $(z,z^\beta)$ with $z\in U^+$ and $\beta\in I^\ast_{z} \cap [\alpha_{+},+\infty)$. 

Since $y\notin U^\beth$, similarly, we prove that there exist $\alpha_{-} \in \R_{>0}$ and a neighborhood~$W_{-}$ of~$(x,y)$ in $U^+\times U^+$ that contains no points of the form $(z,z^\beta)$ with $z\in U^+$ and $\beta\in I^\ast_{z} \cap (0,\alpha_{-}]$. 

The result then follows from a compactness argument as in the previous case.
\end{proof}

\begin{corollary}\label{cor:locH}
The space~$X\cp$ is locally compact.
\end{corollary}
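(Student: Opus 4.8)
The plan is to transport local compactness from $X^+$ to its quotient $X\cp$ along the map $q$, exploiting that $q$ is a continuous open surjection (Lemma~\ref{lem:qopen}) and that the target is Hausdorff (Proposition~\ref{prop:Hausdorff}).

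First I would record that $X^+$ is locally compact. The affine analytic spaces $\E{n}{\cA}$ over a Banach ring are Hausdorff and locally compact (see Section~\ref{sec:BerkovichBanach}), and $X^\hyb$ is glued from closed analytic subspaces of open subsets of such spaces, so it is locally compact Hausdorff as well; being open in $X^\hyb$, the space $X^+$ inherits local compactness.

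Next comes the topological core of the argument. Fix $z \in X\cp$ and choose $x \in X^+$ with $q(x)=z$. By local compactness of $X^+$, there exist an open neighborhood $U$ of $x$ and a compact set $K$ with $x \in U \subseteq K \subseteq X^+$. Since $q$ is continuous, $q(K)$ is compact; since $q$ is open (Lemma~\ref{lem:qopen}), $q(U)$ is an open neighborhood of $z$; and the inclusions $z \in q(U) \subseteq q(K)$ exhibit $q(K)$ as a compact neighborhood of~$z$. As $z$ was arbitrary, every point of $X\cp$ admits a compact neighborhood, which is exactly local compactness once we know, by Proposition~\ref{prop:Hausdorff}, that $X\cp$ is Hausdorff.

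The argument is short and presents no serious obstacle; the two points deserving care are the appeal to local compactness of $X^+$ (which ultimately rests on the local compactness of Berkovich spaces over Banach rings) and, more crucially, the use of the openness of~$q$. Without openness, the image $q(K)$ of a compact neighborhood would merely be compact, not necessarily a neighborhood of~$z$, so Lemma~\ref{lem:qopen} is the indispensable ingredient that makes the proof work.
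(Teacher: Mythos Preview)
Your proof is correct and follows essentially the same approach as the paper: both argue that $X^+$ is locally compact (as an open subset of the locally compact space $X^\hyb$), then use the openness of~$q$ (Lemma~\ref{lem:qopen}) together with the Hausdorff property of~$X\cp$ (Proposition~\ref{prop:Hausdorff}) to transfer local compactness to the quotient. The paper's version is simply terser, invoking the general fact without spelling out the neighborhood argument you give.
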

\begin{proof}
The space~$X^\hyb$ is locally compact, hence so is~$X^+$. Since $q$ is open and $X\cp$ is Hausdorff, we deduce that $X\cp$ is locally quasi-compact. 
\end{proof}

\begin{lemma}\label{lem:trivialfibercompact}
The space $X^\partial$ is compact.
\end{lemma}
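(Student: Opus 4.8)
The plan is to realize $X^\partial = q(X_\infty)$ as the continuous image of a compact set. Since $X\cp$ is Hausdorff (Proposition~\ref{prop:Hausdorff}) and $q$ is continuous, it suffices to exhibit a compact subset $K \subseteq X_\infty$ whose flow-saturation covers $X_\infty$, i.e. $T(K) \supseteq X_\infty$. Indeed, every trajectory meeting $X_\infty$ would then meet $K$, giving $q(K) = q(X_\infty) = X^\partial$, which is consequently compact as the image of a compact set.

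To build $K$, I would fix a compactification $Y$ of $X$ (Nagata's theorem), set $Z := Y - X$, and choose a finite cover of $Y$ by affine opens $V_j = \Spec B_j$, with $Z \cap V_j = V(J_j)$ for a finitely generated ideal $J_j = (b_{j,1},\dotsc,b_{j,r_j})$. Recall that $V_j^\beth = \cM(B_j)$ (with $B_j$ trivially normed) is compact and consists exactly of the points having a center on~$V_j$. On $\cM(B_j)$ I would consider the continuous, flow-homogeneous function $\phi_j := \max_i \abs{b_{j,i}(\wc)}$, which satisfies $\phi_j(x^\alpha) = \phi_j(x)^\alpha$, and set $K_j := \{x \in \cM(B_j) : \phi_j(x) = 1/2\}$ and $K := \bigcup_j K_j$. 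Each $K_j$ is closed in the compact space $\cM(B_j)$, hence compact, so $K$ is compact.

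It then remains to check $K \subseteq X_\infty$ and $T(K) \supseteq X_\infty$. For $x \in K_j$: since $\phi_j(x) = 1/2 < 1$, every $b_{j,i}$ has seminorm~$<1$, so the center of~$x$ lies in $V(J_j) = Z \cap V_j \subseteq Z$, whence $x$ has no center on~$X$; moreover $\phi_j(x) > 0$ forces $\rho(x) \in V_j \setminus V(J_j) = V_j \cap X$, so $x \in X_0^\an$. Thus $x \in X_0^\an - X^\beth = X_\infty$. Conversely, given $x \in X_\infty$, by Proposition~\ref{prop:etainfini} the point~$x$ has a center~$\xi$ on~$Z$; choosing $V_j \ni \xi$ yields $x \in \cM(B_j)$ with $\phi_j(x) \in (0,1)$. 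Since $\pr(x) = 0$ we have $I_x^\ast = (0,+\infty)$, and $\alpha \mapsto \phi_j(x^\alpha) = \phi_j(x)^\alpha$ is a bijection onto~$(0,1)$, so there is a unique $\alpha_0$ with $\phi_j(x^{\alpha_0}) = 1/2$. As $x^{\alpha_0} \in \cM(B_j)$ (all seminorms stay $\le 1$ under the flow), we get $x^{\alpha_0} \in K_j$, hence $x \in T(x^{\alpha_0}) \subseteq T(K)$ by Lemma~\ref{lem:TxTy}. This gives $T(K) \supseteq X_\infty$ and finishes the argument.

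The routine inputs (compactness and continuity on $\cM(B_j)$, flow-homogeneity of~$\phi_j$) are immediate from the definitions. The one genuinely delicate point is the choice of slicing function: one must cut with generators of the ideal of the \emph{boundary}~$Z$ while working inside the compact locus $\cM(B_j) = V_j^\beth$, rather than slicing coordinates of~$X$ on an affine cover of~$X$ itself. The naive approach would fail for non-affine~$X$, since a point with $\rho(x)$ in a chart could still acquire a center on~$X$ outside that chart and hence not lie in~$X_\infty$; passing to the compactification and imposing $\phi_j < 1$ is precisely what pins the center down on~$Z$ and guarantees $K_j \subseteq X_\infty$.
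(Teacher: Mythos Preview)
Your proof is correct and follows essentially the same strategy as the paper: pass to a compactification $Y$ with boundary $Z$, cover by affines, and exhibit $X^\partial$ as the image of compact level sets $\{\phi_j = r\}$ inside the $V_j^\beth$'s, using that every flow orbit in $X_\infty$ meets such a level set. The only cosmetic difference is that the paper first blows up so that $Z$ becomes a Cartier divisor and works with a single local generator $h_i$, whereas you keep several generators and take $\phi_j = \max_i \abs{b_{j,i}}$; since $\max$ is flow-homogeneous this works just as well and spares you the blow-up.
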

\begin{proof}
Let $X$ be a variety over $k$. Let~$Y$ be a compactification of~$X$ and set $Z := Y-X$. Up to blowing up~$Z$ in~$X$, we may assume that~$Z$ is a Cartier divisor. Let~$\cI$ be the sheaf of ideals defining~$Z$ in~$Y$. Let $(U_{i})_{i\in I}$ be a finite cover of~$Y$ by affine open subsets such that, for each $i\in I$, $\cI_{\vert U_{i}\cap Z}$ is generated by a single element $h_{i} \in \cO(U_{i})$.

Fix $r \in (0,1)$. Let $i\in I$. Denoting by~$\Uk_{i}$ the completion of~$U_{i}$ along~$U_{i}\cap Z$, we have 
\[\Uk_{i,\eta} = \{x\in U_{i}^\beth \colon 0 < \abs{h_{i}(x)} < 1\}.\]  
The quotient $\Uk_{i,\eta}/\binphi$ is compact, since it is the image of the compact set $\{x\in U_{i}^\beth \colon |h_{i}(x)| = r\}$.

By Proposition~\ref{prop:etainfini}, we have $X_{\infty} = \Yk_{\eta}$, hence $X^\partial = \Yk_{\eta}/\binphi$. The result follows, since $\Yk_{\eta}$ is covered by the $\Uk_{i,\eta}$'s, and $X^\partial$ is Hausdorff, by Proposition~\ref{prop:Hausdorff}.
\end{proof}

\begin{theorem}\label{th:compact}
The space~$X\cp$ is compact.
\end{theorem}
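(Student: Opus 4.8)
The plan is to upgrade the properties already established---$X\cp$ is Hausdorff (Proposition~\ref{prop:Hausdorff}) and locally compact (Corollary~\ref{cor:locH}), the boundary $X^\partial$ is compact (Lemma~\ref{lem:trivialfibercompact}), and $X\cp = X^\an \sqcup X^\partial$ with $\iota\colon X^\an \to X\cp$ an open map inducing a homeomorphism onto the open subset $q(X^+_{>0})$ (Lemmas~\ref{lem:iota} and~\ref{lem:qopen})---to full quasi-compactness. The reduction I would use is: it suffices to prove that \emph{for every open neighborhood~$\mathcal{V}$ of~$X^\partial$ in~$X\cp$, the closed complement $X\cp \setminus \mathcal{V}$ is compact.} Indeed, given an open cover of~$X\cp$, compactness of~$X^\partial$ extracts a finite subfamily covering~$X^\partial$; its union contains such a~$\mathcal{V}$, and finitely many further members cover the compact set $X\cp \setminus \mathcal{V}$. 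Setting $W := q^{-1}(\mathcal{V})$, a flow-saturated open subset of~$X^+$ containing $X^+_{0} = X_{\infty}$, one identifies $X\cp \setminus \mathcal{V}$ with $\iota(C)$, where $C := X^+_{1} \setminus W$.

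To place $C$ inside a compact space, I would fix a compactification $Y$ of~$X$; blowing up $Z := Y - X$, I may assume $Z$ is a Cartier divisor, cut out on a finite affine cover $(U_{i})$ by single equations $h_{i}$ (as in the proof of Lemma~\ref{lem:trivialfibercompact}), and this leaves $X$, hence $X\cp$, unchanged. Since $Y$ is proper, $Y^\an = Y^\hyb_{1}$ is compact, and $X^\an = X^+_{1}$ is the open subset $\rho^{-1}(X) \cap Y^\an$, where $\rho \colon Y^\hyb \to Y$ is the analytification morphism. As $\iota$ is continuous and a closed subset of a compact space is compact, it is enough to show that $C$ is closed in~$Y^\an$. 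Now $C$ is already closed in~$X^\an$ because $W$ is open; therefore any point~$w$ lying in the closure of~$C$ in~$Y^\an$ but not in~$C$ must belong to $Y^\an \setminus X^\an$, that is, satisfy $\rho(w) \in Z$. I would rule out such~$w$ by contradiction.

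The heart of the argument is a two-parameter flow degeneration. Fix $r \in (0,1)$ and suppose $\rho(w) \in Z \cap U_{i} = V(h_{i})$, and let $(x_{\lambda})$ be a net in~$C$ converging to~$w$. Continuity of $\abs{h_{i}(\wc)}$ yields $\abs{h_{i}(x_{\lambda})} \to \abs{h_{i}(w)} = 0$, whereas $\abs{h_{i}(x_{\lambda})} > 0$ since $\rho(x_{\lambda}) \in X$. Putting $\alpha_{\lambda} := \log r / \log\abs{h_{i}(x_{\lambda})}$, which lies in $(0,1)$ eventually and tends to~$0$, the flowed points $y_{\lambda} := x_{\lambda}^{\alpha_{\lambda}} \in X^+$ satisfy $\abs{h_{i}(y_{\lambda})} = r$ and $\pr(y_{\lambda}) = \alpha_{\lambda} \to 0$. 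By compactness of~$Y^\hyb$, a subnet of $(y_{\lambda})$ converges to some~$\xi$ with $\pr(\xi) = 0$ and $\abs{h_{i}(\xi)} = r \in (0,1)$; the latter condition forces $\rho(\xi) \in U_{i} \cap X$ and the center of~$\xi$ to lie in~$Z$, so that $\xi \in X^\an_{0} - X^\beth = X_{\infty} \subseteq W$ by Lemma~\ref{lem:center}. Since~$W$ is open, some~$y_{\lambda}$ lies in~$W$; as $W$ is flow-saturated and $y_{\lambda} \binphi x_{\lambda}$, this gives $x_{\lambda} \in W$, contradicting $x_{\lambda} \in C$. Hence $C$ is closed in~$Y^\an$, completing the proof.

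I expect this last step to be the main obstacle, as it encodes the very mechanism by which $X^\partial$ compactifies~$X^\an$: a net of points of~$X^\an$ escaping toward~$Z$ becomes, after flowing down to small values of~$\pr$, a net converging to a centerless point of~$X^\an_{0}$, i.e.\ a point of~$X_{\infty}$. The subtle features to handle with care are the genuinely simultaneous limit in both $\pr$ and~$\abs{h_{i}}$ (in contrast with a naive flow-down at fixed~$x$, whose limit lands in the discarded set~$X^\beth$), the systematic use of nets rather than sequences since these spaces need not be first countable, and the valuative verification that the accumulation point~$\xi$ has its center on~$Z$ while its support~$\rho(\xi)$ remains in~$X$.
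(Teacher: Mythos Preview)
Your overall strategy is sound and uses the same core mechanism as the paper: flow a point~$x$ with $\abs{h_{i}(x)}$ small down to a point~$y$ with $\abs{h_{i}(y)}=r$, which then lies in~$X_{\infty}$. There is, however, a genuine gap in the last step. You assert that a subnet of $(y_{\lambda})$ converges in~$Y^\hyb$ to some~$\xi$ with $\abs{h_{i}(\xi)}=r$ and center on~$Z$. But $h_{i}$ is only defined on~$U_{i}^\hyb$, so already writing $\abs{h_{i}(\xi)}=r$ presupposes $\xi\in U_{i}^\hyb$; and concluding that the center lies in $Z\cap U_{i}$ further requires $\xi\in U_{i}^\beth$, i.e.\ $\abs{t_{j}(\xi)}\le 1$ for generators $t_{j}$ of~$\cO(U_{i})$. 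Neither follows from compactness of~$Y^\hyb$ alone: a priori the subnet could escape~$U_{i}^\hyb$, or accumulate at a point whose center on~$Y$ lies in $Y\setminus U_{i}$ rather than in~$Z$. The gap is fillable: since $\abs{t_{j}(x_{\lambda})}\to\abs{t_{j}(w)}$ and $\alpha_{\lambda}\to 0$, one checks that $\abs{t_{j}(y_{\lambda})}=\abs{t_{j}(x_{\lambda})}^{\alpha_{\lambda}}$ remains bounded (so the $y_{\lambda}$ eventually sit in a fixed compact polydisc inside~$U_{i}^\hyb$), and in fact $\abs{t_{j}(y_{\lambda})}\to 1$ when $\abs{t_{j}(w)}>0$ while $\abs{t_{j}(y_{\lambda})}\le 1$ when $\abs{t_{j}(w)}=0$, forcing $\abs{t_{j}(\xi)}\le 1$ and hence $\xi\in U_{i}^\beth$.

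The paper circumvents this delicacy by rescaling the generators~$t_{j}$ (using that~$k$ is non-trivially valued) so that $\abs{t_{j}(w)}<1$, restricting to a neighborhood~$V_{w}$ of~$w$ where this inequality persists, and flowing into the explicit compact set $K_{w}=\{z\in U_{i}^\hyb:\abs{t_{j}(z)}\le 1,\ \abs{h_{i}(z)}=r\}$, which visibly lies in $U_{i}^\beth\cap X^+$. It then argues constructively rather than by contradiction: finitely many~$q(K_{w})$, together with~$q$ of the compact complement $Y_{1}^\an\setminus\bigcup V_{w}$, exhibit~$X^\an$ inside a compact subset of~$X\cp$. Your net-based reorganization is valid once the above verification is supplied, but the rescaling trick is precisely what makes the paper's version cleaner and avoids the subnet bookkeeping.
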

\begin{proof}
Let $X$ be a variety over $k$. As in the previous proof, let~$Y$ be a compactification of~$X$ such that $Z := Y-X$ is a Cartier divisor. Let~$\cI$ be the sheaf of ideals defining~$Z$ in~$Y$. Fix $r \in (0,1)$. 

Recall that $X\cp = X^\partial \cup X^\an$, where $X^\an$ is identified to its image by~$\iota$. By Proposition~\ref{prop:Hausdorff}, $X\cp$ is Hausdorff and, by Lemma~\ref{lem:trivialfibercompact}, $X^\partial$ is compact, so it is enough to prove that $X^\an$ is contained in a compact subset of~$X\cp$.

Let $x\in Z_{1}^\an$. Choose an affine open subset $U = \Spec(A)$ of~$Y$ such that $U_{1}^\an$ contains~$x$. We may assume that $A$ is a $k$-algebra of finite type, say $A = k[t_{1},\dotsc,t_{n}]/J$. Up to shrinking~$U$, we may assume that~$\cI_{Z\cap U}$ is generated by a single element $h \in A$. Since $k$ is not trivially valued, up to rescaling the $t_{i}$'s, we may assume that, for each~$i$, we have $\abs{t_{i}(x)}< 1$. Set 
\[V_{x} := \{y\in U_{1}^\an \colon \forall i=1,\dotsc,n,\  \abs{t_{i}(y)}< 1, \abs{h(y)}< r\}.\]
It is an open neighborhood of~$x$ in~$Y_{1}^\an$. 

Consider the set 
\[K_{x} := \{z\in U^\hyb \colon \forall i=1,\dotsc,n,\ \abs{t_{i}(z)}\le 1,\ \abs{h(z)}=r\}.\]
It is a compact subset of~$X^\hyb$, and it is contained in~$X^+$, by Proposition~\ref{prop:etainfini}. 

For each $y\in V_{x} \cap X_{1}^\an$, we have $\abs{h(y)} \in (0,r)$, hence there exists $\alpha \in (0,1)$ such that $|h(y)|^\alpha = r$. It follows that $q(V_{x} \cap X_{1}^\an)$ is contained in~$q(K_{x})$. 

Since~$Z_{1}^\an$ is compact, there exists a finite subset~$F$ of~$Z_{1}^\an$ such that the open subset $V := \bigcup_{x\in F} V_{x}$ of~$Y_{1}^\an$ covers~$Z_{1}^\an$. In particular, $W := Y_{1}^\an- V = X_{1}^\an - (V\cap X_{1}^\an)$ is a compact subset of~$X_{1}^\an$. We conclude by writing 
\[X^\an \simeq q(X_{1}^\an) = q(W) \cup q(V\cap X_{1}^\an) \subseteq q(W) \cup \bigcup_{x\in F} q(K_{x}).\]
\end{proof}

\begin{lemma}
Assume that $k$ is countable. Then the spaces $X^\hyb$, $X^+$ and~$X\cp$ are metrizable. 
\end{lemma}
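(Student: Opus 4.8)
The plan is to rely on the Urysohn metrization theorem: a topological space that is regular, Hausdorff and second countable is metrizable. For each of the three spaces, regularity comes for free from local compactness together with the Hausdorff property, so the only substantial point to establish is second countability, and this is exactly where the countability hypothesis on~$k$ enters.

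First I would treat $X^\hyb$. Locally it is modelled on closed analytic subsets of the affine spaces $\Ahyb{n}{k}$, which are locally compact and Hausdorff; since $X$ is a variety, hence separated and quasi-compact, the space $X^\hyb$ is locally compact Hausdorff and admits a finite cover by affine charts $U_{i}^\hyb$. As a finite union of subspaces, $X^\hyb$ is second countable as soon as each $U_{i}^\hyb$ is, and each $U_{i}^\hyb$ embeds as a subspace of some $\Ahyb{n}{k}$, so it suffices to produce a countable basis of $\Ahyb{n}{k}$. By definition of the topology, the sets $\{x : \abs{P(x)} < s\}$ and $\{x : \abs{P(x)} > r\}$, for $P \in k[T_{1},\dotsc,T_{n}]$ and $r,s \in \R$, form a subbasis; approximating each bound from inside shows that one may restrict to $r,s \in \Q$ without changing the generated topology. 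Since $k$ is countable, so is $k[T_{1},\dotsc,T_{n}]$, hence the family of finite intersections of such sets with rational bounds is countable and forms a basis. Thus $\Ahyb{n}{k}$, and therefore $X^\hyb$, is second countable, so it is metrizable.

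The space $X^+$ is an open subset of~$X^\hyb$, hence inherits local compactness, the Hausdorff property and second countability, and is therefore metrizable as well, being a subspace of a metrizable space. It remains to handle the quotient $X\cp = X^+/\binphi$. It is Hausdorff by Proposition~\ref{prop:Hausdorff} and locally compact by Corollary~\ref{cor:locH}, hence regular. For second countability I would use that the quotient map $q\colon X^+ \to X\cp$ is continuous, surjective and open by Lemma~\ref{lem:qopen}: if $(B_{n})_{n\in\N}$ is a countable basis of~$X^+$, then each $q(B_{n})$ is open, and for any open $V \subseteq X\cp$ and any $y \in V$ one may choose $x \in q^{-1}(y) \subseteq q^{-1}(V)$, find~$n$ with $x \in B_{n} \subseteq q^{-1}(V)$, and conclude $y \in q(B_{n}) \subseteq V$ using surjectivity. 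Hence $(q(B_{n}))_{n\in\N}$ is a countable basis of~$X\cp$, and Urysohn's theorem applies once more to give metrizability of~$X\cp$.

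The argument is essentially formal once second countability is in hand, so the only real content is the explicit countable basis for $\Ahyb{n}{k}$; the main point to get right is that the countability of~$k$ genuinely suffices, that is, that restricting the defining inequalities to polynomials with coefficients in the countable ring $k[T_{1},\dotsc,T_{n}]$ and to rational bounds still yields a basis of the hybrid topology. The descent of second countability through the open surjection~$q$ is then routine.
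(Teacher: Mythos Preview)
Your proof is correct and follows essentially the same approach as the paper: reduce metrizability to second countability via Urysohn, establish second countability of $\Ahyb{n}{k}$ from the countability of $k[T_{1},\dotsc,T_{n}]$ and rational bounds, and propagate to $X^\hyb$, $X^+$, and $X\cp$. Your treatment is in fact slightly more careful than the paper's, which simply asserts that second countability passes to the quotient $X\cp$; you correctly justify this step by invoking the openness of~$q$ from Lemma~\ref{lem:qopen}.
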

\begin{proof}
The spaces $X^\hyb$, $X^+$ and $X\cp$ are Hausdorff and locally compact. By Urysohn's metrization theorem, it suffices to show that they are second-countable. Since $X^+$ is open in $X^\hyb$ and $X\cp$ is a quotient of~$X^+$, we only need to prove that $X^\hyb$ is second-countable.

Since~$X$ is quasi-compact, we may reduce to the case where~$X$ is affine, and then to the case where $X = \A^N_{k}$, for some $N\in \N$. Let us fix coordinates $T_{1},\dotsc,T_{N}$ on $\A^N_{k}$. Then, the topology of $\Ahyb{N}{k}$ is generated by the countably many open sets of the form 
\[ \{ x \in \Ahyb{N}{k} : r < \abs{P(x)} < s\},\]
for $P \in k[T_{1},\dotsc,T_{n}]$, $r,s \in \Q$. The result follows.
\end{proof}

\begin{remark}
If $k$ is not countable, then $\Ahyb{1}{k}$ is not metrizable. Indeed, in this case, the Gauss point of $(\Ahyb{1}{k})_{0} = \E{1}{k_{0}}$ admits no countable basis of neighborhoods.
\end{remark}

\begin{proposition}\label{prop:Xandense}
The space $X^\hyb_{>0}$ is dense in~$X^\hyb$. In particular, $X^\an$ is dense in~$X\cp$. 
\end{proposition}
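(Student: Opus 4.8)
The two assertions are of unequal depth: the substance is the density of $X^\hyb_{>0}$ in $X^\hyb$, and the claim about $X^\an\subseteq X\cp$ will follow formally. Since $X^\hyb_0=\pr^{-1}(0)$ is closed and $X^\hyb_{>0}=X^\hyb-X^\hyb_0$ is its open complement, the plan is to show that every point $x_0\in X^\hyb_0=X^\an_0$ lies in the closure of $X^\hyb_{>0}$, i.e. that every basic neighbourhood of $x_0$ meets some fibre $X^\hyb_\eps$ with $\eps>0$. I would do this by exhibiting, for a dense family of points $x_0$, an explicit net $(z_\eps)$ with $z_\eps\in X^\hyb_\eps$ and $z_\eps\to x_0$ as $\eps\to 0$, and then upgrading to all of $X^\an_0$ by a soft closure argument.

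First I would reduce to $X=\Spec(A)$ affine, as density may be checked on the members of an open cover $X^\hyb=\bigcup U^\hyb$, with $X^\hyb_{>0}\cap U^\hyb=U^\hyb_{>0}$. By Remark~\ref{rem:pairs}, a point $x_0\in X^\an_0$ is a pair $(q,\va_{x_0})$ with $q\in X$ and $\va_{x_0}$ an absolute value on $\kappa(q)$; boundedness by the trivial absolute value forces, by multiplicativity, that $\va_{x_0}$ is trivial on $k^\times$, so $v_0:=-\log\va_{x_0}$ is a valuation trivial on $k$. The key reduction is that the \emph{quasimonomial} (Abhyankar) points are dense in $X^\an_0$, so by the following closure argument it suffices to approximate such $x_0$: if $S\subseteq\overline{X^\hyb_{>0}}$ is dense in $X^\an_0$, then $X^\an_0=\overline S^{\,X^\an_0}\subseteq\overline{X^\hyb_{>0}}$, whence $X^\hyb=X^\hyb_{>0}\cup X^\hyb_0\subseteq\overline{X^\hyb_{>0}}$.

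For a quasimonomial $x_0$, the valuation $v_0$ is monomial: in suitable local parameters $u_1,\dots,u_d$ of a model of $X$ one has $v_0\big(\sum_\beta c_\beta u^\beta\big)=\min_{c_\beta\neq 0}\sum_i\lambda_i\beta_i$ for weights $\lambda_i\ge 0$ and coefficients $c_\beta$ of trivial $k$-valuation. Here I would take $z_\eps$ to be the associated Gauss (monomial) point of $X^\hyb_\eps$, given by the multiplicative seminorm
\[ \Big|\sum_\beta c_\beta u^\beta\Big|_{z_\eps} := \max_{\beta}\Big(\abs{c_\beta}^{\eps}\,\prod_i e^{-\lambda_i\beta_i}\Big), \]
suitably extended along the finite extension $\kappa(q)/k(u_1,\dots,u_d)$. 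This is a genuine point of $X^\hyb_\eps$: it is a Gauss norm over the valued field $\hat k_\eps$, hence multiplicative, and its restriction to $k$ is $\va^\eps$, so $\pr(z_\eps)=\eps>0$. Because any element of $A$ involves only finitely many monomials, letting $\eps\to 0$ sends $\abs{c_\beta}^{\eps}\to 1$ and therefore $\abs{P(z_\eps)}\to\abs{P(x_0)}$ for every $P\in A$; as the topology of $X^\hyb$ is initial for the evaluation maps $P\mapsto\abs{P(\wc)}$, this yields $z_\eps\to x_0$. This is the affine avatar of the degeneration of fibres studied in~\cite{BJ}. I expect this step to be the main obstacle: it combines the appeal to density of quasimonomial points with the bookkeeping of the finite extension $\kappa(q)/k(u)$, and it is precisely the rank-one nature of Berkovich points that forces the restriction to valuations with finitely generated value groups, the only ones for which such an approximating family of honest seminorms exists.

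Finally, for the ``in particular'' statement I would argue formally. One has $X^\hyb_{>0}\subseteq X^+=X^\hyb-X^\beth$ (since $X^\beth\subseteq X^\an_0$), and $X^+$ is open in $X^\hyb$, so density of $X^\hyb_{>0}$ in $X^\hyb$ gives density in $X^+$. By Lemma~\ref{lem:iota} the identified copy $X^\an=\iota(X^+_1)$ equals $q(X^+_{>0})=q(X^\hyb_{>0})$ inside $X\cp$. Since $q\colon X^+\to X\cp$ is continuous and surjective,
\[ X\cp=q(X^+)=q\big(\overline{X^\hyb_{>0}}^{\,X^+}\big)\subseteq\overline{q(X^\hyb_{>0})}=\overline{X^\an}, \]
so $X^\an$ is dense in $X\cp$.
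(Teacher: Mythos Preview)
Your deduction of the ``in particular'' clause from the first assertion is essentially the paper's: density of $X^\hyb_{>0}$ in $X^\hyb$ (hence in the open subset $X^+$) together with continuity and surjectivity of~$q$ does the job.

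For the first assertion, however, your approach diverges sharply from the paper's and carries a genuine gap in the Archimedean case. The paper gives a one-line argument: if $X^\hyb_{>0}$ were not dense, some nonempty open $U\subseteq X^\hyb$ would lie in $X^\hyb_0=\pr^{-1}(0)$; but the projection $\pr\colon U\to\cM(k_\hyb)=[0,1]$ is open by \cite[Proposition~6.4.1]{CTCZ}, so $\pr(U)=\{0\}$ is impossible. No explicit approximation, no quasimonomial valuations.

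Your construction instead defines, for each quasimonomial~$x_0$, a ``Gauss point'' $z_\eps\in X^\hyb_\eps$ via the max-formula $\max_\beta\bigl(|c_\beta|^\eps\,e^{-\sum_i\lambda_i\beta_i}\bigr)$. This is a legitimate multiplicative seminorm only when $\hat k_\eps$ is non-Archimedean. When $k$ is Archimedean (e.g.\ $k=\C$, the paper's headline setting), the max-formula is \emph{not} multiplicative, so $z_\eps$ is not a point of $X^\hyb_\eps$ at all: for $\eps>0$ the fibre $X^\hyb_\eps$ consists only of evaluation seminorms at complex points, and there are no Gauss-type points available. One can still approximate~$x_0$ by honest points of the fibres (for $\eta_r\in(\A^1_\C)^\an_0$ with $r<1$ one may take $z_\eps=r^{1/\eps}\in\C$ and check $|P(z_\eps)|^\eps\to|P(\eta_r)|$), but this is a different construction from the one you wrote, and your ``suitable extension along $\kappa(q)/k(u)$'' would need substantial reworking in that regime. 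Even in the non-Archimedean case, you are invoking density of quasimonomial points in $X^\an_0$, itself a nontrivial structural input, to prove what the paper obtains for free from openness of~$\pr$.
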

\begin{proof}
If $X^\hyb_{>0}$ were not dense in~$X^\hyb$, there would exist a non-empty open subset~$U$ of $X^\hyb$ that is contained in~$X^\hyb_{0}$. By \cite[Proposition~6.4.1]{CTCZ}, the projection map $\pr \colon U \to \cM(k_{\hyb})$ is open, which yields a contradiction. This proves the first part of the statement. The second part follows by applying~$q$.
\end{proof}

\begin{proposition}
The space $X\cp$ is locally path-connected. 

If $X$ is connected, then $X\cp$ is path-connected.
\end{proposition}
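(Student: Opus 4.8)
The plan is to deduce both assertions from local path-connectedness, and to obtain the latter by transporting it from $X^+$ through the open quotient map $q$. First I would record the following general principle: if $p\colon E\to B$ is an open continuous surjection and $E$ is locally path-connected, then so is $B$. Indeed, given a basis $\mathcal B$ of path-connected open subsets of $E$, the sets $q(V)$ for $V\in\mathcal B$ are open (as $q$ is open) and path-connected (as continuous images of path-connected sets), and they form a basis of $B$. Applying this to $q\colon X^+\to X\cp$, which is open by Lemma~\ref{lem:qopen}, reduces the first assertion to proving that $X^+$ is locally path-connected.

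For the local path-connectedness of $X^+$ I would distinguish points according to their image under $\pr$. At a point $z$ with $\pr(z)>0$, the space $X^+$ coincides locally with $X^\hyb_{>0}$, which fibers over $(0,1]$ with fibers the classical analytic spaces $X_\eps$ (Lemma~\ref{lem:hybaneps}); these are locally path-connected, the flow $\Phi$ is continuous and open (Proposition~\ref{prop:Phicontinuous}), and the trajectory through $z$ is a genuine arc transverse to the fibers (Remark~\ref{rem:shapeT(x)}), so combining an arc of scales with a path-connected neighborhood inside a single fiber produces a basis of path-connected neighborhoods. The delicate case is $z\in X_\infty=X^+_0$: here the whole trajectory $T(z)$ lies in $X_0$, so the flow by itself does not connect $z$ to the Archimedean bulk $X_{>0}$, which is nevertheless dense near $z$ (Proposition~\ref{prop:Xandense}).

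To handle this case I would build transverse degeneration paths, generalizing the explicit computation underlying the identification $\Gm\cp=\Gm^\an\cup\{\ast_0\}\cup\{\ast_\infty\}$ (Examples~\ref{ex:A1Gm} and~\ref{ex:A1partial}). Concretely, I would fix a compactification $Y$ of $X$ with $Z=Y-X$ a Cartier divisor, work in an affine chart where $Z$ is cut out by a single equation $h$, and use coordinates $P_1,\dots,P_m$ defining a basic box neighborhood of $z$. The model path keeps the tropical coordinates $\va_{P_i}$ fixed at their values $\abs{P_i(z)}$ (so that it never leaves the box) while letting the scale tend to $0$: one takes analytic points degenerating toward $Z$ with scale $\eps_\tau\to 0$ chosen so that $\abs{P_i(\wc)}^{\eps_\tau}$ stays equal to $\abs{P_i(z)}$, and checks, exactly as in the monomial computation for $\Gm$, that these points converge in the hybrid topology to a point of $X_0$. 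Together with the local path-connectedness of the trivially valued fiber $X_\infty$ (a Berkovich space over $k_0$), which lets one move freely inside the boundary part of a neighborhood, this yields a basis of path-connected neighborhoods of $z$, completing the proof that $X^+$, hence $X\cp$, is locally path-connected.

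Finally, the global statement follows formally. If $X$ is connected then $X^\an$ is connected (the analytification of a connected variety over a field is connected), and it is dense in $X\cp$ by Proposition~\ref{prop:Xandense}, so that $X\cp$, being the closure of the connected set $X^\an$, is connected; a connected, locally path-connected space is path-connected, which gives the claim. The main obstacle is the transverse degeneration step: producing, for an arbitrary point of $X_\infty$ (whose associated valuation need not be quasi-monomial), an honest continuous path into $X_{>0}$ and verifying its convergence in the hybrid topology while keeping it inside a prescribed neighborhood. The non-Archimedean base field case requires the same computation with $\va$ in place of the complex absolute value.
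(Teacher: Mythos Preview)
Your overall architecture is exactly the paper's: transport local path-connectedness from $X^+$ to $X\cp$ through the open surjection~$q$ (Lemma~\ref{lem:qopen}), and then deduce the global statement from connectedness of~$X^\an$, its density (Proposition~\ref{prop:Xandense}), and the fact that connected plus locally path-connected implies path-connected. That part is fine and essentially verbatim the paper's argument.

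The divergence is in how you establish that $X^+$ itself is locally path-connected. The paper does not argue this by hand: it simply invokes \cite[Th\'eor\`eme~7.2.17]{CTCZ}, a general result stating that analytic spaces over geometric base rings are locally path-connected. Since $X^+$ is an open subset of $X^\hyb$, this settles the matter in one line. Your case split, by contrast, attempts to rebuild this from scratch, and the hard case $z\in X_\infty$ is not actually carried out: the ``transverse degeneration path'' is only described heuristically, and you yourself flag it as the main obstacle. For a general point of $X_\infty$ (not quasi-monomial, possibly with infinite-rank value group), producing an explicit continuous path into $X^\hyb_{>0}$ that stays inside a prescribed basic open set, and verifying convergence for \emph{all} polynomials simultaneously, is genuinely delicate; your sketch does not supply the required uniform control. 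So as written there is a gap.

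The fix is not to complete your construction but to recognize that local path-connectedness of Berkovich spaces over Banach rings is a known structural fact and cite it. If you insist on a direct argument, you would essentially be reproving a substantial piece of \cite{CTCZ}; that is possible but far outside the scope of this proposition.
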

\begin{proof}
By \cite[Th\'eor\`eme~7.2.17]{CTCZ}, $X^+$ is locally path-connected. By Lemma~\ref{lem:qopen}, $q$ is open, and the result follows.

Assume that $X$ is connected. Then, $X^\an$ is connected too. It then follows from Proposition~\ref{prop:Xandense} that $X\cp$ is connected, hence path-connected by the first part of the statement.
\end{proof}

\subsection{Sheaf-theoretic properties}

Let $X$ be a variety over~$k$. We endow $X\cp$ with the sheaf $\O_{X\cp} := q_{\ast} \O_{X^+}$. 

\begin{remark}
Let $U$ be an open subset of~$X\cp$ and let $x\in U$. Since we have quotiented by the flow, for $f\in \O_{X\cp}(U) = \cO_{X^+}(q^{-1}(U))$, the absolute value of~$f$ at~$x$ is no longer well-defined. 

However, properties such as $f(x)=0$, $f(x)\ne 0$, $\abs{f(x)}<1$, $\abs{f(x)}=1$, $\abs{f(x)} \le \abs{g(x)}$ for $g\in \O_{X\cp}(U)$, \dots still make sense, since they are invariant under raising to a non-zero power. We will allow us to use them in the sequel.
\end{remark}

\begin{lemma}
Let $x\in X\cp$. The ring~$\cO_{X\cp,x}$ is a local ring with maximal ideal
\[ \m_{x} = \{ f\in \cO_{X\cp,x} : f(x)=0\}.\]
In particular, $X\cp$ is a locally ringed space.
\qed
\end{lemma}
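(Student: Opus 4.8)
The plan is to work directly with the definition of the stalk $\cO_{X\cp,x} = \cO_{X^+,q^{-1}(\cdot)}$ at a point, and to show that the prescribed set $\m_x$ is the unique maximal ideal. Since $X\cp = X^+/\binphi$ and $\cO_{X\cp} = q_\ast \cO_{X^+}$, a germ $f \in \cO_{X\cp,x}$ is represented by a section of $\cO_{X^+}$ on a $\binphi$-saturated neighborhood $q^{-1}(U)$ of the trajectory $T(\tilde x)$, where $\tilde x \in X^+$ is any lift of $x$. The first point to settle is that the condition ``$f(x) = 0$'' is well-defined: by Proposition~\ref{prop:restrictionflotO}, for a point $y$ and $\alpha \in I_y^\ast$ we have $f(y^\alpha) = \lambda_{y^\alpha,y}(f(y))$, and since $\lambda_{y^\alpha,y}$ is a field isomorphism, $f(y) = 0$ if and only if $f(y^\alpha) = 0$. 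Hence vanishing at $x$ does not depend on the chosen representative of the trajectory, confirming that $\m_x$ is well-defined as asserted in the preceding remark.

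Next I would verify that $\m_x$ is an ideal and that its complement consists exactly of units. That $\m_x$ is an ideal is immediate from the fact that $f \mapsto f(x)$ (valued in the field $\cH(\tilde x)$, up to the flow isomorphism) is additive and multiplicative on germs. The crucial step is the converse: if $f(x) \ne 0$, then $f$ is invertible in $\cO_{X\cp,x}$. For this I would lift to $X^+$: choose a representative $\tilde x$ and a saturated open $q^{-1}(U)$ on which $f$ is defined, with $f(\tilde x) \ne 0$. Since $X^+$ is (an open subset of) a hybrid Berkovich space and $\cO_{X^+}$ is its structure sheaf, the locally ringed space structure on $X^+$ — itself inherited from $X^\hyb$, which is a locally ringed space because it is a Berkovich space over a geometric base ring — guarantees that $f$ non-vanishing at $\tilde x$ implies $f$ is a unit in $\cO_{X^+,\tilde x}$, i.e.\ $1/f$ is defined on some neighborhood of $\tilde x$.

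The subtlety is that invertibility must hold in $\cO_{X\cp,x}$, not merely in $\cO_{X^+,\tilde x}$: the inverse $1/f$ produced on a neighborhood $W$ of $\tilde x$ need not a priori be a section over a \emph{saturated} neighborhood. Here I would invoke Proposition~\ref{prop:restrictionflotO} (or its consequence that the flow preserves the structure sheaf along trajectories): shrinking $W$ to a flow-connected flow-open neighborhood of $\tilde x$, the section $1/f$ extends uniquely to $T(W)$, and $f \cdot (1/f) = 1$ persists on the saturation because the restriction $\cO(T(W)) \to \cO(W)$ is injective. Pushing forward by $q$, the germ $1/f$ descends to an element of $\cO_{X\cp,x}$ inverting $f$. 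This identifies $\cO_{X\cp,x}$ as a ring in which every element outside $\m_x$ is invertible, so $\m_x$ is the unique maximal ideal and $\cO_{X\cp,x}$ is local.

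I expect the main obstacle to be precisely this descent of the inverse through the quotient: one must be careful that non-vanishing is stable along the whole trajectory (so that $q^{-1}$ of a neighborhood of $x$ on which $f \ne 0$ is genuinely saturated and flow-open) and that the inverse glues to a section over a saturated set. Both are handled by the flow machinery of Section~\ref{sec:flow} — specifically the isomorphism $\cO(T(W)) \simeq \cO(W)$ of Proposition~\ref{prop:restrictionflotO} — but assembling them cleanly, keeping track of which neighborhoods are flow-open and flow-connected, is the delicate bookkeeping. Once that is in place, locality follows formally, and $X\cp$ is a locally ringed space.
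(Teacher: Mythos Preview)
Your proof is correct and essentially matches what the paper intends: note that the paper gives no proof at all (the lemma ends with \qed), treating the statement as immediate given the preceding Remark and the flow machinery of Section~\ref{sec:flow}. Your argument is a faithful and careful unpacking of that implicit reasoning, and the ``subtlety'' you identify --- that the inverse produced locally on~$X^+$ must descend to a section over a \emph{saturated} neighborhood --- is exactly the point requiring the isomorphism $\cO(T(W)) \simto \cO(W)$ from Proposition~\ref{prop:restrictionflotO} together with the openness of~$q$ (Lemma~\ref{lem:qopen}).

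One small streamlining: rather than extending $1/f$ and then checking $f\cdot(1/f)=1$ persists by injectivity, you could observe directly that $f(\tilde x)\ne 0$ forces $f(y)\ne 0$ for every $y\in T(\tilde x)$ (since $|f(y)|=|f(\tilde x)|^\alpha$), so the open set $\{f\ne 0\}\cap q^{-1}(U)$ is already saturated, and $1/f$ is defined on it as a section of~$\cO_{X^+}$. This is essentially the content of Corollary~\ref{cor:coherentTx}, which gives the stalk identification $\cO_{X\cp,x}\simeq\cO_{X^+,\tilde x}$ directly; the locality then follows immediately from the fact that $X^+$, as an open subspace of~$X^\hyb$, is already a locally ringed space.
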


\begin{proposition}\label{prop:functorcp}
Let $f\colon X_{1} \to X_{2}$ be a proper morphism of varieties over~$k$. Then the morphism $f^+ \colon X_{1}^+ \to X_{2}^+$ from Proposition~\ref{prop:f+} induces a morphism of locally ringed spaces $f\cp \colon X_{1}\cp \to X_{2}\cp$.

In particular, the mappings $X \mapsto X\cp$, $f\mapsto f\cp$ define a functor from the category of varieties over~$k$ with proper morphisms to that of locally ringed spaces.
\qed
\end{proposition}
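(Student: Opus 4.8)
The plan is to descend the morphism $f^+$ through the quotient maps $q_1 \colon X_1^+ \to X_1\cp$ and $q_2 \colon X_2^+ \to X_2\cp$, first topologically and then sheaf-theoretically, and finally to record functoriality. The key preliminary observation is that $f^+$ is compatible with the flow: for every $x \in X_1^+$ and every $\alpha \in I_x^\ast$ one has $f^+(x^\alpha) = f^+(x)^\alpha$. This is immediate from the definition of the flow together with the functoriality of the analytification, since the seminorm attached to $x^\alpha$ is that of $x$ raised to the power $\alpha$, and a morphism of analytic spaces acts by pullback of functions, an operation that commutes with raising seminorms to a power. (Note that $f^+$ lies over $\cM(k_\hyb)$, so $\pr \circ f^+ = \pr$ and $I_{f^+(x)}^\ast = I_x^\ast$, so all expressions make sense.)

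From flow-equivariance it follows that $f^+$ preserves trajectories. Indeed, if $x \binphi y$, then $y \in T(y) = T(x)$, so $y = x^\alpha$ for some $\alpha \in I_x^\ast$ (or $y = x$ in the trivially valued case); hence $f^+(y) = f^+(x)^\alpha \in T(f^+(x))$, and Lemma~\ref{lem:TxTy} gives $f^+(x) \binphi f^+(y)$. Therefore $q_2 \circ f^+$ is constant on $\binphi$-classes, and since $q_1$ is a topological quotient map, it factors uniquely through a continuous map $f\cp \colon X_1\cp \to X_2\cp$ with $f\cp \circ q_1 = q_2 \circ f^+$.

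Next I would upgrade $f\cp$ to a morphism of locally ringed spaces. For an open subset $U$ of $X_2\cp$, the identity $f\cp \circ q_1 = q_2 \circ f^+$ yields $q_1^{-1}((f\cp)^{-1}(U)) = (f^+)^{-1}(q_2^{-1}(U))$, which, combined with the definitions $\cO_{X_i\cp} = (q_i)_\ast \cO_{X_i^+}$, identifies $\cO_{X_2\cp}(U)$ with $\cO_{X_2^+}(q_2^{-1}(U))$ and $((f\cp)_\ast \cO_{X_1\cp})(U)$ with $\cO_{X_1^+}((f^+)^{-1}(q_2^{-1}(U)))$. The comorphism of the morphism of locally ringed spaces $f^+$ then supplies, after these identifications, a morphism of sheaves of rings $(f\cp)^\sharp \colon \cO_{X_2\cp} \to (f\cp)_\ast \cO_{X_1\cp}$, compatible with restrictions. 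To check that the induced maps on stalks are local, I would use the description $\m_x = \{ g \in \cO_{X_1\cp,x} : g(x) = 0\}$: if $g \in \cO_{X_2\cp, f\cp(x)}$ satisfies $g(f\cp(x)) = 0$, represented by some $\tilde g$ on $X_2^+$ with $\tilde g(f^+(x)) = 0$, then its pullback is represented by $(f^+)^\sharp \tilde g$, which vanishes at $x$ because $f^+$ is itself a morphism of locally ringed spaces; the vanishing being invariant under the flow, this shows $(f\cp)^\sharp$ carries $\m_{f\cp(x)}$ into $\m_x$.

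Finally, functoriality is formal: $(\mathrm{id})\cp = \mathrm{id}$ and $(g \circ f)\cp = g\cp \circ f\cp$ follow from the corresponding identities for $\wc^+$ (Proposition~\ref{prop:f+}) together with the uniqueness of the factorization through the quotient maps $q_i$. The only point requiring genuine care is the flow-equivariance of $f^+$ in the first step: once trajectories are known to be preserved, the topological descent follows from the universal property of the quotient, the sheaf morphism is read off directly from $(f^+)^\sharp$ through the $q_\ast$ description of the structure sheaves, and the stalk-locality is a formal consequence of $f^+$ being a morphism of locally ringed spaces.
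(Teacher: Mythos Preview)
Your proof is correct and complete. The paper itself gives no proof for this proposition (it ends with a bare \qed), treating the result as immediate from the definitions and from Proposition~\ref{prop:f+}; you have simply written out the details the paper leaves implicit, and your argument---flow-equivariance of~$f^+$, topological descent through the quotient maps, transport of the comorphism via $\cO_{X_i\cp} = (q_i)_\ast \cO_{X_i^+}$, and verification of locality on stalks---is exactly the expected one.
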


\begin{proposition}\label{prop:iotaiso}
The map $\iota \colon X^\an \to X\cp$ is an open immersion. 
\end{proposition}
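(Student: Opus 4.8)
The plan is to show that $\iota\colon X^\an \to X\cp$ is an open immersion of locally ringed spaces, which means verifying two things: that $\iota$ is a homeomorphism onto an open subset of~$X\cp$, and that the induced map on structure sheaves $\iota^{-1}\cO_{X\cp} \to \cO_{X^\an}$ is an isomorphism. The topological half is already settled: by Lemma~\ref{lem:qopen}, $\iota$ is an open map inducing a homeomorphism onto its (open) image $q(X^+_{>0})$. So the entire content of the proof lies in the sheaf-theoretic statement, namely that $\iota$ identifies the structure sheaf of~$X\cp$, pulled back along~$\iota$, with the analytic structure sheaf on~$X^\an = X^+_1$.

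First I would unwind the definitions. By construction $\cO_{X\cp} = q_\ast \cO_{X^+}$, so for an open subset~$U$ of~$X\cp$ we have $\cO_{X\cp}(U) = \cO_{X^+}(q^{-1}(U))$, and $q^{-1}(U)$ is a flow-saturated open subset of~$X^+$ (that is, $T(q^{-1}(U)) = q^{-1}(U)$). Pulling back along~$\iota$ and using that $\iota$ is the composite $X^+_1 \hookrightarrow X^+ \xrightarrow{q} X\cp$, the claim on stalks reduces to comparing, for $x \in X^+_1 = X^\an$, the stalk $\cO_{X^+,x}$ with $\cO_{X^\an,x}$, once we have matched neighborhoods correctly. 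More precisely, for a point $y = \iota(x) \in X\cp$ with $x \in X^+_1$, a basis of open neighborhoods of~$y$ is given by the images $q(V')$ of flow-saturated open sets $V'$ of~$X^+$, and $\iota^{-1}(q(V')) = V' \cap X^+_1$; thus the map on stalks is the colimit over such~$V'$ of the restriction maps $\cO_{X^+}(V') \to \cO_{X^\an}(V' \cap X^+_1)$.

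The key step is therefore to prove that these restriction maps become isomorphisms in the colimit, and this is exactly where the flow machinery developed earlier pays off. I would invoke Lemma~\ref{lem:>0eps} with $\eps = 1$: for a flow-saturated open subset $U$ of~$X^\hyb_{>0}$ (here $U = V'$, which lies in $X^+ \subseteq X^\hyb_{>0}$ after restricting attention to $X^+_{>0}$, noting $X^+_1 \subseteq X^+_{>0}$), the restriction map $\cO(U) \to \cO(U \cap X^\hyb_1)$ is an isomorphism. Since $X^\hyb_1 = X^\an$ via Lemma~\ref{lem:hybaneps} (and its structure sheaf identification), this gives precisely that $\cO_{X^+}(V') \simeq \cO_{X^\an}(V' \cap X^+_1)$ compatibly with restriction, so the induced map on stalks $\cO_{X\cp, \iota(x)} \to \cO_{X^\an, x}$ is an isomorphism. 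One must be slightly careful that the basis of flow-saturated neighborhoods of~$y$ obtained from~$q$ cofinally realizes the neighborhoods of~$x$ in~$X^\an$ after intersecting with $X^+_1$; this follows from the openness of~$q$ (Lemma~\ref{lem:qopen}) and the construction in its proof, which produces flow-saturated $W'$ with $W' \cap X^+_1$ an arbitrarily small neighborhood of~$x$.

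The main obstacle I anticipate is bookkeeping around the overconvergent structure sheaf and the identification $X^\hyb_1 = X^\an$. The space $X^+_1$ carries the structure sheaf $\iota^{-1}\cO_{X^+}$ (overconvergent sections from~$X^+$), and one must confirm via Lemma~\ref{lem:hybaneps} that this agrees with the genuine analytic structure sheaf of $X^\an = (X \otimes_k \hat k)^\an$, rather than merely agreeing topologically. The second assertion of Lemma~\ref{lem:hybaneps}, stating $\iota^\ast \cF(\iota^{-1}(U)) = \cF(U)$ for coherent~$\cF$, together with Lemma~\ref{lem:>0eps}, is designed to handle exactly this, so the obstacle is more a matter of assembling these statements in the right order than of genuinely new difficulty. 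Once the neighborhood bases and the two sheaf-comparison lemmas are aligned, the isomorphism on stalks, and hence the open immersion claim, follows.
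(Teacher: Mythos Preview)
Your proposal is correct and follows essentially the same approach as the paper: establish the topological part via Lemmas~\ref{lem:iota} and~\ref{lem:qopen}, then deduce the sheaf isomorphism from Lemmas~\ref{lem:>0eps} (with $\eps=1$) and~\ref{lem:hybaneps}, using that $q^{-1}(U)$ is flow-saturated in $X^\hyb_{>0}$. The only difference is cosmetic: the paper checks the isomorphism directly on sections over each open $U \subseteq q(X^\hyb_{>0})$, whereas you pass to stalks and must worry about cofinality of the flow-saturated neighborhoods---a concern the paper's formulation sidesteps entirely.
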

\begin{proof}
By Lemmas~\ref{lem:iota} and~\ref{lem:qopen}, $\iota$ induces a homeomorphism from~$X^\an$ onto~$q(X^\hyb_{>0})$.

It remains to prove that, for each open subset~$U$ of~$q(X^\hyb_{>0})$, the natural morphism $\cO_{X\cp}(U) \to \cO_{X^\an}(\iota^{-1}(U))$ is an isomorphism. Since $\iota^{-1}(U) = q^{-1}(U) \cap X^\hyb_{1}$ and $T(q^{-1}(U)) = q^{-1}(U)$, the result follows from Lemmas~\ref{lem:>0eps} and~\ref{lem:hybaneps}.
\end{proof}

Let us now compare coherent sheaves on~$X^+$ and~$X\cp$.

\begin{proposition}\label{prop:q-1q*}
Let $U$ be an open subset of~$X^+$ such that $q^{-1}(q(U)) = U$. For each coherent sheaf~$\cF$ on~$U$, the natural morphism $q^{-1} q_{\ast} \cF \to \cF$ is an isomorphism. Moreover, the functor $q_{\ast}$ is exact on coherent sheaves on~$U$.

In particular, the natural morphism $q^{-1} \cO_{X\cp} \to \cO_{X^+}$ is an isomorphism.

\medbreak

Let $V$ be an open subset of~$X\cp$. For each coherent sheaf~$\cG$ on~$V$, the natural morphism $\cG \to q_{\ast} q^{-1} \cG$ is an isomorphism.
\end{proposition}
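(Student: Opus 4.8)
The plan is to read both displayed morphisms as the counit and the unit of the adjunction $q^{-1} \dashv q_\ast$ attached to the quotient map $q \colon X^+ \to X\cp$, and to reduce everything to the single statement that the counit is an isomorphism on stalks. Two formal remarks will do the rest. Since $q$ is surjective, every point of $X\cp$ is of the form $q(x)$ and $(q^{-1}\cG)_x = \cG_{q(x)}$, so $q^{-1}$ is conservative (it reflects isomorphisms, epimorphisms and monomorphisms); and $q^{-1}$ is exact. Thus once the counit is known to be an isomorphism, exactness of $q_\ast$ and the final assertion about the unit will both follow formally.

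The heart of the matter is the stalk computation. Fix a coherent sheaf $\cF$ on a saturated open $U$ (that is, $q^{-1}(q(U)) = U$) and a point $x \in U$, and compute $(q_\ast\cF)_{q(x)} = (q^{-1}q_\ast\cF)_x$. I would use the neighbourhood basis of $q(x)$ in $X\cp$ given by the sets $q(\Omega)$, where $\Omega$ runs over the flow-connected open neighbourhoods of $x$ in $U$: these $q(\Omega)$ are open because $q$ is open (Lemma~\ref{lem:qopen}) and form a basis because the $\Omega$ do (Lemma~\ref{lem:basisflowconnected}). For such an $\Omega$ one has $q^{-1}(q(\Omega)) = T(\Omega)$, hence $(q_\ast\cF)(q(\Omega)) = \cF(T(\Omega))$, and $T(\Omega) \subseteq T(U) = U$ because $U$ is saturated, so $\cF$ is indeed defined on $T(\Omega)$. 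Now $\Omega$ and $T(\Omega)$ are both flow-connected and flow-open (Example~\ref{ex:flowopen}, Lemma~\ref{lem:propertiesflowopen}, Remark~\ref{rem:flowing}), with $\Omega$ open in $T(\Omega)$, so the hypotheses of Theorem~\ref{th:coherentflow} hold for the pair $U := \Omega$, $V := T(\Omega)$ and the restriction $\cF(T(\Omega)) \to \cF(\Omega)$ is an isomorphism, compatibly with shrinking $\Omega$. Passing to the colimit over the $\Omega \ni x$ gives $(q_\ast\cF)_{q(x)} \cong \colim_{\Omega} \cF(\Omega) = \cF_x$, and by construction this is the germ map, i.e. the stalk of the counit; hence $q^{-1}q_\ast\cF \to \cF$ is an isomorphism.

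From here the remaining claims are formal. For exactness, $q_\ast$ is always left exact; given a short exact sequence $0 \to \cF' \to \cF \to \cF'' \to 0$ of coherent sheaves on~$U$, applying the exact functor $q^{-1}$ to the cokernel of $q_\ast\cF \to q_\ast\cF''$ and using the counit isomorphisms identifies that cokernel, after $q^{-1}$, with the cokernel of $\cF \to \cF''$, namely $0$, so conservativity of $q^{-1}$ forces $q_\ast\cF \to q_\ast\cF''$ to be surjective, and the same device identifies $\ker(q_\ast\cF \to q_\ast\cF'')$ with $q_\ast\cF'$. The structure-sheaf statement is the special case $\cF = \cO_{X^+}$, $U = X^+$, since $q_\ast\cO_{X^+} = \cO_{X\cp}$ by definition. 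Finally, for a coherent sheaf $\cG$ on an open $V \subseteq X\cp$, the sheaf $q^{-1}\cG$ is coherent on the saturated open $q^{-1}(V)$ because $q^{-1}\cO_{X\cp} = \cO_{X^+}$; applying the first part to $\cF = q^{-1}\cG$ makes the counit $q^{-1}q_\ast q^{-1}\cG \to q^{-1}\cG$ an isomorphism, and the triangle identity then forces $q^{-1}(\text{unit})$ to be an isomorphism, whence the unit $\cG \to q_\ast q^{-1}\cG$ is an isomorphism by conservativity of $q^{-1}$.

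The main obstacle I anticipate is precisely the cofinality issue hidden in the stalk computation: a priori $q(x)$ admits neighbourhoods $q(\Omega_0)$ coming from flow-connected sets $\Omega_0$ that meet the trajectory $T(x) = q^{-1}(q(x))$ away from $x$, and one must check that these are dominated by the saturated neighbourhoods $T(\Omega)$ with $x \in \Omega$, so that Theorem~\ref{th:coherentflow} can be brought to bear on the trajectory through~$x$. This is where I would argue carefully: if $q(x) \in q(\Omega_0)$ then $x \in T(\Omega_0)$, and since $T(\Omega_0)$ is a saturated open neighbourhood of $x$ it contains a flow-connected open $\Omega \ni x$, for which $T(\Omega) \subseteq T(\Omega_0)$, i.e. $q(\Omega) \subseteq q(\Omega_0)$. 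Thus the $q(\Omega)$ with $x \in \Omega$ are indeed cofinal, which is exactly what reduces the computation of the stalk to a single trajectory and unlocks the flow-theoretic input.
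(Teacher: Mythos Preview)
Your proof is correct and follows essentially the same route as the paper: both identify the stalk $(q_\ast\cF)_{q(x)}$ with $\cF_x$ via Theorem~\ref{th:coherentflow} applied to pairs $\Omega \subseteq T(\Omega)$, and then deduce the remaining assertions from this. Your packaging is slightly more categorical (cofinal basis plus triangle identity and conservativity of $q^{-1}$) where the paper argues injectivity and surjectivity of the stalk map separately and uses a presentation-and-diagram argument for the unit, but the substance is the same.
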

\begin{proof}
Let $\cF$ be a coherent sheaf on~$U$. Let $x\in U$. We have 
\[ (q^{-1} q_{\ast} \cF)_{x} = (q_{\ast}\cF)_{q(x)} = \colim_{W \ni q(x)} \cF(q^{-1}(W)),\]
where $W$ runs through the open neighborhoods of~$q(x)$ in $q(U)$. 

Let $s \in (q_{\ast} \cF)_{q(x)}$ whose image in~$\cF_{x}$ is~0. There exists an open neighborhood~$V$ of~$x$ in~$U$ such that $s_{\vert V} = 0$. By Theorem~\ref{th:coherentflow}, it follows that~$s$ is~0 on $T(V) = q^{-1}(q(V))$. By Lemma~\ref{lem:qopen}, $q(V)$ is open, hence $s = 0$ in $(q_{\ast} \cF)_{q(x)}$. We deduce that the natural map $(q^{-1} q_{\ast} \cF)_{x} \to \cF_{x}$ is injective. 

By similar arguments, we prove that the natural map $(q^{-1} q_{\ast} \cF)_{x} \to \cF_{x}$ is surjective. The first statement follows. The above description of the stalks ensures the exactness of~$q_{\ast}$.

\medbreak

Let $\cG$ be a coherent sheaf on~$V$. The statement being local, we may assume that we have an exact sequence $\cO_{X\cp}^m \to \cO_{X\cp}^n \to \cG \to 0$. Applying the functor $q^{-1}$ and using the first part of the statement, we get an exact sequence $\cO_{X^+}^m \to \cO_{X^+}^n \to q^{-1}\cG \to 0$. It follows from this exact sequence and that $q^{-1}\cG$ is coherent. By the first part of the statement, we finally get a commutative diagram with exact rows
\[
\begin{tikzcd}
\cO_{X\cp}^m \arrow[r] \arrow[d,equal]& \cO_{X\cp}^n \arrow[r] \arrow[d, equal]& \cG \arrow[r] \arrow[d]& 0\\ 
\cO_{X\cp}^m \arrow[r] & \cO_{X\cp}^n \arrow[r] & q_{\ast}q^{-1}\cG \arrow[r] & 0\\ 
\end{tikzcd}
\]
the result follows.
\end{proof}

\begin{corollary}
The sheaf $\cO_{X\cp}$ is coherent.
\end{corollary}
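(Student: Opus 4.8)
The plan is to deduce coherence of~$\cO_{X\cp}$ from coherence of~$\cO_{X^+}$ by transferring finiteness of kernels through the quotient map~$q$, using the flow results to cross from~$X^+$ down to~$X\cp$.

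First I would recall that $\cO_{X^+}$ is coherent, being the structure sheaf of an open subset of the $k_\hyb$-analytic space~$X^\hyb$ (coherence of the structure sheaf holds over any geometric base ring, see \cite{CTCZ}). Since $\cO_{X\cp}$ is visibly of finite type over itself, it remains to show that for every open subset~$V$ of~$X\cp$ and every morphism $\phi\colon \cO_{X\cp}^n|_V \to \cO_{X\cp}|_V$, the kernel $\ker(\phi)$ is of finite type. Set $U := q^{-1}(V)$, so that $q^{-1}(q(U)) = U$ and $U$ is flow-saturated, i.e. $T(U) = U$.

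Next I would identify $\ker(\phi)$ explicitly. Applying the exact functor~$q^{-1}$ and using the isomorphism $q^{-1}\cO_{X\cp} \simeq \cO_{X^+}$ from Proposition~\ref{prop:q-1q*}, the morphism $q^{-1}\phi \colon \cO_{X^+}^n|_U \to \cO_{X^+}|_U$ has kernel $\cK := \ker(q^{-1}\phi)$, which is coherent on~$U$ by coherence of~$\cO_{X^+}$. Since $q^{-1}$ commutes with kernels and $q_\ast q^{-1}$ is the identity on the free sheaves $\cO_{X\cp}^n$ and $\cO_{X\cp}$ (by definition of~$\cO_{X\cp}$ and compatibility of pushforward with finite direct sums), applying the functor~$q_\ast$ — which is exact on coherent sheaves on~$U$ by Proposition~\ref{prop:q-1q*} — to the left-exact sequence $0 \to \cK \to \cO_{X^+}^n \to \cO_{X^+}$ yields $0 \to q_\ast\cK \to \cO_{X\cp}^n \xrightarrow{\phi} \cO_{X\cp}$, that is, an identification $\ker(\phi) \simeq q_\ast \cK$. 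Thus the problem becomes: show that $q_\ast \cK$ is of finite type on~$X\cp$.

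The main obstacle is precisely this last step: finite generation must be verified downstairs on~$X\cp$, whereas $\cK$ is finitely generated only upstairs, and the quotient collapses entire trajectories, so arbitrary local generators of~$\cK$ need not be compatible with~$\binphi$. This is resolved by the flow results. Fix $\bar x \in V$ and choose $x \in q^{-1}(\bar x)$. As $\cK$ is coherent, it is generated on some flow-connected open neighborhood~$\Omega$ of~$x$ (such neighborhoods form a basis by Lemma~\ref{lem:basisflowconnected}, and are automatically flow-open by Example~\ref{ex:flowopen}) by finitely many sections $\sigma_1,\dots,\sigma_m \in \cK(\Omega)$. The trajectory $T(\Omega) \subseteq U$ is again flow-connected and flow-open (Lemma~\ref{lem:propertiesflowopen}), and the pair $(\Omega, T(\Omega))$ satisfies the hypotheses of Theorem~\ref{th:coherentflow}, so restriction is an equivalence between coherent sheaves on~$T(\Omega)$ and on~$\Omega$. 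Hence the~$\sigma_i$ extend uniquely to $\tilde\sigma_1,\dots,\tilde\sigma_m \in \cK(T(\Omega))$, and the cokernel of the map $\cO_{X^+}^m|_{T(\Omega)} \to \cK|_{T(\Omega)}$ determined by the~$\tilde\sigma_i$, being coherent and vanishing on~$\Omega$, vanishes on all of~$T(\Omega)$ by the equivalence; that is, the~$\tilde\sigma_i$ generate~$\cK$ on~$T(\Omega)$. Finally, since $T(\Omega) = q^{-1}(q(\Omega))$ and $q(\Omega)$ is open in~$X\cp$ (Lemma~\ref{lem:qopen}), the~$\tilde\sigma_i$ are exactly sections of $q_\ast\cK = \ker(\phi)$ over the neighborhood~$q(\Omega)$ of~$\bar x$; using the stalk identification $(q_\ast\cK)_{\bar y} \simeq \cK_y$ of Proposition~\ref{prop:q-1q*}, they generate every stalk over~$q(\Omega)$. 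Therefore $\ker(\phi)$ is of finite type near~$\bar x$, and since $\bar x$ was arbitrary, $\cO_{X\cp}$ is coherent.
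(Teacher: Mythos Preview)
Your proof is correct and follows essentially the same approach as the paper: the paper's proof is a one-line sketch invoking coherence of~$\cO_{X^+}$, exactness of~$q^{-1}$ and~$q_*$, and ``arguments as in the proof of Proposition~\ref{prop:q-1q*}'', which unwound are exactly your flow-extension argument (via Theorem~\ref{th:coherentflow}) to produce finitely many generators of~$q_*\cK$ on a saturated neighborhood. You have simply written out in full what the paper gestures at.
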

\begin{proof}
This follows from the exactness of~$q^{-1}$ and~$q_{\ast}$ and the coherence of~$\cO_{X^+}$, by similar arguments as in the proof of Proposition~\ref{prop:q-1q*}.
\end{proof}

\begin{corollary}\label{cor:eqX+Xcp}
For each open subset $V$ of~$X\cp$, the functor~$q^{-1}$ induces an equivalence between the categories of coherent sheaves on~$V$ and on~$q^{-1}(V)$, with quasi-inverse~$q_{\ast}$.

In particular, the categories of coherent sheaves on~$X^+$ and~$X\cp$ are equivalent.
\qed
\end{corollary}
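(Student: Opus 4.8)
The plan is to deduce the equivalence formally from Proposition~\ref{prop:q-1q*}, once I know that both functors land in the right categories. Fix an open subset $V$ of $X\cp$ and set $U := q^{-1}(V)$; it is an open subset of $X^+$ satisfying $q^{-1}(q(U)) = U$ and $q(U) = V$, so that Proposition~\ref{prop:q-1q*} applies to it. First I would record that $q^{-1}$ sends coherent sheaves on $V$ to coherent sheaves on $U$: this is already established in the course of the proof of Proposition~\ref{prop:q-1q*}, where a local presentation of $\cG$ is pulled back, using that $q^{-1}$ is exact and $q^{-1}\cO_{X\cp} \simeq \cO_{X^+}$.

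The symmetric statement, namely that $q_{\ast}$ sends coherent sheaves on $U$ to coherent sheaves on $V$, is the point requiring care, and I expect it to be the main obstacle, since $q_{\ast}$ of a sheaf of finite type need not obviously be of finite type. As coherence is local on $V$, I would fix $v \in V$ and a point $u \in q^{-1}(v)$, and choose, by Lemma~\ref{lem:basisflowconnected}, a flow-connected (hence flow-open) neighborhood $U_{0}$ of $u$ in $U$ carrying a finite presentation $\cO_{U_{0}}^m \to \cO_{U_{0}}^n \to \cF|_{U_{0}} \to 0$. The trouble is that $U_{0}$ need not be saturated, so I would first spread this presentation out to the saturated open $T(U_{0}) = q^{-1}(q(U_{0})) \subseteq U$: by Proposition~\ref{prop:restrictionflotO} the matrix of the presentation extends uniquely over $\cO(T(U_{0}))$, and by Theorem~\ref{th:coherentflow} the resulting cokernel is canonically identified with $\cF|_{T(U_{0})}$. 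Applying the exact functor $q_{\ast}$ and using $q_{\ast}\cO_{X^+} = \cO_{X\cp}$ then yields a finite presentation of $(q_{\ast}\cF)|_{q(U_{0})}$, whence $q_{\ast}\cF$ is coherent near $v$.

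With both functors known to preserve coherence, the equivalence is immediate: Proposition~\ref{prop:q-1q*} provides functorial isomorphisms $\cG \simto q_{\ast}q^{-1}\cG$ for $\cG \in \mathrm{Coh}(V)$ and $q^{-1}q_{\ast}\cF \simto \cF$ for $\cF \in \mathrm{Coh}(U)$, that is, the unit and counit of the adjunction $q^{-1} \dashv q_{\ast}$ are isomorphisms. Hence $q^{-1}$ and $q_{\ast}$ are mutually quasi-inverse equivalences between $\mathrm{Coh}(V)$ and $\mathrm{Coh}(q^{-1}(V))$. Finally, I would obtain the last assertion by specializing to $V = X\cp$, for which $q^{-1}(V) = X^+$.
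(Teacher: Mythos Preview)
Your proposal is correct and follows the route the paper intends; the paper records no proof (the \qed is placed inside the statement), so you are simply making explicit the one nontrivial step the author leaves to the reader, namely that $q_{\ast}$ preserves coherence. Your verification of this via spreading a local presentation from a flow-connected open $U_{0}$ to its saturation $T(U_{0})=q^{-1}(q(U_{0}))$ using Proposition~\ref{prop:restrictionflotO} and Theorem~\ref{th:coherentflow}, then pushing forward along the exact functor $q_{\ast}$, is exactly the kind of argument already used in and around Proposition~\ref{prop:q-1q*} and the corollary on coherence of~$\cO_{X\cp}$, so this is not a different approach but a faithful unpacking of the paper's \qed.
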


Let us finally show that the functor $X\mapsto X^+$ preserves many properties.

\begin{corollary}\label{cor:propertiesXcp}
Let $X$ be a variety over~$k$. If $X$ is (i) Cohen-Macaulay, (ii) $(R_{n})$ for some $n\in \N$, (iii) $(S_{n})$ for some $n\in \N$, (iv) reduced, (v) normal, (vi) regular, then so is~$X\cp$.
\end{corollary}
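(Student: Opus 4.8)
The plan is to reduce each of the six properties to a condition on the stalks of the structure sheaf and then transport it from~$X^+$ to~$X\cp$. All of (i)--(vi) are \emph{local}: a locally ringed space satisfies any one of them if and only if each of its local rings does. For reducedness, normality, regularity and Cohen--Macaulayness this is immediate; for $(R_{n})$ and $(S_{n})$ it amounts to unwinding the definitions, since the condition on a local ring~$A$ refers only to~$A$ and its localizations. We already know from Proposition~\ref{prop:propertiesX+} that $X^+$ enjoys all of (i)--(vi), so it suffices to show that every local ring of~$X\cp$ is isomorphic (as a ring) to a local ring of~$X^+$.

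This identification of stalks is exactly what the sheaf comparison provides. By Proposition~\ref{prop:q-1q*}, the natural morphism $q^{-1}\cO_{X\cp} \to \cO_{X^+}$ is an isomorphism. For any point $y\in X^+$ and any sheaf~$\cG$ on~$X\cp$ one has $(q^{-1}\cG)_{y} = \cG_{q(y)}$; applying this to $\cG = \cO_{X\cp}$ yields a ring isomorphism
\[ \cO_{X\cp, q(y)} \simeq (q^{-1}\cO_{X\cp})_{y} \simeq \cO_{X^+, y}.\]
Since $q$ is surjective, every point $x\in X\cp$ is of the form $q(y)$ for some $y\in X^+$, so each stalk $\cO_{X\cp,x}$ is isomorphic to the stalk $\cO_{X^+,y}$.

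It then remains to combine the two observations: each stalk of~$X\cp$ is isomorphic to a stalk of~$X^+$, and each stalk of~$X^+$ satisfies the relevant property by Proposition~\ref{prop:propertiesX+}. Hence every stalk of~$X\cp$ satisfies it, and by locality~$X\cp$ itself does.

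I expect the only point requiring care to be the verification that (i)--(vi) are genuinely stalk-local conditions for locally ringed spaces, which is routine. Note that smoothness (property~(vii) of Proposition~\ref{prop:propertiesX+}) is deliberately absent from the present statement: it is a \emph{relative} notion over the base~$\cM(k_{\hyb})$, whereas the quotient~$X\cp$ is no longer a space over~$k_{\hyb}$, so the stalk-comparison argument does not apply to it.
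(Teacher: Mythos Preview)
Your proof is correct and follows essentially the same route as the paper: use Proposition~\ref{prop:q-1q*} to identify $\cO_{X\cp,q(y)} \simeq \cO_{X^+,y}$ for every $y\in X^+$, then invoke Proposition~\ref{prop:propertiesX+} and the stalk-local nature of properties (i)--(vi). Your added remarks on why stalk-locality holds and why smoothness is omitted are accurate elaborations, but the core argument is the same.
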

\begin{proof}
By Proposition~\ref{prop:q-1q*}, for each $x\in X^+$, we have an isomorphism $\cO_{X\cp,q(x)} \simeq \cO_{X^+,x}$. Since the properties of the statement are properties of local rings, the result follows from Proposition~\ref{prop:propertiesX+}.
\end{proof}

\begin{corollary}\label{cor:propertiesfcp}
Let $f\colon X_{1} \to X_{2}$ be a proper morphism of varieties over~$k$. If $f$ is (i) a closed immersion, (ii) surjective, (iii) finite, (iv) flat, (v) \'etale, then so is~$f\cp$.
\end{corollary}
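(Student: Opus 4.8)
The plan is to transfer each property from $f^+$, for which it is already known by Proposition~\ref{prop:propertiesf+}, to~$f\cp$, using the commutative square
\[
\begin{tikzcd}
X_1^+ \arrow[r, "f^+"] \arrow[d, "q_1"'] & X_2^+ \arrow[d, "q_2"] \\
X_1\cp \arrow[r, "f\cp"] & X_2\cp
\end{tikzcd}
\]
in which $q_1,q_2$ are the open surjective quotient maps. The two basic tools are, first, the flow-compatibility of~$f^+$, namely $f^+(x^\alpha)=f^+(x)^\alpha$ for $x\in X_1^+$ and $\alpha\in I_x^\ast$ (this holds because $f^+$ pulls functions back while the flow raises seminorms to powers, and it underlies Proposition~\ref{prop:functorcp}); since $f$ is a $k$-morphism we have $\pr\circ f^+=\pr$, whence $f^+(T(x))=T(f^+(x))$. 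Second, the stalk isomorphisms $\cO_{X_i\cp,q_i(x)}\simeq\cO_{X_i^+,x}$ of Proposition~\ref{prop:q-1q*}, which are compatible with $f\cp$ and~$f^+$, so that the stalk map of~$f\cp$ at~$q_1(x)$ is identified with the stalk map of~$f^+$ at~$x$.

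I would first dispose of the properties that are local on stalks. Flatness (iv) is by definition the flatness of every stalk map, so it transfers verbatim through the stalk isomorphisms from the flatness of~$f^+$. Likewise étaleness (v), being flatness together with unramifiedness — and unramifiedness being a condition on the local rings and their residue fields, namely that the maximal ideal of~$\cO_{X_2^+,f^+(x)}$ generates that of~$\cO_{X_1^+,x}$ and that the residue extension is separable — is a stalk condition, hence transfers as well. For the closed immersion (i), the surjectivity of $\cO_{X_2\cp}\to (f\cp)_\ast\cO_{X_1\cp}$ follows on stalks from the surjectivity of $\cO_{X_2^+}\to (f^+)_\ast\cO_{X_1^+}$.

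The topological statements I would settle using compactness. Surjectivity (ii) is immediate: $f\cp\circ q_1=q_2\circ f^+$ is surjective and $q_1$ is surjective. For~(i), injectivity and flow-compatibility of~$f^+$ force $f\cp$ to be injective; since $X_1\cp$ and $X_2\cp$ are compact Hausdorff (Theorem~\ref{th:compact}, Proposition~\ref{prop:Hausdorff}), a continuous injection is automatically a homeomorphism onto a closed subset, which, combined with the sheaf surjectivity above, makes $f\cp$ a closed immersion. For finiteness (iii), I would use that a finite morphism is exactly a proper morphism with finite fibers. Properness of~$f\cp$ is automatic, being a continuous map between compact Hausdorff spaces. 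For the fibers, flow-compatibility gives $(f^+)^{-1}(T(y))=T\big((f^+)^{-1}(y)\big)$, so $(f\cp)^{-1}(q_2(y))=q_1\big((f^+)^{-1}(T(y))\big)=q_1\big((f^+)^{-1}(y)\big)$ is the image of the finite fiber $(f^+)^{-1}(y)$, hence finite.

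The main obstacle I anticipate is the interface between the non-local quotient~$q$ and the topological properties: a priori $f\cp$ could fail to be an embedding or to have finite fibers even when $f^+$ is well behaved, since $q$ collapses whole trajectories. The point that resolves this is precisely that $f^+$ is flow-compatible and $\pr$-preserving, so it carries trajectories bijectively onto trajectories; together with the compactness of~$X\cp$, this reduces every topological claim to the corresponding claim for~$f^+$. The remaining care is to verify that the stalk isomorphisms of Proposition~\ref{prop:q-1q*} are genuinely compatible with the two morphisms, so that the stalk-local properties descend without extra hypotheses.
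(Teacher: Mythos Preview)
Your argument is correct and follows the same strategy as the paper: transfer each property from~$f^+$ (Proposition~\ref{prop:propertiesf+}) to~$f\cp$ via the stalk isomorphisms of Proposition~\ref{prop:q-1q*}, exactly as in the proof of Corollary~\ref{cor:propertiesXcp}. The paper's proof is a one-line reference to that earlier argument, whereas you have spelled out the additional topological verifications (injectivity of~$f\cp$ for closed immersions, finiteness of fibers, properness via compactness) that are implicit there; this extra care is welcome, since properties~(i)--(iii) are not purely stalk-local and the flow-compatibility of~$f^+$ is genuinely needed to descend them through~$q$.
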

\begin{proof}
The result follows from Proposition~\ref{prop:propertiesf+}, as in the proof of Corollary~\ref{cor:propertiesXcp}.
\end{proof}

\begin{remark}
We did not include the smoothness property in Corollaries~\ref{cor:propertiesXcp} and~\ref{cor:propertiesfcp}, since it would require to define smoothness in the setting of compactifications. However, once this definition is given, the result should certainly hold true and be a direct consequence of Propositions~\ref{prop:propertiesX+} and~\ref{prop:propertiesf+}, as for the other properties.
\end{remark}

\section{GAGA theorems}\label{sec:GAGA}

We investigate coherent sheaves on~$X^+$ and~$X\cp$ and prove GAGA theorems relating them to coherent sheaves on~$X$. 

To do so, we consider~$X^+$ from the extrinsic point of view, that is to say as a subset of~$Y^\hyb$, where $Y$ is an algebraic compactification of~$X$, and eventually rely on the usual GAGA theorems on~$Y$. Consequently, our main technical results are extension results from $X^+$ to $Y^\hyb_{>0}$, either for sections of coherent sheaves (that extend up to some meromorphic singularities, see the final part of Theorem~\ref{th:VYZStein} and Lemma~\ref{lem:globalsectionsmeromorphic}), or for the sheaves themselves (see Theorem~\ref{th:extension}).

\subsection{Valuative thickenings}

Let $X$ be a variety over~$k$. We have seen in Proposition~\ref{prop:etainfini} %at the end of Section~\ref{sec:genericfibers} 
that the spaces~$X^\beth$ and~$X_{\infty}$ may be defined using an extra ambient variety~$Y$ and a closed subvariety~$Z$ of it with $X = Y-Z$. In this section, we will work directly with~$Y$ and~$Z$, forgetting~$X$ for a while. We will define some sort of hybrid thickenings of~$Z$ in~$Y$, taking the form of subsets of~$Y^\hyb$ and study their properties.

\begin{notation}\label{nota:VYZ}
Let $Y$ be a variety over~$k$ and $Z$ be a closed subvariety of~$Y$. Consider the following subsets of $Y_0^\an = Y_{0}^\hyb \subseteq Y^\hyb$:
\[\begin{cases}
V^\rho_{Y}(Z) := \rho_{Y}^{-1}(Z);\\
V^r_{Y}(Z) := r_{Y}^{-1}(Z); \\  
V^0_{Y}(Z) :=r_{Y}^{-1}(Z) \cap \rho_{Y}^{-1}(Z)  = Y^\beth \cap \rho_{Y}^{-1}(Z);\\ 
V^\ast_{Y}(Z) := r_{Y}^{-1}(Z) - \rho_{Y}^{-1}(Z).
\end{cases}\]
We endow them with the overconvergent structures inherited from~$Y^\hyb$. 
\end{notation}

Note that, since $r_{Y}$ is anticontinuous and $\rho_{Y}$ is continuous, $V^r_{Y}(Z)$ and $V^\ast_{Y}(Z)$ are open in~$Y^\beth$, and $V^\rho_{Y}(Z)$ and $V^0_{Y}(Z)$ are closed in~$Y^\hyb$. 

Remark that $V^\rho_{Y}(Z)$ identifies to~$Z_{0}^\an$ as a topological space (but their structure sheaves differ). Using the terminology from Definition~\ref{def:center} and Lemma~\ref{lem:center}, we may characterize $V^r_{Y}(Z)$ (resp. $V^\ast_{Y}(Z)$) as the set of points in~$Y_{0}^\an$ (resp. $X_{0}^\an$) that have a center on~$Z$. 

%It follows that those constructions behave well with respect to unions and intersections.

\begin{lemma}\label{lem:unionintersection}
Let $(U_{i})_{i\in I}$ be a family of open subsets of~$Y$. Set $U_{\cup} := \bigcup_{i\in I} U_{i}$ and $U_{\cap} := \bigcap_{i\in I} U_{i}$.
% For $i\in I$, set $Z_{i} := Z \cap U_{i}$.
For $\square \in \{\rho, r , 0, \ast\}$, we have 
\[ V^\square_{U_{\cup}}(Z\cap U_{\cup}) = \bigcup_{i\in I} V^\square_{U_{i}}(Z\cap U_{i})\]
and 
\[ V^\square_{U_{\cap}}(Z\cap U_{\cap}) = \bigcap_{i\in I} V^\square_{U_{i}}(Z\cap U_{i}).\]
\end{lemma}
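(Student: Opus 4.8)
The plan is to deduce both displayed identities from a single comparison formula relating the \emph{relative} construction over an open subset $U\subseteq Y$ to the \emph{absolute} one over~$Y$. Writing $\pi_{\rho}:=\rho_{Y}$ and $\pi_{r}=\pi_{0}=\pi_{\ast}:=r_{Y}$, I would first establish that for every open $U\subseteq Y$ and every $\square\in\{\rho,r,0,\ast\}$,
\[ V^\square_{U}(Z\cap U) = V^\square_{Y}(Z)\cap \pi_{\square}^{-1}(U)\qquad\text{(as subsets of } Y_{0}^\an). \]
Granting this \emph{master formula}, the lemma becomes purely formal. Indeed $Z\cap U_{\cup}=\bigcup_{i}(Z\cap U_{i})$ and preimages commute with unions, so
\[ V^\square_{U_{\cup}}(Z\cap U_{\cup}) = V^\square_{Y}(Z)\cap\textstyle\bigcup_{i}\pi_{\square}^{-1}(U_{i}) = \bigcup_{i}\big(V^\square_{Y}(Z)\cap\pi_{\square}^{-1}(U_{i})\big) = \bigcup_{i}V^\square_{U_{i}}(Z\cap U_{i}), \]
and the identical computation with $\bigcap$ replacing $\bigcup$ (using $Z\cap U_{\cap}=\bigcap_{i}(Z\cap U_{i})$ and assuming $I\neq\emptyset$, the empty case being vacuous) gives the intersection identity.

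It then remains to prove the master formula, and here the guiding ingredient is the functoriality of $\rho_Y$ and $r_Y$ under the open immersion $U\hookrightarrow Y$. For $\square=\rho$ this is immediate: analytification commutes with open immersions, so $\rho_{Y}^{-1}(U)=U_{0}^\an$ with $\rho_{U}=\rho_{Y}|_{U_{0}^\an}$, whence $V^\rho_{U}(Z\cap U)=\rho_{Y}^{-1}(Z\cap U)=V^\rho_{Y}(Z)\cap\rho_{Y}^{-1}(U)$. For the maps built from $r_{Y}$ the one nonformal input I would isolate is the elementary fact that the center $r_{Y}(x)$ of a point $x\in Y^\beth$ is a \emph{specialization} of its generic point $\rho_{Y}(x)$; this is immediate from the definition of the center via the valuation ring $\kappa(\rho(x))^\circ$, whose spectrum is a chain carrying the generic point to $\rho_{Y}(x)$ and the closed point to the center. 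Since open subsets are stable under generization, this yields both $U^\beth=r_{Y}^{-1}(U)$ with $r_{U}=r_{Y}|_{U^\beth}$ (a center lying in the open $U$ forces the generic point into $U$, so the factoring morphism lands in $U$; conversely a center over $U$ gives one over~$Y$ by composition, unique by separatedness), giving the $\square=r$ case through Lemma~\ref{lem:center}, and the containment $r_{Y}^{-1}(U)\subseteq\rho_{Y}^{-1}(U)$.

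For $\square\in\{0,\ast\}$ I would then expand the definitions $V^0_{U}(Z\cap U)=r_{U}^{-1}(Z\cap U)\cap\rho_{U}^{-1}(Z\cap U)$ and $V^\ast_{U}(Z\cap U)=r_{U}^{-1}(Z\cap U)-\rho_{U}^{-1}(Z\cap U)$, substitute the two functoriality identities just obtained, and use the containment $r_{Y}^{-1}(U)\subseteq\rho_{Y}^{-1}(U)$ to delete the now-redundant constraint $\rho_{Y}(x)\in U$ (automatic once the center lies in $U$). A short set manipulation then collapses both expressions to $V^0_{Y}(Z)\cap r_{Y}^{-1}(U)$ and $V^\ast_{Y}(Z)\cap r_{Y}^{-1}(U)$ respectively, finishing the master formula.

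The main obstacle, as this makes clear, is concentrated entirely in the sets $V^0$ and $V^\ast$, which interlace the \emph{continuous} map $\rho_{Y}$ with the \emph{anticontinuous} map $r_{Y}$: a naive attempt to distribute $\cap$ across $\bigcup_{i}$ (or across a complement, in the $V^\ast$ case) through the two maps fails, since in general $(\bigcap_{i}A_{i})\cap(\bigcup_{i}B_{i}^{c})\neq\bigcap_{i}(A_{i}\cap B_{i}^{c})$. The specialization relation between center and generic point — equivalently the containment $r_{Y}^{-1}(U)\subseteq\rho_{Y}^{-1}(U)$ — is exactly what suppresses the offending $\rho$-condition and lets the formulas reduce to the clean form above. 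Everything else reduces to the standard functoriality of analytification and of Thuillier's $\beth$-construction under open immersions.
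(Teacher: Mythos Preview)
Your proof is correct and rests on the same key observation as the paper's: the center $r_{Y}(x)$ is a specialization of the generic point $\rho_{Y}(x)$, so for any open $U\subseteq Y$ one has $r_{Y}^{-1}(U)\subseteq\rho_{Y}^{-1}(U)$ (the paper phrases this contrapositively as ``if $x\in F_{0}^{\an}$ for a closed $F$, then the center lies in $F$''). The paper then simply says the result follows from the interpretations in terms of centers, whereas you package the same content into the explicit master formula $V^{\square}_{U}(Z\cap U)=V^{\square}_{Y}(Z)\cap\pi_{\square}^{-1}(U)$ and deduce the union and intersection identities formally from it; this is a cleaner organization of the same argument, not a different one.
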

\begin{proof}
Let $x \in Y_{0}^\an$. Assume that $x$ has a center on~$Y$. If there exists a closed subvariety~$F$ of~$Y$ such that $x$ belongs to~$F_{0}^\an$, then its center belongs to~$F$. By contrapositive, if $x$ has a center on~$U_{i}$, for some $i \in I$, then $x$ belongs to~$(U_{i})_{0}^\an$. 

The result follows from this remark, using the interpretations of the sets in terms of centers.
\end{proof}

\subsubsection{The affine case}

Here, we focus on the case of affine varieties. Let $Y = \Spec(A)$ be an affine variety over~$k$ and let $Z = V(I)$ be a subvariety of~$Y$.

Let us fix some notation. Let $A = k[T_{1},\dotsc,T_{n}]/J$ be a presentation of the ring~$A$. Let $h = (h_{1},\dotsc,h_{m})$ be a family of generators  of~$I$.

Consider the affine space $\Ahyb{n}{k}$ 
with coordinates $T_{1},\dotsc,T_{n}$. The space $Y^\hyb$ may be identified to the closed analytic subset of $\Ahyb{n}{k}$ defined by the sheaf of ideals generated by~$J$.

We may further identify the various spaces in play to some concrete spaces:
\[Y^\hyb_{0} = \{x \in \big(\Ahyb{n}{k}\big)_{0} : \forall f \in J, f(x)=0\},\]
\[Y^\beth = \{x \in Y^\hyb_{0} :  \forall i=1,\dotsc,n, \abs{T_{i}(x)}\le 1\},\]
\[V_{Y}^\rho(Z) = \{x \in Y^\hyb_{0} : \forall j=1,\dotsc,m, h_{j}(x) = 0\},\]
\[V_{Y}^r(Z) = \{x \in Y^\beth : \forall j=1,\dotsc,m, \abs{h_{j}(x)} <1\},\]
\[V^0_{Y}(Z) = \{x \in Y^\beth :  \forall j=1,\dotsc,m, h_{j}(x) = 0\},\]
\[V^\ast_{Y}(Z) = \{x \in Y^\beth :  \forall j=1,\dotsc,m, 0 < \abs{h_{j}(x)} < 1\}.\]
Recall that we endow the spaces $V_{Y}^\square(Z)$ with the overconvergent structures inherited from $Y^\hyb$. 

\begin{proposition}\label{prop:isoVV0affine}
For each coherent sheaf~$\cF$ on~$V^\rho_{Y}(Z)$ (resp. $\cF(V_{Y}^r(Z))$), the restriction map
\[ \cF(V_{Y}^\rho(Z)) \too \cF(V_{Y}^0(Z)) \textrm{ (resp. } \cF(V_{Y}^r(Z)) \too \cF(V_{Y}^0(Z))\, ) \]
is an isomorphism.

Moreover, the restriction map induces an equivalence between the categories of coherent sheaves on~$V_{Y}^\rho(Z)$ (resp. $V_{Y}^r(Z)$) and~$V^0_{Y}(Z)$.
\end{proposition}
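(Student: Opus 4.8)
The plan is to exploit the fact that all three sets $V^\rho_{Y}(Z)$, $V^r_{Y}(Z)$ and $V^0_{Y}(Z)$ are \emph{flowing} subsets of $Y_{0}^\an \subseteq Y^\hyb$, that the smallest one $V^0_{Y}(Z) = \cM(A/I)$ is compact, and that the flow contracts the two larger sets onto it. Concretely, for $x \in V^r_{Y}(Z)$ the points $x^\alpha$ converge, as $\alpha \to +\infty$, to the trivial valuation centered at $r_{Y}(x) \in Z$, which is a point of $V^0_{Y}(Z)$; dually, for $x \in V^\rho_{Y}(Z)$ the points $x^\alpha$ converge, as $\alpha \to 0^+$, to the trivial valuation at $\rho_{Y}(x) \in Z$, again a point of $V^0_{Y}(Z)$. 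A direct check on the defining inequalities (using that $Y^\beth$ is flowing) shows that $V^r_{Y}(Z)$ and $V^\rho_{Y}(Z)$ are each a union of full trajectories, and that for every point $x$ in one of them the intersection of the set with $T(x)$ is all of $T(x)$, hence connected.

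First I would record the key geometric lemma: for every open neighborhood $W$ of $V^0_{Y}(Z)$ in $Y^\hyb$, the saturation $T(W)$ is an open neighborhood of $V^r_{Y}(Z)$ (resp. of $V^\rho_{Y}(Z)$). Openness follows from Proposition~\ref{prop:Phicontinuous} together with Lemma~\ref{lem:propertiesflowopen}; the containment follows from the contraction above, since for $x \in V^r_{Y}(Z)$ some $x^{\alpha_0}$ already lies in $W$, whence $x = (x^{\alpha_0})^{1/\alpha_0} \in T(W)$ (and symmetrically for $V^\rho_{Y}(Z)$). This is the device that repairs the failure of the naive inclusion $V^r_{Y}(Z) \subseteq T(V^0_{Y}(Z))$, which does \emph{not} hold: the limit points witnessing the contraction lie in $V^0_{Y}(Z)$ but not in the open trajectories themselves.

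With this in hand, injectivity of $\cF(V^r_{Y}(Z)) \to \cF(V^0_{Y}(Z))$ is quick: a section vanishing on $V^0_{Y}(Z)$ vanishes on an open neighborhood $W$ of it, and then Lemma~\ref{lem:sy=0F}, applied along each trajectory $T(x)$ with $x \in V^r_{Y}(Z)$ --- a connected set meeting $W$ near its limit point --- forces vanishing on all of $V^r_{Y}(Z)$; the same argument with $\alpha \to 0^+$ handles $V^\rho_{Y}(Z)$. For surjectivity and for the statement on categories, I would combine compactness of $V^0_{Y}(Z)$ (so that coherent data extend to some neighborhood $W$, by Proposition~\ref{prop:extensioncoherentsheafcompact}) with the flow-equivalence results: choosing $W$ flow-connected and flow-open, Theorem~\ref{th:coherentflow} (or, working with compact over-flow-connected exhaustions as in Corollary~\ref{cor:coherentflowcompact}) identifies coherent sheaves and their sections on $W$ with those on $T(W)$, which by the geometric lemma is a neighborhood of $V^r_{Y}(Z)$ (resp. $V^\rho_{Y}(Z)$). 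Passing to the colimit over shrinking $W$ then yields both the restriction isomorphism and the equivalence of categories.

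I expect the main obstacle to be precisely the non-compactness of $V^r_{Y}(Z)$ and $V^\rho_{Y}(Z)$ together with the failure of $V^r_{Y}(Z) \subseteq T(V^0_{Y}(Z))$, so that the flow-equivalence theorems cannot be applied to the pair $(V^0_{Y}(Z), V^r_{Y}(Z))$ directly. The contraction-in-the-limit observation, and the resulting saturated neighborhoods $T(W)$, are what circumvent this; the remaining technical care lies in verifying the flow-connectedness and over-flow-connectedness hypotheses needed to invoke Theorem~\ref{th:coherentflow} and Corollary~\ref{cor:coherentflowcompact} on the neighborhoods $W$ and $T(W)$, and in checking that the colimit over $W$ is compatible with restriction.
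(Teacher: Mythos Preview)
Your strategy has a genuine gap. You propose to work with open neighborhoods~$W$ of the compact set $V^0_Y(Z)$ in $Y^\hyb$, apply the flow-equivalence between $W$ and $T(W)$, and then use $T(W)\supseteq V^r_Y(Z)$. The problem is that $T(W)$ is \emph{not} contained in $V^r_Y(Z)$: since $W$ is open in $Y^\hyb$ it contains points with $\pr>0$ or with $|T_i|>1$, and saturating under the flow spreads these further. In particular, $T(W)$ always contains $V^\rho_Y(Z)$ as well, and generally leaves $Y^\beth$. But the coherent sheaf $\cF$ is only given on $V^r_Y(Z)$ (with overconvergent structure), not on $T(W)$; the flow-equivalence theorem needs the sheaf to be defined on the larger set to begin with. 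The same issue undermines your injectivity argument: Lemma~\ref{lem:sy=0F} is stated for an open $V$ in the ambient space, and locally extending $\cF$ and $s$ to such a $V$ gives no control on connectedness of $T(x)\cap V$ once you pass outside $V^r_Y(Z)$. Your final paragraph flags ``technical care'' but this is a structural obstacle, not a hypothesis to verify.

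The paper's proof sidesteps this by exhausting from the \emph{inside}. It introduces the explicit rational domains
\[
V^{\rho,t}=\{x\in Y^\hyb_0: |T_i(x)|\le t,\ h_j(x)=0\}\ (t>1),\qquad V^{r,u}=\{x\in Y^\beth:|h_j(x)|\le u\}\ (u\in(0,1)),
\]
which are compact, over-flow-connected (Lemma~\ref{lem:basisrational}), contained in $V^\rho_Y(Z)$ resp.\ $V^r_Y(Z)$, and satisfy $T(V^{\rho,t})=V^\rho_Y(Z)$ resp.\ $T(V^{r,u})=V^r_Y(Z)$. Corollary~\ref{cor:coherentflowcompact} then applies directly to any pair $V^{\rho,t}\subseteq V^{\rho,s}$, giving $\cF(V^{\rho,s})\simto\cF(V^{\rho,t})$ and the categorical equivalence; letting $s\to\infty$ handles $V^\rho_Y(Z)$, while letting $t\to 1^+$ (so that the $V^{\rho,t}$ form a neighborhood basis of $V^0_Y(Z)$) handles $V^0_Y(Z)$. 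This is exactly the ``compact over-flow-connected exhaustion'' you allude to, but the point is that the exhausting sets must lie \emph{inside} the domain of $\cF$, and the paper exhibits them concretely.
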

\begin{proof}
For $t>1$, set 
\[ V^{\rho,t} := \{x \in Y_{0^\dag}^\hyb : \forall i=1,\dotsc,n, \abs{T_{i}(x)}\le t,\ \forall j=1,\dotsc,m, h_{j}(x) = 0\}.\]
Note that, for each $t > 1$, we have $T(V^{\rho,t}) = V^\rho_{Y}(Z) = \bigcup_{s > 1} V^{\rho,s}$. 

For each $t > 1$, the space $V^{\rho,t}$ is a rational domain of~$Y^\hyb$. Indeed, since $k$ is non-trivially valued, there exists $\alpha \in k$ such that $\abs{\alpha} \in (0,1)$. We then have
\[ V^{\rho,t} := \{x \in Y^\hyb : \abs{\alpha(x)}=1, \ \forall i=1,\dotsc,n, \abs{T_{i}(x)}\le t,\ \forall j=1,\dotsc,m, h_{j}(x) = 0\}.\]
By Lemma~\ref{lem:basisrational}, $V^{\rho,t}$ is over-flow-connected.

Let $t \in \R_{> 1}$. By Corollary~\ref{cor:coherentflowcompact}, the result holds with $V_{Y}^\rho(Z)$ and $V_{Y}^0(Z)$ replaced by $V^{\rho,s}$, for $s\ge t$, and $V^{\rho,t}$ respectively, hence also by $V_{Y}^\rho(Z)$ and $V^{\rho,t}$ respectively.

The result for $V_{Y}^\rho(Z)$ and $V_Y^0(Z)$ follows, using Proposition~\ref{prop:extensioncoherentsheafcompact} and the fact that the family of spaces $(V^{\rho,t})_{t >1}$ forms a basis of neighborhoods of~$V^0_{Y}(Z)$ in~$Y_{0^+}^\hyb$. %(see the proof of Lemma~\ref{lem:basisrational}).

\medbreak

The result for $V_{Y}^r(Z)$ is proved similarly, replacing the sets $V^{\rho,t}$ by the sets 
\[ V^{r,u} := \{x \in Y^\beth : \forall j=1,\dotsc,m, \abs{h_{j}(x)} \le u\}\]
for $u \in (0,1)$.
\end{proof}

We now describe the global sections of the structure sheaf on the spaces $V^\square_{Y}(Z)$. Consider the affine space $\Ahyb{n+m}{k}$ with coordinates $T_{1},\dotsc,T_{n},U_{1},\dotsc,U_{m}$. 

\begin{notation}
Set
\begin{align*}
D_{0}(\infty,0^+) &:= \{x \in (\Ahyb{n+m}{k})_{0^\dag} :  \forall j=1,\dotsc,m, U_{j}(x) = 0\},\\
D_{0}(1^+,1^-) &:= \{x \in (\Ahyb{n+m}{k})_{0^\dag} :  \forall i=1,\dotsc,n, \abs{T_{i}(x)}\le 1,\ \forall j=1,\dotsc,m, \abs{U_{j}(x)} <1\},\\
D_{0}(1^+,0^+) &:= \{x \in (\Ahyb{n+m}{k})_{0^\dag} :  \forall i=1,\dotsc,n, \abs{T_{i}(x)}\le 1,\ \forall j=1,\dotsc,m, U_{j}(x) = 0\},\\
D_{0}(1^+, \,^\ast1^{-}) &:= \{x \in (\Ahyb{n+m}{k})_{0^\dag} :  \forall i=1,\dotsc,n, \abs{T_{i}(x)}\le 1,\ \forall j=1,\dotsc,m, 0 < \abs{U_{j}(x)} <1\}.
\end{align*}
Set also
\[D_{1}(\infty,0^+) = \{x \in (\Ahyb{n+m}{k})_{1} :  \forall j=1,\dotsc,m, U_{j}(x) = 0\}.\]
\end{notation}

We may consider $D_{0}(\infty,0^+)$, $D_{0}(1^+,0^+)$, $D_{0}(1^+,1^-)$ and $D_{0}(1^+, \,^\ast1^{-})$ as subsets of~$\E{n+m}{k_{0}}$. In this case, we denote by $\cO_{k_{0}}$ the overconvergent structure sheaf. Similarly, we may also consider $D_{1}(\infty,0^+)$ as a subset of~$\E{n+m}{\hat k}$, in which case we denote by $\cO_{\hat k}$ the overconvergent structure sheaf.

\begin{lemma}\label{lem:ringk0}
The restriction maps induce isomorphisms 
\[  \cO_{k_{0}}(D_{0}(\infty,0^+)) =  \cO_{k_{0}}(D_{0}(1^+,0^+)) =  \cO_{k_{0}}(D_{0}(1^+,1^-)).\]
Those rings are contained in $k[T_{1},\dotsc,T_{n}][\![U_{1},\dotsc,U_{m}]\!]$.

We have
 \[ \cO_{k_{0}}(D_{0}(1^+,\,^\ast1^-)) =  \cO_{k_{0}}(D_{0}(1^+,1^-))[U_{1}^{-1},\dotsc,U_{m}^{-1}].\]
\end{lemma}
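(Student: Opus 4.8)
The plan is to compute the four rings explicitly as subrings of $k[T_{1},\dotsc,T_{n}][\![U_{1},\dotsc,U_{m}]\!]$, by expanding analytic functions as (Laurent) series in the variables~$U_{j}$ whose coefficients are functions of~$T$. These sets are the affine models of those appearing in Proposition~\ref{prop:isoVV0affine} (with $Y=\A^{n+m}_{k}$ and $Z=\{U_{1}=\dots=U_{m}=0\}$), the scaling action $x\mapsto x^{\alpha}$ on $\E{n+m}{k_{0}}$ playing the role of the flow. Two inputs drive the computation. First, trivially valued GAGA (Berkovich, \cite[Theorem~3.5.1]{rouge}, together with its closed-disc variant as in the proof of Theorem~\ref{th:GAGAhybrid}) identifies both $\cO_{k_{0}}(\E{n}{k_{0}})$ and $\cO_{k_{0}}(\{\abs{T_{i}}\le 1\})$ with the polynomial ring $k[T_{1},\dotsc,T_{n}]$. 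Second, the key elementary feature of the trivial absolute value: for a point $x$ one has $\bigabs{\sum_{\beta}c_{\beta}U^{\beta}}(x)=\max_{\beta}\abs{c_{\beta}(x)}\,\abs{U(x)}^{\beta}$, so that any series with coefficients in $k[T]$ converges on every polydisc $\{\abs{U_{j}}<\delta\}$ with $\delta\le 1$, uniformly on compact subsets.

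With these in hand I would first settle the $U$-direction. Fixing the closed unit polydisc in~$T$, I claim that for every $\delta\in(0,1]$ the expansion map yields $\cO_{k_{0}}(\{\abs{T_{i}}\le 1,\ \abs{U_{j}}<\delta\})=k[T][\![U]\!]$: a section expands uniquely as $\sum_{\beta}c_{\beta}(T)U^{\beta}$, its truncations being pole-free rational functions that converge uniformly to it on compact subsets, the coefficients $c_{\beta}$ lying in $\cO_{k_{0}}(\{\abs{T_{i}}\le1\})=k[T]$; conversely every such series converges by the max-formula. Since the transition maps between different values of~$\delta$ are the identity on $k[T][\![U]\!]$, passing to the colimit along $\delta\to 0$ shows that the overconvergent ring $\cO_{k_{0}}(D_{0}(1^{+},0^{+}))$ equals the value at $\delta=1$, namely $\cO_{k_{0}}(D_{0}(1^{+},1^{-}))$. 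This proves the second isomorphism of the statement, and at the same time establishes the containment of Claim~2 (in fact an equality here). For the first isomorphism I would run the same expansion over the unconstrained base $\E{n}{k_{0}}$ in place of the unit polydisc; by trivially valued GAGA the coefficient ring is again $k[T]$. The non-compactness of $D_{0}(\infty,0^{+})$ is harmless, since a given series converges on a neighbourhood of $\{U=0\}$ that pinches in the $U$-directions as $\abs{T}\to\infty$, so one obtains $k[T][\![U]\!]$ once more and the restriction $\cO_{k_{0}}(D_{0}(\infty,0^{+}))\to\cO_{k_{0}}(D_{0}(1^{+},0^{+}))$ is the identity on $k[T][\![U]\!]$, hence bijective.

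Finally, for the localization statement, the inclusion $\cO_{k_{0}}(D_{0}(1^{+},1^{-}))[U_{1}^{-1},\dotsc,U_{m}^{-1}]\subseteq\cO_{k_{0}}(D_{0}(1^{+},{}^{\ast}1^{-}))$ is clear because each~$U_{j}$ is invertible where $\abs{U_{j}}>0$. For the reverse inclusion I would expand a section~$f$ on $\{\abs{T_{i}}\le1,\ 0<\abs{U_{j}}<1\}$ as a Laurent series $\sum_{\beta\in\Z^{m}}c_{\beta}(T)U^{\beta}$ and show that the exponents are bounded below: if some $\beta_{j}$ ranged to $-\infty$ along nonzero coefficients, the max-formula would force $\abs{f(x)}=+\infty$ at points with $\abs{U_{j}(x)}$ small, contradicting that $f$ is a genuine analytic function. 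A uniform bound $\beta_{j}\ge -N$ then gives $U_{1}^{N}\cdots U_{m}^{N}f\in\cO_{k_{0}}(D_{0}(1^{+},1^{-}))$, whence $f$ lies in the localization.

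I expect this finite-pole-order argument in the last claim to be the main obstacle, not because the idea is deep but because it must be combined with careful bookkeeping of the overconvergent colimits and of the pinching neighbourhoods used for $D_{0}(\infty,0^{+})$: one has to check that a single~$N$ clears the poles along every component $\{U_{j}=0\}$ simultaneously and that the successive expansions (existence, uniqueness, and convergence of the Taylor and Laurent series of an \emph{overconvergent} section) are legitimate. All of these reduce to the single convergence estimate furnished by the max-formula over~$k_{0}$, which I would establish once and reuse throughout; everything else is then formal given the trivially valued GAGA inputs.
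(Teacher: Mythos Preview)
Your identification of each of the three rings with all of $k[T][\![U]\!]$ is incorrect, and this is the load-bearing step of your argument. The sets $\{\abs{T_i}\le 1,\ \abs{U_j}<\delta\}$ are not open in $\E{n+m}{k_0}$: any open neighbourhood of the Gauss point in the $T$-direction must contain points with $\abs{T_i}>1$. Hence the overconvergent ring $\cO_{k_0}(D_0(1^+,0^+))$ is described by convergence on some polydisc $\{\abs{T_i}<t,\ \abs{U_j}<u\}$ with $t>1$, which forces a growth constraint linking the $T$-degree of each coefficient to the $U$-exponent. Concretely, $f=\sum_{j\ge 0} T_1^{2^j}U_1^{j}$ lies in $k[T][\![U]\!]$ but, for any $t>1$ and $u>0$, one has $t^{2^j}u^{j}\to\infty$, so $f$ is in none of the three rings. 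Your ``max-formula'' does show pointwise convergence at points with $\abs{T_i(x)}\le 1$, but it does not yield uniform convergence on the overconvergent neighbourhoods that the definition of $\cO_{k_0}$ requires; the ``converse'' direction of your claim therefore fails, and with it your deduction that the three restriction maps are the identity on $k[T][\![U]\!]$.

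The paper does not attempt to name the ring; it only proves the containment $\cO_{k_0}(D_0(1^+,0^+))\subset k[T][\![U]\!]$ and then establishes the isomorphisms directly by the rescaling trick you mention in your first paragraph but never deploy: if $\sum a_{i,j}T^iU^j$ converges on $\{\abs{T_i}<t,\ \abs{U_j}<u\}$ with $t>1$, then, since $\abs{a_{i,j}}_0^{\,e}=\abs{a_{i,j}}_0$, it also converges on $\{\abs{T_i}<t^e,\ \abs{U_j}<u^e\}$ for every $e>0$. Choosing $e$ large makes $t^e$ arbitrarily large (giving the map to $D_0(\infty,0^+)$) and $u^e$ close to $1$ (giving the map to $D_0(1^+,1^-))$. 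Your outline for the localisation statement is fine in spirit (finitely many negative exponents because $\abs{\,\cdot\,}_0\in\{0,1\}$), and the paper argues ``similarly''; but for the first two displayed equalities you need to replace the false identification with $k[T][\![U]\!]$ by this scaling argument.
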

\begin{proof}
Set $T := (T_{1},\dotsc,T_{n})$ and $U:=(U_{1},\dotsc,U_{m})$. For $i=(i_{1},\dotsc,i_{n}) \in \N^n$, set $\abs{i} := i_{1} + \dotsb +i_{n}$ and $T^i := T_{1}^{i_{1}} \dotsb T_{n}^{i_{n}}$. Similarly, for $j=(j_{1},\dotsc,j_{m}) \in \N^m$, set $\abs{j} := j_{1} + \dotsb +j_{m}$ and $U^j := U_{1}^{j_{1}} \dotsb U_{m}^{j_{m}}$. 

The ring $\cO_{k_{0}}(D_{0}(1^+,0^+))$ may be written as the set of power series 
\[\sum_{(i,j) \in \N^{n+m}} a_{i,j}\, T^i U^j \in k[\![T,U]\!]\]
for which there exists $t>1$ and $u>0$ such that 
\[\lim_{(i,j) \to \infty} \abs{a_{i,j}}_{0}\, t^{\abs{i}} u^{\abs{j}} = 0.\]

Let $f \in \cO_{k_{0}}(D_{0}(1^+,0^+))$ and write it as a power series as above. Using the fact that $\abs{a_{i,j}}$ is~0 or~1, we deduce that~$f$ belongs to $k[T][\![U]\!]$.

Let us prove that $f \in \cO_{k_{0}}(D_{0}(\infty,0^+))$. It is enough to prove that, for each $s >1$, we have  $f \in \cO_{k_{0}}(D_{0}(s^+,0^+))$, where 
\[D_{0}(s^+,0^+) := \{x \in (\Ahyb{n+m}{k})_{0} :  \forall i=1,\dotsc,n, \abs{T_{i}(x)}\le s,\ \forall j=1,\dotsc,m, U_{j}(x) = 0\}.\]
Since $t>1$, there exists $e>1$ such that $t^e>s$. We have 
\[\lim_{(i,j) \to \infty} \abs{a_{i,j}}_{0}\, (t^e)^{\abs{i}} (u^e)^{\abs{j}} = \lim_{(i,j) \to \infty} \big( \abs{a_{i,j}}_{0}\, t^{\abs{i}} u^{\abs{j}} \big)^e = 0,\]
hence $f \in \cO_{k_{0}}(D_{0}(s^+,0^+))$.

The other parts of the statement are proven similarly.
\end{proof}

Let us now return to the usual overconvergent structures, inherited from the inclusions into $\Ahyb{n+m}{k}$. We denote the corresponding structure sheaves simply by~$\cO$.

\begin{lemma}\label{lem:D10}
We have natural isomorphisms
\[\cO(D_{0}(\infty,0^+)) = (\cO(D_{0}(1^+,1^-)) = \cO(D_{0}(1^+,0^+)) \simto \cO_{k_{0}}(D_{0}(1^+,0^+)) \cap \cO_{\hat k}(D_{1}(\infty,0^+))\]
and 
\[\cO(D_{0}(1^+,\,^\ast1^-)) = (U_{1},\dotsc,U_{m})^{-1} \, \cO(D_{0}(1^+,1^-)).\]
The Jacobson radical of $\cO(D_{0}(1^+,1^-))$ is generated by $U_{1},\dotsc,U_{m}$.
\end{lemma}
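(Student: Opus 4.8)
The plan is to realise all the rings in play as explicit subrings of the power series ring $k[\![T,U]\!]$ (with $T = (T_{1},\dots,T_{n})$, $U = (U_{1},\dots,U_{m})$) cut out by convergence conditions, and then to match these conditions. First I would record the power series description of hybrid overconvergent sections: Taylor expansion identifies $\cO(D_{0}(1^+,0^+))$ with the set of series $\sum a_{i,j}\, T^i U^j$, $a_{i,j}\in k$, converging uniformly on some hybrid neighbourhood $\{\pr \le \delta,\ \abs{T_{i}}\le t,\ \abs{U_{j}}\le u\}$ with $t>1$, $u>0$, $\delta\in(0,1]$. Using the description of functions on hybrid polydiscs from \cite{CTCZ} (uniform convergence being governed by the hybrid sup-norm) together with the elementary computation $\sup_{\eps\in[0,\delta]}\abs{a}^\eps=\max(\abs{a}_{0},\abs{a}^\delta)$, this amounts to the single condition
\[ \max\big(\abs{a_{i,j}}_{0},\abs{a_{i,j}}^\delta\big)\, t^{\abs{i}} u^{\abs{j}} \too 0 \quad \text{as } (i,j)\to\infty. \]
Since Taylor coefficients also record the elements of $\cO_{k_{0}}$ and $\cO_{\hat k}$, all three rings are subrings of $k[\![T,U]\!]$ (the $\hat k$-factor a priori of $\hat k[\![T,U]\!]$), so the claimed isomorphisms become equalities of subrings and injectivity is automatic.

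Next I would split the displayed condition. As $\max(A,B)c\to0$ is equivalent to $Ac\to0$ and $Bc\to0$, it is the conjunction of $\abs{a_{i,j}}_{0}\, t^{\abs{i}} u^{\abs{j}}\to0$ and $\abs{a_{i,j}}^\delta\, t^{\abs{i}} u^{\abs{j}}\to0$. The first is exactly membership $f\in\cO_{k_{0}}(D_{0}(1^+,0^+))$ from Lemma~\ref{lem:ringk0}. For the second, raising to the power $1/\eps$ for each $\eps\in(0,\delta]$ turns it, via the flow identification (compare Lemmas~\ref{lem:hybaneps} and~\ref{lem:>0eps}), into convergence of the $\hat k$-series $\sum a_{i,j}\,T^iU^j$ on the $\hat k$-polydisc of radii $(t^{1/\eps},u^{1/\eps})$; letting $\eps\to0^+$ the $T$-radius $t^{1/\eps}\to+\infty$ while the $U$-radius tends to $0^+$, which is precisely membership in $\cO_{\hat k}(D_{1}(\infty,0^+))$. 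The converse inclusion I would check by taking $\delta=1$, an arbitrary $t>1$, and a common $U$-radius $u=\min(u_{t},v_{t})$ supplied by the two factors. This yields the isomorphism $\cO(D_{0}(1^+,0^+))\simto\cO_{k_{0}}(D_{0}(1^+,0^+))\cap\cO_{\hat k}(D_{1}(\infty,0^+))$.

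The chain of equalities then drops out: for all three of $D_{0}(\infty,0^+)$, $D_{0}(1^+,1^-)$, $D_{0}(1^+,0^+)$ the $\hat k$-factor is the same ring $\cO_{\hat k}(D_{1}(\infty,0^+))$ — the flowed $T$-radius is unbounded in each case, and both $U_{j}=0$ and $\abs{U_{j}}<1$ flow to a $U$-radius tending to $0^+$ — while the three $k_{0}$-factors coincide by Lemma~\ref{lem:ringk0}. For the localisation statement I would run the same intersection description on $D_{0}(1^+,\,^\ast1^-)$, whose $k_{0}$- and $\hat k$-factors are the localisations of the previous ones at the common multiplicative set generated by $U_{1},\dots,U_{m}$ (Lemma~\ref{lem:ringk0} and its $\hat k$-analogue); as localisation at these elements commutes with the intersection, this gives $(U_{1},\dots,U_{m})^{-1}\cO(D_{0}(1^+,1^-))$.

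Finally, for the Jacobson radical: the inclusion $(U_{1},\dots,U_{m})\subseteq\mathrm{Jac}$ holds because each $U_{j}$ has hybrid sup-norm $<1$ on $D_{0}(1^+,1^-)$, so for any $g$ the geometric series $\sum_{\ell\ge0}(gU_{j})^\ell$ converges uniformly on a slightly shrunk neighbourhood and inverts $1-gU_{j}$. Conversely, reducing modulo $U$ sends a series to its $T$-part $\sum_{i}a_{i,0}T^i$, which by $\abs{a_{i,0}}_{0}\,t^{\abs{i}}\to0$ with $t>1$ has finite support, so $\cO(D_{0}(1^+,1^-))/(U_{1},\dots,U_{m})\simeq k[T_{1},\dots,T_{n}]$; since $k[T]$ is Jacobson with zero radical and $(U)\subseteq\mathrm{Jac}$, the identity $\mathrm{Jac}(R)/I=\mathrm{Jac}(R/I)$ for $I\subseteq\mathrm{Jac}(R)$ forces $\mathrm{Jac}=(U_{1},\dots,U_{m})$. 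The main obstacle will be the convergence splitting in the second paragraph: pinning down the power series description sharply enough, and in particular tracking how the flow-rescaling $t^{1/\eps}\to\infty$ converts overconvergence in $T$ at $\eps=0$ into entireness in $T$ on the $\hat k$-side, with both inclusions handled in the correct order of quantifiers.
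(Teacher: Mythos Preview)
Your approach matches the paper's: both describe the rings as power series with hybrid convergence conditions (via \cite{EtudeLocale,CTCZ}), split the hybrid norm as $\max(\va_{0},\va^{\delta})$, and read off $\hat k$-entireness in~$T$ from the flow rescaling $t\mapsto t^{1/\delta}\to\infty$. The paper handles the chain of equalities and the localisation separately, by exhausting $D_{0}(1^+,1^-)$ and $D_{0}(\infty,0^+)$ by closed polydiscs and $D_{0}(1^+,{}^\ast 1^-)$ by closed annuli (invoking \cite[Corollaire~1.4.11]{CTCZ}), whereas you deduce everything from the single intersection formula; this is a legitimate shortcut, but the step ``localisation at $U_{1},\dots,U_{m}$ commutes with the intersection'' is not automatic and deserves a line (it holds here because both factors are stable under multiplication by each $U_{j}$, so denominators can be cleared to a common monomial). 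Two small imprecisions: for the converse inclusion you cannot take $t>1$ arbitrary, only some $t\le t_{0}$ supplied by the $k_{0}$-factor; and the supremum of $\abs{U_{j}}$ on $D_{0}(1^+,1^-)$ is $1$, not strictly below, so for $(U)\subseteq\mathrm{Jac}$ it is cleaner (as the paper does) to work on $D_{0}(1^+,0^+)$, where $1+gU_{j}$ evaluates to~$1$ at every point and is therefore invertible on a neighbourhood. Your quotient argument $R/(U)\simeq k[T]$ for the reverse inclusion is neater than the paper's explicit decomposition $f=\sum U_{j}g_{j}$.
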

\begin{proof}
Use the same notation as in the proof of Lemma~\ref{lem:ringk0}. By \cite[Corollaire~2.8]{EtudeLocale}, the ring $\cO(D_{0}(1^+,0^+))$ identifies to the set of power series
\[f = \sum_{(i,j) \in \N^{n+m}} a_{i,j}\, T^i U^j \in k[\![T,U]\!]\]
for which there exists $\eps \in (0,1]$, $t>1$ and $u>0$ such that the power series
\[ \sum_{(i,j) \in \N^{n+m}} \norm{a_{i,j}}_{[0,\eps]} \,t^{\abs{i}}  u^{\abs{j}}\]
converges. 

Let $f \in \cO_{k_{0}}(D_{0}(1^+,0^+))$ and write it as above. Recall that
\[ \nm_{[0,\eps]} = \max(\va_{0},\va^\eps) = \max\big(\va_{0}, \sup_{0< \delta \le \eps }(\va^\delta)\big).\]
As a result, the convergence condition splits into several conditions. The one involving~$\va_{0}$ corresponds to the fact that $f$ belongs to $\cO_{k_{0}}(D_{0}(1^+,0^+))$. 

For $\delta \in (0,\eps]$, the condition involving $\va^\delta$ amounts to the convergence of the power series $\sum_{(i,j) \in \N^{n+m}} \abs{a_{i,j}} \,(t^{1/\delta})^{\abs{i}}  (u^{1/\delta})^{\abs{j}}$, that is to say the convergence of $f$ in the neighborhood of $D_{1}((t^{1/\delta})^+,0^+)$ in $(\Ahyb{n+m}{k})_{1}$, where 
\[D_{1}((t^{1/\delta})^+,0^+) := \{x \in (\Ahyb{n+m}{k})_{1} :  \forall i=1,\dotsc,n, \abs{T_{i}(x)}\le t^{1/\delta},\ \forall j=1,\dotsc,m, U_{j}(x) = 0\}.\]
Since $t^{1/\delta}$ goes to~$+\infty$ when $\delta$ goes to~0, $f$ finally belongs to~$\cO_{\hat k}(D_{1}(\infty,0^+))$.

We have proven that $\cO(D_{0}(1^+,0^+)) \subseteq \cO_{k_{0}}(D_{0}(1^+,0^+)) \cap \cO_{\hat k}(D_{1}(\infty,0^+))$. The reverse inclusion is obtained by similar arguments.

The same method allows to compute the global sections of the structure sheaf on the discs of the form 
\[ \{x \in (\Ahyb{n+m}{k})_{0^\dag} :  \forall i=1,\dotsc,n, \abs{T_{i}(x)}\le 1,\ \forall j=1,\dotsc,m, \abs{U_{j}(x)} \le u\},\]
for $u \in [0,1)$, or 
\[ \{x \in (\Ahyb{n+m}{k})_{0^\dag} :  \forall i=1,\dotsc,n, \abs{T_{i}(x)}\le v,\ \forall j=1,\dotsc,m, U_{j}(x) = 0\},\]
for $v > 1$. Since $D_{0}(1^+,1^-)$ and $D_{0}(\infty,0^+)$ are increasing unions of such discs, we may compute the global sections of the structure sheaf on them too.

For $\cO(D_{0}(1^+,\,^\ast1^-))$, we proceed similarly, writing $D_{0}(1^+,\,^\ast1^-)$ as the increasing union of the annuli 
\[ \{x \in (\Ahyb{n+m}{k})_{0^\dag} :  \forall i=1,\dotsc,n, \abs{T_{i}(x)}\le 1,\ \forall j=1,\dotsc,m, s \le\abs{U_{j}(x)} \le t\},\]
for $s \le t \in (0,1)$, and using \cite[Corollaire~1.4.11]{CTCZ} instead of \cite[Corollaire~2.8]{EtudeLocale}.

\medbreak

Let us now prove the final part about the Jacobson radical. Recall that the Jacobson radical of a ring~$B$ may be characterized as the set of elements $b \in B$ such that, for every $c\in B$, $1+cb$ is invertible.

Let $f$ be in the Jacobson radical of $\cO(D_{0}(1^+,0^+))$. By the first part of the statement and Lemma~\ref{lem:ringk0}, $f$ belongs to $k[T_{1},\dotsc,T_{n}][\![U_{1},\dotsc,U_{m}]\!]$ and, for each $c \in k^\ast$, $1+cf$ is invertible in this ring. It follows that $f(T_{1},\dotsc,T_{n},0,\dots,0)=0$. 

Let us write $f = \sum_{(i,j) \in \N^{n+m}} a_{i,j}\, T^i U^j$ as above. For $k=1,\dotsc,m$, set 
\[ g_{k} := \sum_{\stackrel{(i,j) \in \N^{n+m}}{j_{1} = \dotsb = j_{k-1} =0,\ j_{k} \ge 1}} a_{i,j}\, T_{1}^{i_{1}}\dotsb T_{n}^{i_{n}} U_{k}^{i_{k}-1} U_{k+1}^{i_{k+1}} \dotsb U_{m}^{i_{m}} \in k[T_{1},\dotsc,T_{n}][\![U_{1},\dotsc,U_{m}]\!].\]
Since $f(T_{1},\dotsc,T_{n},0,\dots,0)=0$, we have
\[ f = U_{1} \, g_{1} + \dotsb + U_{m} \, g_{m}.\]
By construction, the series $g_{k}$ satisfy the same convergence conditions as~$f$. It follows that they all belong to~$\cO(D_{0}(1^+,1^-))$, hence 
\[ f \in (U_{1},\dotsc,U_{m})\, \cO(D_{0}(1^+,0^+)).\]

Conversely, let $f \in  (U_{1},\dotsc,U_{m})\, \cO(D_{0}(1^+,0^+))$. Let $c\in \cO(D_{0}(1^+,0^+))$. The function $1+cf$ evaluates to~1 at each point of $D_{0}(1^+,0^+)$, hence it does not vanish in a neighbourhood of this set. It follows that it is invertible in $\cO(D_{0}(1^+,0^+))$, as required.
\end{proof}

\begin{theorem}\label{th:VYZStein}
Let $\square \in \{\rho,r,0,\ast\}$. Let~$\cF$ be a coherent sheaf on~$V_{Y}^\square(Z)$. Then, $\cF(V_{Y}^\square(Z))$ is a finitely generated $\cO(V_{Y}^\square(Z))$-module and, for each $q\in \N_{\ge 1}$, we have
\[ H^q(V_{Y}^\square(Z),\cF)=0.\]
Moreover, the global section functor $\cF \mapsto \cF(V_{Y}^\square(Z))$ induces an equivalence between the category of coherent sheaves on~$V_{Y}^\square(Z)$ and the category of finitely generated modules on~$\cO(V_{Y}^\square(Z))$.

We have natural isomorphisms
\[ \cO(D_{0}(1^+,0^+))/\big(J+(U_{1}-h_{1},\dotsc,U_{m}-h_{m})\big) \simto \cO(V_{Y}^0(Z)) = \cO(V^r_{Y}(Z)) = \cO(V^\rho_{Y}(Z))\]
and \[ \cO(D_{0}(1^+,\,^\ast1^-))/\big(J+(U_{1}-h_{1},\dotsc,U_{m}-h_{m})\big) \simto \cO(V_{Y}^\ast(Z)),\]
sending $T_{i}$ to $T_{i}$, for $i \in\{1,\dotsc,n\}$, and $U_{j}$ to $h_{j}$, for $j \in\{1,\dotsc,m\}$. 

In particular, we have
\[\cO(V_{Y}^\ast(Z)) = I^{-1}\, \cO(V^r_{Y}(Z)).\] 
The Jacobson radical of $\cO(V^r_{Y}(Z))$ is generated by~$I$.
\end{theorem}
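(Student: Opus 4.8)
The plan is to transport the whole statement to the explicit model discs of Lemma~\ref{lem:D10} by a graph embedding, to import the Stein properties of hybrid discs from~\cite{CTCZ}, and then to read off the ring-theoretic assertions from Lemma~\ref{lem:D10} and Proposition~\ref{prop:isoVV0affine}.

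First I would introduce the graph embedding $\gamma \colon Y \hookrightarrow \A^{n+m}_{k}$, $T \mapsto (T,h(T))$, a closed immersion whose image is cut out by the ideal $\cJ := (J, U_{1}-h_{1},\dotsc,U_{m}-h_{m})$. Analytifying over~$k_{\hyb}$ and restricting to the fibre over~$0$ with its overconvergent structure, $\gamma$ identifies each $V_{Y}^\square(Z)$ with the closed analytic subspace defined by~$\cJ$ inside the corresponding model disc, namely $D_{0}(\infty,0^+)$, $D_{0}(1^+,0^+)$, $D_{0}(1^+,1^-)$ and $D_{0}(1^+,{}^\ast1^-)$ for $\square=\rho,0,r,\ast$ respectively: on the image one has $U_{j}=h_{j}$, so that the conditions $h_{j}(x)=0$, $\abs{h_{j}(x)}<1$, $0<\abs{h_{j}(x)}<1$ defining the $V_{Y}^\square(Z)$ become the conditions $U_{j}(x)=0$, $\abs{U_{j}(x)}<1$, $0<\abs{U_{j}(x)}<1$ defining the discs. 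Being a closed immersion, $\gamma$ is finite, hence its direct image preserves coherence, cohomology and global sections.

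Next I would establish the Stein properties for the model discs and transport them. The disc $D_{0}(1^+,0^+)$ is compact and its Stein properties (vanishing of $H^{\ge 1}$, finite generation of global sections, and the equivalence of coherent sheaves with finitely generated modules over the ring of global sections) are the compact polydisc case of~\cite{CTCZ}; the annular disc $D_{0}(1^+,{}^\ast1^-)$ is an increasing union of compact annuli, to which I would pass these properties by a Mittag--Leffler argument exactly as in the proof of Theorem~\ref{th:GAGAhybrid}. Pushing forward along the finite map~$\gamma$ yields the Stein properties for $V_{Y}^0(Z)$ and $V_{Y}^\ast(Z)$, while the cases $\square=\rho,r$ follow from Proposition~\ref{prop:isoVV0affine}, whose restriction isomorphisms reduce them to the compact case $V_{Y}^0(Z)$. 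Moreover, exactness of the global-section functor on the Stein discs turns the identifications above into the asserted isomorphisms $\cO(D_{0}(1^+,0^+))/\bigl(J+(U_{1}-h_{1},\dotsc,U_{m}-h_{m})\bigr)\simto\cO(V_{Y}^0(Z))$ and $\cO(D_{0}(1^+,{}^\ast1^-))/\bigl(J+(U_{1}-h_{1},\dotsc,U_{m}-h_{m})\bigr)\simto\cO(V_{Y}^\ast(Z))$ sending $T_{i}\mapsto T_{i}$ and $U_{j}\mapsto h_{j}$; the three-fold equality $\cO(V_{Y}^0(Z))=\cO(V_{Y}^r(Z))=\cO(V_{Y}^\rho(Z))$ is then exactly Proposition~\ref{prop:isoVV0affine}.

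The final two assertions follow from Lemma~\ref{lem:D10}. For the localization, Lemma~\ref{lem:D10} gives $\cO(D_{0}(1^+,{}^\ast1^-)) = (U_{1},\dotsc,U_{m})^{-1}\,\cO(D_{0}(1^+,1^-))$; since localization is exact it commutes with the quotient by~$\cJ$, and as $U_{j}\mapsto h_{j}$ this gives $\cO(V_{Y}^\ast(Z)) = I^{-1}\,\cO(V_{Y}^r(Z))$. For the Jacobson radical, the surjection $\cO(D_{0}(1^+,1^-))\twoheadrightarrow\cO(V_{Y}^r(Z))$ carries the radical $(U_{1},\dotsc,U_{m})$, computed in Lemma~\ref{lem:D10}, into the radical of the quotient, which gives the easy inclusion $I\,\cO(V_{Y}^r(Z))\subseteq\operatorname{rad}\cO(V_{Y}^r(Z))$. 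For the reverse inclusion I would reproduce the direct argument of Lemma~\ref{lem:D10}: since $I\,\cO(V_{Y}^r(Z))$ is the kernel of the restriction $\cO(V_{Y}^r(Z))\to\cO(V_{Y}^0(Z))$ to the closed subspace $\{h_{\bullet}=0\}$ (by the Stein description), it suffices to show that any~$f$ in the radical restricts to~$0$ on $V_{Y}^0(Z)$, for otherwise~$f$ takes a nonzero value at some point and one produces a~$c$ with $1+cf$ non-invertible. I expect the two genuinely delicate points to be the Mittag--Leffler passage to the non-compact annular disc in the Stein step, and this last production of a non-unit from a function not vanishing on $V_{Y}^0(Z)$, where one must keep careful track of the evaluations at points (and, as the example $Z=V(T^2)$ suggests, of the exact meaning of ``generated by~$I$''); the latter is the main obstacle.
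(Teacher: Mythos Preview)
Your overall architecture matches the paper's: the graph embedding into $\Ahyb{n+m}{k}$ cut out by $J+(U_{j}-h_{j})$, the identification of each $V_{Y}^\square(Z)$ with a closed analytic subset of the corresponding model disc, and the appeal to the Stein results of \cite{CTCZ} for the discs. The ring descriptions and the localization statement are handled exactly as in the paper.

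There is, however, a genuine gap in the vanishing of higher cohomology for $\square\in\{\rho,r\}$. Proposition~\ref{prop:isoVV0affine} only gives you an isomorphism on $H^0$ and an equivalence of abelian categories of coherent sheaves; it says nothing about $H^{q\ge1}$, because the restriction functor to $V_Y^0(Z)$ has no reason to commute with derived global sections (injective or flasque resolutions do not restrict to resolutions of the same kind). The paper does \emph{not} deduce $H^q=0$ on $V_{Y}^r(Z)$ or $V_{Y}^\rho(Z)$ from the compact case: it embeds them as closed analytic subsets of $D_{0}(1^+,1^-)$ and $D_{0}(\infty,0^+)$ respectively, exhibits explicit universal Stein exhaustions of those discs, and invokes \cite[Th\'eor\`eme~8.5.14]{CTCZ}. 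You need to do the same; once that is done, Proposition~\ref{prop:isoVV0affine} is used only for finite generation and the equivalence of categories, exactly as you intended. (For $\square=\ast$ the paper takes a slightly different shortcut for finite generation, replacing your Mittag--Leffler argument by a flow-slice: it picks the overconvergent affinoid $W=\{\abs{h_{j}}=s\}$ with $T(W)=V_{Y}^\ast(Z)$ and transfers from~$W$; either route works.)

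Your Jacobson-radical sketch also contains a slip. You write that $I\,\cO(V_{Y}^r(Z))$ is the kernel of the restriction to $V_{Y}^0(Z)$, but that restriction is precisely the isomorphism of Proposition~\ref{prop:isoVV0affine}, so its kernel is zero. What you want is the restriction to the \emph{closed analytic subspace} defined by~$I$ (i.e.\ the scheme-theoretic $Z$, with structure sheaf $\cO/I\cO$), not to the overconvergent subset $V_{Y}^0(Z)$; these share an underlying set but have different structure sheaves. With that correction your argument goes through, and is essentially a repetition on the quotient of the proof of Lemma~\ref{lem:D10}; the paper simply cites Lemma~\ref{lem:D10} and leaves the passage to the quotient implicit.
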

\begin{proof}
By assumption, $Y$ identifies to the closed subscheme of~$\A^n_{k}$ with coordinates $T_{1},\dotsc,T_{n}$ defined by the ideal~$J$. It follows that $Y^\hyb$ identifies to the closed analytic subspace of~$\Ahyb{n}{k}$ defined by the sheaf of ideals~$\cJ$ generated by~$J$. 

The projection $\pi \colon \Ahyb{n+m}{k} \to \Ahyb{n}{k}$ induces an isomorphism between the closed analytic subset~$E$ of $\Ahyb{n+m}{k} $ defined by~$\pi^*\cJ + (U_{1}-h_{1},\dotsc,U_{m}-h_{m})$ and the closed analytic subset of $\Ahyb{n}{k}$ defined by~$\cJ$ (whose inverse is the morphism defined by $(h_{1},\dotsc,h_{m})$), that it to say~$Y^\hyb$. Under this isomorphism, the subset $V^0_{Y}(Z)$ of~$Y^\hyb$ corresponds to the image of~$D_{0}(1^+,0^+)$ in~$E$. 

Note that $V^0_{Y}(Z)$ is an overconvergent affinoid space, with the terminology of~\cite[D\'efinition~8.3.1]{CTCZ}. Let~$\cF$ be a coherent sheaf on~$V^0_{Y}(Z)$. By \cite[Th\'eor\`eme 8.3.9]{CTCZ}, $\cF(V^0_{Y}(Z))$ is a finitely generated $\cO(V^0_{Y}(Z))$-module, $\cF(V^0_{Y}(Z))$ generates~$\cF$ and the higher cohomology of~$\cF$ vanishes. The first part of the result for~$V^0_{Y}(Z)$ follows, as well as the claimed equivalence of categories.

By \cite[Corollaire 8.2.16]{CTCZ}, coherent sheaves on $D_{0}(1^+,0^+)$ have no higher coherent cohomology. In particular, the first cohomology group of the sheaf of ideals generated by $J+(U_{1}-h_{1},\dotsc,U_{m}-h_{m})$ vanishes. The description of $\cO(V_{Y}^0(Z))$ follows by a standard argument (see for instance the proof of \cite[Th\'eor\`eme~8.3.8]{CTCZ}).

\medbreak

Let us now prove the results for $V^r_{Y}(Z)$. Reasoning as above, we may identify $V^r_{Y}(Z)$ with a closed analytic subset of $D_{0}(1^+,1^-)$. It is enough to prove that the latter has no higher coherent cohomology. For $n\in \N$, set 
\[ D_{n} := \{x \in (\Ahyb{n+m}{k})_{0^\dag} :  \forall i=1,\dotsc,n, \abs{T_{i}(x)}\le 1,\ \forall j=1,\dotsc,m, \abs{U_{j}(x)} \le 1 - 2^{-n}\}.\]
Arguing as in \cite[Exemple~8.5.6]{CTCZ}, we prove that the sequence $(D_{n})_{n\in \N}$ is a universal Stein exhaustion of~$D_{0}(1^+,1^-)$ (see \cite[D\'efinition~8.5.5]{CTCZ}). The result now follows from \cite[Th\'eor\`eme~8.5.14]{CTCZ}.  The finite generation property and the equivalence of categories follow from the analogous results for~$V^0_{Y}(Z)$, together with Proposition~\ref{prop:isoVV0affine}. 

\medbreak

A similar strategy works for $V_{Y}^\rho(Z)$, identifying it to a closed analytic subset of~$D_{0}(\infty,0^+)$ and using the exhaustion by the discs 
\[ \{x \in (\Ahyb{n+m}{k})_{0^\dag} :  \forall i=1,\dotsc,n, \abs{T_{i}(x)}\le n,\ \forall j=1,\dotsc,m, U_{j}(x) = 0\},\]
for $n\in \N$.

\medbreak

Let us now deal with $V_{Y}^\ast(Z)$. To prove the finite generation property and the equivalence of categories, it is enough to find a subspace~$W$ of $V_{Y}^\ast(Z)$ that will play the same role as~$V^0_{Y}(Z)$ for $V^r_{Y}(Z)$. In other words, we are looking for an overconvergent affinoid subspace~$W$ of $V_{Y}^\ast(Z)$ such that $T(W) = V_{Y}^\ast(Z)$. A suitable candidate is given by 
\[W = \{ x \in V_{Y}^\ast(Z) : \forall j \in\{1,\dotsc,m\}, \abs{h_{j}(x)}=s\},\] 
for some $s\in (0,1)$. By \cite[Proposition~8.3.6]{CTCZ}, it is indeed an overconvergent affinoid space, and the rest is clear. 

\medbreak

The final statements about the localization and the Jacobson radical now follow from Lemma~\ref{lem:D10}.
\end{proof}

\begin{corollary}
For each coherent sheaf~$\cF$ on $V_{Y}^\ast(Z)$, there exists a coherent sheaf~$\cG^r$ (resp. $\cG^\rho$) on~$V^r_{Y}(Z)$ (resp. $V^\rho_{Y}(Z)$) such that $\cG^r_{\vert V_{Y}^\ast(Z)} \simeq \cF$ (resp. $\cG^\rho_{\vert V_{Y}^\ast(Z)} \simeq \cF$).
\end{corollary}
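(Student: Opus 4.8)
The plan is to reduce the statement to an elementary localization argument for finitely generated modules, using the explicit descriptions of the rings of global sections furnished by Theorem~\ref{th:VYZStein}. Write $B := \cO(V^r_{Y}(Z))$. That theorem provides the canonical identifications $\cO(V^\rho_{Y}(Z)) = \cO(V^0_{Y}(Z)) = B$ and $\cO(V_{Y}^\ast(Z)) = I^{-1}B = B_{h}$, where $h := h_{1}\cdots h_{m}$ and $B_{h}$ is the localization of $B$ inverting $h$; moreover the restriction map $B = \cO(V^r_{Y}(Z)) \to \cO(V_{Y}^\ast(Z)) = B_{h}$ is precisely this localization. The same theorem gives, for each $\square \in \{\rho, r, \ast\}$, an equivalence between the category of coherent sheaves on $V^\square_{Y}(Z)$ and the category of finitely generated $\cO(V^\square_{Y}(Z))$-modules, under which the global section functor is exact (higher cohomology vanishes). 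Everything is thereby controlled by the single ring $B$ and its localization $B_{h}$.

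Next I would carry out the module construction. Set $M := \cF(V_{Y}^\ast(Z))$, a finitely generated $B_{h}$-module. Since $V^0_{Y}(Z)$ is an overconvergent affinoid space, $B$ is Noetherian, hence so is $B_{h}$, and $M$ admits a finite presentation $B_{h}^{q} \xrightarrow{\phi} B_{h}^{p} \to M \to 0$. Clearing denominators, there is an integer $N$ such that the matrix of $h^N\phi$ has entries in $B$; it defines a morphism $\psi \colon B^q \to B^p$ whose localization $\psi \otimes_{B} B_{h} = h^N\phi$ differs from $\phi$ only by the unit $h^N$ of $B_{h}$. Setting $N' := \coker(\psi)$, a finitely generated $B$-module, we obtain $N' \otimes_{B} B_{h} = \coker(h^N\phi) = \coker(\phi) = M$.

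Finally I would build the sheaves and identify the restriction. Let $\cG^r$ (resp.\ $\cG^\rho$) be the coherent sheaf on $V^r_{Y}(Z)$ (resp.\ $V^\rho_{Y}(Z)$) corresponding to $N'$ under the equivalence of Theorem~\ref{th:VYZStein}; concretely $\cG^r = \coker(\cO_{V^r_{Y}(Z)}^{q} \xrightarrow{\psi} \cO_{V^r_{Y}(Z)}^{p})$, and likewise for $\cG^\rho$. Since the inclusion $V_{Y}^\ast(Z) \hookrightarrow V^r_{Y}(Z)$ makes the restriction functor exact and carries the structure sheaf to the structure sheaf, we get $\cG^r_{\vert V_{Y}^\ast(Z)} = \coker(\cO_{V_{Y}^\ast(Z)}^{q} \to \cO_{V_{Y}^\ast(Z)}^{p})$, where the morphism is obtained from $\psi$ by applying the restriction map $B \to B_{h}$. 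Taking global sections (exact, again by Theorem~\ref{th:VYZStein}) yields $N' \otimes_{B} B_{h} = M$, so $\cG^r_{\vert V_{Y}^\ast(Z)}$ and $\cF$ have the same image under the equivalence on $V_{Y}^\ast(Z)$, whence $\cG^r_{\vert V_{Y}^\ast(Z)} \simeq \cF$. For $\cG^\rho$, the restriction to $V_{Y}^\ast(Z)$ is to be read through the canonical equivalence $\mathrm{Coh}(V^\rho_{Y}(Z)) \simeq \mathrm{Coh}(V^r_{Y}(Z))$ induced by restriction to $V^0_{Y}(Z)$ (Proposition~\ref{prop:isoVV0affine}), under which $\cG^\rho$ corresponds to $\cG^r$ (both being $\coker(\psi)$ for the same matrix over $B$), and the conclusion follows as before. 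The only genuine obstacle is matching the sheaf-theoretic restriction with the algebraic base change $- \otimes_{B} B_{h}$; presenting $\cG^r$ as a cokernel of free sheaves sidesteps any flatness discussion, since the sole input needed is the ring-level assertion from Theorem~\ref{th:VYZStein} that $\cO(V^r_{Y}(Z)) \to \cO(V_{Y}^\ast(Z))$ is the localization at $h$.
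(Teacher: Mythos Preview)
Your proof is correct and follows essentially the same approach as the paper: use the equivalences of Theorem~\ref{th:VYZStein} to reduce to a module-theoretic statement, then use that $\cO(V_{Y}^\ast(Z))$ is a localization of $\cO(V^r_{Y}(Z)) = \cO(V^\rho_{Y}(Z))$ to lift a finitely generated module. The paper's proof is a two-line sketch of exactly this; you have simply spelled out the clearing-denominators step and the compatibility between sheaf-theoretic restriction and algebraic base change (and routed the $\cG^\rho$ case through the equivalence with $\cG^r$, which the paper leaves implicit).
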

\begin{proof}
By Theorem~\ref{th:VYZStein}, the category of coherent sheaves on $V_{Y}^\ast(Z)$ is equivalent to the category of modules of finite type over $\cO(V_{Y}^\ast(Z))$. Similarly, the category of coherent sheaves on $V_{Y}^r(Z)$ (resp. $V_{Y}^\rho(Z)$) is equivalent to the category of modules of finite type over $\cO(V_{Y}^r(Z))$ (resp. $\cO(V^\rho_{Y}(Z))$). The results follows, since $\cO(V_{Y}^\ast(Z))$ is a localization of $\cO(V^r_{Y}(Z))$ (resp. $\cO(V^\rho_{Y}(Z))$). %\JP{details? ref to the scheme case?}
\end{proof}

\subsubsection{Extension of coherent sheaves}

In this section, we return to the general setting. Let~$Y$ be a variety over~$k$. Let~$Z$ be an effective Cartier divisor on~$Y$. Let~$\cI$ be the sheaf of ideals defining~$Z$. By assumption, it is locally principal. 

Our goal here is to prove that every coherent sheaf on~$V_{Y}^\ast(Z)$ still extends to a coherent sheaf on~$V_{Y}(Z)$. The statement is reminiscent of results stipulating that coherent sheaves on the generic fiber of a formal scheme may be induced by coherent sheaves on the whole formal scheme, see \cite[Proposition~5.6]{FormalRigidI} or \cite[Proposition~4.8.18]{Proust}. We closely follow the proofs given in those references.

We first extend Proposition~\ref{prop:isoVV0affine}.

\begin{proposition}\label{prop:isoVV0}
For each coherent sheaf~$\cF$ on~$V^\rho_{Y}(Z)$ (resp. $\cF(V_{Y}^r(Z))$), the restriction map
\[ \cF(V^\rho_{Y}(Z)) \too \cF(V_{Y}^0(Z))  \textrm{ (resp. } \cF(V_{Y}^r(Z)) \too \cF(V_{Y}^0(Z))\, ) \]
is an isomorphism.

Moreover, the restriction map induces an equivalence between the categories of coherent sheaves on~$V_{Y}^\rho(Z)$ (resp. $V_{Y}^r(Z)$) and~$V^0_{Y}(Z)$.
\end{proposition}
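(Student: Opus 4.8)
The plan is to deduce the statement from its affine counterpart, Proposition~\ref{prop:isoVV0affine}, by a local-to-global argument whose engine is the compatibility Lemma~\ref{lem:unionintersection}. First I would fix a finite cover of $Y$ by affine open subschemes $(U_i)_{i\in I}$. Since a variety is separated, every finite intersection $U_{i_0}\cap\dots\cap U_{i_p}$ is again affine, so Proposition~\ref{prop:isoVV0affine} applies on each of them. By Lemma~\ref{lem:unionintersection}, for each $\square \in \{\rho,r,0\}$ one has $V^\square_Y(Z) = \bigcup_i V^\square_{U_i}(Z\cap U_i)$ together with $V^\square_{U_i}(Z\cap U_i)\cap V^\square_{U_j}(Z\cap U_j) = V^\square_{U_i\cap U_j}(Z\cap U_i\cap U_j)$, and these identities are compatible with the restriction maps relating the various~$\square$.

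For injectivity of $\cF(V^\rho_Y(Z))\to\cF(V^0_Y(Z))$ (resp. $\cF(V^r_Y(Z))\to\cF(V^0_Y(Z))$) I would argue on germs, so that no genuine open cover is required. If a section $s$ dies on $V^0_Y(Z)$, then for any point $x$ of the source there is, by the covering above, an index $i$ with $x\in V^\square_{U_i}(Z\cap U_i)$; the restriction of $s$ to $V^\square_{U_i}(Z\cap U_i)$ then dies on the subset $V^0_{U_i}(Z\cap U_i)\subseteq V^0_Y(Z)$, and the affine injectivity forces $s$ to vanish in a neighborhood of~$x$. Hence $s=0$.

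For surjectivity and for essential surjectivity of the restriction functor, the scheme is the usual descent: lift the given object locally on each $U_i$ via the affine equivalence of categories, check that two lifts agree on each affine overlap $U_i\cap U_j$ using affine injectivity — and, for sheaves, that the transition isomorphisms satisfy the cocycle condition on the affine triple overlaps $U_i\cap U_j\cap U_k$ — and then glue. Full faithfulness of the restriction functor is afterwards formal: $\sHom(\cF,\cG)$ is coherent and commutes with restriction, so the global-sections isomorphism identifies $\Hom(\cF,\cG)=\sHom(\cF,\cG)(V^\square_Y(Z))$ with $\Hom(\cF_{\vert V^0_Y(Z)},\cG_{\vert V^0_Y(Z)})$.

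The main obstacle is the gluing step, and it is genuinely asymmetric between the cases. For $\square=\rho$ the pieces $V^\rho_{U_i}(Z\cap U_i) = V^\rho_Y(Z)\cap (U_i)^\an_0$ form an honest open cover of $V^\rho_Y(Z)$, so gluing is immediate from the sheaf axiom. For $\square=r$ (and for the structure-sheaf statements over $V^0_Y(Z)$), the corresponding pieces $V^r_{U_i}(Z\cap U_i) = V^r_Y(Z)\cap r_Y^{-1}(U_i)$ are \emph{closed}, because $r_Y$ is anticontinuous, and gluing over a closed cover is not automatic. I expect to circumvent this by working throughout with the overconvergent structure: since $V^0_Y(Z)$ is compact, Proposition~\ref{prop:extensioncoherentsheafcompact} allows me to spread the data to a genuine open neighborhood in~$Y^\hyb$, carry out the gluing there over the open cover induced by the $(U_i)^\an_0$, and then restrict back. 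Arranging this passage to neighborhoods so as to remain compatible with the affine identifications is the delicate point of the argument.
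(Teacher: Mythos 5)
Your argument is, in substance, the paper's own proof: the paper reduces Proposition~\ref{prop:isoVV0} to Proposition~\ref{prop:isoVV0affine} over a finite affine cover $(U_i)$ of~$Y$ (whose pairwise intersections are affine by separatedness), uses Lemma~\ref{lem:unionintersection} to match the pieces and their overlaps, and concludes by exactly the \v{C}ech-type gluing you describe. The one place where you go beyond the paper is your observation that for $\square=r$ the pieces $V^r_{U_i}(Z\cap U_i)=V^r_Y(Z)\cap r_Y^{-1}(U_i)$ are only relatively closed; the concern is legitimate (the paper is silent on it), but the remedy you sketch is off target. The open sets $\rho_Y^{-1}(U_i)\cap V^r_Y(Z)$ are in general strictly larger than the sets $V^r_{U_i}(Z\cap U_i)$ (a point may have its kernel in $U_i$ while its center, a specialization of the kernel, escapes $U_i$), so the affine isomorphisms are not available on that open cover, and spreading out from the compact $V^0_Y(Z)$ via Proposition~\ref{prop:extensioncoherentsheafcompact} only controls a neighborhood of $V^0_Y(Z)$, not all of $V^r_Y(Z)$. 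What actually closes the gap is the standard shrinking argument for overconvergent sections over a finite closed cover: the local data live on open neighborhoods $W_i$ of the closed pieces, and one shrinks the $W_i$ so that each $W_i\cap W_j$ lands inside the prescribed neighborhood of $V^r_{U_i\cap U_j}(Z\cap U_i\cap U_j)$; this separation is available because each piece is a countable increasing union of compacts (the sets $V^{r,u}$ of the affine proof) inside the locally compact Hausdorff space $V^r_Y(Z)$, which is therefore $\sigma$-compact and normal. With that substitution your proof is complete and coincides with the paper's.
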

\begin{proof}
Let $\square \in \{\rho, r\}$. Let $(U_{i})_{i\in I}$ be a finite open affine covering of~$Y$. Let $\cF$ be a coherent sheaf on$V^\square_{Y}(Z)$. By Proposition~\ref{prop:isoVV0affine}, for each $i,j\in I$, the restriction maps
\[ \cF(V^\square_{U_{i}}(Z \cap U_{i})) \too  \cF(V^0_{U_{i}}(Z \cap U_{i}))\]
and 
\[\cF(V^\square_{U_{i}\cap U_{j}}(Z \cap U_{i}\cap U_{j})) \too  \cF(V^0_{U_{i}\cap U_{j}}(Z \cap U_{i}\cap U_{j}))\]
are isomorphisms, for each $i,j \in I$.

By Lemma~\ref{lem:unionintersection}, the family $(V^\square_{U_{i}}(Z \cap U_{i}))_{i\in I}$ (resp. $(V^0_{U_{i}}(Z \cap U_{i}))_{i\in I}$) covers $V^\square_{Y}(Z)$ (resp. $V^0_{Y}(Z)$) and, for $i,j \in I$, we have $V^\square_{U_{i} \cap U_{j}}(Z\cap U_{i}\cap U_{j}) = V^\square_{U_{i}}(Z\cap U_{i}) \cap V^\square_{U_{j}}(Z\cap U_{j})$ (resp. $V^0_{U_{i} \cap U_{j}}(Z\cap U_{i}\cap U_{j}) = V^0_{U_{i}}(Z\cap U_{i}) \cap V^0_{U_{j}}(Z\cap U_{j})$). The first result follows. The second one is proved similarly.
\end{proof}

\begin{lemma}\label{lem:extensionsubsheaf}
Let $\square \in \{\rho,r,0\}$. Let $U$ be an open subset of~$Y$ and set $V^\square_{U}(Z) := V^\square_{U}(Z\cap U)$. Let~$\cF$ be a coherent sheaf on~$V^\square_{Y}(Z)$ and let~$\cG$ be a coherent subsheaf of~$\cF_{\vert V^\square_{U}(Z)}$. Then, there exists a coherent subsheaf~$\cG'$ of~$\cF$ such that $\cG'_{\vert V^\square_{U}(Z)} = \cG$. 
\end{lemma}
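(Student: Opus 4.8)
The plan is to produce~$\cG'$ by a single \emph{canonical} construction on all of~$V^\square_Y(Z)$, so that no gluing step is needed, and then to verify coherence locally on~$Y$ by means of the module description of Theorem~\ref{th:VYZStein}. Write $\alpha \colon V^\square_U(Z) \hookrightarrow V^\square_Y(Z)$ for the inclusion; by Lemma~\ref{lem:unionintersection} it is compatible with restricting~$Y$ to open subschemes. Let $\mathcal{Q} := (\cF_{\vert V^\square_U(Z)})/\cG$, a coherent sheaf on~$V^\square_U(Z)$, and set
\[ \cG' := \ker\big(\cF \too \alpha_{\ast}\mathcal{Q}\big) \subseteq \cF.\]
This is a subsheaf of~$\cF$ defined globally on~$V^\square_Y(Z)$, so no compatibility between charts has to be checked.

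The restriction property is then formal. Since the overconvergent structure sheaf on~$V^\square_U(Z)$ is \emph{by construction} $\alpha^{-1}\cO_{V^\square_Y(Z)}$, the restriction functor along~$\alpha$ is the exact functor~$\alpha^{-1}$, and $\alpha^{-1}\alpha_{\ast} = \id$; hence
\[ \cG'_{\vert V^\square_U(Z)} = \ker\big(\cF_{\vert V^\square_U(Z)} \too \mathcal{Q}\big) = \cG.\]
Crucially, this argument is insensitive to whether $V^\square_U(Z)$ is open or closed in~$V^\square_Y(Z)$ (for $\square=\rho$ the first is the case, while for $\square\in\{r,0\}$ the inclusion is closed because of the center condition imposed by the anticontinuous map~$r_Y$), so that the usual flatness/base-change bookkeeping is entirely bypassed.

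It remains to prove that~$\cG'$ is coherent, which is a local question on~$Y$; we may therefore assume~$Y$ affine and, after shrinking, that~$U$ is affine as well. By Theorem~\ref{th:VYZStein}, $\cF$ corresponds to a finitely generated module~$M$ over $R := \cO(V^\square_Y(Z))$, and the higher cohomology vanishing stated there lets us read off
\[ \cG'(V^\square_Y(Z)) = \ker\big(M \too \cF(V^\square_U(Z))/\cG(V^\square_U(Z))\big),\]
where $H^1(V^\square_U(Z),\cG)=0$ (Theorem~\ref{th:VYZStein} applied to~$U$) is used to identify the target. Thus $\cG'(V^\square_Y(Z))$ is exactly the preimage~$N_0 \subseteq M$ of the submodule $N := \cG(V^\square_U(Z))$ under the restriction map $M \to \cF(V^\square_U(Z))$. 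The section rings $\cO(V^\square_Y(Z))$ are Noetherian (this follows from their explicit descriptions in Lemmas~\ref{lem:ringk0} and~\ref{lem:D10}, or simply from the fact that the equivalence of Theorem~\ref{th:VYZStein} identifies finitely generated $R$-modules with the abelian category of coherent sheaves, forcing submodules of finitely generated modules to be finitely generated). Hence $N_0$ is finitely generated; the coherent subsheaf of~$\cF$ it determines has global sections~$N_0$ and restricts into~$\cG$, so by the equivalence it coincides with~$\cG'$, which is therefore coherent.

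The main obstacle is precisely this last finiteness step: everything else (gluing and the restriction identity) is handled for free by the canonical kernel definition together with the observation that restriction along~$\alpha$ is the exact inverse-image functor. Thus the whole weight of the proof rests on the Noetherianity of the overconvergent section rings $\cO(V^\square_Y(Z))$, i.e.\ on the structural analysis of the affine case carried out above.
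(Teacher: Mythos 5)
There is a genuine gap, and it sits exactly where you locate ``the whole weight of the proof'': the finiteness of $N_0$. Neither of your two justifications for the Noetherianity of $R=\cO(V^\square_Y(Z))$ holds up. The equivalence of Theorem~\ref{th:VYZStein} identifies coherent sheaves with finitely generated $R$-modules and, since coherent sheaves form an abelian category, it does force kernels, images and cokernels of maps \emph{between finitely generated $R$-modules} to be finitely generated --- i.e.\ it forces $R$ to be a coherent ring. It does not force $R$ to be Noetherian: a coherent non-Noetherian ring has submodules of finitely generated modules that are not finitely generated. Your $N_0$ is precisely of that dangerous kind: it is the kernel of a map from the finitely generated $R$-module $M$ to $\cF(V^\square_U(Z))/\cG(V^\square_U(Z))$, which is finitely generated over the larger ring $\cO(V^\square_U(Z))$ but not over $R$ (no more than $\cO(\A^1)$-sections extend to $\P^1$), so even ring-coherence of $R$ gives you nothing. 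As for the explicit descriptions in Lemmas~\ref{lem:ringk0} and~\ref{lem:D10}, they exhibit $\cO(D_0(1^+,0^+))$ as a ring of overconvergent series inside $k[T_1,\dotsc,T_n][\![U_1,\dotsc,U_m]\!]$ cut out by hybrid convergence conditions (including entire-function conditions on the Archimedean fibre); Noetherianity of such rings is nowhere established in the paper and would be a substantial theorem in its own right --- recall that global section rings of Stein spaces are typically \emph{not} Noetherian. A secondary circularity: even granting that $N_0$ is finitely generated, identifying the coherent subsheaf it generates with your $\cG'=\ker(\cF\to\alpha_\ast\mathcal{Q})$ requires knowing that $\cG'$ is generated by its global sections, which is exactly the Stein property you only have for sheaves already known to be coherent. (There is also a smaller issue in the reduction to the affine case for $\square\in\{r,0\}$, where $V^\square_{Y'}(Z)$ is closed rather than open in $V^\square_Y(Z)$; the paper sidesteps this by first reducing to $\square=\rho$ via Proposition~\ref{prop:isoVV0}.)

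The idea you are missing is that the extension of the subsheaf cannot be performed directly on the analytic side; it has to be routed through the \emph{algebraic} reduction, where Noetherianity is available for free. The paper reduces to $\square=\rho$, uses the closed immersion $j\colon Z^\hyb_{0^\dag}\to V^\rho_Y(Z)$ together with the hybrid GAGA equivalence (Theorem~\ref{th:GAGAhybrid}) to transport $j_U^{\ast}\cG$ to a coherent subsheaf of a coherent sheaf on the Noetherian scheme $Z\cap U$, extends it over $Z$ by \cite[Th\'eor\`eme~6.9.7]{EGAInew}, and only then forms a kernel $\cG':=\sKer(\cF\to j_\ast(j^\ast\cF/\cH))$ --- whose target is now coherent, so coherence of $\cG'$ is automatic. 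The price is that the restriction identity is no longer formal: it is recovered from $M'/\cI^\hyb M'=M/\cI^\hyb M$ via Nakayama's lemma, using the fact (Theorem~\ref{th:VYZStein}) that $\cI^\hyb(V^\rho_U(Z))$ is the Jacobson radical of $\cO(V^\rho_U(Z))$. In short, the two constructions trade difficulties in opposite directions, and the paper's trade is the one that can actually be paid for with the results available.
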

\begin{proof}
By Proposition~\ref{prop:isoVV0}, it is enough to prove the statement for $\square = \rho$. The analytification $Z^\an_{0}$ of~$Z$ over~$k_{0}$ may naturally be seen as a subset of the analytification~$Z^\hyb$ of~$Z$ over~$k_{\hyb}$. Recall that we denote it by~$Z^\hyb_{0^\dag}$, when endowed with the overconvergent structure sheaf inherited from~$Z^\hyb$ (see Notation~\ref{nota:Xhyb}).

Note that $V^\rho_{Y}(Z)$ and $Z^\hyb_{0^\dag}$ have the same underlying topological spaces. More precisely, we have a closed immersion
\[ j \colon Z^\hyb_{0^\dag} \too V^\rho_{Y}(Z),\]
which is defined by the sheaf of ideals~$\cI^{\hyb}$. It restricts to a closed immersion
\[ j_{U} \colon (Z\cap U)^\hyb_{0^\dag} \too V^\rho_{U}(Z).\]

By Theorem~\ref{th:GAGAhybrid}, we have an equivalence between the categories of coherent sheaves on~$Z$ (resp. $Z\cap U$) and~$Z^\hyb_{0^\dag}$ (resp. $(Z\cap U)^\hyb_{0^\dag}$). By \cite[Th\'eor\`eme~6.9.7]{EGAInew}, given a coherent sheaf on~$Z$, any coherent subsheaf of its restriction to~$Z\cap U$ extends to a coherent subsheaf on the whole~$Z$. It follows that there exists a coherent subsheaf $\cH$ of $j^\ast  \cF$ such that $\cH_{\vert  (Z\cap U)^\hyb_{0^\dag}} = j_{U}\!^\ast \cG$. 

Set $\cG' := \sKer(\cF \to j_{\ast} (j^\ast \cF/\cH))$. It is a coherent subsheaf of~$\cF$ and we claim that $\cG'_{\vert V^\rho_{U}(Z)} = \cG$. This is a local statement, so, using Lemma~\ref{lem:unionintersection}, we may assume that~$U$ is affine. By Theorem~\ref{th:VYZStein}, it is enough to show that the modules of global sections~$M'$ of $\cG'_{\vert V^\rho_{U}(Z)}$ and~$M$ of~$\cG$ are equal. By Theorem~\ref{th:VYZStein}, $M$ and~$M'$ are finitely generated over~$\cO(V^\rho_{U}(Z))$ and, by construction, we have 
\[j_{U}\!^\ast \cG'_{\vert V_{U}^\rho(Z)} = j_{U}\!^\ast \cG,\]
hence $M'/\cI^\hyb(V^\rho_{U}(Z)) = M/\cI^\hyb(V^\rho_{U}(Z))$. By Theorem~\ref{th:VYZStein}, $\cI^\hyb(V^\rho_{U}(Z))$ is the Jacobson radical of $\cO(V^\rho_{U}(Z))$, so that the statement is now a consequence of Nakayama's lemma.
\end{proof}

\begin{lemma}\label{lem:gluingU1U2}
Let $U_{1}$ and $U_{2}$ be quasi-compact open subsets of~$Y$. For $i\in \{1,2\}$, set $V^r_{i} := V^r_{U_{i}}(Z \cap U_{i})$ and $V^\ast_{i} := V_{U_{i}}^\ast(Z \cap U_{i})$. Let $\cF_{1}$ and $\cF_{2}$ be coherent sheaves on~$V_{1}^\ast$ and~$V_{2}^\ast$ respectively. Assume that they extend to coherent sheaves on~$V^r_{1}$ and~$V^r_{2}$ respectively, and that there exists an isomorphism
\[ v \colon (\cF_{1})_{\vert V^\ast_{1}\cap V^\ast_{2}} \simto (\cF_{2})_{\vert V^\ast_{1}\cap V^\ast_{2}}.\]
Then there exist a coherent sheaf~$\cF$ on~$V^r_{1}\cup V^r_{2}$ and isomorphisms $\cF_{\vert V_{1}^\ast} \simeq \cF_{1}$ and $\cF_{\vert V_{2}^\ast} \simeq \cF_{2}$ that are compatible with~$v$.
\end{lemma}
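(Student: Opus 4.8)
The plan is to assemble the desired sheaf on $V^r_1 \cup V^r_2 = V^r_{U_1 \cup U_2}(Z\cap(U_1\cup U_2))$ (Lemma~\ref{lem:unionintersection}) by first \emph{reconciling} the two given extensions over the overlap and then transporting the reconciliation to each piece by the subsheaf–extension lemma. Write $W^r := V^r_1 \cap V^r_2 = V^r_{U_1\cap U_2}$ and $W^\ast := V^\ast_1 \cap V^\ast_2 = V^\ast_{U_1\cap U_2}$, and let $\cG_i$ be the given coherent extension of $\cF_i$ to $V^r_i$. Since $Z$ is an effective Cartier divisor it is locally cut out by a single function $h$, and the crucial structural input comes from Theorem~\ref{th:VYZStein}: locally on the overlap $\cO(V^\ast)$ is the localization $\cO(V^r)[1/h]$, the ideal $(h)$ is the Jacobson radical of $\cO(V^r)$, and coherent sheaves on $V^r$ (resp.\ $V^\ast$) are equivalent to finite modules over these rings. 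These are exactly the facts that will let me pass from the generic datum $v$, which lives only over $W^\ast$, to an integral datum over $W^r$.

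First I would remove torsion: replacing each $\cG_i$ by its quotient by the subsheaf of sections supported on $V^0$ (which vanishes on $V^\ast_i$ and therefore leaves $\cF_i$ unchanged), I may assume each $\cG_i$ is $h$–torsion free. Next, viewing $v$ as a section of the coherent sheaf $\sHom(\cG_1,\cG_2)$ over $W^\ast$ and using $\cO(V^\ast)=I^{-1}\cO(V^r)$, some power $h^k v$ extends to an honest morphism $\phi \colon \cG_1|_{W^r} \to \cG_2|_{W^r}$ of coherent sheaves on $W^r$. Set $\cK := \im(\phi) \subseteq \cG_2|_{W^r}$, a coherent subsheaf. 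Because $h^k$ and $v$ are invertible on $W^\ast$, one has $\cK|_{W^\ast} = \cG_2|_{W^\ast}$ and $\ker(\phi)|_{W^\ast}=0$, so torsion–freeness of $\cG_1$ forces $\ker\phi = 0$ and $\phi \colon \cG_1|_{W^r} \simto \cK$ is an isomorphism onto a coherent subsheaf of $\cG_2|_{W^r}$ that is \emph{full}, i.e.\ restricts to all of $\cG_2$ over $W^\ast$.

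Now I would apply Lemma~\ref{lem:extensionsubsheaf} (with $\square = r$, ambient variety $U_2$, open subvariety $U_1\cap U_2$, and $\cF = \cG_2$) to extend $\cK$ from $W^r$ to a coherent subsheaf $\cK_2 \subseteq \cG_2$ on all of $V^r_2$ with $\cK_2|_{W^r} = \cK$. This produces the data $\cG_1$ on $V^r_1$, $\cK_2$ on $V^r_2$, and the isomorphism $\phi \colon \cG_1|_{W^r} \simto \cK_2|_{W^r}$ over the overlap. To assemble these into a single coherent sheaf $\cF$ on $V^r_1\cup V^r_2$ I would thicken the (closed) pieces $V^r_i$ to open neighborhoods on which the sheaves extend, using Proposition~\ref{prop:extensioncoherentsheafcompact} together with the overconvergent formalism, extend $\phi$ over an open neighborhood of the compact set $W^r$, glue over the now–open overlap, and restrict back. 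Compatibility is then checked on the starred locus, where $h$ is invertible: there $\cF$ restricts to $\cG_1|_{V^\ast_1} = \cF_1$ and to $\cK_2|_{V^\ast_2} = \cG_2|_{V^\ast_2} = \cF_2$ by fullness, and the two identifications differ precisely by $v$, as required.

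The main obstacle is the passage from the generic isomorphism $v$ over $W^\ast$ to a genuine gluing over $W^r$ \emph{across the special fibre} $V^0$: the map $v$ becomes a morphism of coherent models only after clearing the denominator $h^k$, and it is only the inclusion $(h)\subseteq \operatorname{Jac}(\cO(V^r))$ and the localization $\cO(V^\ast) = I^{-1}\cO(V^r)$ from Theorem~\ref{th:VYZStein}, together with the torsion–free reduction, that guarantee the reconciled lattice $\cK$ is full and that neither $\cF_1$ nor $\cF_2$ is lost. A secondary but genuine technical point is that, by anticontinuity of the reduction map, the members $V^r_i$ of the cover are \emph{closed} in their union and $W^r$ is closed in each $V^r_i$; this is exactly why the reconciliation must be propagated by the subsheaf–extension Lemma~\ref{lem:extensionsubsheaf}, the module equivalence of Theorem~\ref{th:VYZStein}, and the neighborhood formalism of Proposition~\ref{prop:extensioncoherentsheafcompact}, rather than by naive descent along an open cover.
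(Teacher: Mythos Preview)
Your approach is essentially the paper's: kill torsion in the chosen extensions, use the localization $\cO(V^\ast)=I^{-1}\cO(V^r)$ to clear denominators and embed one model into the other over $W^r$, extend the resulting subsheaf by Lemma~\ref{lem:extensionsubsheaf}, and glue. There is, however, one genuine gap.

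Your claim ``$\cK_2|_{V^\ast_2}=\cG_2|_{V^\ast_2}=\cF_2$ by fullness'' is not justified. Fullness of $\cK=\im(\phi)$ was established only on $W^\ast=V^\ast_1\cap V^\ast_2$; Lemma~\ref{lem:extensionsubsheaf} produces \emph{some} coherent subsheaf $\cK_2\subseteq\cG_2$ on $V^r_2$ with $\cK_2|_{W^r}=\cK$, but gives no control over $\cK_2$ on $V^\ast_2\setminus V^\ast_1$. There it may well be a proper subsheaf of $\cG_2$, so the glued sheaf need not restrict to $\cF_2$ on $V^\ast_2$. The paper repairs exactly this point: instead of gluing $\cG_1$ with the extended subsheaf $\cG'_2$ (your $\cK_2$), it glues $\cI^n\cG_1$ with $\cI^n\cG_2+\cG'_2$. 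On $V^\ast_2$ the ideal $\cI$ is invertible, so $\cI^n\cG_2+\cG'_2$ restricts to $\cG_2|_{V^\ast_2}=\cF_2$ regardless of how small $\cG'_2$ is there; on $W^r$ one still has the identification with $\cI^n\cG_1$ needed to glue. Adding this correction term (or any equivalent device that forces the second piece to be full on all of $V^\ast_2$) closes the gap; the rest of your outline is fine.
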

\begin{proof}
For $i\in \{1,2\}$, let $\cG_{i}$ be a coherent sheaf on~$V^r_{i}$ such that $(\cG_{i})_{\vert V_{i}^\ast} = \cF_{i}$. Moding out by the annihilator of~$\cI$, which is coherent, we may assume that $\cG_{1}$ and~$\cG_{2}$ have no $\cI$-torsion. It then follows from Theorem~\ref{th:VYZStein} that, for each $i\in \{1,2\}$ and each open subset~$W$ of~$V^r_{i}$, the restriction map 
\[ \rho_{i} \colon \cG_{i}(W) \to \cG_{i}(W\cap V_{i}^\ast) = \cF_{i}(W\cap V_{i}^\ast)\] 
is injective.

We claim that there exists an integer $n\in \N$ such that, for each open subset~$W$ of~$V^r_{1}\cap V^r_{2}$, we have
\[ v(\rho_{1}(\cI^n \cG_{1}(W))) \subseteq \rho_{2}(\cG_{2}(W)).\]

Since $Y$ is separated, $U_{1}\cap U_{2}$ admits a finite covering by open affine subsets~$(U'_{j})_{j\in J}$. We may assume that, for each $j\in J$, $\cI_{\vert U'_{j}}$ is principal, say generated by $f_{j} \in \cO(U'_{j})$. By Lemma~\ref{lem:unionintersection}, the family $(V^r_{U'_{j}}(Z \cap U'_{j}))_{j\in J}$ forms an open cover of $V^r_{1}\cap V^r_{2}$ and, by Theorem~\ref{th:VYZStein}, it is enough to prove the result for global sections over each $V^r_{U'_{j}}(Z \cap U'_{j})$ (with an integer~$n$ that may depend on~$j$).

Let $j\in J$. By Theorem~\ref{th:VYZStein}, $\cG_{1}(V^r_{U'_{j}}(Z \cap U'_{j}))$ is finitely generated over $\cO(V^r_{U'_{j}}(Z \cap U'_{j}))$, so it is enough to prove that, for each $s\in \cG_{1}(V^r_{U'_{j}}(Z \cap U'_{j}))$, there exists $n\in \N$ such that 
\[v(\rho_{1}(f_{j}^n s)) = f_{j}^n v(\rho_{1}(s)) \in \rho_{2}(\cG_{2}(V^r_{U'_{j}}(Z \cap U'_{j}))).\]
By Theorem~\ref{th:VYZStein} again, the restriction map 
\[\rho_{2} \colon \cG_{2}(V^r_{U'_{j}}(Z \cap U'_{j})) \too \cG_{2}(V^\ast_{U'_{j}}(Z \cap U'_{j}))\]
is nothing but the localization by~$f_{j}$. Since $v(\rho_{1}(s))$ belongs to its image, the result follows.

\medbreak

Using the preceding result, we may identify $(\cI^n \cG_{1})_{\vert V^r_{1}\cap V^r_{2}}$ to a subsheaf of~$(\cG_{2})_{\vert V^r_{1}\cap V^r_{2}}$. By Lemma~\ref{lem:extensionsubsheaf}, $(\cI^n \cG_{1})_{\vert V^r_{1}\cap V^r_{2}}$~extends to a subsheaf~$\cG'_{2}$ of~$\cG_{2}$. Gluing $\cI^n \cG_{1}$ and $\cI^n\cG_{2} + \cG'_{2}$ along $(\cI^n \cG_{1})_{\vert V^r_{1}\cap V^r_{2}}$ over $V^r_{1}\cap V^r_{2}$, we get a coherent sheaf~$\cF$ that satisfies the properties of the statement.
\end{proof}

\begin{theorem}\label{th:extension}
Every coherent sheaf on~$V_{Y}^\ast(Z)$ extends to a coherent sheaf on~$V^r_{Y}(Z)$.
\end{theorem}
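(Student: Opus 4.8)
The plan is to reduce to the affine case, where the extension exists thanks to the explicit description of global sections in Theorem~\ref{th:VYZStein}, and then to globalize by induction, using Lemma~\ref{lem:gluingU1U2} to glue two pieces at a time. The crucial observation is that the ambient coherent sheaf on~$V^\ast_Y(Z)$ supplies, for free, the gluing isomorphisms required by that lemma.

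First I would treat the affine case. Assume $Y = \Spec(A)$ is affine and let $\cF$ be a coherent sheaf on~$V^\ast_Y(Z)$. By Theorem~\ref{th:VYZStein}, $\cF$ corresponds to a finitely generated module $M$ over $\cO(V^\ast_Y(Z))$, and we have $\cO(V^\ast_Y(Z)) = I^{-1}\,\cO(V^r_Y(Z))$. Choosing generators $m_{1},\dotsc,m_{r}$ of $M$ over $\cO(V^\ast_Y(Z))$ and letting $M_{0}$ be the $\cO(V^r_Y(Z))$-submodule they generate, we obtain a finitely generated $\cO(V^r_Y(Z))$-module with $I^{-1} M_{0} = M$. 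By the equivalence of categories in Theorem~\ref{th:VYZStein}, $M_{0}$ corresponds to a coherent sheaf on~$V^r_Y(Z)$ whose restriction to~$V^\ast_Y(Z)$ is~$\cF$. This settles the affine case.

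Next I would globalize by induction on a finite affine cover $U_{1},\dotsc,U_{N}$ of~$Y$ (which exists since~$Y$ is a variety). Set $W_{m} := U_{1}\cup \dotsb \cup U_{m}$; each $W_{m}$ is a quasi-compact open subset of~$Y$. I claim that $\cF_{\vert V^\ast_{W_{m}}(Z)}$ extends to a coherent sheaf on~$V^r_{W_{m}}(Z)$, by induction on~$m$, the case $m=1$ being the affine case. For the inductive step, write $W_{m+1} = W_{m}\cup U_{m+1}$. By the inductive hypothesis, $\cF_{\vert V^\ast_{W_{m}}(Z)}$ extends to a coherent sheaf on~$V^r_{W_{m}}(Z)$; by the affine case, $\cF_{\vert V^\ast_{U_{m+1}}(Z)}$ extends to a coherent sheaf on~$V^r_{U_{m+1}}(Z)$. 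On the overlap, Lemma~\ref{lem:unionintersection} gives $V^\ast_{W_{m}}(Z) \cap V^\ast_{U_{m+1}}(Z) = V^\ast_{W_{m}\cap U_{m+1}}(Z)$, and there both extensions restrict to~$\cF$, which supplies the isomorphism~$v$ demanded by Lemma~\ref{lem:gluingU1U2}. Applying that lemma with the quasi-compact opens $W_{m}$ and~$U_{m+1}$ produces a coherent sheaf on $V^r_{W_{m}}(Z) \cup V^r_{U_{m+1}}(Z) = V^r_{W_{m+1}}(Z)$ (again Lemma~\ref{lem:unionintersection}) whose restriction to~$V^\ast_{W_{m+1}}(Z)$ is~$\cF$, completing the induction. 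Taking $m=N$ yields the desired extension on~$V^r_Y(Z)$.

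The only delicate point is the gluing, and it is entirely encapsulated in Lemma~\ref{lem:gluingU1U2}: the naive identification of the two local extensions along their overlap may fail, because an extension of $\cF_{\vert V^\ast}$ to~$V^r$ is far from unique, and one must instead multiply by a suitable power of~$\cI$ and invoke the extension-of-subsheaves result (Lemma~\ref{lem:extensionsubsheaf}) to absorb the discrepancy. Since that lemma is already available, the remaining verifications—quasi-compactness of each~$W_{m}$, the identifications of intersections and unions via Lemma~\ref{lem:unionintersection}, and the canonicity of~$v$ coming from the single ambient sheaf~$\cF$ on~$V^\ast_Y(Z)$—are routine.
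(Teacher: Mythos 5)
Your proposal is correct and follows essentially the same route as the paper: a local extension over each affine piece obtained from the module-theoretic description in Theorem~\ref{th:VYZStein} (your localization argument is precisely the content of the corollary following that theorem, while the paper's proof clears denominators in a finite presentation — the two are equivalent), followed by gluing via Lemma~\ref{lem:gluingU1U2}, where your induction on the partial unions $W_{m}$ merely makes explicit the paper's ``repeatedly using Lemma~\ref{lem:gluingU1U2}''. No gaps.
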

\begin{proof}
Let $\cF$ be a coherent sheaf on~$V_{Y}^\ast(Z)$. Let $(U_{i})_{i\in I}$ be a finite open affine covering of~$Y$. We may assume that, for each $i\in I$, $\cI_{\vert U_{i}}$ is principal, say generated by $f_{i} \in \cO(U_{i})$.  By Lemma~\ref{lem:unionintersection}, the families $(V^r_{i} := V^r_{U_{i}}(Z \cap U_{i}))_{i\in I}$ and $(V_{i}^\ast := V_{U_{i}}^\ast(Z \cap U_{i}))_{i\in I}$ form open covers of $V^r_{Y}(Z)$ and $V_{Y}^\ast(Z)$ respectively.

Let $i\in I$. It follows from Theorem~\ref{th:VYZStein} that there exists an exact sequence
\[ (\cO_{V_{i}^\ast})^{n_{i}} \xrightarrow[]{u_{i}}  (\cO_{V_{i}^\ast})^{m_{i}} \too \cF_{\vert V_{i}^\ast} \too 0.\]
Let $(e_{i,1},\dotsc,e_{i,n_{i}})$ denote the canonical basis of the $\cO(V_{i}^\ast)$-module $\cO(V_{i}^\ast)^{n_{i}}$. By Theorem~\ref{th:VYZStein} again, there exists $N_{i}\in \N$ such that, for each $j \in \{1,\dotsc,n_{i}\}$, we have $f_{i}^{N_{i}} u_{i}(e_{i,j}) \in \cO(V^r_{i})^{m_{i}}$. The cokernel~$\cG_{i}$ of the morphism
\[{\renewcommand{\arraystretch}{1.3}\begin{array}{ccc}
(\cI_{V^r_{i}})^{n_{i}} & \too &  (\cO_{V^r_{i}})^{m_{i}}\\
f_{i}^{N_{i}}\, e_{i,j} & \mapstoo & f_{i}^{N_{i}} u_{i}(e_{i,j})
\end{array}}\]
is then a coherent sheaf on~$V^r_{i}$ whose restriction to~$V_{i}^\ast$ is isomorphic to~$\cF_{\vert V_{i}^\ast}$. 

By repeatedly using Lemma~\ref{lem:gluingU1U2}, we may now glue the sheaves~$\cG_{i}$ into a coherent sheaf~$\cG$ on~$V^r_{Y}(Z)$ whose restriction to~$V_{Y}^\ast(Z)$ is isomorphic to~$\cF$.
\end{proof}

\subsection{Back to compactifications}

We now apply the extension result from Theorem~\ref{th:extension} to study coherent sheaves on valuative compactifications.

\begin{proposition}\label{prop:s=0}
Let $U$ be an analytic space over~$k_{\hyb}$. For each coherent sheaf~$F$ on~$U$ and each $s \in F(U)$, if $s_{\vert U_{>0}} =0$, then $s=0$. 
\end{proposition}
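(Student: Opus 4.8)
The plan is to show that $s$ has empty support, by realising its support as a closed analytic subspace trapped inside the special fibre $U_0$, which is impossible unless it is empty.

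First I would form the coherent ideal $\cK := \sKer(\cO_U \xrightarrow{\,\cdot s\,} \cF)$, the kernel of the $\cO_U$-linear map sending $1$ to $s$; it is coherent because both $\cO_U$ and $\cF$ are. Then $\cO_U/\cK \cong \cO_U \cdot s \subseteq \cF$ is coherent, with support exactly $W := \Supp(s) = \{y \in U : s_y \ne 0\}$. Thus $W = V(\cK)$ is a closed analytic subspace of $U$ (with structure sheaf $\cO_U/\cK$), and $s = 0$ if and only if $W = \emptyset$. The hypothesis $s_{\vert U_{>0}} = 0$ says precisely that $s_y = 0$ for every $y \in U_{>0}$, that is $W \cap U_{>0} = \emptyset$, so that $W \subseteq U_0 = \pr^{-1}(0)$.

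It then remains to prove that a closed analytic subspace $W$ of $U$ contained in the fibre $U_0$ must be empty. Here I would use that $W$ is itself a $k_\hyb$-analytic space and that its projection $\pr_{\vert W} \colon W \to \cM(k_\hyb) = [0,1]$ is open. The openness follows exactly as in the proof of Proposition~\ref{prop:Xandense}: since the local rings $\cO_{\cM(k_\hyb),\eps}$ are fields (as recalled in the proof of Lemma~\ref{lem:XXeps}), the structure morphism $\pr_{\vert W}$ is automatically flat over every point of the base, hence open. If $W$ were non-empty, then $\pr_{\vert W}(W)$ would be a non-empty open subset of $[0,1]$; but $W \subseteq U_0$ forces $\pr_{\vert W}(W) \subseteq \{0\}$, and $\{0\}$ has empty interior in $[0,1]$, a contradiction. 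Equivalently, one may phrase the last step as: $W_{>0}$ is dense in $W$, so $W \ne \emptyset$ would give $W_{>0} = W \cap U_{>0} \ne \emptyset$, contradicting $W \subseteq U_0$. Hence $W = \emptyset$ and $s = 0$.

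The main obstacle is this very last point: I must upgrade the density statement of Proposition~\ref{prop:Xandense}, which is phrased for analytifications $X^\hyb$ of varieties, to the arbitrary closed analytic subspace $W = \Supp(s)$ produced above. What makes this legitimate is that the sole input of that density argument is the openness of the projection to $\cM(k_\hyb)$, which is insensitive to whether the space is an analytification: it holds for \emph{every} $k_\hyb$-analytic space because the structure morphism to $\cM(k_\hyb)$ is flat (all local rings of $\cM(k_\hyb)$ being fields) and flat morphisms are open in this setting. I would therefore take care to record openness of the projection to $\cM(k_\hyb)$ — equivalently, density of the $>0$ part — at the level of general $k_\hyb$-analytic spaces, so that it applies verbatim to $W$.
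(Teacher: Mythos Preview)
Your proof is correct and follows essentially the same route as the paper: both realise the support of~$s$ as a closed analytic subspace contained in~$U_0$ and derive a contradiction from the openness of the projection to~$\cM(k_\hyb)$ (the paper cites \cite[Proposition~6.4.1]{CTCZ} directly, as in the proof of Proposition~\ref{prop:Xandense}). The only cosmetic difference is that the paper works locally, choosing a presentation $\cO_V^n/M$ of~$F$ near a point and forming the cyclic subsheaf generated by a lift of~$s$, whereas you package the same object globally as $\cO_U/\sKer(\cO_U\xrightarrow{\cdot s}\cF)$; your formulation is slightly cleaner but the content is identical.
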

\begin{proof}
Let $F$ be a coherent sheaf on~$U$ and let $s\in F(U)$. Assume that $s_{\vert U_{>0}} =0$. It is enough to prove that, for each $x \in U_{0}$, the image $s_{x}$ of~$s$ in~$F_{x}$ is~0. 

Let $x \in U_{0}$. There exists an open neighborhood~$V$ of~$x$ in~$U$ such that $F_{\vert V}$ is isomorphic to a quotient $\cO_{V}^n/M$, where $M$ is a coherent submodule of~$\cO_{V}^n$. Up to shrinking~$V$, we may assume that $s_{\vert V}$ lifts to an element $t \in \cO(V)^n$. Set $G := (t\,\cO_{V}^n + M)/M$. It is a coherent sheaf on~$V$ and, by assumption, we have $G_{\vert V_{>0}} = 0$. 

Assume by contradiction that $G_{x} \ne 0$. Since the support~$\Supp(G)$ of~$G$ identifies to the sheaf of ideals~$\Ann_{\cO_{V}}(G)$, it is a closed analytic subset of~$V$. In particular, $\Supp(G)$ is naturally endowed with a structure of analytic space over $k_{\hyb}$. By \cite[Proposition~6.4.1]{CTCZ}, the projection map $\Supp(G) \to \cM(k_{\hyb})$ is open at~$x$, which contradicts the fact that $G_{\vert V_{>0}} = 0$. It follows that $G_{x} =0$, that it to say $t_{x} \in M_{x}$, and $s_{x}=0$.
\end{proof}

\begin{corollary}\label{cor:injection>0}
Let $U$ be an analytic space over~$k_{\hyb}$. Let $i_{>0} \colon U_{>0} \to U$ denote the open embedding of~$U_{>0}$ into~$U$. For each coherent sheaf~$F$ on~$U$, the canonical morphism $F \to (i_{>0})_{\ast} F_{\vert U_{>0}}$ is injective.

In particular, for each variety $X$ over~$k$ and each coherent sheaf~$\cF$ on~$X$, the restriction map
\[ H^0(X^+,\cF^\hyb) \too H^0(X^\an, \cF^\an)\]
is injective.
\end{corollary}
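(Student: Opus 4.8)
The plan is to deduce both assertions from Proposition~\ref{prop:s=0}.

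For the first assertion, I would use that a morphism of sheaves is a monomorphism precisely when it is injective on sections over every open subset (the kernel presheaf of a morphism of sheaves being already a sheaf). So fix an open subset $V$ of~$U$ and identify the map induced by $F \to (i_{>0})_{\ast} F_{\vert U_{>0}}$ on sections over~$V$ with the restriction map $F(V) \to F(V \cap U_{>0})$; indeed $((i_{>0})_{\ast} F_{\vert U_{>0}})(V) = F_{\vert U_{>0}}(V \cap U_{>0}) = F(V \cap U_{>0})$, and $V \cap U_{>0} = V_{>0}$ since the projection of~$V$ to $\cM(k_\hyb)$ is the restriction of that of~$U$. The set~$V$, being open in~$U$, is itself an analytic space over~$k_{\hyb}$, and $F_{\vert V}$ is coherent on it, so Proposition~\ref{prop:s=0} (applied to~$V$ and~$F_{\vert V}$) shows that any section of~$F$ over~$V$ vanishing on~$V_{>0}$ is zero. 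Hence the restriction map is injective, and as~$V$ is arbitrary, $F \to (i_{>0})_{\ast} F_{\vert U_{>0}}$ is injective.

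For the ``in particular'' statement, the key point is that the restriction map factors through the fibrewise-positive part. Set $U := X^+ = X^\hyb - X^\beth$. Since $X^\beth$ is contained in $X^\hyb_0$, we have $U_{>0} = X^\hyb_{>0}$, and this set is flow-invariant, i.e. $T(U_{>0}) = U_{>0}$, because the flow scales the projection to $\cM(k_\hyb) = [0,1]$ by a positive factor. The restriction map of the statement then factors as
\[ H^0(X^+, \cF^\hyb) \too H^0(U_{>0}, \cF^\hyb) \too H^0((X^+)_1, \cF^\hyb),\]
where the first arrow is restriction to~$U_{>0}$ and the second is restriction to the fibre over~$1$. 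By the first part of the corollary (applied to $U = X^+$ and $F = \cF^\hyb$, on global sections) the first arrow is injective; by Lemma~\ref{lem:>0eps} applied with $\eps = 1$ to the flow-invariant open set $U_{>0}$, the second arrow is an isomorphism. Finally, under the identification $(X^+)_1 = X^\hyb_1 = X^\an$, Lemma~\ref{lem:hybaneps} identifies $\cF^\hyb_{\vert X^\hyb_1}$ with~$\cF^\an$ and matches their spaces of (overconvergent) global sections with $H^0(X^\an,\cF^\an)$. Composing the three identifications, the restriction map $H^0(X^+, \cF^\hyb) \to H^0(X^\an, \cF^\an)$ is injective.

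The substantive input—that a nonzero coherent section cannot be supported on the non-Archimedean fibre alone—is already carried out in Proposition~\ref{prop:s=0} through the openness of the projection to $\cM(k_\hyb)$. The remaining work here is purely organisational, and the only subtlety I anticipate is the need to route the restriction map through $U_{>0}$, so that Lemma~\ref{lem:>0eps} can bridge the gap between the whole positive part $U_{>0}$ and the single fibre $X^\an = (X^+)_1$; without this intermediate step the first part alone would only yield injectivity into $H^0(U_{>0},\cF^\hyb)$ rather than into $H^0(X^\an,\cF^\an)$.
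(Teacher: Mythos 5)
Your argument is correct and follows essentially the same route as the paper: the first assertion is reduced to Proposition~\ref{prop:s=0} applied on small open (analytic) pieces of~$U$, and the second is obtained by factoring the restriction through $H^0(X^+_{>0},\cF^\hyb)$ and invoking Lemmas~\ref{lem:>0eps} and~\ref{lem:hybaneps} with $\eps=1$. The only cosmetic difference is that you check injectivity on sections over arbitrary opens while the paper phrases it via a basis of neighborhoods of points of~$U_0$; the substance is identical.
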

\begin{proof}
Since each point in~$U_{0}$ has a basis of neighborhoods consisting of analytic spaces over~$k_{\hyb}$, the first part of the statement follows from Proposition~\ref{prop:s=0}.

Let us now prove the last part of the statement. Let $X$ be a variety over~$k$ and $\cF$ be a coherent sheaf on~$X$. It follows from the first part that the restriction map $H^0(X^+,\cF^\hyb) \to H^0(X^+_{>0}, \cF^\hyb)$ is injective. The results now follows from Lemmas~\ref{lem:hybaneps} and~\ref{lem:>0eps}, applied with $\eps=1$.

\end{proof}

\begin{corollary}\label{cor:iso>0}
Let $U$ be an analytic space over~$k_{\hyb}$. Let $F$ and $G$ be coherent sheaves on~$U$. Then, $F$ and $G$ are isomorphic as $\cO_{U}$-modules if, and only if, $F_{\vert U_{>0}}$ and $G_{\vert U_{>0}}$ are isomorphic as $\cO_{U_{>0}}$-modules .\end{corollary}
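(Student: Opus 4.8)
The ``only if'' direction is immediate: any isomorphism $F\simeq G$ of $\cO_U$-modules restricts to an isomorphism $F_{\vert U_{>0}}\simeq G_{\vert U_{>0}}$. For the converse, the first point to stress is that one should \emph{not} try to extend the given isomorphism $\phi\colon F_{\vert U_{>0}}\xrightarrow{\sim}G_{\vert U_{>0}}$ to an isomorphism over~$U$. Indeed, since $\pr(x^\alpha)=\alpha\,\pr(x)$, trajectories through a point of~$U_{0}$ remain inside~$U_{0}$ and trajectories through~$U_{>0}$ remain inside~$U_{>0}$, so the flow never relates the two loci; a section of $\sHom(F,G)$ over~$U_{>0}$ therefore has no reason to extend across~$U_{0}$ (already for $F=G=\cO_U$, multiplication by a unit of exponential growth on the Archimedean fibres is an automorphism of $\cO_{U_{>0}}$ that does not extend). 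The task is thus to produce a \emph{new} isomorphism over~$U$, not to propagate~$\phi$.

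The plan is to argue first at the level of stalks and then to glue. The statement being local on~$U$, I may assume $U$ is a local model, closed in an open subset of $\Ahyb{n}{k}$, and it suffices to treat a point $x\in U_{0}$ (for $x\in U_{>0}$ there is nothing to do). For such~$x$ I would use the explicit description of the hybrid overconvergent ring $\cO_{U,x}$ as an intersection of its non-Archimedean part $\cO_{k_{0}}$ and its Archimedean part $\cO_{\hat k}$, in the spirit of Lemma~\ref{lem:D10} and Theorem~\ref{th:VYZStein}. Through this description the isomorphism on~$U_{>0}$ transports to the Archimedean factor, while the rigidity furnished by Corollary~\ref{cor:injection>0} (a homomorphism is determined by its restriction to the $>0$ locus) controls the non-Archimedean factor; passing if necessary to completions, where one may invoke that finite modules over a Noetherian local ring are isomorphic as soon as their completions are, this should yield $F_{x}\simeq G_{x}$, hence an isomorphism $F_{\vert W}\simeq G_{\vert W}$ on a neighbourhood $W$ of each point of~$U$.

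The main obstacle is the passage from these local isomorphisms to a global one: the neighbourhood isomorphisms are not canonical and need not agree on overlaps. To glue them I would exploit the same rigidity: by Corollary~\ref{cor:injection>0} an isomorphism of coherent sheaves is pinned down by its restriction to the $>0$ locus, so the discrepancies between two local isomorphisms are sections of $\sHom(F,F)^{\times}$ (resp. $\sHom(G,G)^{\times}$) detected on~$U_{>0}$, where Lemma~\ref{lem:>0eps} lets one normalise them along the flow down to a single Archimedean fibre. Reducing to compact over-flow-connected pieces and using the Stein-type cohomological vanishing of Theorem~\ref{th:VYZStein} and Corollary~\ref{cor:coherentflowcompact}, one should be able to kill the resulting first obstruction and patch the local isomorphisms into a genuine isomorphism $F\simeq G$ over~$U$.

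I expect this last cohomological patching \emph{across}~$U_{0}$ to be the delicate step, precisely because the flow, which governs all the earlier equivalences of categories, is inert in the direction separating $U_{0}$ from $U_{>0}$; the local computation of stalks, by contrast, is a formal consequence of the ring-theoretic descriptions already established.
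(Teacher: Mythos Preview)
Your proposal has a genuine gap: neither the stalk computation nor the gluing is actually carried out, and you yourself flag the gluing ``across~$U_{0}$'' as the delicate step without giving an argument. The stalk claim $F_{x}\simeq G_{x}$ for $x\in U_{0}$ does not follow formally from the ring-theoretic description you invoke: knowing that two finite $\cO_{U,x}$-modules become isomorphic after the base change to the ``Archimedean factor'' is not enough to conclude they are isomorphic over~$\cO_{U,x}$ itself, and the appeal to completions is circular since it is precisely at~$x$ that you lack any isomorphism to complete. Even granting local isomorphisms, your patching scheme via $\sHom(F,F)^{\times}$ and Stein-type vanishing is only a plan, not a proof.

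The paper sidesteps all of this with a one-line trick that you missed. Rather than manufacture any isomorphism, it uses the injections of Corollary~\ref{cor:injection>0} to regard both~$F$ and~$G$ as \emph{subsheaves} of the same ambient sheaf $(i_{>0})_{*}G_{\vert U_{>0}}$ (the embedding of~$F$ uses the given~$\varphi$). Inside this ambient sheaf one forms the coherent quotient $(F+G)/F$; it is supported in~$U_{0}$ because $F$ and~$G$ agree on~$U_{>0}$, and a coherent sheaf supported in~$U_{0}$ is zero by Proposition~\ref{prop:s=0}. Hence $G\subseteq F$, and by symmetry $F=G$ as subsheaves, which is already an isomorphism. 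No local construction, no gluing, no cohomology is needed.
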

\begin{proof}
Only the converse statement requires a proof. Assume that there exists an isomorphism of $\cO_{U_{>0}}$-modules $\varphi \colon F_{\vert U_{>0}} \simto G_{\vert U_{>0}}$.

Let $i_{>0} \colon U_{>0} \to U$ denote the open embedding of~$U_{>0}$ into~$U$. By Corollary~\ref{cor:injection>0}, we may identify $F$ (resp. $G$) to a subsheaf of $(i_{>0})_{\ast} F_{\vert U_{>0}}$ (resp. $(i_{>0})_{\ast} G_{\vert U_{>0}}$). Using $\varphi$, we may identify $F$ and $G$ to subsheaves of the same sheaf $(i_{>0})_{\ast} G_{\vert U_{>0}}$. 

Since $F$ and $G$ are coherent, so is the sheaf $(F + G)/F$. By assumption, its support is contained in~$X^+_{0}$, hence $(F + G)/F = 0$, by Proposition~\ref{prop:s=0}. We deduce that $G \subseteq F$. Similarly, we prove that $F \subseteq G$, hence $F = G$. 
\end{proof}

\begin{lemma}\label{lem:globalsectionsmeromorphic}
Let $X$ be a variety over~$k$. Let $\cF$ be a coherent sheaf on~$X$. Let $Y$ be a compactification of~$X$ such that $Z := Y-X$ is a Cartier divisor. Let~$\cG$ be a coherent sheaf on~$Y$ extending~$\cF$. Then the image of the restriction map
\[ H^0(X^+,\cF^\hyb) \too H^0(X^\an, \cF^\an)\]
lies in $H^0(Y^\an, \cG^\an[\ast Z^\an])$.\footnote{We denote by $\cG^\an[\ast Z^\an]$ the sheaf of sections of~$\cG^\an$ meromorphic along~$Z^\an$, see \cite[footnote 5]{GrothendieckdeRham}.}
\end{lemma}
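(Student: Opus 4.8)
The plan is to check meromorphicity locally near $Z^\an$ and to transfer, across the hybrid degeneration, a pole-order bound from the non-Archimedean fibre (where Theorem~\ref{th:VYZStein} supplies it) to the Archimedean fibre. Being meromorphic along $Z^\an$ is local on $Y^\an$ in a neighbourhood of $Z^\an$; away from $Z^\an$ the restriction $s_{\vert X^\an}$ is already a section of $\cG^\an$ (since $\cG$ extends $\cF$), so there is nothing to prove there. As $Z^\an$ is compact, I may work on an affine open $U=\Spec(A)$ of~$Y$ on which $Z$ is principal, defined by some $h\in A$. With the notation of the affine case, $V^\ast_U(Z\cap U)$ is the star region $\{0<\abs{h}<1\}$, equal to $(X\cap U)_\infty$, and $V^r_U(Z\cap U)=\{\abs{h}<1\}$ is its thickening across~$Z$. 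Since $V^\ast_U(Z\cap U)\subseteq X_\infty=X^+_0\subseteq X^+$, the overconvergent restriction of~$s$ defines a section $s_0\in\cF^\hyb(V^\ast_U(Z\cap U))=\cG^\hyb(V^\ast_U(Z\cap U))$, the identification holding because $\cG$ extends~$\cF$ and $V^\ast_U(Z\cap U)\subseteq X^\hyb$.

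First I would bound the pole order over the fibre $\eps=0$. By Theorem~\ref{th:VYZStein}, the global-section equivalence of categories together with the localisation formula $\cO(V^\ast_U(Z\cap U))=I^{-1}\cO(V^r_U(Z\cap U))$ yields, for the finitely generated modules involved, $\cG^\hyb(V^\ast_U(Z\cap U))=\bigcup_{N\in\N} h^{-N}\,\cG^\hyb(V^r_U(Z\cap U))$. Hence there is an integer~$N$ such that $h^N s_0$ extends to a section of~$\cG^\hyb$ over the whole of $V^r_U(Z\cap U)$, that is, across~$Z$ in the non-Archimedean fibre.

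The crux is to propagate this bound to the fibre $\eps=1$, and for this I would invoke the explicit description of the hybrid structure sheaf in Lemmas~\ref{lem:ringk0} and~\ref{lem:D10}. On the star region, $s_0$ is represented by a single Laurent series $\sum_{i,j} a_{i,j}\,T^i h^j$ with coefficients $a_{i,j}\in k$, whose realisation over~$\hat k$ is precisely $s_{\vert X^\an}$ on the punctured neighbourhood $\{0<\abs{h}<1\}$ of~$Z^\an$; the content of the hybrid formalism is that the realisations over $k_0$ and over~$\hat k$ share the same coefficients $a_{i,j}$. Now the inclusion $\cO(V^r_U(Z\cap U))\subseteq k[T_{1},\dotsc,T_{n}][\![h]\!]$ of Lemma~\ref{lem:ringk0} shows that $h^N s_0\in\cO(V^r_U(Z\cap U))$ involves no negative power of~$h$, i.e. $a_{i,j}=0$ for $j<-N$; being a condition on the coefficients, it persists for the realisation over~$\hat k$. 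Therefore $h^N\,(s_{\vert X^\an})$ is, near~$Z^\an$, a convergent power series in~$h$ with no pole, and by the intersection formula of Lemma~\ref{lem:D10} it extends to a section of~$\cG^\an$ on a neighbourhood~$\Omega$ of~$Z^\an$ in~$Y^\an$. The general coherent case follows by running the same argument through a finite presentation of~$\cG$, using that Theorem~\ref{th:VYZStein} identifies coherent sheaves on $V^r_U(Z\cap U)$ and $V^\ast_U(Z\cap U)$ with finitely generated modules over the respective rings of global sections.

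Covering~$Z^\an$ by finitely many such charts and taking the largest of the resulting integers~$N$, the local extensions agree on overlaps — being determined by their common restriction to the dense open~$X^\an$ — and glue, together with $s_{\vert X^\an}$ itself over $Y^\an-Z^\an$, to a section of $\cG^\an(NZ^\an)$; since $Y^\an$ is quasi-compact, this exhibits $s_{\vert X^\an}$ as an element of $H^0(Y^\an,\cG^\an[\ast Z^\an])$. I expect the genuine difficulty to be exactly the fibre transfer of the third paragraph: a pole-order bound is really the vanishing of certain Laurent coefficients, and the whole force of the hybrid construction — crystallised in the intersection formula of Lemma~\ref{lem:D10} — is that those coefficients are common to the trivially valued and to the Archimedean fibre, so that control gained over one is automatically control over the other.
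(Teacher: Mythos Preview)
Your first two steps match the paper's: reduce to a principal affine chart $U$ and use Theorem~\ref{th:VYZStein} to find $N$ with $h^N s_0 \in \cG^\hyb(V^r_U(Z\cap U))$. The difficulty is indeed the fibre transfer, and here your argument has a genuine gap.

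You assert that the Laurent series representing the overconvergent germ $s_0$ ``realises over~$\hat k$'' precisely to $s_{|X^\an}$ on the punctured disc. But Lemma~\ref{lem:D10} only identifies the overconvergent ring $\cO(D_0(1^+,0^+))$ with an intersection inside $k[\![T,U]\!]$; it does not supply a map to the $\eps=1$ fibre that is automatically compatible with restriction of a section defined globally on~$X^+$. The germ $s_0$ lives in a colimit over open neighbourhoods of $V^\ast_U(Z\cap U)$ in~$Y^\hyb$, all of which may lie in $\pr^{-1}([0,\eps_0))$ for some small~$\eps_0$; the power-series map of Lemma~\ref{lem:D10} sends such a germ to an abstract element of $\cO_{\hat k}(D_1(\infty,0^+))$, and nothing you have said forces this element to equal the honest restriction $s_{|X^\an_1}$. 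Closing this gap requires precisely the flow: the global section $s$ is defined across all $\eps\in(0,1]$, and Proposition~\ref{prop:restrictionflotO} or Corollary~\ref{cor:coherentflowcompact} is what makes the Laurent expansions at different~$\eps$ share coefficients. You never invoke it.

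The paper's proof sidesteps the power-series bookkeeping and uses the flow directly. It first normalises coordinates so that $|T_i(z)|<1$ at the chosen point $z\in Z^\an$ (a step you omit), then notes that $h^m s\in\cG^\hyb(V^r_U(Z))$ in the overconvergent sense means $h^m s$ is an actual section on an open neighbourhood~$W$ of $V^r_U(Z)$ in~$Y^\hyb$. Corollary~\ref{cor:coherentflowcompact} then transports this section along the flow from~$W$ to a neighbourhood of~$z$ in~$Y^\an_1$: the condition $|T_i(z)|<1$ guarantees that a rational-domain neighbourhood of~$z$ lies in the trajectory of a compact piece of~$W$. No coefficient matching is needed, and the general coherent case is handled by a single surjection $\cO_U^N\to\cG_{|U}$ (lifting $s$ to $\cO^N$) rather than a full presentation.
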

\begin{proof}
Let $s \in H^0(X^+,\cF^\hyb)$. It is enough to show that each $z \in Z^\an$ admits a neighborhood~$U_{z}$ in $Y^\an$ such that $s_{\vert U_{z}}$ belongs to $H^0(U_{z}, \cG^\an[\ast Z^\an])$.

Let $z \in Z^\an$. Let $U$ be an open affine neighborhood of~$z$ in~$Y$ such that $Z\cap U$ is defined by the vanishing of a single element $h$ in $\cO(U)$. Write $\cO(U) = k[T_{1},\dotsc,T_{n}]/J$. We may assume that, for each $i \in \{1,\dotsc,n\}$, we have $\abs{T_{i}(z)} < 1$. 

Since~$U$ is affine, there exists a surjective morphism $\cO_{U}^{N} \to \cG_{\vert U}$. It restricts to a surjective morphism $\cO_{X \cap U}^{N} \to \cF_{\vert X\cap U}$ and, by Theorem~\ref{th:VYZStein}, to a surjective morphism
\[H^0(V_{U}^\ast(Z \cap U),\cO)^{N} \too H^0(V_{U}^\ast(Z\cap U),\cF^\hyb).\]
By the final part of Theorem~\ref{th:VYZStein}, there exists $m\in \N$ such that $h^{m} s$ lies in the image of $H^0(V_{U}^r(Z),\cO)^{N}$, that is to say $h^m s$ belongs to $H^0(V_{U}^r(Z),\cG^\hyb)$. Recall that $V_{U}^r(Z)$ is endowed with the overconvergent structure inherited from~$Y^\hyb$, hence the result holds in some neighborhood of $V_{U}^r(Z)$ in~$Y^\hyb$. Using Corollary~\ref{cor:coherentflowcompact} and the fact that $\abs{T_{i}(z)} < 1$ for all i, we deduce that it holds in the neighborhood of~$z$.
\end{proof}

\begin{theorem}\label{th:H0}
Let $X$ be a variety over~$k$. Let $\cF$ be a coherent sheaf on~$X$. The analytification map $H^0(X,\cF) \to H^0(X^\hyb,\cF^\hyb)$ induces an isomorphism
\[ H^0(X,\cF) \simto H^0(X^+,\cF^\hyb) = H^0(X\cp, \cF\cp).\]
\end{theorem}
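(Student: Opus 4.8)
The statement splits into the formal identification $H^0(X^+,\cF^\hyb)=H^0(X\cp,\cF\cp)$ and the substantial isomorphism $H^0(X,\cF)\simto H^0(X^+,\cF^\hyb)$. The first is immediate: by definition $\cF\cp=q_\ast\cF^\hyb$ and $q^{-1}(X\cp)=X^+$, so Proposition~\ref{prop:q-1q*} gives $H^0(X\cp,\cF\cp)=(q_\ast\cF^\hyb)(X\cp)=\cF^\hyb(X^+)=H^0(X^+,\cF^\hyb)$. I therefore concentrate on the map $a\colon H^0(X,\cF)\to H^0(X^+,\cF^\hyb)$, which is the composite of the hybrid GAGA isomorphism $H^0(X,\cF)\simto H^0(X^\hyb,\cF^\hyb)$ of Theorem~\ref{th:GAGAhybrid} with the restriction to the open subset $X^+\subseteq X^\hyb$.

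Injectivity of $a$ is the easy half. Since $X^\hyb_{>0}\subseteq X^+$ and $X^\hyb_{>0}$ is dense in $X^\hyb$ (Proposition~\ref{prop:Xandense}), any section of $\cF^\hyb$ on $X^\hyb$ whose restriction to $X^+$ vanishes already vanishes on $X^\hyb_{>0}$, hence is zero by Proposition~\ref{prop:s=0}. Thus the restriction $H^0(X^\hyb,\cF^\hyb)\to H^0(X^+,\cF^\hyb)$ is injective, and composing with the isomorphism of Theorem~\ref{th:GAGAhybrid} shows $a$ is injective.

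For surjectivity, let $s\in H^0(X^+,\cF^\hyb)$ and let $\bar s\in H^0(X^\an,\cF^\an)$ be its image under the injective restriction map $b$ of Corollary~\ref{cor:injection>0}. Because $b$ is injective, it is enough to exhibit $m\in H^0(X,\cF)$ with $b(a(m))=\bar s$. I would fix a compactification $Y$ of $X$ for which $Z:=Y-X$ is an effective Cartier divisor (arranged by a blow-up, as in the proof of Theorem~\ref{th:compact}) together with a coherent sheaf $\cG$ on $Y$ with $\cG_{\vert X}=\cF$. By Lemma~\ref{lem:globalsectionsmeromorphic}, $\bar s$ lies in $H^0(Y^\an,\cG^\an[\ast Z^\an])$. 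Writing $\cG^\an[\ast Z^\an]=\colim_n\cG(nZ)^\an$ and using that $Y^\an$ is compact, so that $H^0(Y^\an,-)$ commutes with this filtered colimit, classical GAGA on the proper variety $Y$ applied to each $\cG(nZ)$ yields $H^0(Y^\an,\cG^\an[\ast Z^\an])=\colim_n H^0(Y_{\hat k},\cG_{\hat k}(nZ))=H^0(X_{\hat k},\cF_{\hat k})$, the last equality because $\colim_n\cG_{\hat k}(nZ)=j_\ast\cF_{\hat k}$ for the open immersion $j\colon X\hookrightarrow Y$. Hence $\bar s=(t')^\an$ for a unique $t'\in H^0(X_{\hat k},\cF_{\hat k})=H^0(X,\cF)\otimes_k\hat k$, and it remains to descend $t'$ to $H^0(X,\cF)$.

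This descent is the step I expect to be the main obstacle. The reduction of $\bar s$ to a meromorphic, hence algebraic, section is supplied by Lemma~\ref{lem:globalsectionsmeromorphic} together with the final localisation statement of Theorem~\ref{th:VYZStein} (where the Stein properties of hybrid discs enter), so the genuinely new point is that $s$ is defined not only on the fiber $X^\an=X^+_1$ over $\hat k$ but also on the trivially valued boundary fiber $X_\infty=X^+_0$, which lives over $k_0$. For a boundary point $x_0\in X_\infty$ the absolute value is trivial on $k$, so its residue field $\cH(x_0)$ contains $k$ but no element of $\hat k\smallsetminus k$; evaluating $s$ along a trajectory $z^\eps$ with $z\in X^\an$ fixed and letting $\eps\to0$, Proposition~\ref{prop:restrictionflotO} expresses $s(z^\eps)$ in terms of the coefficients of $t'$, and continuity of $s$ at $x_0$ forces those coefficients (the underlying algebraic sections being linearly independent over $k$) to lie in $k$. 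This yields $t'=:m\in H^0(X,\cF)$, whence $b(a(m))=(m\otimes1)^\an=\bar s=b(s)$ and $s=a(m)$ by injectivity of $b$. When $k$ is complete, in particular for $k=\C$, one has $\hat k=k$ and this descent is vacuous, so the argument collapses to the meromorphic extension plus GAGA on $Y$.
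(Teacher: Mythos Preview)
Your argument follows the paper's proof line for line: injectivity is obtained from Proposition~\ref{prop:s=0} (the paper applies it to the restriction $\psi\colon H^0(X^+,\cF^\hyb)\to H^0(X^\an,\cF^\an)$ directly), and surjectivity from Lemma~\ref{lem:globalsectionsmeromorphic} together with GAGA on a compactification~$Y$ with $Z=Y-X$ Cartier. The paper then simply writes $H^0(Y^\an,\cG^\an[\ast Z^\an])\simeq H^0(Y,\cG[\ast Z])\simeq H^0(X,\cF)$ ``by GAGA'' and observes that the analytification map is a section of the injective map~$\psi$, which finishes the proof.

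You go beyond the paper in noticing that classical GAGA on~$Y^\an$, which lives over~$\hat k$, a priori only gives $H^0(X_{\hat k},\cF_{\hat k})$, so that when $k\ne\hat k$ a descent is required; the paper does not isolate this point. However, your proposed descent via trajectories does not work. For $z\in X^\an=X^\hyb_1$ the trajectory $T(z)=\{z^\eps:0<\eps\le 1\}$ is contained in $X^\hyb_{>0}$ and never meets~$X_\infty$; indeed for any regular function~$f$ one has $|f(z^\eps)|=|f(z)|^\eps\to 1$ as $\eps\to 0$, so any limit point of $z^\eps$ lies in~$X^\beth$, which is \emph{outside}~$X^+$. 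Thus Proposition~\ref{prop:restrictionflotO} gives no link between the value of~$s$ at a point of~$X_\infty$ and its values along such a trajectory, and ``continuity of~$s$ at~$x_0$'' cannot be tested by letting $\eps\to 0$. Your intuition that the trivially valued boundary forces the coefficients into~$k$ is correct---near~$X_\infty$ the sections are built as limits of elements of~$k(T)$ rather than~$\hat k(T)$, cf.\ Lemmas~\ref{lem:ringk0} and~\ref{lem:D10}---but this $k$-rationality has to be read off the description of the rings $\cO(V^\square_Y(Z))$ in Theorem~\ref{th:VYZStein}, not from the flow. Note finally that when $X$ is proper one has $X_\infty=\emptyset$ and $X^+=X^\hyb_{>0}$, and then $H^0(X^+,\cO)\simeq H^0(X^\an,\cO)$ by Lemmas~\ref{lem:>0eps} and~\ref{lem:hybaneps}; for $X=\Spec k$ this is~$\hat k$, so no descent of the kind you sketch can succeed in general, consistently with your closing remark that the argument is only complete as stated for $k=\hat k$.
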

\begin{proof}
Consider the morphism $\psi \colon H^0(X^+,\cF^\hyb) \too H^0(X^\an, \cF^\an)$ induced by the restriction. By Proposition~\ref{prop:s=0}, it is injective.

Let $Y$ be a compactification of~$X$ such that $Z := Y-X$ is a Cartier divisor. By \cite[Corollaire~6.9.11]{EGAInew}, there exists a coherent sheaf~$\cG$ on~$Y$ extending~$\cF$. By GAGA, we have an isomorphism $H^0(Y^\an, \cG^\an[\ast Z^\an]) \simeq H^0(Y, \cG[\ast Z])  \simeq H^0(X,\cF)$, hence, by Lemma~\ref{lem:globalsectionsmeromorphic}, the image of~$\psi$ lies in $H^0(X,\cF)$. 

To conclude, it is enough to remark that the morphism $H^0(X,\cF) \to H^0(X^+,\cF^\hyb)$ induced by the analytification is a section of~$\psi$.
\end{proof}

Applying the result to the structure sheaf and counting idempotents, we obtain the following result.

\begin{corollary}
Let $X$ be a variety over~$k$. We have canonical bijections
\[ \pi_{0}(X) = \pi_{0}(X^+) = \pi_{0}(X\cp).\]
\qed
\end{corollary}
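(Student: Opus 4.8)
The plan is to deduce the statement from Theorem~\ref{th:H0} applied to the structure sheaf, combined with the standard dictionary between idempotents of the ring of global functions and clopen subsets. First I would specialise Theorem~\ref{th:H0} to $\cF = \cO_{X}$: here $\cF^\hyb$ restricts to $\cO_{X^+}$ on~$X^+$ and $\cF\cp = \cO_{X\cp}$, and since the analytification and restriction maps are morphisms of sheaves of rings, the resulting isomorphisms are \emph{ring} isomorphisms
\[ H^0(X,\cO_{X}) \simto H^0(X^+,\cO_{X^+}) \simto H^0(X\cp,\cO_{X\cp}).\]

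Next I would recall the general fact that, for any locally ringed space~$S$, the idempotents of $H^0(S,\cO_{S})$ are in natural bijection with the clopen subsets of~$S$, the bijection being an isomorphism of Boolean algebras. Indeed, for an idempotent~$e$ and a point~$x$, the germ $e_{x}$ is an idempotent of the local ring $\cO_{S,x}$, hence equals~$0$ or~$1$; the map $x \mapsto e_{x}$ is then locally constant and the set $\{x : e_{x}=1\}$ is clopen. Conversely each clopen decomposition $S = U \sqcup (S\setminus U)$ yields the idempotent equal to~$1$ on~$U$ and~$0$ elsewhere. Since a ring isomorphism carries idempotents to idempotents and preserves the Boolean operations, the three Boolean algebras of clopen subsets of~$X$, $X^+$ and~$X\cp$ become canonically isomorphic through the displayed ring isomorphisms.

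I would then use that $X$ is Noetherian, so it has finitely many connected components, each clopen, and every clopen subset is a union of these; hence the Boolean algebra of clopen subsets of~$X$ is finite, and its atoms are exactly the connected components. Transporting along the isomorphisms above shows that the Boolean algebras of clopen subsets of~$X^+$ and~$X\cp$ are finite as well. It then remains to identify, in each space, the atoms of this finite Boolean algebra with the connected components: in a finite Boolean algebra of clopens the whole space is the disjoint union of its atoms, and each atom~$A$ is connected, since a nontrivial clopen partition of~$A$ would (as $A$ is clopen in the ambient space) contradict minimality; a connected clopen subset coincides with the connected component containing it, so the atoms are precisely the elements of $\pi_{0}$. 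Because a Boolean-algebra isomorphism restricts to a bijection on atoms, the ring isomorphisms finally yield the canonical bijections $\pi_{0}(X) = \pi_{0}(X^+) = \pi_{0}(X\cp)$.

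No step here presents a serious obstacle; the one point to be careful about is that $X^+$ and~$X\cp$ are not assumed locally connected a priori. This is precisely why I route the argument through the finiteness of the Boolean algebra of clopens (inherited from~$X$ via the ring isomorphisms) rather than through local connectedness: in a finite Boolean algebra the minimal nonzero clopens are automatically the connected components, which is what makes the identification of atoms with $\pi_{0}$ unconditional.
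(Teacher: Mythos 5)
Your proof is correct and follows exactly the route the paper intends: the paper's entire argument is the one-line remark ``Applying the result to the structure sheaf and counting idempotents,'' and you have simply supplied the standard details (ring isomorphisms on global sections from Theorem~\ref{th:H0}, idempotents versus clopens, finiteness inherited from the Noetherian scheme~$X$). Your care about local connectedness is a nice touch but not strictly needed, since the paper separately proves that $X^+$ and $X\cp$ are locally path-connected.
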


\begin{remark}
The result of Theorem~\ref{th:H0} does not extend to higher cohomology groups in general. Indeed, we have $H^2(\A^{2,+}_{k},\cO) \ne 0$. 

Let us prove this fact. By Example~\ref{ex:A2}, we have 
\[ (\A^2_{k})_{\infty} = \{  x \in \E{2}{k_{0}} : \max(\abs{T_{1}(x),\abs{T_{2}(x)}}) > 1\},\]
so we may write $X=U_{0} \cup U_{1} \cup U_{2}$, with $U_{0} := (\Ahyb{2}{k})_{>0}$ and, for $i=1,2$,
\[ U_{i} := \{ x\in \Ahyb{2}{k} : \abs{T_{i}(x)} >1\}.\]
By \cite[Corollaire~8.5.24]{CTCZ}, the covering $\{U_{0},U_{1},U_{2}\}$ of~$\A^{2,+}_{k}$ is acyclic for any coherent sheaf, which allows to compute cohomology \textit{via} \v Cech cohomology. In particular, $H^2(\A^{2,+}_{k},\cO)$ identifies to the cokernel of the morphism
\[  \varphi \colon H^0(U_{0}\cap U_{1},\cO) \oplus H^0(U_{0}\cap U_{2},\cO) \oplus H^0(U_{1}\cap U_{2},\cO) \too H^0(U_{0}\cap U_{1}\cap U_{2},\cO),\]
given by the alternate sum of the restrictions.

Arguing as in the proof of Lemma~\ref{lem:D10}, we may write elements of $H^0(U_{0}\cap U_{1}\cap U_{2},\cO)$ and the various $H^0(U_{i}\cap U_{j},\cO)$'s as series of the form $\sum_{u,v\in \Z} a_{u,v} T_{1}^u T_{2}^v$ satisfying certain conditions. Let us focus on the non-negative parts of the expansions. For elements of $H^0(U_{0}\cap U_{1},\cO)$ (resp. $H^0(U_{0}\cap U_{2},\cO)$, resp. $H^0(U_{1}\cap U_{2},\cO)$), they involve only finitely many powers of~$T_{1}$ (resp. $T_{2}$, resp. $T_{1}$ and $T_{2}$). On the other hand, any power series that converges on the whole~$\E{2}{\hat k}$ gives rise to an element of $H^0(U_{0}\cap U_{1}\cap U_{2},\cO)$. We deduce that $\coker(\varphi)$, hence $H^2(\A^{2,+}_{k},\cO)$, is non-zero, and even infinite-dimensional.
\end{remark}

%We believe that the statement of Theorem~\ref{th:H0} should hold for higher cohomology groups too. %but we did not manage to prove it. 
%
%\begin{conjecture}
%Let $X$ be a variety over~$k$. For each coherent sheaf~$\cF$ on~$X$ and each $q\ge 0$, we have
%\[H^q(X,\cF) \simto H^q(X\cp,\cF\cp).\]
%\end{conjecture}

To conclude, we compare the categories of coherent sheaves on~$X$ and~$X^+$ or~$X\cp$.

%In order to obtain a full GAGA statement, one also needs to compare the categories of coherent sheaves on both sides. We conclude our manuscript with this result.

\begin{theorem}\label{th:FF+}
Let $X$ be a variety over~$k$. The functor $\cF \mapsto \cF^\hyb_{\vert X^+}$ (resp. $\cF \mapsto \cF\cp$) from the category of coherent sheaves on~$X$ the category of coherent sheaves on~$X^+$ (resp. $X\cp$) is fully faithfull.

If $k$ is complete, then it is an equivalence of categories. 
\end{theorem}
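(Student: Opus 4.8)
The plan is to prove full faithfulness in general and essential surjectivity under the completeness hypothesis, working throughout with~$X^+$ rather than~$X\cp$: by Corollary~\ref{cor:eqX+Xcp} the functor~$q^{-1}$ is an equivalence between coherent sheaves on~$X\cp$ and on~$X^+$ carrying $\cF\cp$ to $\cF^\hyb_{\vert X^+}$, so every statement for $\cF \mapsto \cF\cp$ follows from the one for $\cF \mapsto \cF^\hyb_{\vert X^+}$. For full faithfulness, I would first record that analytification commutes with $\sHom$ of coherent sheaves: since~$X$ is a Noetherian scheme, coherent sheaves are finitely presented, and since the analytification morphism $\rho \colon X^\hyb \to X$ is flat (see the proof of Theorem~\ref{th:GAGAhybrid}), flat base change gives a natural isomorphism $\sHom_{X^\hyb}(\cF^\hyb,\cG^\hyb) \simeq (\sHom_X(\cF,\cG))^\hyb$ for coherent $\cF,\cG$ on~$X$. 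Restricting to~$X^+$ and taking global sections, one computes $\Hom_{X^+}(\cF^\hyb_{\vert X^+}, \cG^\hyb_{\vert X^+}) = H^0(X^+, (\sHom_X(\cF,\cG))^\hyb_{\vert X^+})$, and Theorem~\ref{th:H0} identifies the latter with $H^0(X, \sHom_X(\cF,\cG)) = \Hom_X(\cF,\cG)$. Checking that this composite is the map induced by the functor yields full faithfulness; note that completeness is not used here.

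For essential surjectivity assume~$k$ complete, so that $\hat k = k$ and $X^\an = X^\hyb_1$. Given a coherent sheaf~$\cG$ on~$X^+$, I would first reduce the target to fiber~$1$: by Corollary~\ref{cor:iso>0} a coherent sheaf on the $k_\hyb$-analytic space~$X^+$ is determined up to isomorphism by its restriction to $(X^+)_{>0} = X^\hyb_{>0}$, and by Lemmas~\ref{lem:>0eps} and~\ref{lem:hybaneps} (with $\eps = 1$) the latter is determined by the coherent sheaf~$\cH$ it induces on~$X^\an$. Thus it suffices to construct a coherent sheaf~$\cF$ on~$X$ with $\cF^\an \simeq \cH$: granting this, $\cF^\hyb_{\vert X^\hyb_{>0}}$ and $\cG_{\vert X^\hyb_{>0}}$ both correspond to~$\cH$, whence $\cF^\hyb_{\vert X^+} \simeq \cG$ by Corollary~\ref{cor:iso>0}.

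To algebraize~$\cH$ I would fix a compactification~$Y$ of~$X$ with $Z := Y - X$ a Cartier divisor, and use the boundary data of~$\cG$ to extend~$\cH$ across~$Z^\an$. Working locally on an affine $U = \Spec A \subseteq Y$ with $Z \cap U = V(h)$, the restriction of~$\cG$ to $X_\infty \cap U = V^\ast_U(Z)$ (fiber~$0$) extends, by Theorem~\ref{th:extension}, to a coherent sheaf on~$V^r_U(Z)$, that is, to a finitely generated module~$M^r$ over~$\cO(V^r_U(Z))$ by Theorem~\ref{th:VYZStein}. The crucial point is that, by Lemma~\ref{lem:D10}, $\cO(V^r_U(Z))$ embeds into the fiber-$1$ ring $\cO_{\hat k}(D_1(\infty,0^+))$ of germs along the locus $h = 0$, so~$M^r$ base-changes to a coherent sheaf on a neighborhood of $Z^\an \cap U$ in~$Y^\an$ that crosses~$Z^\an$; on the punctured side it recovers~$\cH$, since there~$M^r$ localizes to $M^r[1/h] = M^\ast$, the module attached to $\cG_{\vert V^\ast_U(Z)}$. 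Gluing these local extensions over a finite cover of~$Z$ — using the full faithfulness already established to glue the comparison isomorphisms — yields a coherent sheaf~$\cH'$ on~$Y^\an$ with $\cH'_{\vert X^\an} \simeq \cH$. Finally, since~$Y$ is proper and~$k$ is complete, classical GAGA gives a coherent sheaf~$\cG_Y$ on~$Y$ with $\cG_Y^\an \simeq \cH'$, and $\cF := \cG_{Y \vert X}$ satisfies $\cF^\an \simeq \cH'_{\vert X^\an} \simeq \cH$, as required.

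The main obstacle is the extension of~$\cH$ across~$Z^\an$ in the last step: since $\pr(x^\alpha) = \alpha\, \pr(x)$, the flow does not connect fiber~$0$ to the positive fibers, so a neighborhood of the fiber-$0$ boundary~$V^r_U(Z)$ only reaches a punctured neighborhood of~$Z^\an$ in fiber~$1$ and by itself merely recovers~$\cH$. Crossing~$Z^\an$ genuinely requires the explicit ring-level input of Lemmas~\ref{lem:ringk0} and~\ref{lem:D10} — the identification of~$\cO(V^r_U(Z))$ with functions that simultaneously satisfy the trivially-valued growth condition and converge as germs along $h = 0$ in fiber~$1$ — together with a careful verification that the resulting local extensions agree with~$\cH$, and with one another, on overlaps, so that they glue into a sheaf on all of~$Y^\an$.
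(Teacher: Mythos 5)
Your overall architecture matches the paper's: reduce from $X\cp$ to $X^+$ via Corollary~\ref{cor:eqX+Xcp}, get full faithfulness from Theorem~\ref{th:H0} applied to $\sHom$ sheaves, and for essential surjectivity use Theorem~\ref{th:extension} to extend the boundary data from $V^\ast_{Y}(Z)$ to $V^r_{Y}(Z)$, produce from it an extension of $\cH := \cG_{\vert X^\an}$ across $Z^\an$, apply classical GAGA on the proper $Y$, and conclude with Lemmas~\ref{lem:hybaneps}, \ref{lem:>0eps} and Corollary~\ref{cor:iso>0}. The full faithfulness part and the two reductions are fine. The gap is in the central step, which you yourself flag as the main obstacle but then dispose of too quickly.

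Concretely: you justify ``on the punctured side it recovers $\cH$'' by the localization $M^r[1/h]=M^\ast$. But $M^\ast$ is the module of overconvergent sections of $\cG$ over $V^\ast_{U}(Z)$, a subset of the fiber over $0$, so this identity is a statement entirely within fiber~$0$ and by itself says nothing about the fiber-$1$ sheaf $\cH$. Since, as you note, the flow does not connect fiber $0$ to the positive fibers, the only bridge from $M^\ast$ to $\cH$ is the overconvergent structure (hybrid neighborhoods of $V^\ast_{U}(Z)$ contain fiber-$\eps$ parts, which flow to punctured neighborhoods of $Z^\an$ in fiber $1$), and making that precise is exactly what the paper's proof spends most of its length on: it extends the fiber-$0$ sheaf $G$ to an honest hybrid neighborhood $W^\le_{i,\eps_{i}}$ over $[0,\eps_{i}]$ by compactness (Proposition~\ref{prop:extensioncoherentsheafcompact}), likewise extends the isomorphism $F\simeq G$ on $X_{\infty}$ to a hybrid neighborhood, glues $F$ and $G$ over $Y^\an_{[0,\eps']}$, restricts to one positive fiber $\eps''$ where GAGA applies, and then propagates the resulting isomorphism along trajectories via Theorem~\ref{th:coherentflow}. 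Your proposal replaces all of this by the ring map $\cO(V^r_{U}(Z))\to\cO_{\hat k}(D_{1}(\infty,0^+))$ of Lemma~\ref{lem:D10} plus an unverified compatibility with $\cH$. Relatedly, your gluing step does not go through as stated: transition isomorphisms between the local fiber-$1$ extensions are only produced on punctured neighborhoods of $Z^\an$, and such isomorphisms need not extend across $Z^\an$ (compare $\cO$ and $\cO(-Z)$, which are isomorphic off $Z$); ``the full faithfulness already established'' concerns the algebraic-to-analytic functor and gives no control on morphisms of analytic sheaves near $Z^\an$. The paper avoids this by performing all gluing at the fiber-$0$/hybrid level (Theorem~\ref{th:extension}, Lemma~\ref{lem:gluingU1U2}, where Nakayama with respect to the Jacobson radical generated by $\cI$ is available) and only afterwards passing to positive fibers with a single, already-glued object.
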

\begin{proof}
The full faithfullness of the functor follows from Theorem~\ref{th:H0} applied to $\sHom$ sheaves. 

\medbreak

Let us now assume that $k$ is complete and prove that the functor is essentially surjective. By Corollary~\ref{cor:eqX+Xcp}, it is enough to handle the case of~$X^+$.

Let us fix a compactification~$Y$ of~$X$. Set $Z := Y-X$. By blowing-up, we may assume that $Z$ is a Cartier divisor.

Let~$F$ be a coherent sheaf on $X^+ = X_{\infty} \cup X^\hyb_{>0}$. With Notation~\ref{nota:VYZ}, we have $X_{\infty} = V^\ast_{Y}(Z)$. By Theorem~\ref{th:extension}, $F_{\vert V^\ast_{Y}(Z)}$ extends to a coherent sheaf~$G$ on~$V^r_{Y}(Z) = r^{-1}_{Y}(Z)$. 

Let $(Y_i)_{i\in I}$ be a finite affine covering of~$Y$ such that, for each $i\in I$, the ideal of the closed subscheme $Z\cap Y_{i}$ of~$Y_{i}$ is generated by a single function, say $h_{i} \in \cO(Y_{i})$. 

For each $i\in I$, let $(f_{i,j})_{j \in J_{i}}$ be a finite family of generators of the $k$-algebra $\cO(Y_{i})$. For $n \in \N$, set 
\[V_{i,n} := \{ x \in Y_{i}^\hyb : \forall j \in J_{i}, \abs{f_{i,j}} \le n.\}\]
The family $(V_{i,n})_{n\in \N}$ is a compact exhaustion of~$Y_{i}^\hyb$.

Since $Y^\hyb$ is compact, it is covered by a family of the form $(V_{i,n})_{i\in I}$, for some $n\in \N$.

Fix $r\in (0,1)$. For $i\in I$ and $\eps \in (0,1]$, set
\[W^\le_{i,\eps} :=  \{ x\in V_{i,n+1} \cap Y^\an_{i,[0,\eps]} : \abs{h_{i}} \le r\}\] 
and
\[W^\ge_{i,\eps} :=  \{ x\in V_{i,n+1} \cap Y^\an_{i,[0,\eps]} : \abs{h_{i}} \ge r\} =  \{ x\in V_{i,n+1} \cap X^\an_{i,[0,\eps]} : \abs{h_{i}} \ge r\}.\]

Let $i\in I$. The sheaf~$G$ is defined on $r^{-1}_{Y}(Z)$, hence on $W^\le_{i,0}$. The latter being compact, by Proposition~\ref{prop:extensioncoherentsheafcompact}, $G$ extends to some neighborhood of it, hence to $W^\le_{i,\eps_{i}}$, for some $\eps_{i} \in (0,1]$. We still denote the extension by~$G$. 

Set $\eps := \min_{i\in I}(\eps_{i}) \in \R_{>0}$. Set $W^\le := \bigcup_{i\in I} W^\le_{i,\eps}$, $W^\ge := \bigcup_{i\in I} W^\ge_{i,\eps}$, $G' := G_{\vert W^\le}$ and $F' := F_{\vert X^+ \cap W^\ge}$. Since $F$ and~$G$ are isomorphic on~$X_{\infty}$, $F'$ and~$G'$ are isomorphic on~$W^\le \cap W^\ge \cap Y^\an_{0}$. The latter isomorphism extends to a neighborhood of $W^\le \cap W^\ge \cap Y^\an_{0}$, hence to $W^\le \cap W^\ge \cap Y^\an_{[0,\eps']}$ for some $\eps' \in (0,\eps]$. As a result, we may glue~$F'$ and~$G'$ on $W^\le \cup (X^+ \cap W^\ge) \cap Y^\an_{[0,\eps']}$. Let us denote the resulting sheaf by~$H$. 

Let $i\in I$. Let $s \in (r,1)$. By construction, $H$ is isomorphic to~$F$ on the open set 
\[ U_{i} := \{ x \in Y_{i}^\hyb \cap \pr^{-1}((0,\eps')) : \forall j \in J_{i}, \abs{f_{i,j}} < n+1, \abs{h_{i}} > s\}.\]

By Theorem~\ref{th:coherentflow}, the isomorphism extends to~$T(U_{i})$, which contains 
\[ \{ x \in  Y_{i}^\hyb \cap \pr^{-1}((0,\eps'))  : \forall j \in J_{i}, \abs{f_{i,j}} < n+1, \abs{h_{i}} > 0\}.\]

For different choices of~$i$, the extended isomorphisms coincide on the common domains of definition, as seen by applying Theorem~\ref{th:coherentflow} on the intersections. Using the fact that the $V_{i,n}$'s cover $Y^\hyb$, we deduce that $H$ is isomorphic to~$F$ on $X^\hyb \cap \pr^{-1}((0,\eps'))$.

Let $\eps'' \in (0,\eps')$. Note that $H$ is defined on the whole~$Y^\an_{\eps''}$, hence, by GAGA, %(over the field $\hat k_{\eps'}$), 
there exists a coherent sheaf~$\cH$ on $Y_{\hat k_{\eps''}} = Y_{k}$ such that~$\cH^\hyb$ is isomorphic to~$H$ on $Y^\an_{\eps''}$.

We have shown above that $F$ is isomorphic to $H$ on $X^\an_{\eps''}$, hence to $\cH^\hyb$. By Lemmas~\ref{lem:hybaneps} and~\ref{lem:>0eps}, it follows that $F$ is isomorphic to $\cH^\hyb$ on $X^\hyb_{>0}$, hence on~$X^+$, by Corollary~\ref{cor:iso>0}. This proves the essential surjectivity of the functor $\cF \mapsto \cF^\hyb_{\vert X^+}$.
\end{proof}

%\begin{corollary}\label{cor:finitecovers}
%Assume that $k$ is complete. Let $X\in \Var_{k}$. The functor $Y \mapsto Y^+$ realizes an equivalence between the categories of finite (resp. finite flat, resp. finite \'etale) covers of~$X$ and~$X^+$.
%\end{corollary}
%\begin{proof}
%The category of finite covers of~$X$ is equivalent to that of coherent sheaves of $\cO_{X}$-algebras. To a finite cover $f \colon Y \to X$, one associates the coherent $\cO_{X}$-algebra $f_{\ast} \cO_{Y}$, and to a coherent $\cO_{X}$-algebra~$\cA$, one associates its relative spectrum over~$X$.
%
%A similar result holds for~$X^+$. The main difficulty lies in the fact that a finite push-forward of a coherent sheaves is coherent, which follows from \cite[Th\'eor\`eme~5.2.1]{CTCZ}. We refer to \cite[Th\'eor\`eme~1.3.14]{BergerThese} for details.
%
%The flat and \'etale refinements follow from \ref{}.
%\end{proof}

\begin{remark}
The essential surjectivity of the functor stated in Theorem~\ref{th:FF+} fails when $k$ is not complete. Indeed, remark that it implies, \textit{via} the relative spectrum construction, an equivalence of categories between the finite covers of a variety~$X$ and that of~$X\cp$. However, any finite cover of~$X^\an_{\hat k}$ that is trivial outside a compact extends to a finite cover of~$X\cp$, but all such covers do not come from covers of~$X$ (over~$k$). %(for instance, those that admit ramification in $\hat k - k$.)

%For example, consider a finite cover $\varphi \colon \Xc \to \E{1}{\hat k}$ that is trivial outside a compact set and has at least one ramification point in $\hat k - k$. It extends to a finite cover of $\varphi' \colon \Xc' \to \A^{1,+}_{k}$ (by first extending to $\A^{1,\hyb}_{k,>0}$, then gluing a trivial cover around $\A^{1,\hyb}_{k,0}$) which does not come from a cover of~$\A^1_{k}$. In other words, the coherent sheaf $\varphi'_{\ast}\cO_{\Xc}$ does not belong to the essential image of the functor $\cF \mapsto \cF^\hyb_{\vert \A^{1,+}_{k}}$.
\end{remark}

\bibliography{biblio}

\bibliographystyle{amsalpha}

\end{document}